\documentclass[a4paper,11pt]{amsart}
\usepackage[english]{babel}
\usepackage[latin1]{inputenc}
\usepackage[T1]{fontenc}
\usepackage{amsmath}
\usepackage{amsfonts}
\usepackage{amssymb}
\usepackage{amsthm}
\usepackage{mathrsfs}
\usepackage{enumitem}
\usepackage{graphicx}
\usepackage{footnote}
\usepackage{hyperref}
\hypersetup{colorlinks=true,    linkcolor=blue,         citecolor=red,       filecolor=BrickRed,       urlcolor=darkgreen    }

\renewcommand{\geq}{\geqslant}
\renewcommand{\leq}{\leqslant}
\newtheorem{thm}{Theorem}
\newtheorem{defn}[thm]{Definition}
\newtheorem{rem}[thm]{Remark}
\newtheorem{cor}[thm]{Corollary}
\newtheorem{prop}[thm]{Proposition}
\newtheorem{lem}[thm]{Lemma}
\linespread{1.0}

\let\o=\omega
\let\s=\star
\let\ss=\bigstar
\let\dd=\bullet
\let\d=\circ

\usepackage{color}
\definecolor{darkgreen}{rgb}{0,0.4,0}
\definecolor{MyDarkBlue}{rgb}{0,0.08,0.50}
\definecolor{BrickRed}{rgb}{0.65,0.08,0}

\setcounter{section}{-0}

\oddsidemargin=0pt \evensidemargin=0pt \textwidth=150mm
\textheight=20cm \voffset=-5mm \pretolerance=3000

\title[On the functions counting walks with small steps in the quarter plane]{On the functions counting walks with small steps in~the quarter plane}

\author{Irina Kurkova\and Kilian Raschel}

 \thanks{I.\ Kurkova: Laboratoire de Probabilit\'es et
        Mod\`eles Al\'eatoires, Universit\'e Pierre et Marie Curie,
        4 Place Jussieu, 75252 Paris Cedex 05, France. Email:~\url{irina.kourkova@upmc.fr}}

\thanks{K.\ Raschel: CNRS and Université de Tours,
        Faculté des Sciences et Techniques,
Parc de Grandmont,
37200 Tours, France. Email: \url{kilian.raschel@lmpt.univ-tours.fr}}

 \date{\today}

\begin{document}

\begin{abstract}
Models of spatially homogeneous walks in the quarter plane ${\bf Z}_+^{2}$ with steps taken from a subset $\mathcal{S}$ of the set of jumps to the eight nearest neighbors are considered. The generating function $(x,y,z)\mapsto Q(x,y;z)$ of the numbers $q(i,j;n)$ of such walks starting at the origin and ending at $(i,j) \in {\bf Z}_+^{2}$ after $n$ steps is studied. For all non-singular models of walks, the functions $x \mapsto Q(x,0;z)$ and $y\mapsto Q(0,y;z)$ are continued as multi-valued functions on ${\bf C}$ having infinitely many meromorphic branches, of which the set of poles is identified. The nature of these functions is derived from this result: namely, for all the $51$ walks which admit a certain infinite group of birational transformations of ${\bf C}^2$, the interval $]0,1/|\mathcal{S}|[$ of variation of $z$ splits into two dense subsets such that the functions $x \mapsto Q(x,0;z)$ and $y\mapsto Q(0,y;z)$ are shown to be holonomic for any $z$ from the one of them and non-holonomic for any $z$ from the other. This entails the non-holonomy of $(x,y,z)\mapsto Q(x,y;z)$, and therefore proves a conjecture of Bousquet-M\'elou and Mishna in \cite{BMM}.

\medskip

\medskip

\noindent{{\sc Keywords.} Walks in the quarter plane; counting generating function; holonomy; group of the walk; Riemann surface; elliptic functions; uniformization; universal covering}

\medskip

\medskip

{\footnotesize\noindent{{\sc AMS 2000 Subject Classification:} primary 05A15; secondary 30F10, 30D05}}
\end{abstract}

\maketitle

\section{Introduction and main results}
\label{Introduction}
\setcounter{equation}{0}

In the field of enumerative combinatorics,
 counting walks on the lattice ${\bf Z}^2$ is among the most classical topics.
  While counting problems have been largely resolved for unrestricted walks on ${\bf Z}^2$ and for walks staying in a half plane \cite{BMP},
  walks confined to the quarter plane ${\bf Z}_{+}^{2}$
  still pose considerable challenges.
  In recent years, much progress has been
  made for walks in the quarter plane with small steps,
  which means that the set $\mathcal{S}$ of possible steps is included in $\{-1,0,1\}^2\setminus \{(0,0)\}$; for examples, see Figures \ref{Ex} and \ref{ExExEx}. In \cite{BMM}, Bousquet-M\'{e}lou and Mishna constructed a thorough classification of these $2^8$ walks. After eliminating trivial cases and exploiting equivalences, they showed that $79$ inherently different walks remain to be studied. Let $q(i,j;n)$ denote the number of paths in ${\bf Z}_{+}^{2}$ having length
$n$, starting from $(0,0)$ and ending at $(i,j)$. Define the
counting function (CF) as
\begin{figure}[t]
\begin{center}
\begin{picture}(70.00,65.00)
\includegraphics{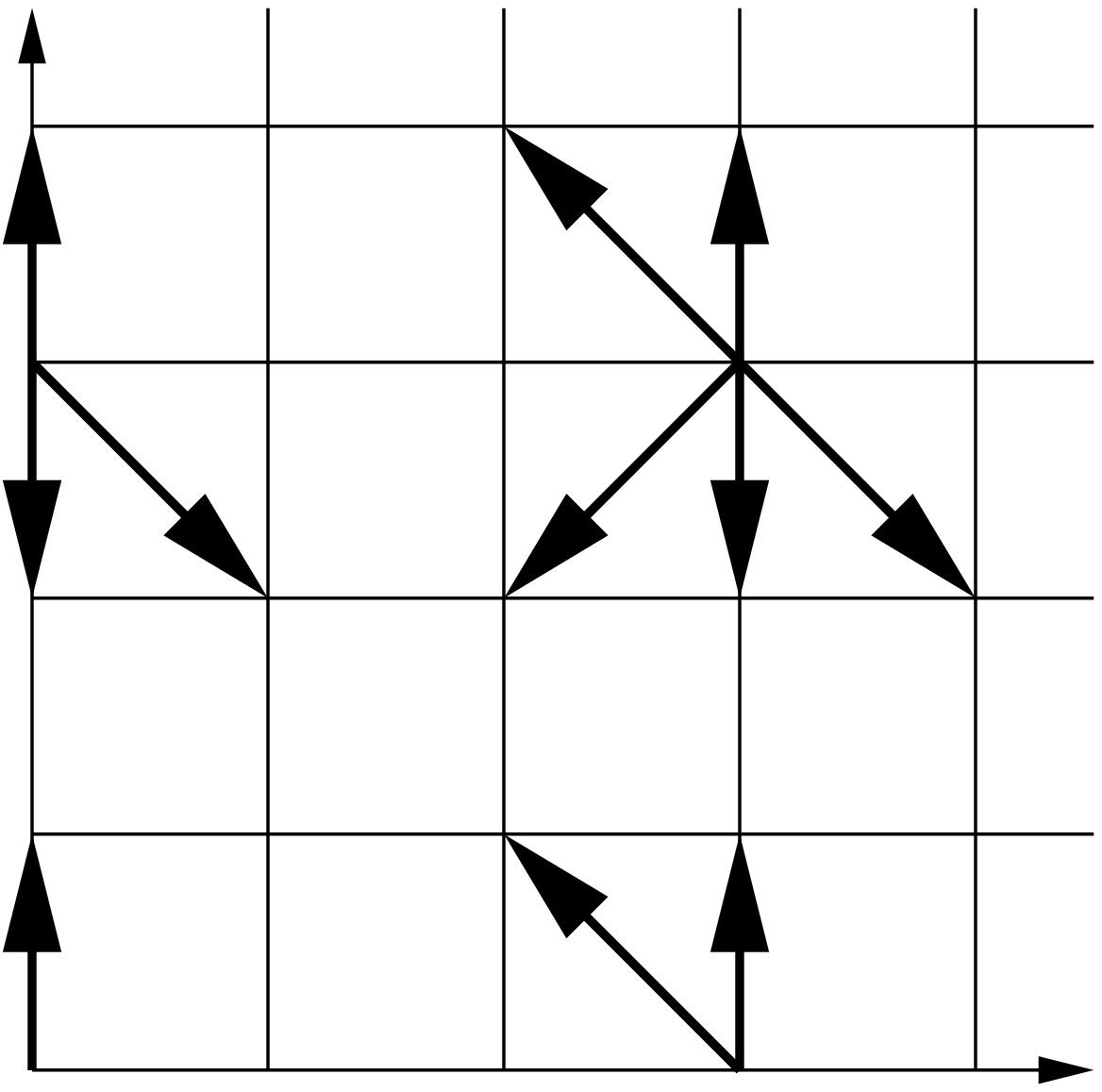}
\end{picture}
\end{center}
\caption{Example of model (with an infinite group) considered here---on the boundary, the jumps are the natural ones: those that would take the walk out ${\bf Z}_{+}^{2}$ are discarded}
\label{Ex}
\end{figure}
     \begin{equation*}
     \label{def_CGF}
          Q(x,y;z)=\sum_{i,j,n\geq 0} q(i,j;n)x^{i}y^{j}z^{n}.
     \end{equation*}
There are then two key challenges:
\begin{enumerate}[label=(\roman{*}),ref={\rm (\roman{*})}]
     \item \label{Challenge_1} Finding an explicit expression for $Q(x,y;z)$;
     \item \label{Challenge_2}
       Determining the nature of $Q(x,y;z)$: is it holonomic
       (i.e., see \cite[Appendix B.4]{FLAJ}, is the vector space
        over ${\bf C}(x,y,z)$---the field of rational functions in the three variables $x,y,z$---spanned by the set of all derivatives of $Q(x,y;z)$ finite
dimensional)? And in that event, is it algebraic, or even rational?
\end{enumerate}
The common approach to treat these problems is to start from a functional equation for the CF, which for the walks with small steps takes the form
(see~\cite{BMM})
     \begin{equation}
     \label{functional_equation}
          K(x,y;z)Q(x,y;z)=K(x,0;z)Q(x,0;z)+K(0,y;z)Q(0,y;z)-K(0,0;z) Q(0,0;z)
            -x y,
     \end{equation}
where
     \begin{equation}
     \label{def_kernel}
          K(x,y;z)=xyz[\textstyle\sum_{(i,j)\in\mathcal{S}}x^{i}y^{j}-1/z]
     \end{equation}
is called the {\em kernel of the walk}. This equation determines $Q(x,y;z)$ through the boundary functions $Q(x,0;z)$, $Q(0,y;z)$ and $Q(0,0;z)$.

Known results regarding both problems \ref{Challenge_1} and \ref{Challenge_2} highlight the notion of the \emph{group of the walk}, introduced by Malyshev \cite{MA,MAL,MALY}. This is the group
     \begin{equation}
     \label{group}
          \langle\xi,\eta\rangle
     \end{equation}
of birational transformations of ${\bf C}(x,y)$, generated by
     \begin{equation}
     \label{xietaf}
          \xi(x,y)= \Bigg(x,\frac{1}{y}\frac{\sum_{(i,-1)\in\mathcal{S}}x^{i}}
          {\sum_{(i,+1)\in\mathcal{S}}x^{i}}\Bigg),
          \qquad  \eta(x,y)=\Bigg(\frac{1}{x}\frac{\sum_{(-1,j)\in\mathcal{S}}y^{j}}
          {\sum_{(+1,j)\in\mathcal{S}}y^{j}},y\Bigg).
     \end{equation}
Each element of $\langle\xi,\eta\rangle$ leaves invariant the jump function $\sum_{(i,j)\in\mathcal{S}}x^{i}y^{j}$.
Further, $\xi^2=\eta^2={\rm Id}$, and $\langle \xi,\eta\rangle$ is a
dihedral group of order even and larger than or equal to four. It
turns out that $23$ of the $79$ walks have a finite group, while the
$56$ others admit an infinite group, see \cite{BMM}.

For $22$ of the $23$ models with finite group,
CFs $Q(x,0;z)$, $Q(0,y;z)$ and $Q(0,0;z)$---and hence $Q(x,y;z)$
 by \eqref{functional_equation}---have been computed
 in \cite{BMM} by means of certain (half-)orbit sums of
 the functional equation \eqref{functional_equation}.
 For the $23$rd model with finite group, known as Gessel's walk
 (see Figure \ref{ExExEx}), the CFs have been expressed by radicals in
  \cite{BK2} thanks to a guessing-proving method
  using computer calculations; they were also found in
  \cite{KRG} by solving some boundary value problems.
  For the $2$ walks with infinite group on the left in
   Figure \ref{The_five_singular_walks}, they have been obtained in \cite{MM2},
    by  exploiting a particular property shared by the $5$ models
      of  Figure \ref{The_five_singular_walks} commonly known
    as {\it singular walks}.
    Finally, in \cite{Ra}, the problem \ref{Challenge_1} was resolved for all $54$
    remaining walks---and in fact for all the $79$ models.
     For the $74$ non-singular walks, this was done via a unified approach:
     explicit integral representations were obtained for CFs $Q(x,0;z)$, $Q(0,y;z)$ and $Q(0,0;z)$
     in certain domains,
      by solving boundary value problems of Riemann-Carleman type.

     In this article we go further, since both functions  $x \mapsto Q(x,0;z)$ and $y \mapsto
     Q(0,y;z)$ are computed on the whole of ${\bf C}$
     as {\it multi-valued} functions  with
     infinitely many meromorphic branches,
     that are made explicit for all $z \in ]0, 1/|\mathcal{S}|[$.
     This result gives not only the most complete continuation of these CFs
     on their complex
     planes along all paths,
      but also permits to establish the nature of these functions, i.e., to
     solve Problem \ref{Challenge_2}.

Problem \ref{Challenge_2} is actually resolved for only $28$ of the
$79$ walks. All $23$ finite group models admit a holonomic CF.
Indeed, the nature of $Q(x,y;z)$ was determined in \cite{BMM} for
$22$ of these walks: $19$ walks turn out to have a holonomic but
non-algebraic CF, while for $3$ walks $Q(x,y;z)$ is algebraic. As
for the $23$rd---again, Gessel's model---, the CF is algebraic
\cite{BK2}. Alternative proofs for the nature of the (bivariate) CF
for these $23$ walks were given in \cite{FR}. For the remaining $56$
walks with an infinite group, not much is known: in \cite{MM2} it
was shown that for $2$ singular walks (namely, the $2$ ones on the
left in Figure \ref{The_five_singular_walks}), the function
$z\mapsto Q(1,1;z)$ has infinitely many poles and, as a consequence
\cite[Appendix B.4]{FLAJ}, is non-holonomic. Accordingly \cite[Appendix B.4]{FLAJ}, the
trivariate function $Q(x,y;z)$ is non-holonomic as well. It is
reasonable to expect that the same approach would lead to the
non-holonomy of all $5$ singular walks, see \cite{MMM,MM2}.
\begin{figure}[t]
\begin{center}
\begin{picture}(427.00,68.00)
\includegraphics{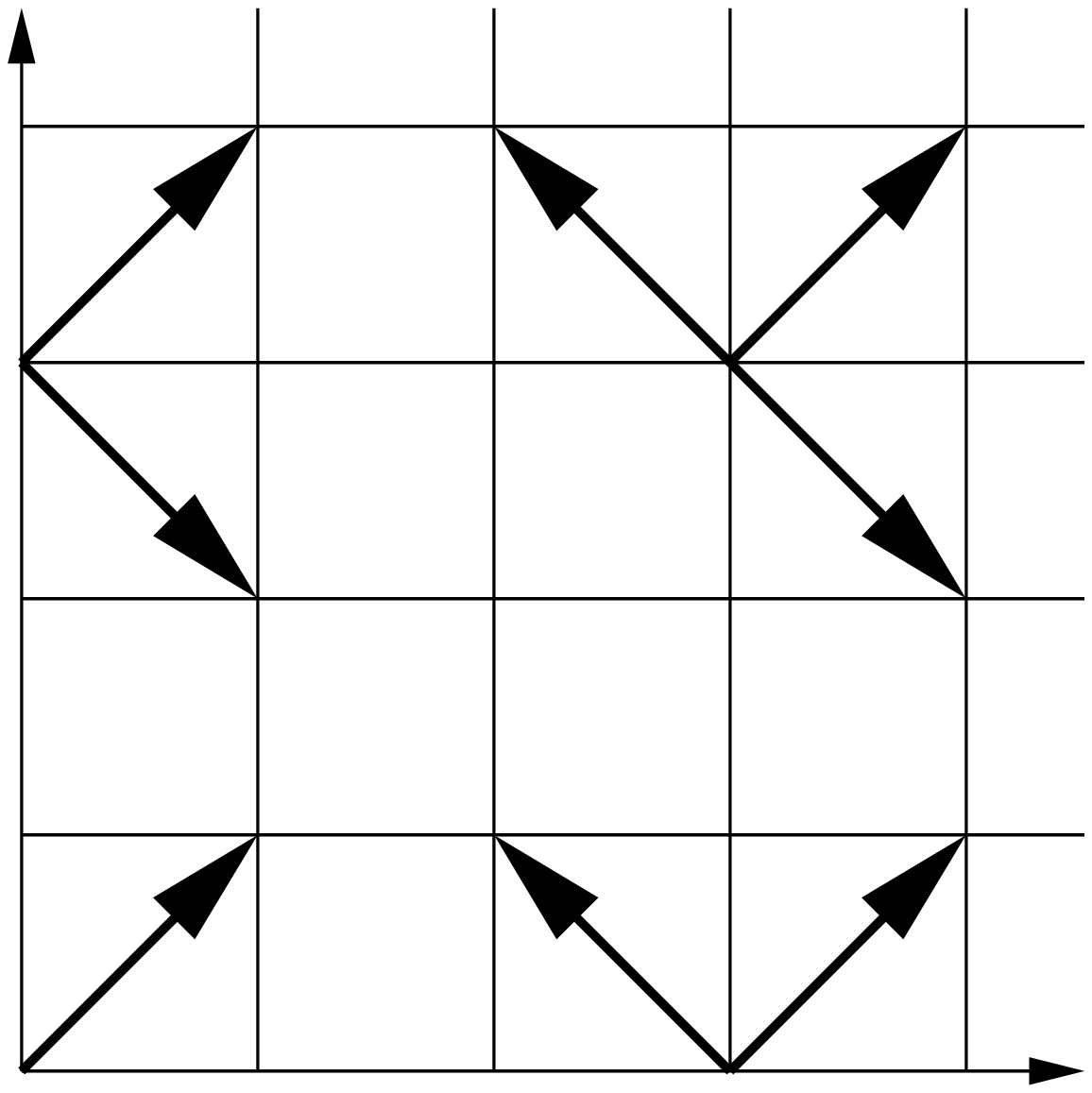}
\hspace{29mm}
\includegraphics{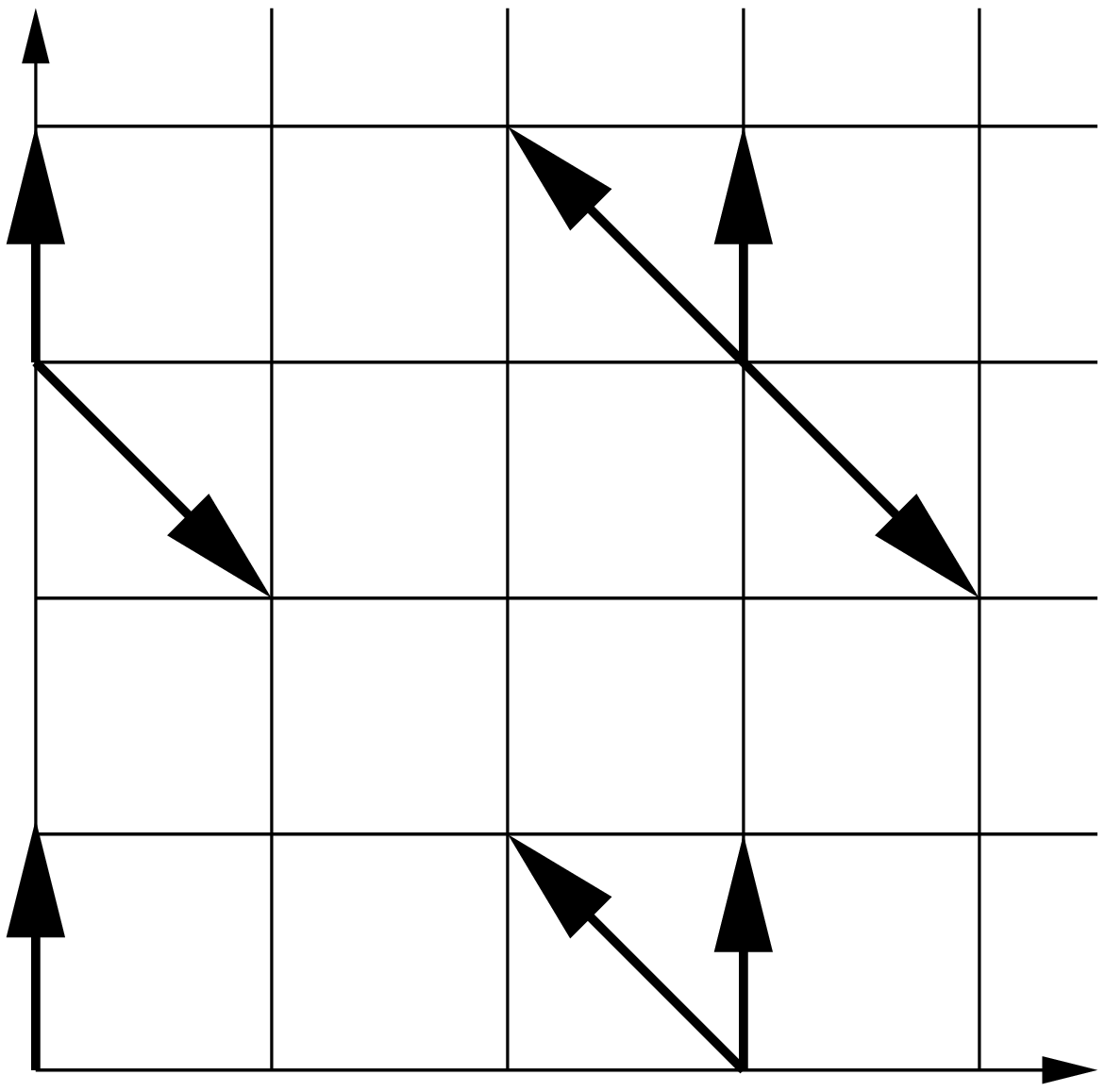}
\hspace{29mm}
\includegraphics{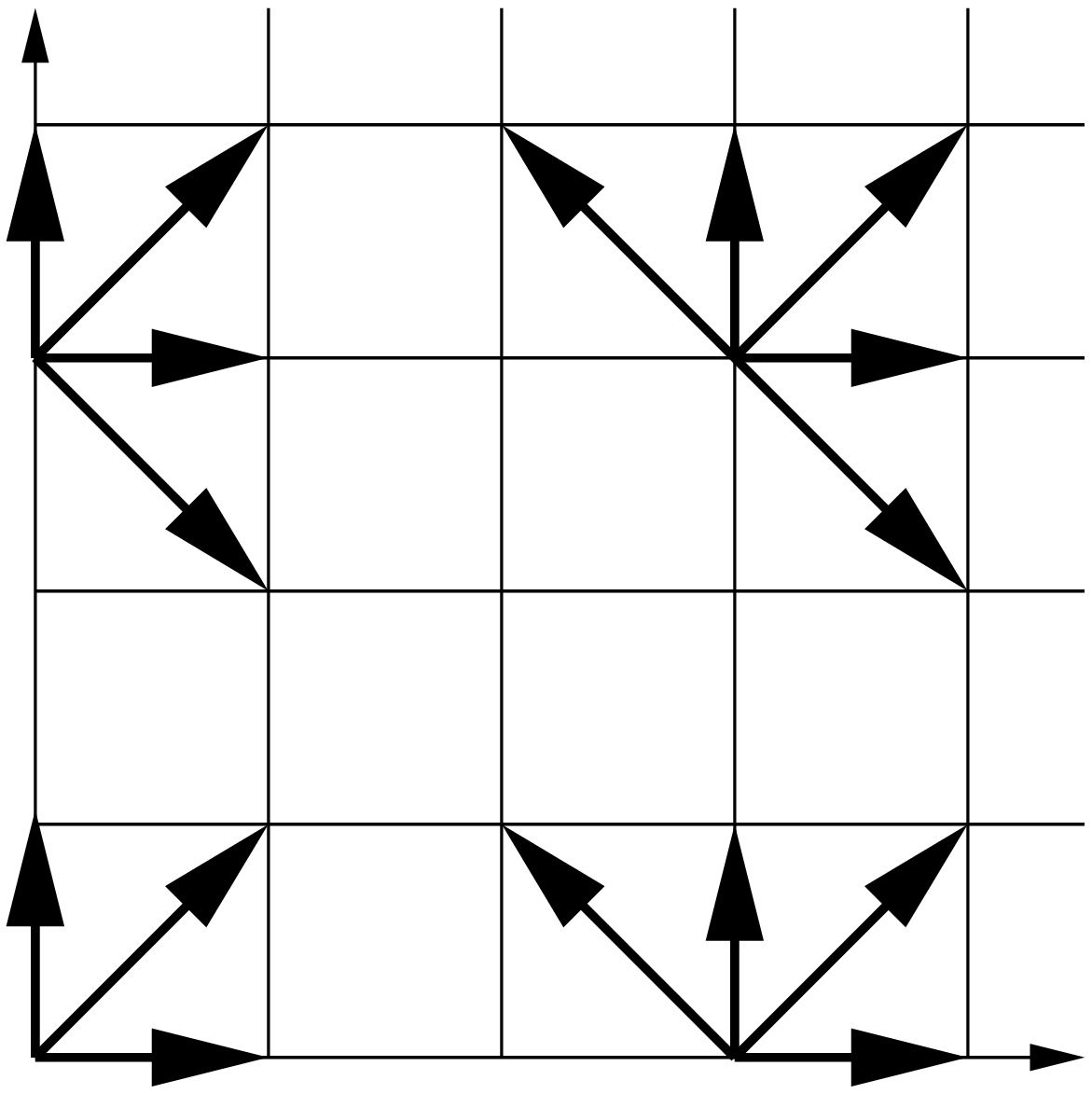}
\hspace{29mm}
\includegraphics{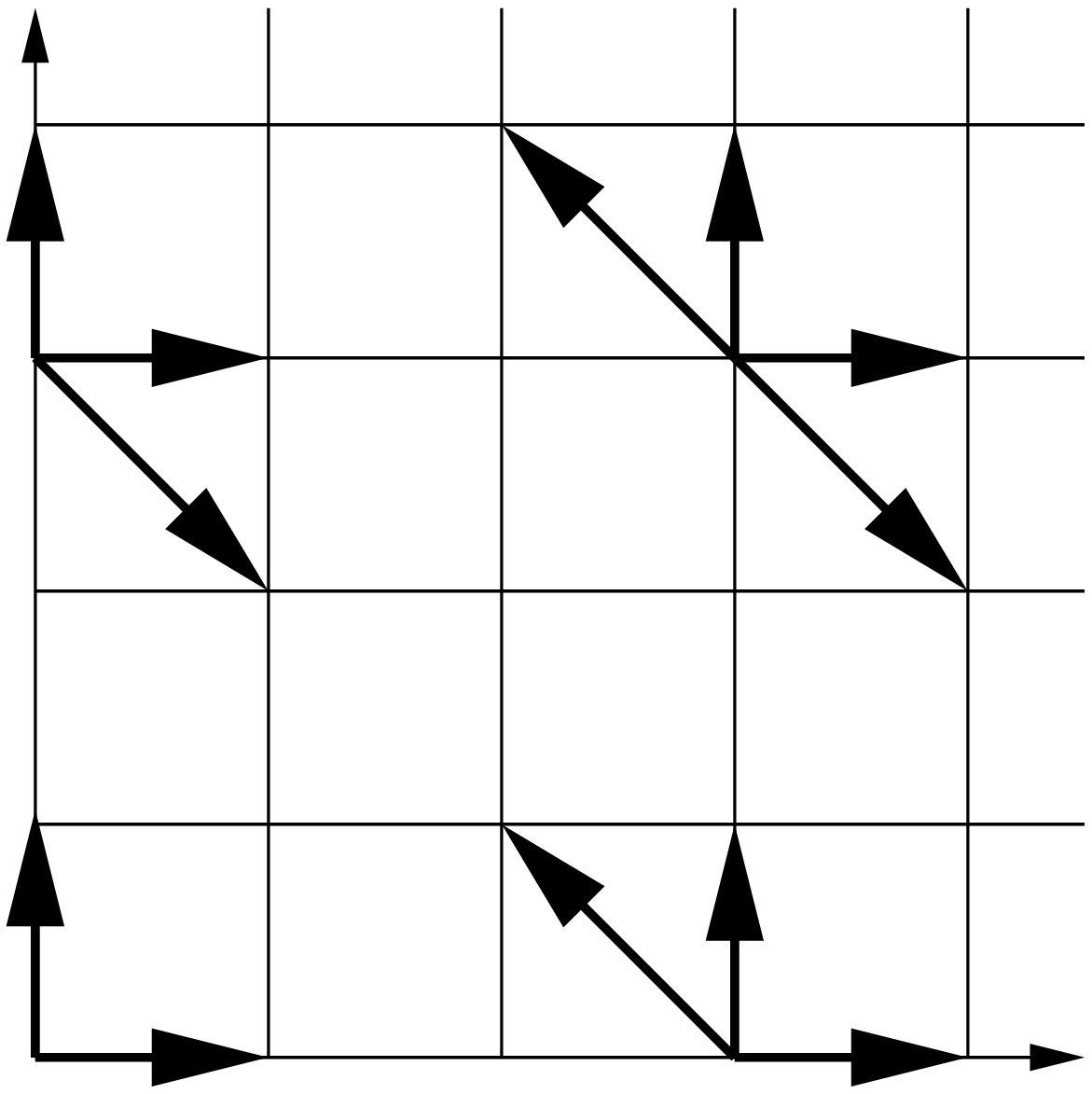}
\hspace{29mm}
\includegraphics{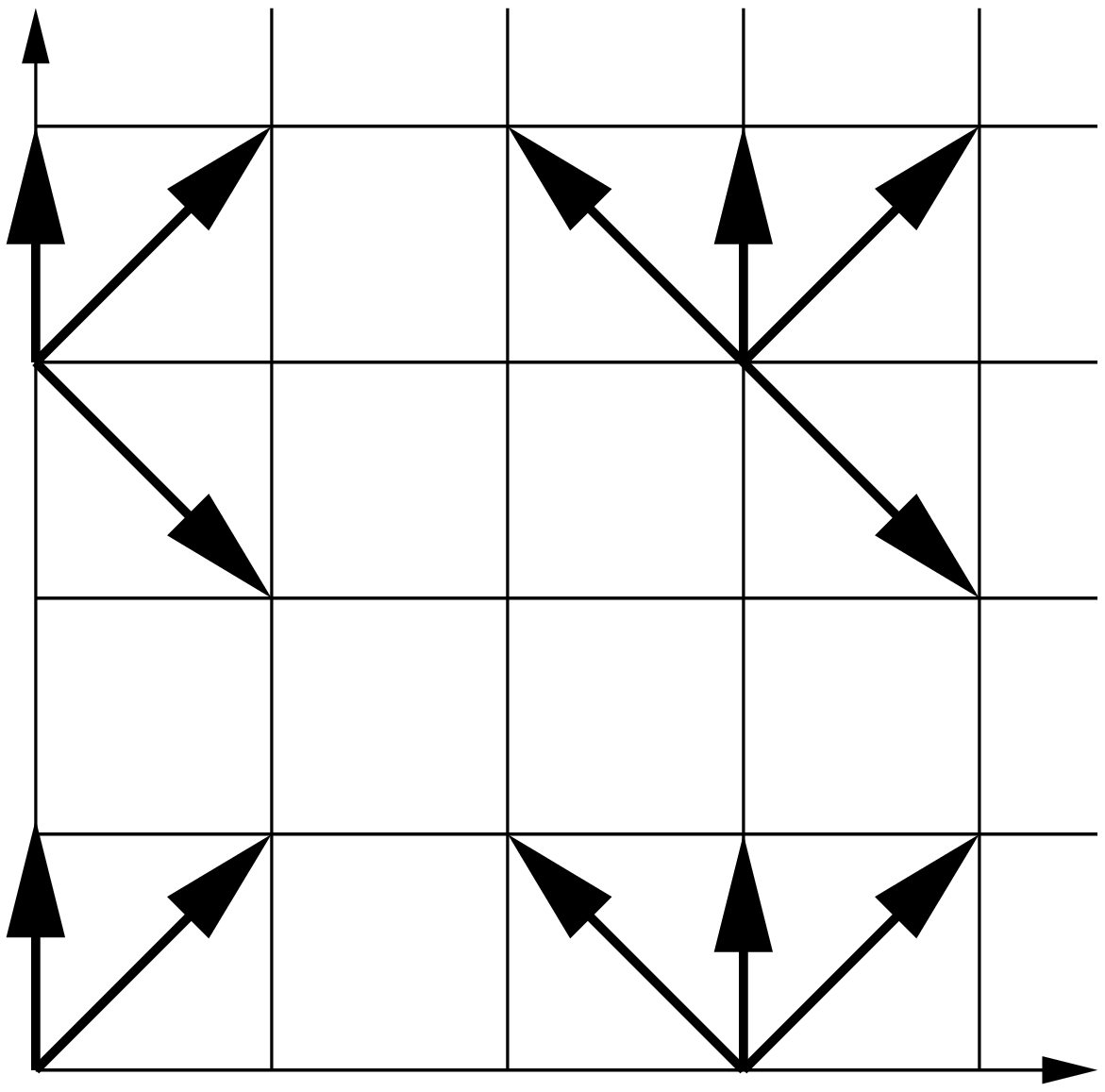}
\end{picture}
\end{center}
\caption{The $5$ singular walks in the classification of~\cite{BMM}}
\label{The_five_singular_walks}
\end{figure}
As for the $51$ non-singular walks with infinite group (all of them
are pictured on Figure~\ref{Allcases}), Bousquet-M\'elou and Mishna
\cite{BMM} conjectured that they also have a non-holonomic CF.
{In this article
  we  prove the following theorem.}

\begin{thm}
\label{main_theorem} For any of the $51$ non-singular walks with
infinite group (\ref{group}), the set $]0, 1/|\mathcal{S}|[$ splits
 into subsets ${\mathcal H}$
 and $]0, 1/|\mathcal{S}|[\setminus {\mathcal H}$ that are both dense in $]0, 1/|\mathcal{S}|[$
   and such that:
\begin{enumerate}[label={\rm (\roman{*})},ref={\rm (\roman{*})}]
\item \label{firstpointthm}
  $x \mapsto Q(x,0;z)$ and $y \mapsto Q(0,y;z)$ are holonomic
for any $z \in {\mathcal H}$;
\item \label{secondpointthm}
$x \mapsto Q(x,0;z)$ and $y \mapsto Q(0,y;z)$ are non-holonomic for
any $z \in ]0, 1/|\mathcal{S}|[\setminus {\mathcal H}$.
\end{enumerate}
\end{thm}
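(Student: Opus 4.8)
\emph{Proof strategy.} The plan is to uniformize the kernel curve by an elliptic curve, lift the boundary unknowns $Q(x,0;z)$ and $Q(0,y;z)$ to that torus, continue them explicitly along the orbit of the group by solving a first-order difference equation, locate all of their poles, and then deduce holonomy versus non-holonomy from the finiteness versus infiniteness of the pole set. \emph{Step 1 (uniformization).} Fix $z\in\,]0,1/|\mathcal{S}|[$. Viewed as a polynomial of degree $2$ in $y$ (and likewise in $x$), the kernel \eqref{def_kernel} defines a curve in $\mathbb{P}^{1}\times\mathbb{P}^{1}$ whose smooth model $T_{z}$ is, for every non-singular walk, of genus $1$; hence $T_{z}\simeq\mathbb{C}/(\mathbb{Z}\omega_{1}(z)+\mathbb{Z}\omega_{2}(z))$. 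I would pull $x,y$ back to degree-$2$ elliptic functions on $T_{z}$; the Galois involutions of the coverings $x\colon T_{z}\to\mathbb{P}^{1}$ and $y\colon T_{z}\to\mathbb{P}^{1}$ are the lifts $\widehat{\xi},\widehat{\eta}$ of the generators $\xi,\eta$ of \eqref{group}, each an involution $\omega\mapsto-\omega+\mathrm{const}$, while $\widehat{\eta}\,\widehat{\xi}$ is the translation $\omega\mapsto\omega+\omega_{3}(z)$ by a third period. The walk has an infinite group exactly when the suitably normalized ratio $\omega_{3}/\omega_{2}$ is irrational; this ratio is a combinatorial invariant of $\mathcal{S}$, independent of $z$, whereas the modulus $\omega_{2}(z)/\omega_{1}(z)$ varies non-trivially with $z$ (the curve $T_{z}$ genuinely degenerates as $z\to1/|\mathcal{S}|$).

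\emph{Step 2 (explicit continuation).} On $\{K=0\}$ the left side of \eqref{functional_equation} vanishes, so on $T_{z}$ one has $K(x,0;z)Q(x,0;z)+K(0,y;z)Q(0,y;z)=K(0,0;z)Q(0,0;z)+xy$. Put $r(\omega)=K(x(\omega),0;z)\,Q(x(\omega),0;z)$ and $s(\omega)=K(0,y(\omega);z)\,Q(0,y(\omega);z)$. Since $r$ is $\widehat{\xi}$-invariant, $s$ is $\widehat{\eta}$-invariant, and $\widehat{\eta}\,\widehat{\xi}$ is translation by $\omega_{3}$, this identity translates into a first-order difference equation
\[
 r(\omega+\omega_{3})-r(\omega)=\widetilde{h}(\omega),
\]
with $\widetilde{h}$ an explicit elliptic function on $T_{z}$ built from $x,y$ and their images under $\widehat{\xi},\widehat{\eta}$, hence with finitely many poles modulo the lattice. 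Solving this equation continues $r$, and therefore $Q(x,0;z)$, meromorphically onto a cylinder $\mathbb{C}/\mathbb{Z}\widehat{\omega}$ — where $\widehat{\omega}$ is whichever of $\omega_{1},\omega_{2}$ survives the construction — and from there onto the whole $x$-plane as a multi-valued function whose branches are indexed by the (infinite) orbit of $\langle\xi,\eta\rangle$; symmetrically for $Q(0,y;z)$. This is the explicit continuation with infinitely many meromorphic branches announced in the abstract.

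\emph{Step 3 (locating the poles).} The difference equation propagates poles: a pole of $r$ at $\omega_{0}$ forces one at $\omega_{0}\pm\omega_{3}$ unless $\widetilde{h}$ has a compensating pole there. Bookkeeping the poles of $\widetilde{h}$ together with the unavoidable pole that $Q(x,0;z)$ picks up under continuation — dictated by the inhomogeneous term $xy$ of \eqref{functional_equation}, which is common to all $51$ models — one shows that the pole set of the continued $Q(x,0;z)$ on $\mathbb{C}/\mathbb{Z}\widehat{\omega}$ is a finite union of cosets $\{\,p+n\omega_{3}:n\in\mathbb{Z}\,\}\bmod\mathbb{Z}\widehat{\omega}$. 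The delicate point here, and what I expect to be the main obstacle, is a \emph{non-cancellation} statement: one must prove that infinitely many of these candidate poles are genuine, i.e.\ that the residues transported by the difference equation never vanish. I would do this by pinning down the very first pole acquired in the continuation (again governed by the $xy$ term, hence treatable uniformly over the $51$ walks) and then running an induction on $n$ showing the propagated residues stay nonzero; a secondary technical point is justifying convergence/meromorphy of the solution of the difference equation on the cylinder.

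\emph{Step 4 (the dichotomy).} Set $\mathcal{H}=\{\,z\in\,]0,1/|\mathcal{S}|[\ :\ \omega_{3}(z)/\widehat{\omega}(z)\in\mathbb{Q}\,\}$. Since $\omega_{3}/\omega_{2}$ is a fixed irrational depending only on $\mathcal{S}$ while the modulus $\omega_{2}(z)/\omega_{1}(z)$ is a non-constant real-analytic function of $z$, the ratio $\omega_{3}(z)/\widehat{\omega}(z)$ is non-constant and real-analytic on the interval, hence strictly monotone on subintervals; therefore the preimages of the dense sets $\mathbb{Q}$ and $\mathbb{R}\setminus\mathbb{Q}$ are both dense, i.e.\ $\mathcal{H}$ and its complement are both dense in $]0,1/|\mathcal{S}|[$. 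If $z\in\mathcal{H}$ then $\{\,n\omega_{3}\bmod\mathbb{Z}\widehat{\omega}\,\}$ is finite, so by Step 3 the continued $Q(x,0;z)$ has only finitely many poles, and the part of the orbit of $\langle\xi,\eta\rangle$ governing the monodromy is finite as well; thus $x\mapsto Q(x,0;z)$ has finite monodromy and finitely many singular points, which makes it holonomic (in fact algebraic), and likewise $y\mapsto Q(0,y;z)$ — this gives \ref{firstpointthm}. If $z\notin\mathcal{H}$ then $\omega_{3}/\widehat{\omega}\notin\mathbb{Q}$, so $\{\,n\omega_{3}\bmod\mathbb{Z}\widehat{\omega}\,\}$ is infinite, whence by Step 3 $Q(x,0;z)$ has infinitely many poles in the $x$-plane; since a holonomic function of one complex variable has only finitely many singularities, $x\mapsto Q(x,0;z)$ is non-holonomic, and the same for $y\mapsto Q(0,y;z)$ — this gives \ref{secondpointthm}. (As an immediate consequence, by \eqref{functional_equation} the trivariate $Q(x,y;z)$ is non-holonomic for $z\notin\mathcal{H}$, which settles the conjecture of \cite{BMM}.)
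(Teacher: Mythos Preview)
Your overall architecture---uniformize the kernel curve, lift the boundary functions, continue via a difference equation in the shift $\omega\mapsto\omega+\omega_3$, and read off (non-)holonomy from the pole structure---matches the paper. But there is a genuine structural error in Step~1 that undermines Step~4.

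You assert that ``the suitably normalized ratio $\omega_3/\omega_2$ is a combinatorial invariant of $\mathcal S$, independent of $z$,'' and that the infinite group corresponds to this ratio being a fixed irrational. This is exactly backwards. In the paper's normalization, $\omega_1\in i\mathbf R$ is the imaginary period and $\omega_2,\omega_3\in\mathbf R$ are both real; the relevant ratio is $\omega_2(z)/\omega_3(z)$, and it is \emph{constant} (rational) in $z$ precisely for the $23$ finite-group walks (Lemma~\ref{lemma_omega_2_3}), while for each of the $51$ infinite-group walks it \emph{varies} with $z$ and hits both rational and irrational values. The set $\mathcal H$ is literally $\{z:\omega_2(z)/\omega_3(z)\in\mathbf Q\}$. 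So your density argument in Step~4, which relies on $\omega_3/\omega_2$ being a fixed irrational and the variation coming from $\omega_2/\omega_1$, does not work: you need to show that the real-valued function $z\mapsto\omega_2(z)/\omega_3(z)$ itself is non-constant on $]0,1/|\mathcal S|[$. The paper does this by an asymptotic expansion as $z\to 0$ (Proposition~\ref{nncc}), obtaining $\omega_2/\omega_3=L+\widetilde L/\ln z+O((\ln z)^{-2})$ with $\widetilde L\ne 0$, which is a genuinely analytic computation (elliptic integrals, Abel's identity for $K(k)$ near $k=1$) carried out model by model.

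Two further points. First, your Step~4 claim that for $z\in\mathcal H$ the functions are ``in fact algebraic'' is too strong: when $\omega_2/\omega_3$ is rational the orbit-sum $\sum_\theta(-1)^\theta xy(\theta\omega)$ need not vanish (cf.\ Section~\ref{section_holonomy}), and then $r_x$ is not elliptic but differs from an elliptic function by a multiple of a Weierstrass-$\zeta$-type function $\Phi$, which is holonomic but \emph{not} algebraic. The paper's holonomy proof (Theorem~\ref{ratho}) invokes a structural decomposition from \cite[Theorem~4.4.1]{FIM} rather than a finite-monodromy argument. Second, your Step~3 correctly flags non-cancellation as the crux, but ``induction on $n$ showing the propagated residues stay nonzero'' greatly understates what is needed: the paper's Lemma~\ref{dop} and Propositions~\ref{ph1}--\ref{ph2} require locating the six special points $a_1,\dots,a_4,b_1,b_2$ on the fundamental parallelogram, checking their mutual order relations under $\ll$, and verifying case-by-case (eight subcases according to the signs of $x_4,y_4$) that a seed pole survives. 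A uniform inductive argument is not given, and it is not clear one exists.
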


{
Theorem~\ref{main_theorem} \ref{secondpointthm} immediately entails Bousquet-M\'elou and Mishna's conjecture: the~trivariate function $(x,y,z)\mapsto Q(x,y;z)$ is non-holonomic since
the holonomy is stable by specialization of a variable \cite[Appendix B.4]{FLAJ}. Further,
Theorem~\ref{main_theorem} \ref{firstpointthm} goes beyond it: it
suggests that $Q(x,y;z)$, although being non-holonomic, still stays accessible for further
 analysis when $z \in {\mathcal H}$, namely by the use of methods developed in \cite[Chapter 4]{FIM},
 see Remark~\ref{Remark2}. This important set ${\mathcal H}$ will be
 characterized in two different ways, see Corollary~\ref{cor-rnc} and
Remark~\ref{Remark2} below.}

The proof of Theorem \ref{main_theorem}   we shall do here is based
on the above-mentioned construction of the CF $x \mapsto Q(x,0;z)$
(resp.\ $y \mapsto Q(0,y;z)$) as a multi-valued function, that must
now be slightly more detailed.
 First, we prove in this article that for any $z \in
]0, 1/|\mathcal{S}|[$,
  the integral expression of $x\mapsto Q(x,0;z)$
  given in \cite{Ra} in a certain domain of ${\bf C}$
  admits a \emph{direct holomorphic continuation}
  on ${\bf C}\setminus[x_3(z),x_4(z)]$.
  Points $x_3(z),x_4(z)$ are among  four {\it branch points}
   $x_1(z), x_2(z), x_3(z), x_4(z)$ of the two-valued algebraic function
   $x\mapsto Y(x;z)$ defined via the kernel \eqref{def_kernel}
    by the equation $K(x,Y(x;z);z)=0$.
    These branch points are roots of the discriminant \eqref{d_d_tilde}
    of the latter equation, which is of the second order.
     We refer to Section \ref{Riemann_surface}
     for the numbering and for some properties of these branch points.
     We prove next that function $x\mapsto Q(x,0;z)$
     does not admit a direct meromorphic
      continuation on any open domain 
    containing
       the segment $[x_3(z),x_4(z)]$,
but admits  a \emph{meromorphic continuation along any path}
going once through $[x_3(z),x_4(z)]$.
 This way, we obtain a second (and different)
  branch of the function,
  which admits a direct meromorphic continuation on the whole
  cut plane ${\bf C}\setminus ([x_1(z),x_2(z)]\cup[x_3(z),x_4(z)])$.
  Next, if the function  $x\mapsto Q(x,0;z)$
   is continued along a path in ${\bf C}\setminus [x_1(z),x_2(z)]$
    crossing once again $[x_3(z),x_4(z)]$,
    we come across its first branch.
     But its continuation along a path in
      ${\bf C}\setminus [x_3(z),x_4(z)]$
      crossing once $[x_1(z),x_2(z)]$
       leads to a third branch of this CF,
        which is meromorphic on ${\bf C}\setminus ([x_1(z),x_2(z)]\cup [x_3(z),x_4(z)])$.
        Making loops through $[x_3(z),x_4(z)]$ and $[x_1(z),x_2(z)]$,
         successively, we construct $x \mapsto Q(x,0;z)$ as a multi-valued
         meromorphic function on ${\bf C}$ with branch points $x_1(z),x_2(z), x_3(z), x_4(z)$,
         and with (generically) infinitely many branches.
          The analogous construction is valid for $y \mapsto Q(0,y;z)$.

In order to prove Theorem~\ref{main_theorem} \ref{secondpointthm},
 we then show that for any of the $51$ non-singular walks with infinite group
   \eqref{group},  for any $z \in ]0,
1/|\mathcal{S}|[\setminus \mathcal{H}$, {\it the set formed by the poles of
all branches of $x \mapsto Q(x,0;z)$ (resp.\ $y \mapsto Q(0,y;z)$)
is infinite}---and even dense in certain curves, to be specified in
Section~\ref{Section_infinite_group} (see Figure~\ref{CC} for their
pictures).
This is not compatible with holonomy. Indeed, all branches of a
holonomic one-dimensional function must verify the {\it same}
 linear differential equation with polynomial coefficients.
 In particular, the poles of all branches are among the zeros
 of these polynomials, and hence they must be in a finite number.

The rest of our paper is organized as follows. In
Section~\ref{Riemann_surface} we construct the Riemann surface ${\bf
T}$ of genus 1 of the two-valued algebraic functions $X(y;z)$ and
$Y(x;z)$ defined by
     \begin{equation*}
          K(X(y;z),y;z)=0,\qquad K(x,Y(x;z);z)=0.
     \end{equation*}
In Section~\ref{uc} we introduce and study
 the universal covering of ${\bf T}$. It can be viewed as the complex plane ${\bf C}$ split
  into infinitely many parallelograms with edges $\o_1(z) \in i {\bf R}$ and
 $\o_2(z) \in {\bf R}$ that are uniformization periods.
 These periods as well as a new important period
  $\o_3(z)$  are made explicit in (\ref{expression_omega_1_2}) and
  (\ref{expression_omega_3}).
 In Section~\ref{Lifting_uc} we lift CFs $Q(x,0;z)$
 and $Q(0,y;z)$ to some domain of ${\bf T}$,
  and then to a domain on its universal covering.
    In Section~\ref{meroun},
   using a proper lifting of the automorphisms $\xi$ and $\eta$
    defined in \eqref{xietaf} as well as the independence
    of $K(x,0;z)Q(x,0;z)$ and $K(0,y;z)Q(0,y;z)$ w.r.t.\ $y$ and $x$,
    respectively, we continue these functions meromorphically
    on the whole of the universal covering.
    All this procedure has been first carried out by Malyshev
    in the seventies \cite{MA,MAL,MALY},
    at that time to study the stationary probability generating
 functions for random walks with small steps in the quarter
 plane ${\bf Z}_+^2$.
 It is presented in~\cite[Chapter 3]{FIM} for the case of ergodic
 random walks in ${\bf Z}_+^2$, and applies directly for
 our $Q(x,0;1/|\mathcal{S}|)$ and $Q(0,y;1/|\mathcal{S}|)$ if the drift vector
          $(\sum_{(i,j)\in \mathcal{S}}i, \sum_{(i,j)\in
          \mathcal{S}}j)$
has not two positive coordinates. In Sections \ref{uc},
\ref{Lifting_uc} and \ref{meroun} we carry out this procedure for
all $z\in ]0,1/|\mathcal{S}|[$ and all  non-singular walks,
independently of the drift. Then, going back from the universal
covering to the complex plane allows us in
Subsection~\ref{subsecbra} to continue
 $x\mapsto Q(x,0;z)$ and $y\mapsto Q(0,y;z)$ as multi-valued meromorphic functions
 with infinitely many branches.

  For given $z \in ]0,
 1/|\mathcal{S}|[$, the rationality or irrationality of the ratio
 $\o_2(z)/\o_3(z)$
 of the uniformization periods is crucial for the
 nature of $x\mapsto Q(x,0;z)$ and $y\mapsto Q(0,y;z)$.
 Namely, Theorem \ref{ratho} of Subsection~\ref{subsecra}
   proves that if $\o_2(z)/\o_3(z)$ is rational,
 these functions are holonomic.

 For $23$ models of walks with finite
 group $\langle\xi, \eta\rangle$, the ratio $\o_2(z)/\o_3(z)$ turns out to be
 rational and independent of $z$, see Lemma~\ref{lemma_omega_2_3}  below,
 that implies immediately the holonomy of the generating functions.
 In Section~\ref{section_holonomy}
 we gather further results of our approach for the models
 with finite group concerning the set of branches of the generating
 functions and their nature. In particular,
 we recover most of the results of \cite{BK2,BMM,FR,MM2}.

   Section~\ref{Section_infinite_group} is devoted to $51$ models with
   infinite group $\langle\xi,
    \eta\rangle$. For all of them, the
 sets $\mathcal{H}=\{z\in]0, 1/|\mathcal{S}|[: \o_2(z)/\o_3(z) \hbox{ is
 rational}\}$ and $]0, 1/|\mathcal{S}|[\setminus \mathcal{H}=
  \{z \in ]0, 1/|\mathcal{S}|[: \o_2(z)/\o_3(z) \hbox{ is
 irrational}\}$ are proved to be dense in
  $]0, 1/|\mathcal{S}|[$,  see Proposition~\ref{proir}.
{These sets can be also characterized as those
where the group $\langle \xi, \eta \rangle$  restricted to the
curve $\{(x,y) : K(x,y;z)=0\}$ is finite and infinite, respectively, see Remark \ref{Remark1}.}
  By Theorem~\ref{ratho} mentioned above,
$x\mapsto Q(x,0;z)$ and $y\mapsto Q(0,y;z)$ are holonomic for any $z
\in \mathcal{H}$, that proves Theorem \ref{main_theorem} \ref{firstpointthm}.
     In Subsections \ref{subsection71}, \ref{subsection72} and \ref{subsection73},
    we  analyze in detail the branches of
    $x\mapsto Q(x,0;z)$ and $y\mapsto Q(0,y;z)$ for any $z \in
   ]0, 1/|\mathcal{S}|[\setminus \mathcal{H}$
     and prove the following facts (see Theorem \ref{main_tt}):
     \begin{enumerate}[label={\rm (\roman{*})},ref={\rm (\roman{*})}]
          \item The only singularities of the first (main) branches
          of $x\mapsto Q(x,0;z)$ and $y\mapsto Q(0,y;z)$ are two branch points
          $x_3(z),x_4(z)$ and $y_3(z),y_4(z)$, respectively;
          \item All (other) branches have only a finite number of poles;
          \item \label{trois} The set of poles of all these branches is
            infinite for each of these functions, and
            is dense on certain curves; these curves are specified
             in Section~\ref{Section_infinite_group}, and in particular
             are pictured on Figure \ref{CC}
             for all $51$ walks given on Figure~\ref{Allcases};
          \item Poles of branches out of these curves may be only at zeros
             of $x\mapsto K(x,0;z)$ or $y \mapsto K(0,y;z)$, respectively.
     \end{enumerate}
  It follows from \ref{trois} that  $x\mapsto Q(x,0;z)$ and $y\mapsto Q(0,y;z)$
  are non-holonomic for any $z \in ]0, 1/|\mathcal{S}|[\setminus
  \mathcal{H}$.

\section{Riemann surface ${\bf T}$}
\label{Riemann_surface}
\setcounter{equation}{0}

In the sequel we suppose that $z\in ]0,1/|\mathcal{S}|[$, and we drop the dependence of the different quantities w.r.t.\ $z$.

\subsection{Kernel $K(x,y)$}
\label{subsection:kernel}
The kernel $K(x,y)$ defined in \eqref{def_kernel} can be written as
     \begin{equation}
     \label{kernel_pt}
          xyz[\textstyle\sum_{(i,j)\in\mathcal{S}}x^{i}y^{j}-1/z]=\widetilde{a}
          (y)x^{2}+\widetilde{b}(y)x+\widetilde{c}(y)=a(x)y^{2}+b(x)y+c(x),
     \end{equation}
where
     \begin{align*}
          \widetilde{a}(y)&=zy\textstyle \sum_{(+1,j)\in\mathcal{S}}y^{j},
          &\widetilde{b}(y)=-y+&zy\textstyle\sum_{(0,j)\in\mathcal{S}}y^{j},&\widetilde{c}(y)&=zy\textstyle\sum_{(-1,j)\in\mathcal{S}}y^{j}, \\
          a(x)&=\textstyle zx\sum_{(i,+1)\in\mathcal{S}}x^{i},
          &b(x)=-x+&zx\textstyle \sum_{(i,0)\in\mathcal{S}}x^{i},&c(x)&=zx\textstyle\sum_{(i,-1)\in\mathcal{S}}x^{i}.
     \end{align*}
With these notations we define
     \begin{equation}
     \label{d_d_tilde}
          \widetilde{d}(y)=\widetilde{b}(y)^{2}-4\widetilde{a}(y)\widetilde{c}(y),
          \qquad d(x)=b(x)^{2}-4a(x)c(x).
     \end{equation}
If the walk is non-singular, then for any $z\in]0,1/|\mathcal{S}|[$, the polynomial $\widetilde{d}$ (resp.\ $d$) has three or four roots, that we call $y_{\ell}$ (resp.\ $x_{\ell}$). They are such that $|y_{1}|<y_{2}<1<y_{3}<|y_{4}|$ (resp.\ $|x_{1}|<x_{2}<1<x_{3}<|x_{4}|$), with $y_4=\infty$ (resp.\ $x_4=\infty$) if $\widetilde{d}$ (resp.\ ${d}$) has order three: the arguments given in \cite[Part 2.3]{FIM} for the case $z=1/|\mathcal{S}|$ indeed also apply for other values of $z$.

Now we notice that the kernel \eqref{def_kernel} vanishes if and
only if $[\widetilde{b}(y)+
2\widetilde{a}(y)x]^{2}=\widetilde{d}(y)$ or $[b(x)+2a(x)y
]^{2}=d(x)$. Consequently \cite{JS}, the algebraic functions
$X(y)$ and $Y(x)$ defined by
     \begin{equation}
     \label{def_algebraic_functions}
          \textstyle \sum_{(i,j)\in\mathcal{S}}X(y)^{i}y^{j}-1/z=0,\qquad \textstyle\sum_{(i,j)\in\mathcal{S}}x^{i}Y(x)^{j}-1/z=0
     \end{equation}
have two branches, meromorphic on the cut planes ${\bf C}\setminus ([y_{1},y_{2}]\cup[y_{3},y_{4}])$ and ${\bf C}\setminus([x_{1},x_{2}]\cup [x_{3},x_{4}])$,
respectively---note that if $y_4<0$, $[y_3,y_4]$ stands for $[y_3,\infty[\cup\{\infty\}\cup]-\infty,y_4]$; the same holds for $[x_3,x_4]$.

We fix the notations of the two branches of the algebraic functions
$X(y)$ and $Y(x)$ by setting
     \begin{equation}
     \label{def_XX}
          X_{0}(y)=\frac{-\widetilde{b}(y)+ \widetilde{d}(y)^{1/2}}{2\widetilde{a}(y)},
          \qquad X_{1}(y)=\frac{-\widetilde{b}(y)-
          \widetilde{d}(y)^{1/2}}{2\widetilde{a}(y)},
     \end{equation}
as well as
     \begin{equation}
     \label{def_YY}
          Y_{0}(x)=\frac{-b(x)+d(x)^{1/2}}{2a(x)},
          \qquad Y_{1}(x)=\frac{-b(x)-d(x)^{1/2}}{2a(x)}.
     \end{equation}
The following straightforward result holds.
     \begin{lem}
     \label{Properties_X_Y_0}
          For all $y\in{\bf C}$, we have $|X_{0}(y)|
          \leq |X_{1}(y)|$. Likewise, for all $x\in{\bf C}$, we have $|Y_{0}(x)|
          \leq |Y_{1}(x)|$.
     \end{lem}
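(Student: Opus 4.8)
The statement asserts that the branch $X_0$ defined by the $+$ sign in \eqref{def_XX} is always the one of smaller modulus, i.e. $|X_0(y)|\le|X_1(y)|$ for every $y\in\mathbf{C}$, and symmetrically $|Y_0(x)|\le|Y_1(x)|$ for every $x$. I will prove the inequality for $X_0,X_1$; the argument for $Y_0,Y_1$ is identical after swapping the roles of $x$ and $y$ and of the tilded and untilded coefficients.

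First I recall that $X_0(y)$ and $X_1(y)$ are, by construction, the two roots of the quadratic $\widetilde a(y)\,x^2+\widetilde b(y)\,x+\widetilde c(y)=0$ in the variable $x$ (see \eqref{kernel_pt}). Hence by Vi\`ete's formulas their product satisfies
\begin{equation*}
     X_0(y)\,X_1(y)=\frac{\widetilde c(y)}{\widetilde a(y)}
     =\frac{\sum_{(-1,j)\in\mathcal{S}}y^{j}}{\sum_{(+1,j)\in\mathcal{S}}y^{j}}.
\end{equation*}
The key elementary observation is now the following: for a quadratic with real coefficients whose discriminant we write as $\widetilde d(y)^{1/2}$, the two roots are $X_{0,1}(y)=\bigl(-\widetilde b(y)\pm \widetilde d(y)^{1/2}\bigr)/(2\widetilde a(y))$, and the one obtained with the sign that makes $-\widetilde b(y)$ and $\pm\widetilde d(y)^{1/2}$ \emph{add constructively} has the larger modulus, while the other, obtained by a near-cancellation in the numerator, has the smaller modulus. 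Concretely, writing $u=-\widetilde b(y)$ and $v=\widetilde d(y)^{1/2}$ (with the branch of the square root already fixed in \eqref{def_XX}), one has the algebraic identity $|u+v|^2-|u-v|^2=4\,\mathrm{Re}(u\overline v)$, so $|X_1(y)|\ge|X_0(y)|$ is equivalent to $\mathrm{Re}\bigl(\widetilde b(y)\,\overline{\widetilde d(y)^{1/2}}\bigr)\ge 0$, i.e. to the statement that $-\widetilde b(y)$ and the chosen determination $\widetilde d(y)^{1/2}$ make an angle at most $\pi/2$. I would verify this by examining the convention for the square root: $\widetilde d(y)^{1/2}$ is the branch that is holomorphic on the cut plane $\mathbf{C}\setminus([y_1,y_2]\cup[y_3,y_4])$ and, as explained in \cite[Part 2.3]{FIM}, is normalised so that it behaves like $+\widetilde b(y)$ in the appropriate sense near the relevant region (for instance near $y=0$, where $\widetilde b(y)\sim -y$ and one checks the sign directly, then propagates by continuity since neither $|X_0(y)|$ nor $|X_1(y)|$ can change which is larger except where $\widetilde d(y)=0$, where they are equal). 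Since $\widetilde b(y)^2-\widetilde d(y)^{1/2\,2}=4\widetilde a(y)\widetilde c(y)$ forces $\widetilde b(y)$ and $\widetilde d(y)^{1/2}$ to stay on the same side, the inequality $\mathrm{Re}(u\overline v)\ge 0$ persists globally on the cut plane, and then extends to all of $\mathbf{C}$ by continuity (the cuts being removable for the comparison of moduli, as on them $\widetilde d(y)\le 0$ forces $|X_0(y)|=|X_1(y)|$).

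The only subtle point, and the one I would be most careful about, is pinning down the branch convention for $\widetilde d(y)^{1/2}$ precisely enough to get the sign of $\mathrm{Re}\bigl(\widetilde b(y)\,\overline{\widetilde d(y)^{1/2}}\bigr)$ right \emph{on the whole} cut plane rather than just locally; this is exactly the normalisation fixed in Section~\ref{Riemann_surface} following \cite{FIM}, and once it is invoked the proof is a short continuity argument together with the Vi\`ete identity above. I expect the authors' proof to be even shorter, simply citing the standard convention and the identity $|X_0||X_1|=|\widetilde c/\widetilde a|$ together with the analogous relation for the sum, or perhaps deducing it directly from the well-known fact that on the unit circle $|X_0(y)|<1<|X_1(y)|$ at $z=1/|\mathcal S|$ and then propagating. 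Either way the essential content is the labelling convention \eqref{def_XX}--\eqref{def_YY}, so the lemma is indeed "straightforward" as stated.
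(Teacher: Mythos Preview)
The paper's proof is a one-line citation: the argument via the maximum modulus principle in \cite[Part 5.3]{FIM}, written there for $z=1/|\mathcal S|$, carries over verbatim to $z\in\,]0,1/|\mathcal S|[$. This is a genuinely different route from yours.

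Your approach has a real gap. The continuity step asserts that ``$|X_0|$ and $|X_1|$ cannot change which is larger except where $\widetilde d(y)=0$'', but this is false: the moduli coincide precisely where $\mathrm{Re}\bigl(\widetilde b(y)\,\overline{\widetilde d(y)^{1/2}}\bigr)=0$, and that real-analytic locus is not contained in the zero set of $\widetilde d$. For instance, wherever $\widetilde b(y)=0$ with $y\neq 0$ (which happens at finite non-branch points for most step sets, since $\widetilde b(y)/y=-1+z\sum_{(0,j)\in\mathcal S}y^{j}$ has roots) one has $X_0=-X_1$, hence $|X_0|=|X_1|$, while $\widetilde d(y)=-4\widetilde a(y)\widetilde c(y)$ is generically nonzero there. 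So the sign of $\mathrm{Re}(\widetilde b\,\overline{\widetilde d^{1/2}})$ can, a priori, flip across such a curve, and propagation from a single point does not rule this out. The sentence ``$\widetilde b^2-(\widetilde d^{1/2})^2=4\widetilde a\widetilde c$ forces $\widetilde b$ and $\widetilde d^{1/2}$ to stay on the same side'' does not supply a proof either: that identity holds identically and imposes no constraint on the sign of $\mathrm{Re}(\widetilde b\,\overline{\widetilde d^{1/2}})$.

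The maximum modulus principle argument in \cite{FIM} sidesteps all of this: one works with the holomorphic function $X_0$ on a domain bounded by the cuts (on which $|X_0|=|X_1|$ automatically, since $\widetilde d\le 0$ there), and the maximum principle delivers the global inequality in one stroke, with no need to track sign changes along paths. Your closing guess---deducing the result from $|X_0|<1<|X_1|$ on the unit circle and then ``propagating''---is in the right spirit, but the propagation still needs the maximum principle (or an equivalent global tool) to be made rigorous; the bare continuity argument you sketch is not enough.
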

\begin{proof}
The arguments (via the maximum modulus principle \cite{JS}) given in \cite[Part 5.3]{FIM} for $z=1/|\mathcal{S}|$ also work for $z\in]0,1/|\mathcal{S}|[$.
\end{proof}

\subsection{Riemann surface ${\bf T}$}

We now construct the Riemann surface ${\bf T}$ of the algebraic function $Y(x)$ introduced in \eqref{def_algebraic_functions}. For this purpose we take two Riemann spheres ${\bf C}\cup\{\infty\}$, say ${\bf S}_x^1$ and ${\bf S}_x^2$, cut along the segments $[x_1, x_2]$ and $[x_3, x_4]$, and we glue them together along the borders of these cuts, joining the lower border of the segment $[x_1, x_2]$ (resp.\ $[x_3, x_4]$) on ${\bf S}_x^1$ to the upper border of the same segment on ${\bf S}_x^2$ and vice versa, see Figure \ref{Construction_Riemann}. The resulting surface ${\bf T}$ is homeomorphic to a torus (i.e., a compact Riemann surface of genus $1$) and is projected on the Riemann sphere ${\bf S}$ by a canonical covering map $h_x: {\bf T} \rightarrow {\bf S}$.
\begin{figure}[!t]
\begin{center}
\begin{picture}(00.00,820.00)
\hspace{-102mm}
\includegraphics{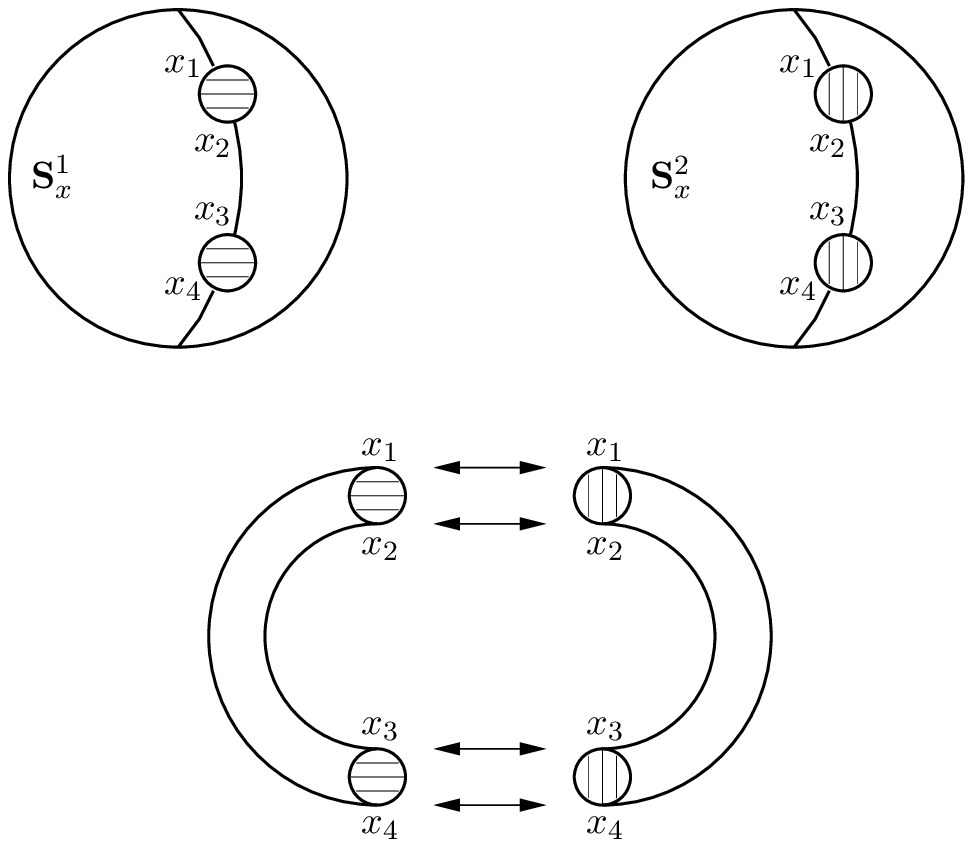}
\end{picture}
\end{center}
\vspace{-200mm}
\caption{Construction of the Riemann surface}
\label{Construction_Riemann}
\end{figure}


In a standard way, we can lift the function $Y(x)$ to ${\bf T}$,
 by setting $Y(s)=Y_\ell(h_x(s))$ if $s\in {\bf S}_x^\ell\subset {\bf
 T}$,
 $\ell\in\{1,2\}$.
Thus, $Y(s)$ is single-valued and continuous on ${\bf T}$. Furthermore, $K(h_x(s), Y(s))= 0$ for any $s\in{\bf T}$. For this reason, we call ${\bf T}$ the Riemann surface of $Y(x)$.

In a similar fashion, one constructs the Riemann surface of the function $X(y)$, by gluing together two copies ${\bf S}_y^1$ and ${\bf S}_y^2$ of the sphere ${\bf S}$ along the segments $[y_1, y_2]$ and $[y_3, y_4]$. It is again homeomorphic to a torus.

Since the Riemann surfaces of $X(y)$ and $Y(x)$ are equivalent, we can work on a {single} Riemann surface ${\bf T}$, but with two different covering maps $h_x,h_y: {\bf T} \rightarrow {\bf S}$. Then, for $s\in{\bf T}$, we set $x(s)= h_x(s)$ and $y(s)= h_y(s)$, and we will often represent a point $s\in {\bf T}$ by the pair of its {\it coordinates} $(x(s), y(s))$. These coordinates are of course not independent, because the equation $K(x(s), y(s))= 0$ is valid for any $s\in{\bf T}$.

\subsection{Real points of ${\bf T}$}
Let us identify the set $\Phi$ of real points of ${\bf T}$, that are
the points $s\in{\bf T}$ where $x(s)$ and $y(s)$ are both real or
equal to infinity. Note that for $y$ real, $X(y)$ is real if
$y\in[y_4,y_1]$ or $y\in[y_2,y_3]$, and complex if $y\in]y_1,y_2[$
or $y\in]y_3,y_4[$, see \eqref{d_d_tilde}. Likewise, for real values of $x$, $Y(x)$ is real
if $x\in[x_4,x_1]$ or $x\in[x_2,x_3]$, and complex if
$x\in]x_1,x_2[$ or $x\in]x_3,x_4[$.
The set $\Phi$ therefore consists of two non-intersecting closed analytic curves $\Phi_0$ and $\Phi_1$, equal to (see Figure \ref{Real_points})
\begin{equation*}
     \Phi_0 = \{s\in{\bf T}: x(s)\in[x_2,x_3]\}=\{s\in{\bf T}: y(s)\in[y_2,y_3]\}
\end{equation*}
and
\begin{equation*}
     \Phi_1=\{s\in{\bf T}: x(s)\in[x_4,x_1]\}=\{s\in{\bf T}: y(s)\in[y_4,y_1]\},
\end{equation*}
and homologically equivalent to a basic cycle on ${\bf T}$---note, however, that the equivalence class containing $\Phi_0$ and $\Phi_1$ is disjoint from that containing the cycle $h_x^{-1}(\{x\in{\bf C}: |x|=1\}$).

\begin{figure}[t]
\begin{center}
\begin{picture}(0.00,715.00)
\hspace{-82mm}
\includegraphics{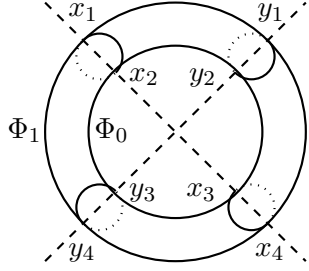}
\end{picture}
\end{center}
\vspace{-217mm}
\caption{Location of the branch points and of the cycles $\Phi_0$ and $\Phi_1$ on the Riemann surface ${\bf T}$}
\label{Real_points}
\end{figure}

\subsection{Galois automorphisms $\xi,\eta$}
We continue Section \ref{Riemann_surface} by introducing two Galois~automorphisms.
Define first, for $\ell\in\{1,2\}$, the incised spheres
     \begin{equation*}
          {\widehat{\bf S}}^\ell_x={\bf S}^\ell_x\setminus ([x_1,x_2]\cup[x_3,x_4]),
          \qquad {\widehat{\bf S}}^\ell_y={\bf S}^\ell_y\setminus ([y_1,y_2]\cup[y_3,y_4]).
     \end{equation*}
For any $s\in {\bf T}$ such that $x(s)$ is not equal to a branch point $x_\ell$, {there is
a unique $s'\neq s\in {\bf T}$ such that $x(s)=x(s')$.}
 Furthermore, if $s\in{\widehat{\bf S}}_x^1$ then $s'\in{\widehat{\bf S}}_x^2$ and vice versa. On the other hand, whenever $x(s)$ is one of the branch points $x_\ell$, $s=s'$. Also, since $K(x(s), y(s))=0$, $y(s)$ and $y(s')$ give the two values of function $Y(x)$ at $x=x(s)=x(s')$. By Vieta's theorem and \eqref{kernel_pt}, $y(s)y(s')=c(x(s))/ a(x(s))$.

Similarly, for any $s\in {\bf T}$ such that $y(s)$ is different from the branch points $y_\ell$, there exists a unique {$s''\neq s \in {\bf T}$} such that $y(s)=y(s'')$. If $s \in{\widehat{\bf S}}_y^1$ then $s'' \in{\widehat{\bf S}}_y^2$ and vice versa. On the other hand, if $y(s)$ is one of the branch points $y_\ell$, we have $s=s''$. Moreover, since $K(x(s), y(s))=0$, $x(s)$ and $x(s'')$ the two values of function $X(y)$ at $y(s)=y(s'')$. Again, by Vieta's theorem and \eqref{kernel_pt}, $x(s)x(s'')= \widetilde c(y(s))/\widetilde a(y(s)) $.

Define now the mappings $\xi : {\bf T}\rightarrow{\bf T}$ and $\eta :{\bf T}\rightarrow {\bf T}$ by
     \begin{equation}
     \label{xie}
          \left\{\begin{array}{lll}\xi s=s'&\mbox{if}&\hspace{-2mm}\phantom{y}x(s)=x(s'),\\
          \eta s=s''&\mbox{if} &\hspace{-2mm}\phantom{x}y(s)=y(s'').
          \end{array}\right.
  \end{equation}
Following \cite{MA,MAL,MALY}, we call them {\it Galois
automorphisms} of ${\bf T}$. Then $\xi^2=\eta^2={\rm Id}$, and
     \begin{equation}
     \label{bue}
          y(\xi s)=\frac{c(x(s))}{a(x(s))}\frac{1}{y(s)},
          \qquad
          x(\eta s)=\frac{\widetilde c(y(s))}{ \widetilde a(y(s))}\frac{1}{x(s)}.
     \end{equation}
Any $s\in{\bf T}$ such that $x(s)=x_\ell$ (resp.\ $y(s)=y_\ell$) is a fixed point for $\xi$ (resp.\ $\eta$).
To {illustrate and to} get some more intuition, it
is helpful to draw {on Figure \ref{Real_points}}
the straight line through the pair of points of $\Phi_0$ where
$x(s)=x_2$ and $x_3$ (resp.\ $y(s)=y_2$ and $y_3$); then points $s$
and $\xi s$ (resp.\ $s$ and $\eta s$) {can be
drawn} symmetric about this straight line.

\subsection{The Riemann surface ${\bf T}$ viewed as a parallelogram whose opposed edges are identified}
Like any compact Riemann surface of genus $1$, ${\bf T}$ is isomorphic to a certain quotient space
     \begin{equation}
     \label{iso_par}
          {\bf C}/(\omega_1{\bf Z}+\omega_2{\bf Z}),
     \end{equation}
where $\omega_1,\omega_2$ are complex numbers linearly independent on ${\bf R}$, see \cite{JS}.
 The set \eqref{iso_par} can obviously be thought as the (fundamental)
 parallelogram $\omega_1[0,1]+\omega_2[0,1]$ whose opposed edges are identified. Up to a {unimodular transform}, $\omega_1,\omega_2$ are unique, see \cite{JS}. In our case, suitable $\omega_1,\omega_2$ will be found in \eqref{expression_omega_1_2}.

If we cut the torus on Figure \ref{Real_points} along $[x_1, x_2]$ and $\Phi_0$, it becomes the parallelogram on the left in Figure \ref{Desc}. On the right in the same figure, this parallelogram is translated to the complex plane, and all corresponding important points are expressed in terms of the complex numbers $\o_1,\o_2$ (see above) and of $\o_3$ (to be defined below, in \eqref{expression_omega_3}).


\begin{figure}[t]
\begin{center}
\begin{picture}(00.00,710.00)
\hspace{-121mm}
\includegraphics{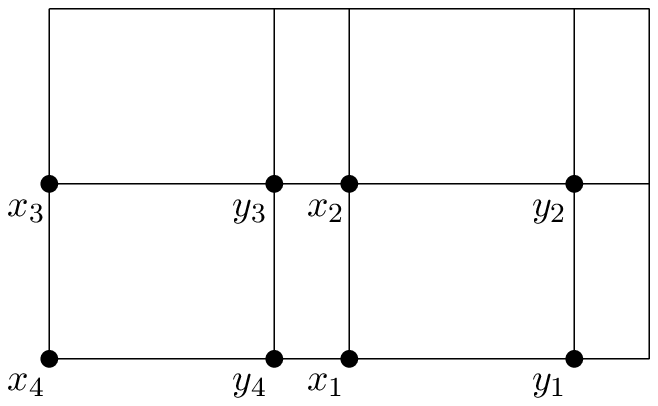}
\hspace{72mm}
\includegraphics{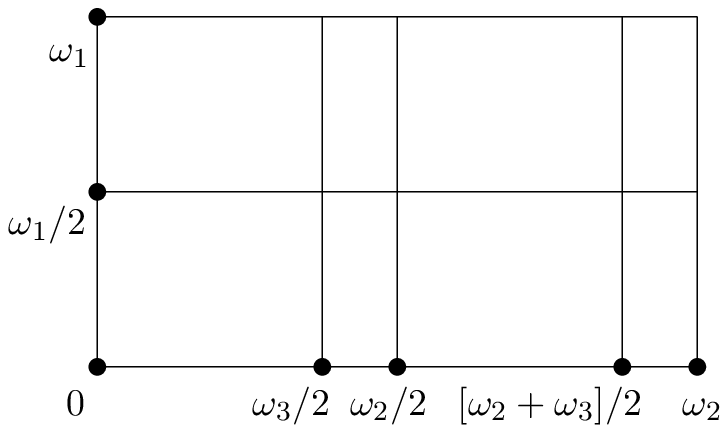}
\end{picture}
\end{center}
\vspace{-205mm}
\caption{The Riemann surface ${\bf C}/(\omega_1{\bf Z}+\omega_2{\bf Z})$ and the
location of the branch points}
\label{Desc}
\end{figure}

\section{Universal covering}
\label{uc}
\setcounter{equation}{0}

\subsection{An informal construction of the universal covering}
The Riemann surface ${\bf T}$ can be considered as a
 parallelogram whose opposite edges are identified, see \eqref{iso_par} and Figure \ref{Desc}.
 The universal covering of ${\bf T}$ can then be viewed as the union of infinitely many such
 parallelograms glued together, as in Figure~\ref{Informal}.

\begin{figure}[t]
\begin{center}
\begin{picture}(00.00,760.00)
\hspace{-105mm}
\includegraphics{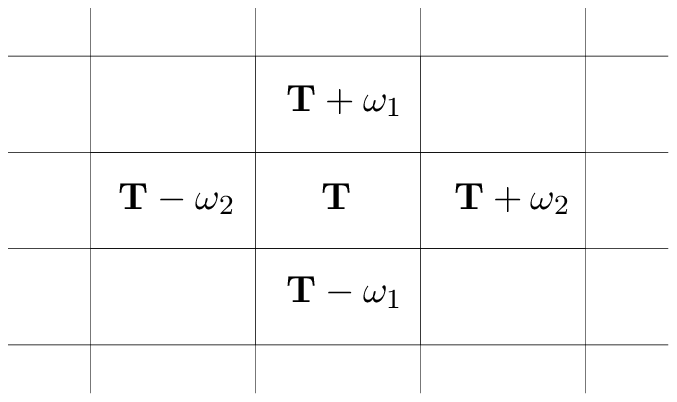}
\end{picture}
\end{center}
\vspace{-230mm}
\caption{Informal construction of the universal covering}
\label{Informal}
\end{figure}

\subsection{Periods and covering map} We now give a proper construction of the universal covering. The Riemann surface ${\bf T}$ being of genus $1$, its universal covering has the form $({\bf C}, \lambda)$, where ${\bf C}$ is the complex plane and $\lambda : {\bf C}\to {\bf T}$ is a non-branching covering map, see \cite{JS}. This way, the surface ${\bf T}$ can be considered as
the additive group ${\bf C}$ factorized by the discrete subgroup $\omega_1{\bf Z}+\omega_2{\bf Z}$, where the periods $\omega_1,\omega_2$ are complex numbers, linearly independent on ${\bf R}$. Any segment of length $|\omega_\ell|$ and parallel to $\omega_\ell$, $\ell \in \{1,2\}$, is projected onto a closed curve on ${\bf T}$ homological to one of the elements of the normal basis on the torus.
We choose {$\lambda([0,\omega_1])$} to be homological to the cut $[x_1, x_2]$ (and hence also to all other cuts $[x_3, x_4]$, $[y_1, y_2]$ and $[y_3, y_4]$); {$\lambda ([0,\omega_2])$} is then homological to the cycles of real points $\Phi_0$ and $\Phi_1$; see Figures \ref{Desc} and \ref{Parallelogram_identified}.

Our aim now is to find the expression of the covering $\lambda$. We will do this by finding, for all $\o \in {\bf C}$,  the explicit expressions of the pair of coordinates $(x(\lambda \omega), y(\lambda \omega))$, that we have introduced in Section \ref{Riemann_surface}. First, the periods $\omega_1,\omega_2$ are obtained in \cite[Lemma 3.3.2]{FIM} for $z=1/|\mathcal{S}|$. The reasoning is
exactly the same for other values of $z$, and we obtain that with $d$ as in \eqref{d_d_tilde},
     \begin{equation}
     \label{expression_omega_1_2}
          \omega_1= i\int_{x_1}^{x_2}
          \frac{\text{d}x}{[-d(x)]^{1/2}},
          \qquad \omega_2 = \int_{x_2}^{x_3} \frac{\text{d}x}{d(x)^{1/2}}.
     \end{equation}
{We also need to introduce}
     \begin{equation}
     \label{expression_omega_3}
          \omega_3 = \int_{X(y_1)}^{x_1}
          \frac{\text{d}x}{d(x)^{1/2}}.
     \end{equation}
Further, we define
     \begin{equation*}
     \label{def_f_x}
          g_x(t)=\left\{\begin{array}{lll}
          \displaystyle d''(x_{4})/6+d'(x_{4})/[t-x_{4}]& \text{if} & x_{4}\neq \infty,\\
          \displaystyle d''(0)/6+d'''(0)t/6 \phantom{{1^1}^{1}}& \text{if} & x_{4}=\infty,\end{array}\right.
     \end{equation*}
as well as
     \begin{equation*}
     \label{def_f_y}
          g_y(t)=\left\{\begin{array}{lll}
          \displaystyle d''(y_{4})/6+d'(y_{4})/[t-y_{4}]& \text{if} & y_{4}\neq \infty,\\
          \displaystyle d''(0)/6+d'''(0)t/6 \phantom{{1^1}^{1}}& \text{if} & y_{4}=\infty,\end{array}\right.
     \end{equation*}
and finally we introduce $\wp(\omega;\omega_1,\omega_2)$, the Weierstrass elliptic function with periods $\omega_1,\omega_2$. Throughout, we shall write $\wp(\omega)$ for $\wp(\omega;\omega_1,\omega_2)$. By definition, see \cite{JS,WW}, we have
     \begin{equation*}
          \wp(\omega)=\frac{1}{\omega^2}+\sum_{(\ell_1,\ell_2)\in{\bf Z}^{2}\setminus\{ (0,0)\}}
          \left[\frac{1}{(\omega-\ell_1\omega_1-\ell_2\omega_2)^{2}}-
          \frac{1}{(\ell_1\omega_1+\ell_2\omega_2)^{2}}\right].
     \end{equation*}

\begin{figure}[t]
\begin{center}
\begin{picture}(00.00,765.00)
\hspace{-105mm}
\includegraphics{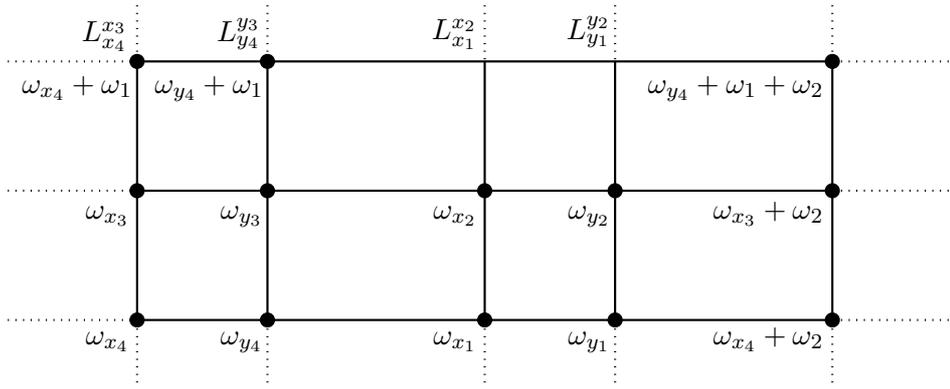} \end{picture}
\end{center}
\vspace{-220mm}
\caption{Important points and cycles on the universal covering}
\label{Parallelogram_identified}
\end{figure}

{\noindent}Then we have the uniformization \cite[Lemma 3.3.1]{FIM}
     \begin{equation}
     \label{expression_uniformization}
          \left\{\begin{array}{l}
          \hspace{-3mm}\phantom{y}x(\lambda \omega)=g_x^{-1}(\wp(\omega)),\\
          \hspace{-3mm}\phantom{x}y(\lambda \omega)=g_y^{-1}(\wp(\omega-\omega_3/2)).
          \end{array}\right.
 \end{equation}
From now on, whenever no ambiguity can arise, we drop the dependence w.r.t.\ $\lambda$, writing $x(\omega)$ and $y(\omega)$ instead of $x(\lambda\omega)$ and $y(\lambda\omega)$, respectively. The coordinates $x(\omega),y(\omega)$ defined in \eqref{expression_uniformization} are elliptic:
     \begin{equation}
     \label{peri}
          x(\omega+\omega_\ell)=x(\omega), \qquad  y(\omega+\omega_\ell)=y(\omega),\qquad  \forall\ell\in\{1,2\},\qquad \forall \omega\in{\bf C}.
     \end{equation}
Furthermore,
     \begin{equation*}
          \left\{\begin{array}{l}
          \hspace{-1.2mm}x(0)=x_4\\
          \hspace{-1mm}y(0)=Y(x_4)
          \end{array}\right.\hspace{-2mm},
          \ \
          \left\{\begin{array}{l}
          \hspace{-1.2mm}x(\omega_1/2)=x_3\\
          \hspace{-1mm}y(\omega_1/2)=Y(x_3)
          \end{array}\right.\hspace{-2mm},
          \ \
          \left\{\begin{array}{l}
          \hspace{-1.2mm}x(\omega_2/2)=x_1\\
          \hspace{-1mm}y(\omega_2/2)=Y(x_1)
          \end{array}\right.\hspace{-2mm},
          \ \
          \left\{\begin{array}{l}
          \hspace{-1.2mm}x([\omega_1+\omega_2]/2)=x_2\\
          \hspace{-1mm}y([\omega_1+\omega_2]/2)=Y(x_2)
          \end{array}\right.\hspace{-2mm}.
     \end{equation*}
Let us denote the points $0,\omega_1/2,\omega_2/2,[\omega_1+\omega_2]/2$ by $\omega_{x_4}, \omega_{x_3}, \omega_{x_1}, \omega_{x_2}$, respectively, see Figures \ref{Desc} and \ref{Parallelogram_identified}. Let
     \begin{equation*}
          L_{x_4}^{x_3}=\omega_{x_4}+\omega_1{\bf R},
          \qquad
          L_{x_1}^{x_2}=\omega_{x_1}+\omega_1{\bf R}.
     \end{equation*}
Then $\lambda L_{x_4}^{x_3}$ (resp.\ $\lambda L_{x_1}^{x_2}$) is the cut of ${\bf T}$ where ${\bf S}_x^1$ and ${\bf S}_x^2$ are glued together, namely, $\{s\in{\bf T}: x(s) \in  [x_3, x_4] \}$ (resp.\ $\{s\in{\bf T}: x(s) \in  [x_1, x_2] \}$).


Moreover, by construction we have (see again Figures \ref{Desc} and \ref{Parallelogram_identified})
      \begin{equation*}
          \left\{\begin{array}{l}
          \hspace{-1.2mm}x(\omega_3/2)=X(y_4)\\
          \hspace{-1mm}y(\omega_3/2)=y_4
          \end{array}\right.\hspace{-2mm},
          \ \
          \left\{\begin{array}{l}
          \hspace{-1.2mm}x([\omega_1+\omega_3]/2)=X(y_3)\\
          \hspace{-1mm}y([\omega_1+\omega_3]/2)=y_3
          \end{array}\right.\hspace{-2mm},
          \ \
          \left\{\begin{array}{l}
          \hspace{-1.2mm}x([\omega_2+\omega_3]/2)=X(y_1)\\
          \hspace{-1mm}y([\omega_2+\omega_3]/2)=y_1
          \end{array}\right.\hspace{-2mm},
     \end{equation*}
and
     \begin{equation*}
          \left\{\begin{array}{l}
          \hspace{-1.2mm}x([\omega_1+\omega_2+\omega_3]/2)=X(y_2)\\
          \hspace{-1mm}y([\omega_1+\omega_2+\omega_3]/2)=y_2
          \end{array}\right.\hspace{-2mm}.
     \end{equation*}
We denote the points $\omega_3/2, [\omega_1 +\omega_3]/2,[\omega_2+\omega_3]/2,  [\omega_1+\omega_2 +\omega_3]/2$ by $\omega_{y_4}, \omega_{y_3}, \omega_{y_1}, \omega_{y_2}$, respectively. Let
     \begin{equation*}
          L_{y_4}^{y_3}=\omega_{y_4}+\omega_1{\bf R},
          \qquad
          L_{y_1}^{y_2}=\omega_{y_1}+\omega_1{\bf R}.
     \end{equation*}
Then $\lambda L_{y_4}^{y_3}$ (resp.\ $\lambda L_{y_1}^{y_2}$) is  the cut of ${\bf T}$ where ${\bf S}_y^1$ and ${\bf S}_y^2$  are glued together, that is to say $\{s\in{\bf T}: y(s) \in  [y_3, y_4] \}$ (resp.\ $\{s\in{\bf T}: y(s) \in  [y_1, y_2] \}$).

The distance between $L_{x_4}^{x_3}$ and $L_{y_4}^{y_3}$ is the same as between $L_{x_1}^{x_2}$ and $L_{y_1}^{y_2}$; it equals $\o_3/2$.


\subsection{Lifted Galois automorphisms $\widehat \xi, \widehat \eta$}
Any conformal automorphism $\zeta$ of the surface ${\bf T}$ can be continued as a conformal automorphism $\widehat \zeta=\lambda^{-1} \zeta \lambda$ of the universal covering {\bf C}. This continuation is not unique, but it will be unique if we fix some $\widehat \zeta \omega_0 \in {\lambda^{-1} \zeta\lambda\omega_0}$ for a given point $\omega_0 \in {\bf C}$.

According to \cite{FIM}, we define $\widehat \xi, \widehat \eta$ by
choosing their fixed points to be $\omega_{x_2},\omega_{y_2}$,
respectively. Since any conformal automorphism of ${\bf C}$ is an
affine function of $\omega$ \cite{JS} and since $\widehat \xi\,^2 =
\widehat \eta\,^2={\rm Id}$, we have
     \begin{equation}
     \label{hatx}
          \widehat \xi \omega=-\omega +2\omega_{x_2},
          \qquad
          \widehat \eta \omega=- \omega+2\omega_{y_2}.
     \end{equation}
It follows that $\widehat \eta \widehat \xi $ and $ \widehat \xi \widehat \eta$ are just the shifts via the real numbers $\omega_3$ and $-\omega_3$, respectively:
     \begin{equation}
     \label{O3}
          \widehat\eta\widehat\xi \omega=\omega+ 2(\omega_{y_2}-\omega_{x_2})=\omega+\omega_3,
          \qquad
          \widehat \xi \widehat \eta \omega=\omega+ 2(\omega_{x_2}-\omega_{y_2})=\omega-\omega_3.
     \end{equation}
By \eqref{xie} and \eqref{bue} we have
     \begin{equation}
     \label{hat}
          x(\widehat \xi \omega)=x(\omega),
          \quad
          y(\widehat \xi \omega)=\frac{c(x(\omega))}{a(x(\omega))}\frac{1}{y(\omega)},
          \quad
          x(\widehat \eta \omega)=\frac{\widetilde c(y(\omega))}{ \widetilde a(y(\omega))}\frac{1}{x(\omega)},
          \quad y(\widehat \eta \omega)=y(\omega).
     \end{equation}
Finally, $\widehat \xi L_{x_1}^{x_2}=L_{x_1}^{x_2}$, $\widehat \xi L^{x_3}_{x_4}=L^{x_3}_{x_4}+\o_2$ and $\widehat \eta  L_{y_1}^{y_2}=L_{y_1}^{y_2}$, $\widehat \eta L^{y_3}_{y_4}=L^{y_3}_{y_4}+\o_2$.

\section{Lifting of $x \mapsto Q(x,0)$ and $y\mapsto Q(0,y)$ to the universal covering}
\label{Lifting_uc}
\setcounter{equation}{0}

\subsection{Lifting to the Riemann surface ${\bf T}$}
We have seen in Section \ref{Riemann_surface} that for any $z\in ]0,1/|\mathcal{S}|[$, exactly two branch points of $Y(x)$ (namely, $x_1$ and $x_2$) are in the unit disc. For this reason, and by construction of the surface ${\bf T}$, the set $\{s\in{\bf T} : |x(s)|=1\}$ is composed of two cycles (one belongs to ${\bf S}_x^1$ and the other to ${\bf S}_x^2$) homological to the cut $\{s\in {\bf T} : x(s)\in [x_1, x_2]\}$. The domain $\mathscr{D}_x=\{s\in{\bf T} : |x(s)|< 1 \}$ is bounded  by these two cycles, see Figure \ref{Domd}, and contains the points $s\in{\bf T}$ such that $x(s) \in [x_1, x_2]$. Since the function $x \mapsto K(x,0)Q(x,0)$ is holomorphic in the unit disc,
we can lift it to $\mathscr{D}_x \subset {\bf T}$ as
     \begin{equation*}
          r_x(s)=K(x(s),0)Q(x(s), 0),\qquad \forall s \in \mathscr{D}_x.
     \end{equation*}

\begin{figure}[!t]
\begin{center}
\begin{picture}(00.00,725.00)
\hspace{-58.5mm}
\includegraphics{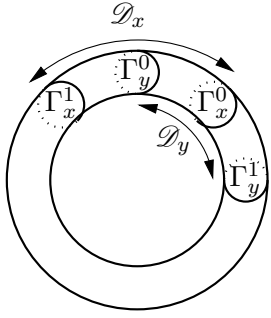}
\end{picture}
\end{center}
\vspace{-220mm}
\caption{
Location of the domains $\mathscr{D}_x$ and $\mathscr{D}_y$ on the Riemann surface ${\bf T}$
}
\label{Domd}
\end{figure}

In the same way, the domain $\mathscr{D}_y=\{s\in{\bf T} : |y(s)|< 1\}$ is bounded by $\{s\in{\bf T} : |y(s)|=1\}$, which consists in two cycles homological to the cut $\{s\in {\bf T} : y(s)\in [y_1, y_2]\}$, see Figure \ref{Domd}, and which contains the latter.
We lift the function $y \mapsto K(0,y)Q(0,y)$ to $\mathscr{D}_y \subset {\bf T}$ as
     \begin{equation*}
          r_y(s)=K(0,y(s))Q(0,y(s)),\qquad \forall s \in \mathscr{D}_y.
     \end{equation*}

It is shown in \cite[Lemma 3]{Ra} that for any $z \in ]0, 1/|\mathcal{S}|[$ and any $x$ such that $|x|= 1$, we have $|Y_0(x)|< 1$ and $|Y_1(x)|>1$. Hence, the cycles that constitute the boundary of $\mathscr{D}_x$ are
     \begin{equation*}
          \Gamma^0_x= \{s\in{\bf T} : |x(s)|=1,\, |y(s)|< 1\},
          \qquad
          \Gamma^1_x=\{s\in{\bf T} : |x(s)|=1,\, |y(s)|>1\}.
     \end{equation*}
We thus have $\Gamma^0_x \in \mathscr{D}_y$  and $\Gamma^1_x \notin \mathscr{D}_y$, see Figure \ref{Domd}. In the same way, for any $z \in ]0,1/|\mathcal{S}|[$ and any $y$ such that $|y|=1$, we have $|X_0(y)|< 1$ and $|X_1(y)|>1$. Therefore, the cycles composing the boundary of $\mathscr{D}_y$ are
     \begin{equation*}
          \Gamma^0_y= \{s\in{\bf T} : |y(s)|=1,\, |x(s)|< 1\},
          \qquad
          \Gamma^1_y=\{s \in{\bf T}: |y(s)|=1,\, |x(s)|>1\}.
     \end{equation*}
Furthermore, $\Gamma^0_y \in \mathscr{D}_x$ and $\Gamma^1_y \notin \mathscr{D}_x$, see Figure \ref{Domd}.

It follows that $\mathscr{D}_x \cap \mathscr{D}_y=\{s\in{\bf T} : |x(s)|<1,\,|y(s)|< 1\}$ is not empty, simply connected and bounded by $\Gamma^0_x$ and $\Gamma^0_y$. Since for any $s \in {\bf T}$, $K(x(s), y(s))=0$, and since the main equation \eqref{functional_equation} is valid on $\{(x,y) \in {\bf C}^2: |x| < 1,\, |y| < 1\}$, we have
     \begin{equation}
     \label{qqq}
          r_x(s)+r_y(s)-K(0,0)Q(0,0)-x(s)y(s)=0, \qquad \forall s\in \mathscr{D}_x \cap \mathscr{D}_y.
     \end{equation}



\subsection{Lifting to the universal covering ${\bf C}$}
The domain $\mathscr{D}$ lifted on the universal covering consists of infinitely many curvilinear strips shifted by $\o_2$:
     \begin{equation*}
          \lambda^{-1}\mathscr{D}_x=\bigcup _{n\in{\bf Z}}\Delta_x^n,
          \qquad
          \Delta_x^n\subset\omega_1{\bf R}+]n\omega_2,(n+1)\omega_2[,
     \end{equation*}
and, likewise,
     \begin{equation*}
          \lambda^{-1}\mathscr{D}_y=\bigcup _{n\in{\bf Z}}\Delta_y^n,
          \qquad
          \Delta_y^n\subset\omega_1{\bf R}+\omega_3/2+]n\omega_2,(n+1)\omega_2[.
     \end{equation*}
Let us consider these strips for $n=0$, that we rename
     \begin{equation*}
          \Delta_x=\Delta_x^0,\qquad \Delta_y=\Delta_y^0.
     \end{equation*}
The first is bounded by $\widehat \Gamma^1_x\subset \lambda^{-1}\Gamma^1_x$ and by $\widehat \Gamma^0_x\subset \lambda^{-1}\Gamma^0_x$, while the second is delimited by $\widehat \Gamma^0_y\subset \lambda^{-1} \Gamma^0_y$ and by $\widehat\Gamma^1_y\subset \lambda^{-1} \Gamma^1_y$.

Further, note that the straight line
$L_{x_1}^{x_2}$ (resp.\ $L_{y_1}^{y_2}$)
 defined in Section \ref{uc} is invariant
 w.r.t.\ $\widehat \xi$ (resp.\ $\widehat \eta$) and belongs to $\Delta_x$ (resp.\ $\Delta_y$).

Then, by the facts that $\xi \Gamma^1_x=\Gamma^0_x$ and $\eta
\Gamma^1_y=\Gamma^0_y$, and by our choice \eqref{hatx} of the definition of $\widehat\xi$ and $\widehat\eta$ on the universal
covering, we have $\widehat \xi \,\widehat \Gamma^1_x=\widehat
\Gamma^0_x$ and $\widehat \eta\, \widehat \Gamma^1_y=  \widehat
\Gamma^0_y$. In addition,
     \begin{equation}
     \label{fghj}
          \widehat \xi \omega \in\Delta_x, \quad\forall\omega \in \Delta_x,
          \qquad
          \widehat \eta \omega \in\Delta_y, \quad\forall\omega \in \Delta_y.
     \end{equation}
Moreover, since $ \Gamma^0_y \in \mathscr{D}_x$, $\Gamma^1_y \notin \mathscr{D}_x$ and $\Gamma^0_x \in \mathscr{D}_y$, $\Gamma^1_x \notin \mathscr{D}_y$, we have $\widehat\Gamma^0_y \in \Delta_x$, $\widehat  \Gamma^1_y \notin \Delta_x$ and $\widehat \Gamma^0_x \in \Delta_y$, $\widehat  \Gamma^1_x \notin \Delta_y$. It follows that $\Delta_x \cap \Delta_y$ is a non-empty strip bounded by $\widehat  \Gamma^0_x$ and $\widehat \Gamma^0_y$, and that
     \begin{equation*}
          \Delta=\Delta_x \cup \Delta_y
     \end{equation*}
is simply connected, as in Figure \ref{Fig_Delta}.


Let us lift the functions $r_x(s)$ and $r_y(s)$ holomorphically to
$\Delta_x$ and $\Delta_y$, respectively: we put
     \begin{equation}
     \label{QQ}
          \left\{\begin{array}{ll}
          r_x(\omega)=r_x(\lambda \omega)=K(x(\omega),0)Q(x(\omega), 0),& \qquad\forall \omega\in \Delta_x,\\
          r_y(\omega)\hspace{0.12mm}=r_y(\lambda \omega)\hspace{0.12mm}=K(0, y(\omega))Q(0, y(\omega)),& \qquad\forall \omega \in\Delta_y.
          \end{array}\right.
     \end{equation}
It follows from \eqref{qqq} and \eqref{QQ} that
     \begin{equation}
     \label{sqs}
          r_x(\omega)+r_y(\omega)-K(0,0)Q(0,0)-x(\omega)y(\omega) =0,\qquad \forall\omega \in \Delta_x \cap \Delta_y.
     \end{equation}
Equation \eqref{sqs} allows us to continue functions $r_x(\omega)$
and $r_y(\omega)$ meromorphically on $\Delta$: we put
     \begin{equation}
     \label{QQR}
          \left\{\begin{array}{ll}
          r_x(\omega)=-r_y(\omega)+K(0,0)Q(0,0)+x(\omega)y(\omega),& \qquad\forall \omega\in \Delta_y,\\
          r_y(\omega)=-r_x(\omega)+K(0,0)Q(0,0)+x(\omega) y(\omega),& \qquad\forall \omega \in\Delta_x.
          \end{array}\right.
     \end{equation}
Equation \eqref{sqs} is then valid on the whole of $\Delta$. We summarize all facts above in the next result.
\begin{thm}
\label{thmde}
The functions
     \begin{equation*}
          r_x(\o)=\left\{\begin{array}{lll}
          \phantom{-}K(x(\o),0)Q(x(\o),0) & \text{if} &\o \in \Delta_x, \\
          -K(0, y(\o))Q(0, y(\o))+ K(0,0)Q(0,0) +x(\o)y(\o) & \text{if} & \o \in \Delta_y,
          \end{array}\right.
     \end{equation*}
and
     \begin{equation*}
          r_y(\o)=\left\{\begin{array}{lll}
          \phantom{-}K(0, y(\o))Q(0,y(\omega)) & \text{if} &\o \in \Delta_y, \\
          -K(x(\o),0)Q(x(\o),0)+ K(0,0)Q(0,0)+ x(\o)y(\o) & \text{if} & \o \in \Delta_x,
          \end{array}\right.
     \end{equation*}
are meromorphic in $\Delta$. Furthermore,
     \begin{equation}
     \label{sqs1}
          r_x(\omega)+r_y(\omega)-K(0,0)Q(0,0)-x(\omega)y(\omega) =0,
          \qquad \forall \o\in\Delta.
     \end{equation}
\end{thm}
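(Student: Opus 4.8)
The statement is really a bookkeeping summary of the constructions carried out just before it, so the plan is to assemble the pieces rather than to prove anything essentially new. The plan is to proceed in three stages: (i) holomorphy of $r_x$ on $\Delta_x$ and of $r_y$ on $\Delta_y$; (ii) the relation \eqref{sqs1} on the overlap $\Delta_x\cap\Delta_y$; (iii) using that relation to extend each function meromorphically to all of $\Delta=\Delta_x\cup\Delta_y$, and checking the two definitions agree on the overlap so that the resulting functions are well defined and meromorphic on the (simply connected) domain $\Delta$.

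For step (i): the univariate series $x\mapsto K(x,0)Q(x,0)$ converges and is holomorphic for $|x|<1$ because $Q(x,0;z)=\sum_{i,n}q(i,0;n)x^iz^n$ has nonnegative coefficients dominated by those of $Q(1,1;z)$, finite for $z\in]0,1/|\mathcal{S}|[$, and $K(x,0;z)$ is a polynomial in $x$; similarly for $y\mapsto K(0,y)Q(0,y)$. The covering map $\lambda$ is holomorphic, and on $\Delta_x$ one has $|x(\omega)|<1$ while on $\Delta_y$ one has $|y(\omega)|<1$ (this is exactly how $\mathscr D_x,\mathscr D_y$ and their lifts $\Delta_x^0,\Delta_y^0$ were defined), so the compositions $r_x(\omega)=K(x(\omega),0)Q(x(\omega),0)$ and $r_y(\omega)=K(0,y(\omega))Q(0,y(\omega))$ are holomorphic on $\Delta_x$, $\Delta_y$ respectively. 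For step (ii): on $\Delta_x\cap\Delta_y$ both $|x(\omega)|<1$ and $|y(\omega)|<1$, and $K(x(\omega),y(\omega))=0$ since every point of $\mathbf T$ (hence of its covering) satisfies $K(x(s),y(s))=0$; feeding $K(x(\omega),y(\omega))=0$ into the functional equation \eqref{functional_equation}, which is valid on the bidisc $\{|x|<1,|y|<1\}$, kills the left-hand side and leaves precisely $r_x(\omega)+r_y(\omega)-K(0,0)Q(0,0)-x(\omega)y(\omega)=0$, i.e. \eqref{qqq}, lifted to \eqref{sqs}.

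For step (iii): on $\Delta_y$ we \emph{define} $r_x(\omega):=-r_y(\omega)+K(0,0)Q(0,0)+x(\omega)y(\omega)$ as in \eqref{QQR}, and symmetrically on $\Delta_x$ for $r_y$; since $r_y$ is holomorphic on $\Delta_y$ and $x(\omega),y(\omega)$ are meromorphic (elliptic) on the whole covering, the right-hand side is meromorphic on $\Delta_y$, and by \eqref{sqs} it coincides on $\Delta_x\cap\Delta_y$ with the original holomorphic $r_x$; because $\Delta$ is simply connected and $\Delta_x\cap\Delta_y$ is a nonempty strip, the two local definitions glue to a single meromorphic function $r_x$ on $\Delta$, and likewise $r_y$. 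The identity \eqref{sqs1} then holds on all of $\Delta$: it holds on $\Delta_x\cap\Delta_y$ by step (ii), and on $\Delta_y\setminus\Delta_x$ (resp. $\Delta_x\setminus\Delta_y$) it is the very definition \eqref{QQR}; one invokes the identity theorem for meromorphic functions to conclude it everywhere. The only mild subtlety — the ``main obstacle'', such as it is — is making sure the gluing in \eqref{QQR} is consistent, i.e. that the extension of $r_x$ off $\Delta_x$ through $\Delta_y$ does not conflict with its original values where the two strips overlap; this is guaranteed precisely by \eqref{sqs} together with simple connectedness of $\Delta$, which is why those two facts were established beforehand. No genus-one or group-theoretic input is needed here; the meromorphic continuation to the whole universal covering (using $\widehat\xi,\widehat\eta$) is the subject of the next section, not of this theorem.
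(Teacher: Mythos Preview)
Your proposal is correct and follows essentially the same approach as the paper: the paper itself presents Theorem~\ref{thmde} as a summary of the preceding construction (``We summarize all facts above in the next result''), and your three stages (i)--(iii) reproduce exactly the sequence \eqref{QQ}--\eqref{sqs}--\eqref{QQR} with the same justifications. The only (harmless) redundancy is the appeal to the identity theorem at the end: since you have already verified \eqref{sqs1} separately on $\Delta_x\cap\Delta_y$, on $\Delta_y\setminus\Delta_x$, and on $\Delta_x\setminus\Delta_y$, the identity holds on all of $\Delta$ by direct inspection.
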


\begin{figure}[t]
\begin{center}
\begin{picture}(00.00,725.00)
\hspace{-105mm}
\includegraphics{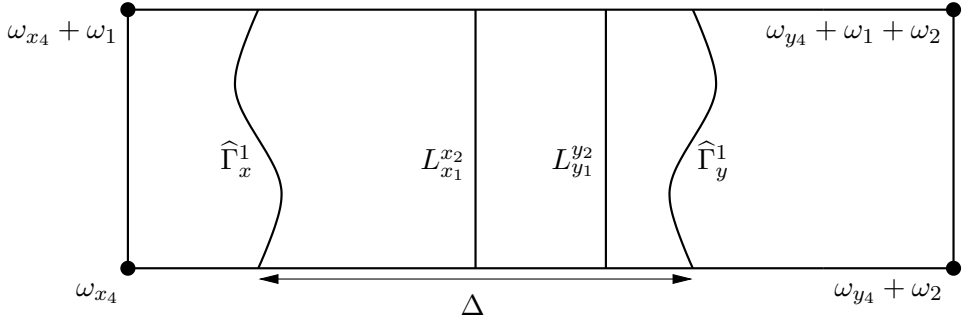}
\end{picture}
\end{center}
\vspace{-215mm}
\caption{Location of $\Delta=\Delta_x\cup\Delta_y$}
\label{Fig_Delta}
\end{figure}

\section{Meromorphic continuation of $x \mapsto Q(x,0)$ and $y \mapsto Q(0,y)$ on the universal covering}
\label{meroun}
\setcounter{equation}{0}

\subsection{Meromorphic continuation}
In Theorem \ref{thmde}  we saw that $r_x(\omega)$ and $r_y(\omega)$ are meromorphic on $\Delta$. We now continue these functions
meromorphically from $\Delta$ to the whole of~${\bf C}$.

\begin{thm}
\label{thm2} The functions $r_x(\omega)$ and $r_y(\omega)$ can be
continued meromorphically to the whole of ${\bf C}$. Further, for
any $\omega \in {\bf C}$, we have
  \begin{align}
       &r_x(\omega-\omega_3)=r_x(\omega)+y(\omega)[x(-\omega+2\omega_{y_2})-x(\omega)],
       \label{cont}
       \\
       &r_y(\omega+\omega_3)\hspace{0.12mm}=r_y(\omega)\hspace{0.12mm}+x(\omega)[y(-\omega+2\omega_{x_2})-y(\omega)],
       \label{cont1}
       \\
       &r_x(\omega)+r_y(\omega)-K(0,0)Q(0,0)-x(\omega)y(\omega) =0,
       \label{sqs2}
       \\
       &\left\{\begin{array}{cc}
       r_x(\widehat \xi \omega)=r_x(\omega), \\r_y(\widehat \eta\omega)=r_y(\omega),
       \end{array}\right.
       \label{xieta}
       \\
       &\left\{\begin{array}{cc}
       r_x(\omega+\omega_1)=r_x(\omega), \\
       r_y(\omega+\omega_1)=r_y(\omega).
       \end{array}\right.       \label{buzz}
  \end{align}
  \end{thm}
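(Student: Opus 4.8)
The plan is to establish the five identities by working entirely on the universal covering, exploiting the two facts already assembled in Theorem \ref{thmde}: the relation \eqref{sqs1} valid on $\Delta$, and the behaviour of the lifted Galois automorphisms $\widehat\xi,\widehat\eta$ recorded in \eqref{hatx}--\eqref{hat}. The starting observation is that $r_x$, as defined on $\Delta_x$, depends on $\omega$ only through $x(\omega)$, and $x(\omega)=x(\widehat\xi\omega)$ by \eqref{hat}; hence the formula $r_x(\widehat\xi\omega)=r_x(\omega)$ holds on $\Delta_x$. Likewise $r_y(\widehat\eta\omega)=r_y(\omega)$ on $\Delta_y$. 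These are local versions of the first line of \eqref{xieta}; the point of the proof is to propagate them, together with \eqref{sqs1}, to all of ${\bf C}$ by analytic continuation.

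First I would use the two relations on $\Delta$ to define the continuation. On $\Delta_x$ we know $r_x$; on $\Delta_y$ we know $r_y$, and \eqref{sqs1} then \emph{defines} $r_x$ on $\Delta_y$ (and symmetrically $r_y$ on $\Delta_x$), so both functions are already meromorphic on the simply connected set $\Delta=\Delta_x\cup\Delta_y$. Now invert: the identity $r_x(\widehat\xi\omega)=r_x(\omega)$, which holds on $\Delta_x$ where the left side makes sense because $\widehat\xi\Delta_x=\Delta_x$ by \eqref{fghj}, lets us push $r_x$ onto $\widehat\xi\Delta_y$; and $r_y(\widehat\eta\omega)=r_y(\omega)$ pushes $r_y$ onto $\widehat\eta\Delta_x$. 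Since $\widehat\eta\widehat\xi$ is the shift by $\omega_3$ and $\widehat\xi\widehat\eta$ the shift by $-\omega_3$ (see \eqref{O3}), alternately applying these two reflections translates $\Delta$ by all integer multiples of $\omega_3$, and the union $\bigcup_{n\in{\bf Z}}(\Delta+n\omega_3)$ covers a full horizontal band; combined with the $\omega_1$-periodicity this exhausts ${\bf C}$. At each step the newly defined piece agrees with the old one on the overlap (the overlap is a strip of positive width, namely a translate of $\Delta_x\cap\Delta_y$, so agreement on it forces agreement as meromorphic functions), which is exactly what makes the continuation single-valued and well defined on all of ${\bf C}$; this is where I expect the bookkeeping to be most delicate, and it is the \textbf{main obstacle}: one must check that going around a "loop" of reflections brings one back consistently, i.e.\ that $\widehat\eta\widehat\xi\widehat\eta\widehat\xi=(\widehat\eta\widehat\xi)^2$ acts as the shift by $2\omega_3$ with no monodromy, which is immediate from \eqref{O3} but must be invoked explicitly.

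Once the continuation exists on ${\bf C}$, the five displayed identities follow by the principle of analytic continuation of identities between meromorphic functions: each identity holds on a set with an accumulation point (a subdomain of $\Delta$), hence everywhere. Concretely: \eqref{sqs2} is \eqref{sqs1} continued off $\Delta$; \eqref{xieta} are the reflection relations above, continued; \eqref{buzz} follows because $x(\omega),y(\omega)$ are $\omega_1$-periodic by \eqref{peri}, so $r_x$ and $r_y$, being built from them via \eqref{QQ} and \eqref{sqs1}, inherit the period $\omega_1$ (alternatively, $\widehat\xi$ maps $L_{x_1}^{x_2}$ to itself and the composition $\widehat\xi\circ(\text{shift by }\omega_1)\circ\widehat\xi$ gives the shift by $\omega_1$, under which $r_x$ must be invariant). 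Finally \eqref{cont} is obtained by composing: write $r_x(\omega-\omega_3)=r_x(\widehat\xi\widehat\eta\,\omega)=r_x(\widehat\eta\,\omega)$ using the first line of \eqref{xieta}, then apply \eqref{sqs2} at the point $\widehat\eta\omega$ to get $r_x(\widehat\eta\omega)=K(0,0)Q(0,0)+x(\widehat\eta\omega)y(\widehat\eta\omega)-r_y(\widehat\eta\omega)$, use $y(\widehat\eta\omega)=y(\omega)$ and $r_y(\widehat\eta\omega)=r_y(\omega)$ from \eqref{hat} and \eqref{xieta}, and then subtract \eqref{sqs2} at $\omega$ itself; the terms $K(0,0)Q(0,0)-r_y(\omega)$ cancel against $r_x(\omega)-x(\omega)y(\omega)$, leaving $r_x(\omega-\omega_3)-r_x(\omega)=y(\omega)[x(\widehat\eta\omega)-x(\omega)]$, which is \eqref{cont} since $\widehat\eta\omega=-\omega+2\omega_{y_2}$ by \eqref{hatx}. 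Identity \eqref{cont1} is the mirror computation with the roles of $x$ and $y$, $\widehat\xi$ and $\widehat\eta$, interchanged.
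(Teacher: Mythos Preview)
Your proposal is correct and follows essentially the same route as the paper's proof: both rely on the local invariances $r_x(\widehat\xi\omega)=r_x(\omega)$ on $\Delta_x$ and $r_y(\widehat\eta\omega)=r_y(\omega)$ on $\Delta_y$, combine them with the relation \eqref{sqs1} on $\Delta$, and use that $\widehat\eta\widehat\xi$ is the shift by $\omega_3$ together with the covering $\bigcup_{n\in\mathbf Z}(\Delta+n\omega_3)=\mathbf C$ (the paper's Lemma~\ref{lemde}) to propagate everything to $\mathbf C$. The only cosmetic difference is the order of presentation: the paper first derives the shift relation \eqref{cont1} on a neighbourhood of $\widehat\Gamma_x^1$ and uses \emph{that} as the explicit continuation recipe, whereas you extend by alternating the two reflections and then read off \eqref{cont}--\eqref{cont1} as algebraic consequences of \eqref{xieta} and \eqref{sqs2}; the computations are identical, and your derivation of \eqref{cont} from \eqref{xieta}+\eqref{sqs2} is clean and correct.
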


For the proof of Theorem \ref{thm2}, we shall need the following lemma.

\begin{lem} We have
\label{lemde}
     \begin{equation}
     \label{delta}
          \bigcup_{n\in {\bf Z}}(\Delta+n\omega_3) ={\bf C}.
     \end{equation}
\end{lem}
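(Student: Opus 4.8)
The plan is to show that translating the simply connected fundamental strip $\Delta$ by all integer multiples of the real period $\omega_3$ covers the whole complex plane. Recall that $\Delta = \Delta_x \cup \Delta_y$, where $\Delta_x$ (resp.\ $\Delta_y$) is a curvilinear strip around the line $L_{x_1}^{x_2}$ (resp.\ $L_{y_1}^{y_2}$), both lines being parallel to $\omega_1$. Since $\omega_1 \in i{\bf R}$ while $\omega_2, \omega_3 \in {\bf R}$, the strip $\Delta$ is ``vertical'' (parallel to the imaginary axis), and what has to be checked is that the successive $\omega_3$-translates overlap and leave no horizontal gaps.

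First I would recall from Section~\ref{uc} that the distance between $L_{x_1}^{x_2}$ and $L_{y_1}^{y_2}$ equals $\omega_3/2$, and that, by \eqref{peri}, $x(\omega)$ and $y(\omega)$ are $\omega_2$-periodic, so $\Delta_x \subset \omega_1{\bf R} + {}]{-}\omega_2/2,\omega_2/2[$ up to the appropriate translate (after re-centring) and $\Delta_y$ is the same strip shifted by $\omega_3/2$. The key point is then that $\Delta_x \cap \Delta_y \neq \emptyset$ (established just before Theorem~\ref{thmde}): the two sub-strips genuinely overlap, so $\Delta$ is a connected strip whose ``width'' in the $\omega_2$-direction is strictly larger than $\omega_3/2$ but the relevant quantity is that $\Delta$ contains a full horizontal interval of length at least $\omega_3$. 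More precisely, $\widehat\eta\widehat\xi$ is the shift by $\omega_3$, by \eqref{O3}; since $\widehat\xi L_{x_1}^{x_2} = L_{x_1}^{x_2}$ and $\widehat\eta L_{y_1}^{y_2} = L_{y_1}^{y_2}$ while $\widehat\eta L_{x_1}^{x_2}$ and $\widehat\xi L_{y_1}^{y_2}$ are translates of these lines, one sees that $\widehat\eta\widehat\xi \Delta_x$ abuts $\Delta_x$ precisely along the region already covered, and iterating the shift by $\omega_3$ moves $\Delta$ onto an adjacent copy with overlap. Hence $\bigcup_{n\in{\bf Z}}(\Delta + n\omega_3)$ is a connected open set invariant under the shift by $\omega_3$ and containing arbitrarily large and small real parts.

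To finish, I would argue that this union also covers the $\omega_1$-direction trivially (each $\Delta_x^n$, $\Delta_y^n$ already extends along all of $\omega_1{\bf R}$, being a full strip), so the union is invariant under $\omega_1{\bf Z}$ as well; combined with $\omega_3$-invariance and the fact that $\omega_1$ and $\omega_3$ are ${\bf R}$-linearly independent (as $\omega_1 \in i{\bf R}\setminus\{0\}$ and $\omega_3 \in {\bf R}\setminus\{0\}$), the union is an open, closed (being a union of translates of a fixed open set filling a period), nonempty subset of the connected space ${\bf C}$, hence equals ${\bf C}$. The main obstacle is the bookkeeping in the $\omega_2$-direction: one must verify that consecutive translates $\Delta + n\omega_3$ and $\Delta + (n+1)\omega_3$ overlap rather than leave a gap, which amounts to checking that the ``$\omega_2$-extent'' of $\Delta$ exceeds $\omega_3$ (equivalently, that $\Delta_x$ and $\Delta_y$, separated by $\omega_3/2$, each have $\omega_2$-extent contributing more than $\omega_3/2$ of new territory); this follows from $\Delta_x\cap\Delta_y\neq\emptyset$ together with $\Gamma_y^1\notin\Delta_x$ and $\Gamma_x^1\notin\Delta_y$, which pin down that the boundary cycles $\widehat\Gamma^1_x,\widehat\Gamma^1_y$ of $\Delta$ are themselves separated by more than $\omega_3$. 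I would make this precise by locating the four boundary cycles $\widehat\Gamma^{0}_x,\widehat\Gamma^{1}_x,\widehat\Gamma^{0}_y,\widehat\Gamma^{1}_y$ relative to the lines $L_{x_1}^{x_2}$, $L_{y_1}^{y_2}$ and their $\widehat\xi$-, $\widehat\eta$-images, and then reading off the covering directly from Figure~\ref{Fig_Delta}.
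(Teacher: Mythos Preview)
Your approach is the same as the paper's---show that $\Delta$ and $\Delta+\omega_3$ overlap, then iterate---but the verification of the overlap is left vague: from $\Delta_x\cap\Delta_y\neq\emptyset$, $\widehat\Gamma^1_y\notin\Delta_x$, $\widehat\Gamma^1_x\notin\Delta_y$ alone you cannot conclude that the $\omega_2$-extent of $\Delta$ exceeds $\omega_3$, and ``reading off from Figure~\ref{Fig_Delta}'' is not a proof. The paper closes this gap in one line by exploiting the extra fact $\widehat\xi\,\widehat\Gamma^1_x=\widehat\Gamma^0_x$ (the Galois automorphism $\widehat\xi$ swaps the two boundary components of $\Delta_x$, being a reflection across $L_{x_1}^{x_2}$): since $\widehat\Gamma^0_x\in\Delta_y$ and $\widehat\eta\Delta_y=\Delta_y$ by \eqref{fghj}, one gets directly
\[
\widehat\Gamma^1_x+\omega_3=\widehat\eta\widehat\xi\,\widehat\Gamma^1_x=\widehat\eta\,\widehat\Gamma^0_x\in\Delta_y\subset\Delta,
\]
so the outer boundary of $\Delta$ on the $\widehat\Gamma^1_x$ side, shifted by $\omega_3$, lands inside $\Delta$; symmetrically $\widehat\Gamma^1_y-\omega_3\in\Delta$. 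This is exactly the missing piece in your sketch, and once you have it the rest of your argument goes through.
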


\begin{proof}
It has been noticed in Section \ref{Lifting_uc} that
 $\widehat \xi \,\widehat \Gamma^1_x=\widehat\Gamma^0_x\in \Delta_y$. By \eqref{fghj}, $\widehat \eta\, \widehat \Gamma^0_x \in \Delta_y \subset\Delta$, so that, by \eqref{O3},
     \begin{equation*}
          \widehat \Gamma^1_x+\omega_3=\widehat \eta \widehat \xi \,\widehat \Gamma^1_x \in \Delta.
     \end{equation*}
In the same way, $\widehat \Gamma^1_y-\o_3 \in \Delta$. It follows that $\Delta \cup (\Delta+\o_3)$ is a simply connected domain, see Figure \ref{Fig_Delta}. Identity \eqref{delta} follows.
\end{proof}

\begin{proof}[Proof of Theorem \ref{thm2}]
For any $\omega \in \Delta$, by Theorem \ref{thmde} we have
  \begin{equation}
     \label{sdhj1}
          r_x(\omega)+
          r_y(\omega)-K(0,0)Q(0,0)-x(\omega) y(\omega)=0.
     \end{equation}
For any $\omega\in\Delta$ close enough to the cycle $\widehat
\Gamma^1_x$, we have that $\widehat \xi \omega \in \Delta_y$ since
 $\widehat \xi \,\widehat \Gamma^1_x=\widehat\Gamma^0_x\in
 \Delta_y$. Then
 $\omega+\omega_3=\widehat \eta \widehat \xi \omega \in\Delta_y$
  by \eqref{fghj}. We now
  compute $r_y(\widehat \eta \widehat \xi \omega)$ for any such $\omega$.
   Equation \eqref{sqs1}, which is valid in $\Delta\supset\Delta_y$, gives
     \begin{equation}
     \label{sdhj}
          r_x(\widehat \xi \omega)+r_y(\widehat \xi \omega)-K(0,0)Q(0,0)-x(\widehat \xi \omega) y(\widehat \xi \omega)=0.
     \end{equation}
By \eqref{hat}, $x(\widehat \xi \omega)=x(\omega)$. For our $\omega \in \Delta_x$, by \eqref{fghj} we have $\widehat \xi \omega \in \Delta_x$, so that Theorem \ref{thmde} yields
     \begin{equation*}
          r_x(\widehat \xi \omega)=
          K(x(\widehat \xi \o),0)Q(x(\widehat \xi \omega), 0)=
          K(x(\o),0)Q(x(\omega), 0)=r_x(\omega).
     \end{equation*}
If we now combine the last fact together with Equation \eqref{sdhj1}, Equation \eqref{sdhj} and identity $x(\widehat \xi \omega)=x(\omega)$, we obtain that
     \begin{equation*}
          r_y(\widehat \xi \omega)=r_y(\omega)+x(\omega)[y(\widehat \xi\omega)-y(\omega)].
     \end{equation*}
Since $\widehat \xi \omega \in \Delta_y$,
 then by \eqref{fghj} we have $\widehat \eta \widehat \xi \omega
 \in\Delta_y$.
 Equation \eqref{hat} and  Theorem \ref{thmde} entail
     \begin{equation*}
          r_y(\widehat \eta \widehat \xi \omega)=
          K(0, y(\widehat \eta \widehat \xi \omega) )
          Q(0, y(\widehat \eta \widehat \xi \omega))=
         K(0, y( \widehat \xi \omega ))
         Q(0, y(\widehat \xi \omega))=r_y(\widehat \xi \omega).
     \end{equation*}
 Finally, for all $\omega \in \Delta $ close enough to $\widehat \Gamma^1_x$
   we have
     \begin{equation*}
          r_y(\widehat \eta \widehat \xi \omega)=r_y(\omega)+
          x(\omega)[y(\widehat \xi\omega)-y(\omega)].
     \end{equation*}
Using \eqref{O3}, we obtain exactly Equation \eqref{cont1}. Thanks to Theorem \ref{thmde} and Lemma \ref{lemde}, this equation shown for any $\omega \in \Delta$ close enough to $\widehat \Gamma^1_x$ allows us to continue $r_y$ meromorphically  from $\Delta$ to the whole of ${\bf C}$. Equation \eqref{cont1} therefore stays valid for any $\omega \in {\bf C}$. The function $r_y(\widehat \eta\omega)=r_y(-\omega+\omega_{y_2})$ is then also meromorphic on ${\bf C}$. Since these functions coincide in $\Delta_y$, then by the principle of analytic continuation \cite{JS} they do on the whole of ${\bf C}$. In the same way, we prove Equation \eqref{cont} for all $\omega \in \Delta_y$ close enough to $\widehat\Gamma^1_y$. Together with Theorem \ref{thmde} and Lemma \ref{lemde} this allows us to continue $r_x(\omega)$ meromorphically to the whole of ${\bf C}$. By the same continuation argument, the identity $r_x(\omega)=r_x(\widehat \xi \omega)$ is valid everywhere on ${\bf C}$. Consequently Equation \eqref{sqs2}, which a priori is satisfied in $\Delta$, must stay valid on the whole of ${\bf C}$. Since $x(\omega)$ and $y(\omega)$ are $\omega_1$-periodic, it follows from Theorem \ref{thmde} that $r_x(\omega)$ and $r_y(\omega)$ are $\omega_1$-periodic in $\Delta$. The vector $\omega_3$ being real, by \eqref{cont} and \eqref{cont1} these functions stay $\omega_1$-periodic on the whole of ${\bf C}$.
\end{proof}

\subsection{Branches of $x\mapsto Q(x,0)$ and $y\mapsto Q(0,y)$}
\label{subsecbra}
 The restrictions of $r_x(\o)/K(x(\o),0)$ on
     \begin{equation}
     \label{mkl}
          \mathscr{M}_{k,\ell}=\omega_1[\ell,\ell+1[+\omega_2[k/2,(k+1)/2[
     \end{equation}
for $k,\ell \in {\bf Z}$ provide all branches on ${\bf C}\setminus
([x_1,x_2]\cup[x_3,x_4])$ of $Q(x,0)$ as follows:
     \begin{equation}
     \label{branches}
          Q(x,0)=\{r_x(\o)/K(x(\o),0):
           \o \text{ is the (unique) element of } \mathscr{M}_{k,\ell} \text{ such that } x(\o)=x\}.
     \end{equation}
Due to the $\o_1$-periodicity of $r_x(\o)$ and $x(\o)$, the
restrictions of these functions on $\mathscr{M}_{k,\ell}$ do not
depend on $\ell\in {\bf Z}$, and therefore determine the same branch
as on $\mathscr{M}_{k,0}$ for any~$\ell$. Furthermore, thanks to
\eqref{xieta}, \eqref{hatx} and \eqref{hat} the restrictions of
$r_x(\o)/K(x(\o),0)$ on $\mathscr{M}_{-k+1,0}$ and on
$\mathscr{M}_{k,0}$ lead to the same branches for any $k\in{\bf Z}$.
Hence, the restrictions of $r_x(\o)/K(x(\o),0)$ to
$\mathscr{M}_{k,0}$ with $k\geq 1$ provide all different branches of
this function. The analogous statement holds for the restrictions of
$r_y(\o)/K(0, y(\o))$ on
     \begin{equation}
     \label{nkl}
          \mathscr{N}_{k,\ell}=\omega_3/2+\omega_1[\ell,\ell+1[+\omega_2]k/2,(k+1)/2]
     \end{equation}
for $k,\ell \in {\bf Z}$, namely:
\begin{equation}
     \label{branches1}
          Q(0,y)=\{r_y(\o)/K(0,y(\o)):
           \o \text{ is the (unique) element of } \mathscr{N}_{k,\ell} \text{ such that } y(\o)=y\}.
     \end{equation}
The restrictions on $\mathscr{N}_{k,\ell}$ for $\ell\in {\bf Z}$
give the same branch as on $\mathscr{N}_{k,0}$. For any $k \in {\bf
Z}_+$ the restrictions
on $\mathscr{N}_{-k+1,0}$
and on $\mathscr{N}_{k,0}$ determine the same branches. Hence, the
restrictions of $r_y(\o)/K(0,y(\o))$ on $\mathscr{N}_{k,0}$ with
$k\geq 1$ provide all different branches of $y \mapsto  Q(0,y)$.

\subsection{Ratio $\o_2/\o_3$}
\label{subsecra}

{
\begin{rem}
\label{Remark1}
  For any $z \in ]0, 1/|\mathcal{S}|[ $ the value $\o_2/\o_3$
   is rational  if and only if the group $\langle \xi, \eta \rangle
   $ restricted to the  curve $\{(x,y)\in{\bf C}\cup\{\infty\}^2 : K(x,y)=0\}$ is
   finite, see \cite[Section 4.1.2]{FIM} and \cite[Proof of Proposition 4]{Ra}.
\end{rem}
}

   The rationality or irrationality of the quantity $\o_2/\o_3$
   is crucial for the nature of the functions
    $x \mapsto Q(x,0)$ and $y \mapsto Q(0,y)$ for a given $z$.
Indeed, the following theorem holds true.
\begin{thm}
\label{ratho}
   For any $z \in ]0, 1/|\mathcal{S}|[$ such that
 $\o_2/\o_3$ is rational, the functions
 $x \mapsto Q(x,0)$ and $y \mapsto Q(0,y)$ are holonomic.
 \end{thm}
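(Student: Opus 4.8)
The plan is to turn the functional identities collected in Theorem~\ref{thm2} into a genuine first‑order linear differential equation satisfied by $r_y$ (and, symmetrically, by $r_x$); holonomy of $y\mapsto Q(0,y)$ and $x\mapsto Q(x,0)$ will then follow through the description of their branches in \eqref{branches1}. I detail the argument for $r_y$, the one for $r_x$ being identical with \eqref{cont} replacing \eqref{cont1}. \emph{First}, I would reduce the shift by the (real, generally non‑lattice) vector $\omega_3$ to a shift by a vector of $L:=\omega_1{\bf Z}+\omega_2{\bf Z}$. Since $\omega_2/\omega_3\in{\bf Q}$, there is an integer $N\geq1$ and a non‑zero integer $M$ with $N\omega_3=M\omega_2$. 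Writing $\psi(\omega)=x(\omega)\bigl[y(-\omega+2\omega_{x_2})-y(\omega)\bigr]$, equation \eqref{cont1} reads $r_y(\omega+\omega_3)=r_y(\omega)+\psi(\omega)$, and iterating it $N$ times gives
\[
   r_y(\omega+M\omega_2)=r_y(\omega)+H(\omega),\qquad H(\omega):=\sum_{j=0}^{N-1}\psi(\omega+j\omega_3).
\]
By \eqref{peri}, $x(\omega)$, $y(\omega)$ and $y(-\omega+2\omega_{x_2})$ are elliptic with periods $\omega_1,\omega_2$; hence $\psi$, its translates and $H$ are $L$‑elliptic, and in particular $H(\omega+M\omega_2)=H(\omega)$.

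\emph{Next}, I would form the Wronskian‑type combination $W(\omega):=H'(\omega)r_y(\omega)-H(\omega)r_y'(\omega)$, meromorphic on ${\bf C}$ by Theorem~\ref{thm2}. It is $\omega_1$‑periodic by \eqref{buzz} and the $\omega_1$‑periodicity of $H$; and since $r_y(\omega+M\omega_2)=r_y(\omega)+H(\omega)$, $r_y'(\omega+M\omega_2)=r_y'(\omega)+H'(\omega)$ and $H(\omega+M\omega_2)=H(\omega)$, a one‑line cancellation shows $W(\omega+M\omega_2)=W(\omega)$. Thus $W$ is elliptic for the full‑rank sublattice $L':=\omega_1{\bf Z}+M\omega_2{\bf Z}$ of $L$. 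If $H\equiv0$ then $r_y$ is itself $L'$‑elliptic; otherwise the definition of $W$ rearranges into
\[
   H(\omega)\,r_y'(\omega)=H'(\omega)\,r_y(\omega)-W(\omega),
\]
a first‑order linear differential equation (in $d/d\omega$) whose coefficients belong to the field $\mathcal{E}'$ of $L'$‑elliptic functions. Either way, $r_y$ is holonomic over $\mathcal{E}'$.

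\emph{Finally}, I would descend the coefficient field to ${\bf C}(y)$. The quotient map ${\bf C}/L'\to{\bf C}/L$ has degree $|M|=[L:L']$, so $\mathcal{E}'$ is a finite algebraic extension of the field $\mathcal{E}$ of $L$‑elliptic functions; and $\mathcal{E}={\bf C}\bigl(\wp(\,\cdot-\omega_3/2),\wp'(\,\cdot-\omega_3/2)\bigr)$ is a degree‑two algebraic extension of ${\bf C}\bigl(\wp(\,\cdot-\omega_3/2)\bigr)$, which by \eqref{expression_uniformization} equals ${\bf C}(y(\omega))$ because $\wp(\omega-\omega_3/2)=g_y(y(\omega))$ is a rational function of $y(\omega)$. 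Two routine facts then close the argument: holonomy is unaffected by replacing the derivation $d/d\omega$ with the unit multiple $d/dy=(dy/d\omega)^{-1}\,d/d\omega$ (note $dy/d\omega\in\mathcal{E}^{\times}$), and a function holonomic over a differential field is holonomic over any subfield over which that field is finite algebraic (the span of its derivatives over the subfield sits inside their span over the extension, which is finite‑dimensional even over the subfield). Walking down the tower $\mathcal{E}'\supseteq\mathcal{E}\supseteq{\bf C}(y(\omega))$, I conclude that $r_y$, hence $Q(0,y(\omega))=r_y(\omega)/K(0,y(\omega))$ — with $K(0,y)$ a non‑zero polynomial in $y$ — is holonomic in $y$; since by \eqref{branches1} all branches satisfy this same equation, $y\mapsto Q(0,y)$ is holonomic, and so is $x\mapsto Q(x,0)$.

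The genuinely new ingredient is the Wronskian identity; the step I expect to be the most delicate is the last one, namely checking that holonomy transports cleanly through the two finite field extensions and through the change of derivation $d/d\omega\leftrightarrow d/dy$. An alternative route, closer to \cite[Chapter~4]{FIM}, would be to solve the system of Theorem~\ref{thm2} explicitly and express $r_x,r_y$ in terms of $\wp$, $\wp'$ and $\zeta$ with periods $\omega_1,\omega_3$ and finitely many of their translates, then invoke the algebraic differential equations satisfied by these functions; the Wronskian shortcut has the advantage of not requiring those closed forms.
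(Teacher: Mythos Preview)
Your proof is correct and takes a genuinely different route from the paper's. The paper does not argue directly; it invokes \cite[Theorem~4.4.1]{FIM}, which (under the rationality hypothesis) produces an explicit decomposition $Q(x,0)=w_1(x)+\widetilde\Phi(x)\phi(x)+w(x)/r(x)$ with $w_1,r$ rational, $\phi,w$ algebraic, and then quotes \cite[Lemma~2.1]{FR} for the holonomy of the transcendental piece~$\widetilde\Phi$ (essentially the Weierstrass $\zeta$ ingredient of Lemma~\ref{lemma_properties_wp}~\ref{zeta_elliptic_function}). The same philosophy reappears in Section~\ref{section_holonomy}: one first \emph{solves} the quasi-periodicity relation $r_x(\omega+\ell\omega_2)=r_x(\omega)+\mathscr{O}(\omega)$ by subtracting $\Phi\cdot\mathscr{O}$, and only then appeals to an external holonomy result for~$\Phi$. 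Your Wronskian trick short-circuits this: from $r_y(\omega+M\omega_2)=r_y(\omega)+H(\omega)$ you manufacture the $L'$-elliptic function $W=H'r_y-Hr_y'$ and read off a first-order linear ODE with elliptic coefficients, without ever writing $r_y$ explicitly or importing the holonomy of $\zeta$-type functions. What you lose is the closed form for $Q(x,0)$ (which the paper's route yields for free); what you gain is a self-contained argument that stays entirely inside the paper's framework and needs no external citations. The delicate step you flag --- transporting holonomy through the finite extensions $\mathcal{E}'\supseteq\mathcal{E}\supseteq{\bf C}(y(\omega))$ and through the change of derivation --- is handled correctly: both facts are elementary once one checks, as you do, that $dy/d\omega\in\mathcal{E}^{\times}$ so that all three fields in the tower are differential subfields for $d/dy$.
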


\begin{proof}
The proof of Theorem \ref{ratho} is completely similar to that of Theorems 1.1 and 1.2 in \cite{FR}, so here we just recall the main ideas. The proof actually consists in applying \cite[Theorem 4.4.1]{FIM}, which entails that if $\o_2/\o_3$ is rational, the function $Q(x,0)$ can be written as
\begin{equation*}
Q(x,0)=w_1(x)+\widetilde\Phi(x)\phi(x)+w(x)/r(x),
\end{equation*}
where $w_1$ and $r$ are rational functions, while $\phi$ and $w$ are algebraic. Further, in \cite[Lemma 2.1]{FR} it  is shown that $\widetilde\Phi$ is holonomic. Accordingly, $Q(x,0)$ is also holonomic. The argument for $Q(0,y)$ is similar. Notice that Theorem 4.4.1 in \cite{FIM} is proved for $z=1/|\mathcal{S}|$ only, but in \cite{FR} it is observed that this result also holds for $z \in ]0, 1/|\mathcal{S}|[$.
\end{proof}

For all $23$ models of walks with finite group \eqref{group}, the
ratio $\omega_2/\omega_3$ is rational and {\it independent} of $z$.
This fact, which is specified in Lemma~\ref{lemma_omega_2_3} below,
implies the holonomy of the functions $x \mapsto Q(x,0)$ and $y
\mapsto Q(0,y)$ for all $z \in ]0, 1/|\mathcal{S}|[$ by Theorem
\ref{ratho}, and  also leads to some more profound analysis of the
models with a finite group. This analysis is the topic of  the
Section~\ref{section_holonomy}.

 For all $51$ non-singular models of walks with infinite group,
    $\o_2/\o_3$ takes rational and irrational values on subsets
    $\mathcal{H}$ and $]0, 1/|\mathcal{S}|[\setminus \mathcal{H}$,
 respectively, which are dense on
  $]0, 1/|\mathcal{S}|[$,
      as it will be proved in Proposition~\ref{proir} below.
   For any $z\in \mathcal{H}$,  $x \mapsto Q(x,0)$ and $y \mapsto Q(0,y)$
     are holonomic by Theorem~\ref{ratho}.
 For all $z \in ]0, 1/|\mathcal{S}|[\setminus \mathcal{H} $, properties of the
branches of $x \mapsto Q(x,0)$ and $y \mapsto Q(0,y)$ (in
particular, the set of their poles) will be studied in detail in
Section~\ref{Section_infinite_group}; the non-holonomy will be
derived from this analysis.

\section{Finite group case}
\setcounter{equation}{0}

\label{section_holonomy}

\begin{figure}[t]
\begin{center}
\begin{picture}(340.00,68.00)
\includegraphics{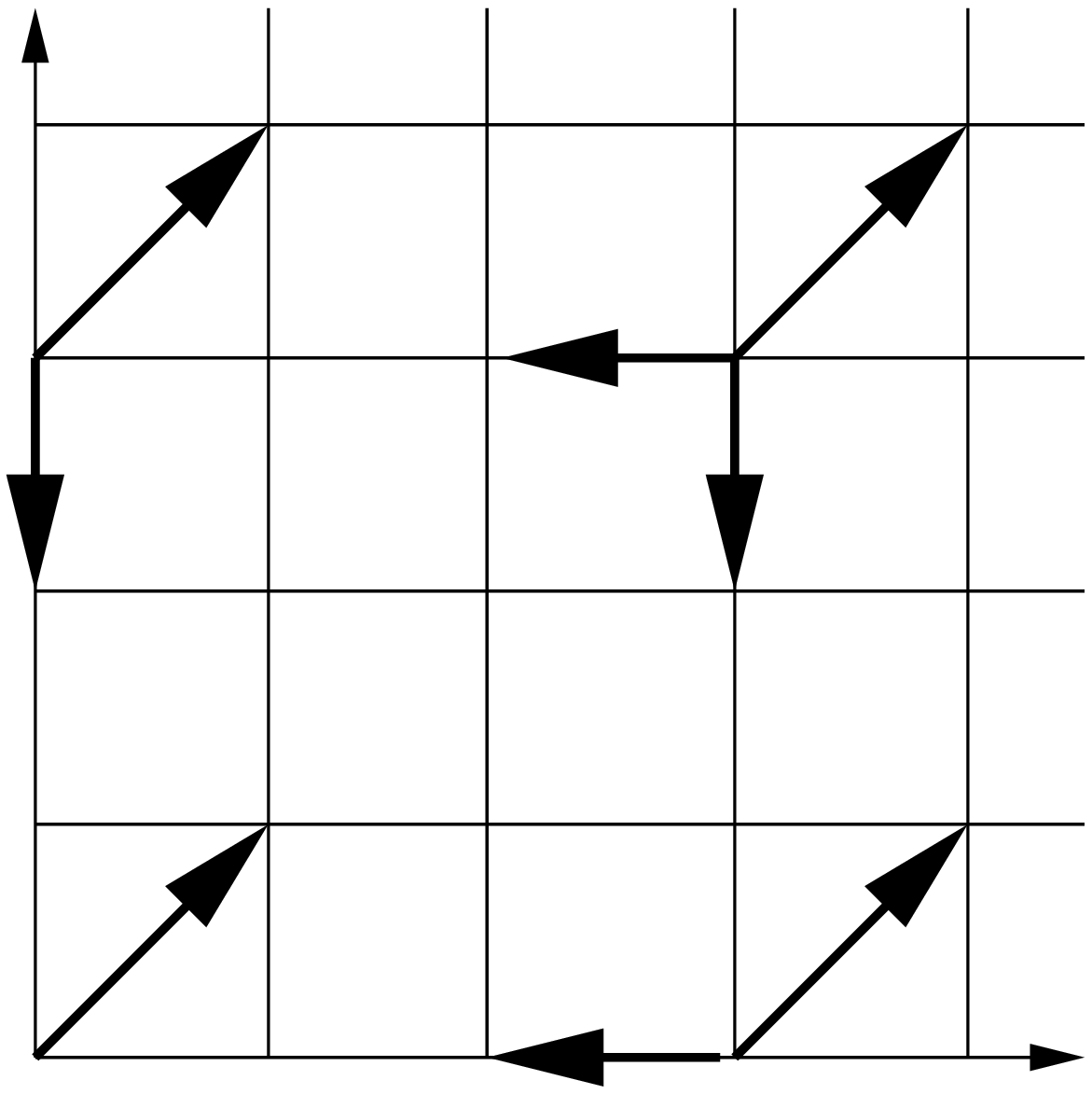}
\hspace{45mm}
\includegraphics{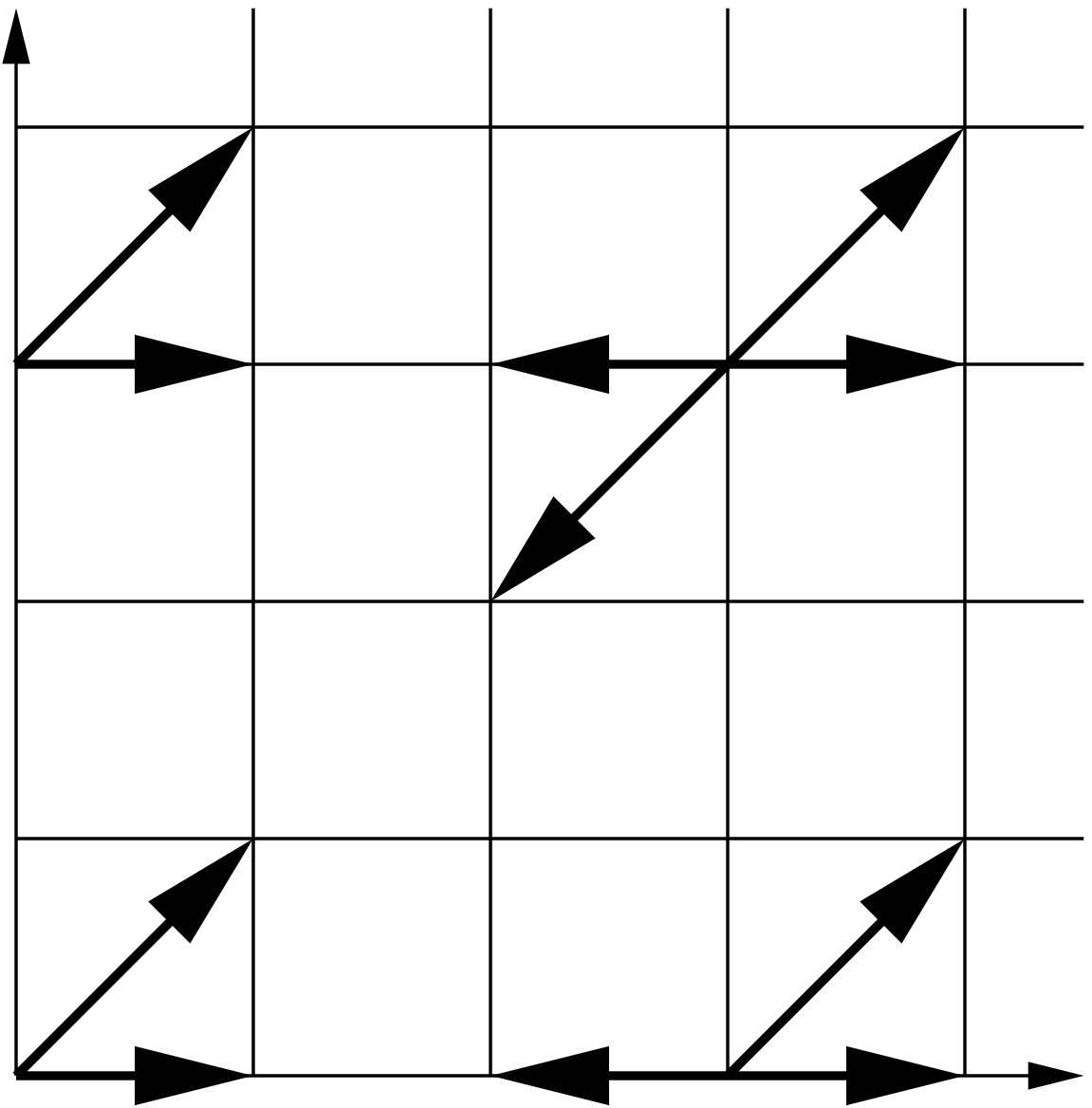}
\hspace{45mm}
\includegraphics{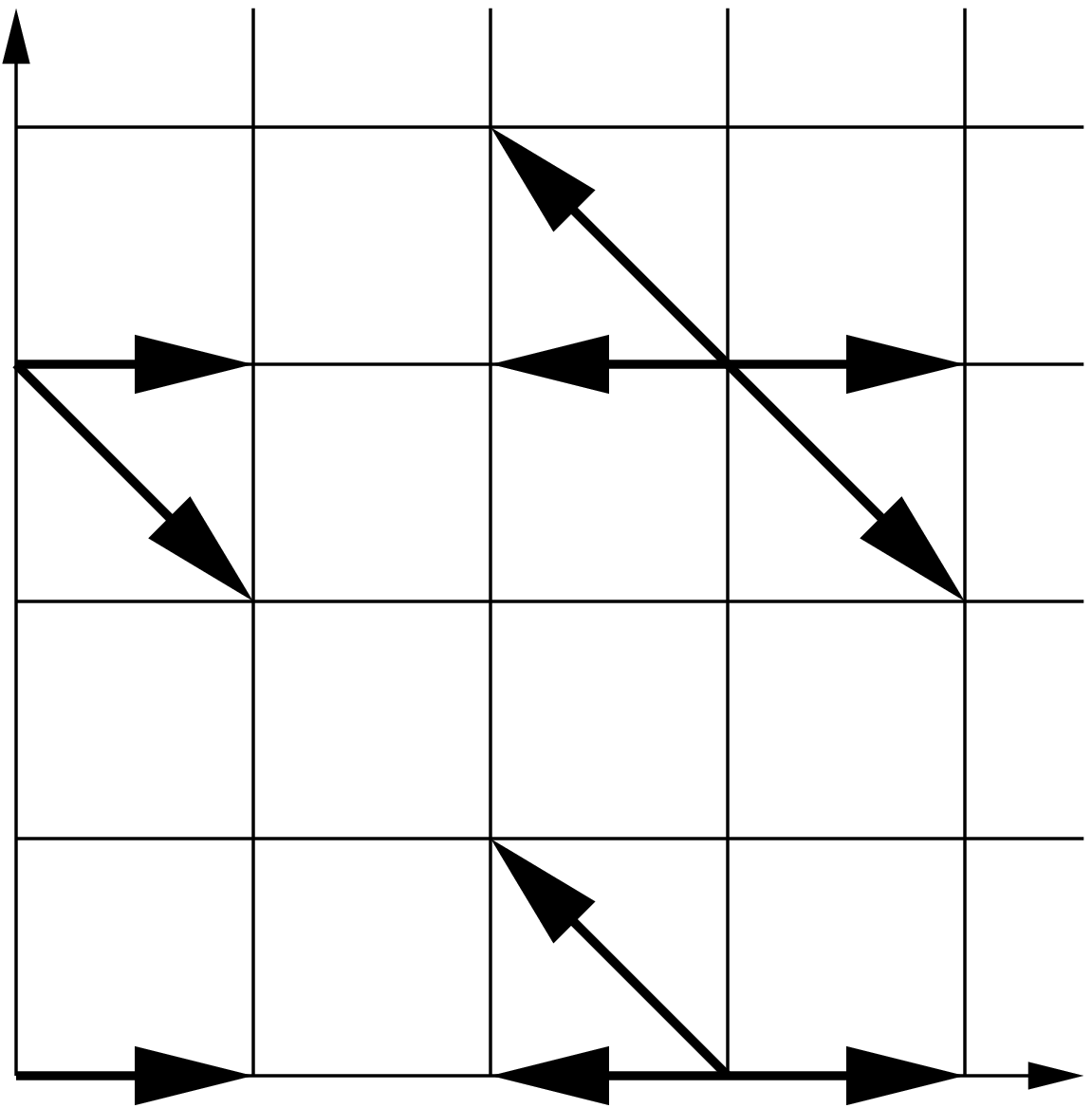}
\end{picture}
\end{center}
\caption{Three famous examples, known as Kreweras', Gessel's and Gouyou-Beauchamps' walks, respectively}
\label{ExExEx}
\end{figure}


Define the \emph{covariance} of the model as
     \begin{equation}
     \label{def_covariance}
          \textstyle \sum_{(i,j)\in\mathcal{S}} i j-[\sum_{(i,j)\in\mathcal{S}} i ][\sum_{(i,j)\in\mathcal{S}} j]=\sum_{(i,j)\in\mathcal{S}} i j.
     \end{equation}
The equality above follows from the fact that for each of the $23$ models with a finite group, $\sum_{(i,j)\in\mathcal{S}} i =0$ or $\sum_{(i,j)\in\mathcal{S}} j=0$, see \cite{BMM}. Lemma \ref{lemma_omega_2_3} below is proved in \cite[Proposition 5]{Ra}.
\begin{lem}
\label{lemma_omega_2_3} For all $23$ models with finite group
\eqref{group}, $\omega_2/\omega_3$ is rational and independent of
$z$. More precisely:
\begin{enumerate}
     \item[---] For the walks with a group of order $4$, $\omega_2/\omega_3=2$;
     \item[---] For the walks with a group of order $6$ and such that the covariance is negative
     (resp.\ positive),  $\omega_2/\omega_3=3$ (resp.\ $3/2$);
     \item[---] For the walks with a group of order $8$ and a negative
     (resp.\ positive) covariance,  $\omega_2/\omega_3=4$ (resp.\ $4/3$).
\end{enumerate}
\end{lem}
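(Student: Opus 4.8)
The plan is to combine Remark~\ref{Remark1} with the elementary geometry of the fundamental parallelogram described in Section~\ref{uc}. Each of the twenty-three models has a finite group $\langle\xi,\eta\rangle$, of order $2n$ with $n\in\{2,3,4\}$, and this order does not depend on $z$ since the transformations \eqref{xietaf} do not. Hence the restriction of $\langle\xi,\eta\rangle$ to the curve $\{(x,y):K(x,y)=0\}\simeq{\bf T}$ is a finite dihedral group, generated modulo the lattice $\Lambda=\omega_1{\bf Z}+\omega_2{\bf Z}$ by the lifted involutions $\widehat\xi,\widehat\eta$, whose product $\widehat\eta\widehat\xi$ is, by \eqref{O3}, the translation by $\omega_3$. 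Finiteness forces $k\omega_3\in\Lambda$ for some minimal $k\geq1$; writing $k\omega_3=p\omega_1+q\omega_2$ with $p,q\in{\bf Z}$ and using that $\omega_2,\omega_3$ are real while $\omega_1$ is purely imaginary (see \eqref{expression_omega_1_2}, \eqref{expression_omega_3} and the last lines of the proof of Theorem~\ref{thm2}), one gets $p=0$, so that $\omega_2/\omega_3=k/q\in{\bf Q}$, with $\gcd(k,q)=1$ by minimality of $k$. This establishes the rationality.

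For the independence of $z$, note that $z\mapsto\omega_2(z)/\omega_3(z)$ is continuous on $]0,1/|\mathcal{S}|[$ (the branch points and the integrals \eqref{expression_omega_1_2}, \eqref{expression_omega_3} vary continuously with $z$) and, by the previous paragraph, takes values in the fixed finite set $\{k/q:k\mid n,\ 1\leq q\leq k,\ \gcd(k,q)=1\}$; a continuous function into a discrete set is constant. To identify the value one reads off from Section~\ref{uc} that the four cuts $L_{x_4}^{x_3},L_{y_4}^{y_3},L_{x_1}^{x_2},L_{y_1}^{y_2}$ lie on the vertical lines of abscissas $0,\omega_3/2,\omega_2/2,(\omega_2+\omega_3)/2$ and are pairwise disjoint within one period, whence $0<\omega_3<\omega_2$ and $1\leq q<k$; one also checks that the restriction to the curve remains faithful for these models, so that $k=n$. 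For $n=2$ this pins down $\omega_2/\omega_3=2$; for $n=3$ and $n=4$ two candidates remain, $\{3,3/2\}$ and $\{4,4/3\}$ respectively, according to whether the cut $[y_3,y_4]$ sits closer to $[x_3,x_4]$ ($q=1$) or to $[x_1,x_2]$ ($q=n-1$). I would relate this alternative to the sign of the covariance $\sum_{(i,j)\in\mathcal{S}}ij$, which dictates the local shape of $\{K=0\}$ near the branch points and the coordinate axes, hence the cyclic arrangement of the $X$- and $Y$-cuts on ${\bf T}$: a negative covariance gives $q=1$ and a positive one $q=n-1$.

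The step I expect to be the real obstacle is precisely this last correspondence between the sign of the covariance and the value of $q$ for $n\in\{3,4\}$: doing it conceptually requires controlling how the cyclic order of the four cuts on ${\bf T}$ degenerates as the covariance crosses zero, which is delicate. A robust alternative, sufficient in view of the $z$-independence already proved and of the classification of \cite{BMM}, is to compute \eqref{expression_omega_1_2}--\eqref{expression_omega_3} explicitly for one representative walk in each of the five classes (group of order $4$; group of order $6$ with each sign of the covariance; group of order $8$ with each sign), for instance at $z=1/|\mathcal{S}|$ or in the limit $z\to0$, and to transport the result to every model in the same class.
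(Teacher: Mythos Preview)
The paper does not give its own proof of this lemma; it simply cites \cite[Proposition~5]{Ra}. So your self-contained argument is welcome, and the skeleton is sound: finiteness of $\langle\xi,\eta\rangle$ forces $\widehat\eta\widehat\xi=\ \cdot\,+\omega_3$ to have finite order on ${\bf T}$, reality of $\omega_2,\omega_3$ and pure imaginarity of $\omega_1$ turn this into $k\omega_3=q\omega_2$, and continuity of $z\mapsto\omega_2/\omega_3$ into the finite set $\{k/q:k\mid n,\ 1\leq q<k,\ \gcd(k,q)=1\}$ yields $z$-independence. The bound $0<\omega_3<\omega_2$ from the placement of the cuts is also correct.

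There are, however, two genuine gaps in the identification of the value.

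\emph{Faithfulness.} You assert that the restriction of $\langle\xi,\eta\rangle$ to the curve is faithful (so that the minimal $k$ equals $n$), but you do not argue it, and it matters: for $n=4$ the value $k/q=2/1$ is not excluded until you know $k=4$. A clean way to close this is to use your own $z$-independence: since $k_z/q_z$ is constant with $\gcd(k_z,q_z)=1$, both $k_z$ and $q_z$ are constant, say $k$ and $q$. If $k<n$, then $(\eta\xi)^k$ acts trivially on every curve $\{\sum_{(i,j)\in\mathcal{S}}x^iy^j=1/z\}$ for $z$ in an interval; restricting to real $(x,y)$ these curves sweep out a nonempty open subset of ${\bf R}^2$, and since $(\eta\xi)^k$ is a rational map with rational coefficients, it must then be the identity on ${\bf C}^2$, contradicting that $\eta\xi$ has order $n$ in the birational group. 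Hence $k=n$.

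\emph{One representative does not suffice.} Your proposed workaround---compute $\omega_2/\omega_3$ for one walk in each of the five classes and ``transport the result to every model in the same class''---does not prove the lemma: nothing you have established shows that two different walks sharing the same group order and the same sign of covariance must have the same value of $q$. The pair $(n,\text{sign of covariance})$ is exactly what the lemma asserts determines the ratio; you cannot assume it. Either carry out the computation (e.g.\ via the $z\to0$ asymptotics you mention, or directly from \eqref{expression_omega_1_2}--\eqref{expression_omega_3}) for \emph{each} of the $23$ models---this is finite and is essentially what \cite{Ra} does---or supply the conceptual link between the sign of $\sum_{(i,j)\in\mathcal{S}}ij$ and the cyclic position of $L_{y_4}^{y_3}$ relative to $L_{x_4}^{x_3}$ and $L_{x_1}^{x_2}$ that you yourself flag as the obstacle.
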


In the sequel, we note $\omega_2/\omega_3=k/\ell$; then, $2k$ is the order of the group. Since $k\omega_3=\ell\omega_2$, we obviously always have
     \begin{equation*}
          r_x(\omega +\ell\omega_2)-r_x(\omega)=
          \sum_{1\leq m\leq k}
          r_x(\omega +m \omega_3)-r_x(\omega+(m-1)\omega_3).
     \end{equation*}
It follows from \eqref{cont} and from properties \eqref{hatx}, \eqref{O3} and \eqref{hat} of the Galois automorphisms that
     \begin{align}
          r_x(\omega +\ell\omega_2)-r_x(\omega)=&\sum_{1\leq m\leq k} (xy) (\omega +m \omega_3)-(xy) (\widehat \eta (\o+m\o_3))\nonumber\\
          =&\sum_{1\leq m\leq k} (xy)((\widehat \eta \widehat \xi)^m \o)-(xy)(\widehat\xi (\widehat \eta \widehat \xi)^{m-1}\o))\nonumber\\
          =&\sum_{\theta \in\langle \widehat\xi,\widehat\eta\rangle}(-1)^{\theta} xy(\theta(\omega)),\label{key_equation}
     \end{align}
{where $(-1)^{\theta}$ is the signature of $\theta$; in other words, $(-1)^{\theta}=(-1)^{\ell(\theta)}$, where $\ell(\theta)$ is the length of $\theta$, i.e., the smallest $\ell$ such that we can write $\theta = \theta_1\circ \cdots \circ \theta_\ell$, with $\theta_1,\ldots,\theta_\ell$ equal to $\widehat\xi$ or $\widehat\eta$.}
The same identity with the opposite sign holds for $r_y$. The quantity \eqref{key_equation} is the {\it orbit-sum} of the
function $xy$ under the group $\langle
\widehat\xi,\widehat\eta\rangle$, and is denoted by
$\mathscr{O}(\omega)$. It satisfies the property hereunder, which is
proved in \cite{BMM}.
\begin{lem}
In the finite group case, the orbit-sum $\mathscr{O}(\omega)$ is identically zero if and only if the covariance \eqref{def_covariance} is positive.
\end{lem}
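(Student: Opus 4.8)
The plan is to analyze the orbit-sum $\mathscr{O}(\omega)=\sum_{\theta\in\langle\widehat\xi,\widehat\eta\rangle}(-1)^\theta\,(xy)(\theta\omega)$ directly, using the explicit action of the Galois automorphisms on the coordinates $x(\omega),y(\omega)$ given in \eqref{hat} and \eqref{bue}, together with the structure of the finite dihedral group $\langle\widehat\xi,\widehat\eta\rangle$ of order $2k$. First I would recall that the covariance $\sum_{(i,j)\in\mathcal{S}}ij$ governs the sign of the drift-type quantity attached to the walk, and that by the classification in \cite{BMM} each of the $23$ finite-group models has $\sum i=0$ or $\sum j=0$, so \eqref{def_covariance} holds. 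The key observation is that $\mathscr{O}$ is $\omega_1$-periodic and, because $k\omega_3=\ell\omega_2$, also $\omega_2$-periodic (this follows from \eqref{key_equation} read on the torus: the left-hand side $r_x(\omega+\ell\omega_2)-r_x(\omega)$ is elliptic since $r_x$ need not be, but $\mathscr{O}$ itself, being a finite symmetrized sum over the group of the elliptic functions $x(\theta\omega)y(\theta\omega)$, descends to ${\bf T}$). Hence $\mathscr{O}$ is an elliptic function on ${\bf C}/(\omega_1{\bf Z}+\omega_2{\bf Z})$, i.e.\ a rational function of $x(\omega),y(\omega)$ on the curve $\{K=0\}$.

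Next I would locate its poles. The only possible poles of $x(\omega)y(\omega)$ on ${\bf T}$ are at the finitely many points where $x$ or $y$ becomes infinite — the points lying over $x=\infty$, $y=\infty$, and (from \eqref{bue}) over the zeros of $a(x)$, $\widetilde a(y)$ — and applying the group only permutes this finite set. So $\mathscr{O}$ is a priori meromorphic with a controlled finite pole set, and one must check that the alternating signs in the orbit-sum force all these polar parts to cancel: near each candidate pole the dominant contributions come in pairs $\theta,\ \widehat\xi\theta$ (or $\theta,\ \widehat\eta\theta$) whose leading Laurent coefficients are equal by \eqref{hat} — e.g.\ $x(\widehat\xi\omega)=x(\omega)$ makes the two terms have the same $xy$-singularity when the pole is driven by $x$ — while the signatures $(-1)^\theta$ are opposite. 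Carrying this out at the $x$-infinity fibre, the $y$-infinity fibre, and the zeros of $a,\widetilde a$ shows $\mathscr{O}$ has no poles, hence is a constant $c$. Then $\mathscr{O}\equiv 0$ iff $c=0$, and $c$ is computed by evaluating $\mathscr{O}$ at one convenient point, most naturally $\omega=\omega_{x_2}$ or $\omega_{y_2}$ (a fixed point of $\widehat\xi$ resp.\ $\widehat\eta$), where the orbit collapses and the $2k$ terms reduce to $k$ pairs; alternatively one reads off $c$ from the $z\to 0$ or boundary behaviour where $x,y$ and the step-set data become transparent. The dependence of $c$ on the covariance then emerges from the way the terms over the infinity fibres behave: a positive covariance makes the would-be leading singular terms actually cancel even in their finite parts, giving $c=0$, whereas a non-positive covariance leaves a nonzero residual constant.

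Equivalently — and this is probably the cleanest route — I would simply compare with the half-orbit-sum computation of \cite{BMM}: there the orbit-sum of $xy$ under the group of the walk is exactly the obstruction to extracting $Q(x,0;z)$, $Q(0,y;z)$ by the group method, and Bousquet-M\'elou and Mishna showed case by case (the $4$ models of order $4$, the models of order $6$, and the models of order $8$) that this orbit-sum vanishes precisely for the $19$ walks with a holonomic non-algebraic CF and is nonzero for Kreweras, reverse Kreweras, and the double Kreweras (and Gessel) — and the split between these two families is exactly the sign of the covariance, as visible from their Table. Since the group $\langle\widehat\xi,\widehat\eta\rangle$ on the universal covering maps isomorphically onto the group of the walk acting on $\{K=0\}$ (the finite-group hypothesis, Remark~\ref{Remark1}), the orbit-sum $\mathscr{O}(\omega)$ in \eqref{key_equation} is the lift of theirs, so the statement is just a translation of their result.

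The main obstacle I expect is bookkeeping the pole cancellations at the infinity fibres cleanly and uniformly across the three group orders, since the pairing $\theta\leftrightarrow\widehat\xi\theta$ versus $\theta\leftrightarrow\widehat\eta\theta$ must be chosen according to whether $x$ or $y$ drives the pole, and one has to be sure no net residue survives; getting the constant $c$ and relating its vanishing sharply to the sign (not just the vanishing) of the covariance is the delicate point, and for that the honest path is to invoke the explicit case analysis of \cite{BMM} rather than re-derive it.
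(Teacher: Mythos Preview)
The paper does not prove this lemma at all: it simply attributes it to \cite{BMM}. Your final paragraph, which says the honest path is to invoke the explicit case analysis of Bousquet-M\'elou and Mishna, is therefore exactly the paper's own ``proof''. So that part of your proposal is correct and matches.

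However, your preceding direct argument contains a genuine error. You claim that pairing $\theta$ with $\widehat\xi\theta$ (or $\widehat\eta\theta$) cancels all poles of $\mathscr{O}$, forcing it to be a constant $c$. This is false: at a pole driven by $x(\omega)\to\infty$ with $y(\omega)$ finite, the two terms $(xy)(\omega)$ and $(xy)(\widehat\xi\omega)=x(\omega)y(\widehat\xi\omega)$ have leading coefficients proportional to $y(\omega_0)$ and $y(\widehat\xi\omega_0)$ respectively, and these are generically \emph{different}. Concretely, for the simple walk $\mathcal{S}=\{(\pm 1,0),(0,\pm 1)\}$ one has $\xi(x,y)=(x,1/y)$, $\eta(x,y)=(1/x,y)$, and the orbit-sum is $xy-x/y-y/x+1/(xy)$, which is manifestly non-constant on the curve. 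So $\mathscr{O}$ is an elliptic function that is \emph{not} in general constant, and your reduction to evaluating a single number $c$ collapses.

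You also have the dichotomy backwards in your penultimate paragraph: the orbit-sum vanishes precisely for the \emph{four algebraic} models (Kreweras, reverse Kreweras, double Kreweras, Gessel), which are exactly the positive-covariance ones; it is nonzero for the $19$ holonomic non-algebraic models. This is consistent with the lemma as stated and with Propositions~\ref{prop_zero_orbit-sum}--\ref{prop_non-zero_orbit-sum}, but opposite to what you wrote.
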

We therefore come to the following corollary.
\begin{cor}
In the finite group case, the functions $x \mapsto Q(x,0)$ and $y \mapsto Q(0,y)$ have a finite number of different branches if and only if the covariance \eqref{def_covariance} is positive.
\end{cor}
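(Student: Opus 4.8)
The plan is to describe all branches in terms of the single elliptic function $\mathscr{O}$ together with the translation by $\ell\omega_2=k\omega_3$, where $\omega_2/\omega_3=k/\ell$ as above. First I would record the two consequences of \eqref{key_equation} (and of its ``opposite sign'' counterpart for $r_y$ mentioned just after it): for every $\omega\in{\bf C}$,
\begin{equation*}
r_x(\omega+\ell\omega_2)-r_x(\omega)=\mathscr{O}(\omega),\qquad r_y(\omega+\ell\omega_2)-r_y(\omega)=-\mathscr{O}(\omega).
\end{equation*}
By \eqref{peri} the orbit-sum $\mathscr{O}$ is $\omega_1$- and $\omega_2$-periodic, hence in particular $\ell\omega_2$-periodic; iterating the displayed identities therefore gives $r_x(\omega+n\ell\omega_2)-r_x(\omega)=n\,\mathscr{O}(\omega)$ and $r_y(\omega+n\ell\omega_2)-r_y(\omega)=-n\,\mathscr{O}(\omega)$ for all $n\in{\bf Z}$.

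Next I would translate this into the language of \eqref{branches}. The strip $\mathscr{M}_{k+2n\ell,0}$ from \eqref{mkl} is exactly the translate $\mathscr{M}_{k,0}+n\ell\omega_2$; since $x(\cdot)$ is $\omega_2$-periodic by \eqref{peri}, the point $\omega'\in\mathscr{M}_{k+2n\ell,0}$ with $x(\omega')=x$ is $\omega'=\omega+n\ell\omega_2$, where $\omega$ is the point of $\mathscr{M}_{k,0}$ with $x(\omega)=x$. Hence the branch of $x\mapsto Q(x,0)$ carried by $\mathscr{M}_{k+2n\ell,0}$ is $x\mapsto (r_x(\omega)+n\,\mathscr{O}(\omega))/K(x,0)$, so the branches attached to $\mathscr{M}_{k+2n_1\ell,0}$ and $\mathscr{M}_{k+2n_2\ell,0}$ differ by $x\mapsto (n_1-n_2)\,\mathscr{O}(\omega)/K(x,0)$; the analogous description holds for $y\mapsto Q(0,y)$ via \eqref{branches1}, \eqref{nkl}.

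Now the two implications. If the covariance \eqref{def_covariance} is positive, the preceding lemma gives $\mathscr{O}\equiv 0$, so $r_x$ is $\ell\omega_2$-periodic; as $K(x(\cdot),0)$ is $\omega_2$-periodic, $r_x(\cdot)/K(x(\cdot),0)$ is $\ell\omega_2$-periodic, and thus the branch carried by $\mathscr{M}_{k,0}$ depends only on $k$ modulo $2\ell$. Combined with the identifications already made in Subsection~\ref{subsecbra}, this shows $x\mapsto Q(x,0)$ has at most $2\ell$ distinct branches, and likewise $y\mapsto Q(0,y)$. Conversely, if the covariance is not positive, the lemma gives $\mathscr{O}\not\equiv 0$; being elliptic, $\mathscr{O}$ cannot vanish identically on the parallelogram $\mathscr{M}_{k,0}$, so, $K(\cdot,0)$ being a non-zero polynomial, the functions $x\mapsto n\,\mathscr{O}(\omega)/K(x,0)$, $n\in{\bf Z}$, are pairwise distinct. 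Hence the branches carried by $\mathscr{M}_{1+2n\ell,0}$, $n\geq 0$, are pairwise distinct, so $x\mapsto Q(x,0)$---and symmetrically $y\mapsto Q(0,y)$---has infinitely many branches. The two implications together yield the corollary.

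The only step that requires genuine care is the second paragraph: the identification of $\mathscr{M}_{k+2n\ell,0}$ with the $n\ell\omega_2$-translate of $\mathscr{M}_{k,0}$, and the verification---using the $\omega_2$-periodicity of $x(\cdot)$ and the iterated form of \eqref{key_equation}---that the branch value genuinely picks up the shift $n\,\mathscr{O}(\omega)/K(x,0)$. Once this bookkeeping and the ellipticity of $\mathscr{O}$ are in hand, both directions follow at once from the preceding lemma.
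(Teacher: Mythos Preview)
Your argument is correct and is precisely the reasoning the paper has in mind: the corollary is stated there as an immediate consequence of \eqref{key_equation} together with the preceding lemma, and you have simply written out that consequence in full---iterating \eqref{key_equation} via the $\omega_2$-periodicity of $\mathscr{O}$, reading off the branch on $\mathscr{M}_{k+2n\ell,0}$ as the branch on $\mathscr{M}_{k,0}$ shifted by $n\,\mathscr{O}(\omega)/K(x,0)$, and then invoking the lemma in each direction. The only point worth making explicit is that the $\omega_2$-periodicity of $\mathscr{O}$ follows from \eqref{peri} because each $\theta\in\langle\widehat\xi,\widehat\eta\rangle$ sends $\omega+\omega_2$ to $\theta(\omega)\pm\omega_2$; otherwise your bookkeeping is exactly right.
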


After the lifting to the universal covering done in Theorem \ref{thm2}, results of \cite{BK2, BMM} concerning the nature of the functions $x\mapsto Q(x,0)$ and $y \mapsto Q(0,y)$ in all finite group cases can now be established by very short reasonings. For the sake of completeness, we show how this works.

\begin{prop}[\cite{BK2,BMM}]
\label{prop_zero_orbit-sum} For all models with a finite group and a
positive covariance \eqref{def_covariance}, $x\mapsto Q(x,0)$ and
$y\mapsto Q(0,y)$ are algebraic.
\end{prop}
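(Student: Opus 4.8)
The plan is to exploit the vanishing of the orbit-sum $\mathscr{O}(\omega)$ established in the finite-group, positive-covariance case, together with the continuation formulas \eqref{cont}, \eqref{cont1} and the $\omega_1$-periodicity from Theorem~\ref{thm2}. First I would record that since the covariance is positive, $\mathscr{O}(\omega)\equiv 0$, and hence by \eqref{key_equation} (and its analogue with opposite sign for $r_y$) we get
\begin{equation*}
     r_x(\omega+\ell\omega_2)=r_x(\omega), \qquad r_y(\omega+\ell\omega_2)=r_y(\omega), \qquad \forall\omega\in{\bf C},
\end{equation*}
where $\omega_2/\omega_3=k/\ell$ and $2k$ is the order of the group. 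Combined with the $\omega_1$-periodicity \eqref{buzz}, this shows that $r_x$ and $r_y$ are elliptic functions on ${\bf C}$, with period lattice $\omega_1{\bf Z}+\ell\omega_2{\bf Z}$. In particular, they descend to meromorphic functions on a compact Riemann surface (a torus), namely ${\bf C}/(\omega_1{\bf Z}+\ell\omega_2{\bf Z})$, which is a finite (degree-$\ell$) cover of ${\bf T}={\bf C}/(\omega_1{\bf Z}+\omega_2{\bf Z})$.

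Next I would invoke the classical fact that any meromorphic function on a compact Riemann surface is algebraic over the field of meromorphic functions generated by any non-constant one; concretely, on the torus ${\bf C}/(\omega_1{\bf Z}+\ell\omega_2{\bf Z})$ every elliptic function is a rational function of $\wp(\omega;\omega_1,\ell\omega_2)$ and $\wp'(\omega;\omega_1,\ell\omega_2)$, hence algebraic over ${\bf C}(\wp(\omega;\omega_1,\ell\omega_2))$. The key point is then to relate this to algebraicity of $x\mapsto Q(x,0)$ in the variable $x$: since $x(\omega)=g_x^{-1}(\wp(\omega;\omega_1,\omega_2))$ by \eqref{expression_uniformization}, and $g_x$ is a Möbius (or affine) transformation, $x(\omega)$ is itself a rational function of $\wp(\omega;\omega_1,\omega_2)$, which in turn is algebraic over ${\bf C}(\wp(\omega;\omega_1,\ell\omega_2))$ because the former torus covers the latter with finite degree. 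Therefore both $x(\omega)$ and $r_x(\omega)$ lie in a fixed finite algebraic extension of ${\bf C}(\wp(\omega;\omega_1,\ell\omega_2))$, so $r_x(\omega)$ is algebraic over ${\bf C}(x(\omega))$. Dividing by $K(x(\omega),0)$, which is rational in $x(\omega)$, and using the branch description \eqref{branches}, this yields that $Q(x,0)$ is an algebraic function of $x$; the argument for $Q(0,y)$ is identical, using $\mathscr{N}_{k,\ell}$ and \eqref{branches1}.

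The main obstacle I anticipate is the bookkeeping needed to pass cleanly from "algebraic as a function on the universal covering" to "algebraic in $x$": one must check that evaluating the multivalued relation \eqref{branches} is compatible with the algebraic dependence, i.e.\ that the finitely many branches obtained from the strips $\mathscr{M}_{k,0}$, $k\geq 1$, are exactly the branches of an algebraic function of $x$. This is where the finiteness of the number of branches (the Corollary preceding the Proposition, which holds precisely because the covariance is positive) is essential: without it the argument collapses, since an algebraic function has only finitely many branches. A secondary technical point is to make sure $K(x,0)$ does not introduce spurious non-algebraic behaviour, but since it is a polynomial in $x$ (with coefficients depending on $z$) this is immediate. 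Everything else---ellipticity of $r_x,r_y$, the Weierstrass parametrization, and the covering argument---is standard once $\mathscr{O}(\omega)\equiv 0$ is in hand.
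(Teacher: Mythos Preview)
Your proof is correct and follows essentially the same route as the paper: from the vanishing orbit-sum you deduce that $r_x$ is elliptic with periods $\omega_1,\ell\omega_2$, then use that every elliptic function is rational in $\wp,\wp'$ and that $\wp(\omega;\omega_1,\ell\omega_2)$ and $x(\omega)$ are algebraic over one another, to conclude algebraicity in $x$. The only difference is cosmetic: the paper makes each step explicit via the listed properties \ref{differential_equation}--\ref{elliptic_field} of Lemma~\ref{lemma_properties_wp} (in particular using \ref{principle_transformation} and \ref{addition_theorem} to relate $\wp(\omega;\omega_1,\ell\omega_2)$ to $\wp(\omega;\omega_1,\omega_2)$), whereas you package the same facts as the standard ``meromorphic functions on a compact Riemann surface'' and finite-covering arguments.
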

\begin{prop}[\cite{BMM}]
\label{prop_non-zero_orbit-sum} For all models with a finite group
and a negative or zero covariance \eqref{def_covariance}, $x\mapsto
Q(x,0)$ and $y\mapsto Q(0,y)$ are holonomic and non-algebraic.
\end{prop}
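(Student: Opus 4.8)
The plan is to separate the two assertions: holonomy comes essentially for free from the earlier results, while non-algebraicity is where the orbit-sum does the work. For holonomy I would simply invoke Lemma~\ref{lemma_omega_2_3}, which says that for each of these $23$ models $\o_2/\o_3$ is rational; Theorem~\ref{ratho} then gives at once that $x\mapsto Q(x,0)$ and $y\mapsto Q(0,y)$ are holonomic. So the content is to show they are \emph{not} algebraic, and I argue by contradiction. Assume $x\mapsto Q(x,0)$ is algebraic, say $P(x,Q(x,0))\equiv 0$ with $P\in{\bf C}[X,T]\setminus\{0\}$ of $T$-degree $d$. Since $K(x,0)=c(x)$ is a polynomial in $x$, clearing the denominator $K(x(\o),0)$ produces a nonzero $\widetilde P\in{\bf C}[X,T]$, still of $T$-degree $d$, with $\widetilde P(x(\o),r_x(\o))=0$ on $\Delta_x$; by Theorem~\ref{thm2} both $x(\o)$ and $r_x(\o)$ are meromorphic on all of ${\bf C}$, so $\widetilde P(x(\o),r_x(\o))\equiv 0$ on ${\bf C}$. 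In other words, $r_x$ is algebraic of degree $\le d$ over the field ${\bf C}(x(\o))$ of elliptic functions with period lattice $\o_1{\bf Z}+\o_2{\bf Z}$.

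The heart of the proof is then to extract a period of $r_x$ from this. For every $\gamma\in\o_1{\bf Z}+\o_2{\bf Z}$ we have $x(\o+\gamma)=x(\o)$ by \eqref{peri}, hence $r_x(\cdot+\gamma)$ is again a root of $\widetilde P(x(\o),T)$; since for generic $\o$ this polynomial in $T$ has exactly $d$ roots, the family of translates $\{r_x(\cdot+\gamma)\}_\gamma$ assumes at most $d$ distinct values, so the stabilizer $\{\gamma:r_x(\cdot+\gamma)=r_x\}$ has index $\le d$ in $\o_1{\bf Z}+\o_2{\bf Z}$. (Here the identity theorem is used to upgrade the coincidence of two translates at one generic point to an identity on all of ${\bf C}$.) In particular $r_x$ admits a period of the form $N\o_2$ with $N\ge 1$; combined with the $\o_1$-periodicity \eqref{buzz}, $r_x$ would in fact be elliptic.

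Finally I would play this period against the orbit-sum identity \eqref{key_equation}. Writing $\o_2/\o_3=k/\ell$ as in Lemma~\ref{lemma_omega_2_3}, \eqref{key_equation} reads $r_x(\o+\ell\o_2)-r_x(\o)=\mathscr O(\o)$, and $\mathscr O$ is $\o_2$-periodic because $\widehat\xi,\widehat\eta$ are affine involutions \eqref{hatx} while $x(\o),y(\o)$ are $\o_2$-periodic \eqref{peri}. Iterating over $N$ consecutive blocks of length $\ell\o_2$ gives
\begin{equation*}
     r_x(\o+N\ell\o_2)-r_x(\o)=\sum_{j=0}^{N-1}\mathscr O(\o+j\ell\o_2)=N\mathscr O(\o),
\end{equation*}
whereas the left-hand side vanishes identically by the $N\o_2$-periodicity of $r_x$. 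Hence $\mathscr O\equiv 0$, and by the lemma stating that $\mathscr O$ vanishes identically if and only if the covariance \eqref{def_covariance} is positive, this contradicts the hypothesis of non-positive covariance. The same argument applied to $r_y$, using the orbit-sum identity for $r_y$ (which is \eqref{key_equation} with the opposite sign), shows that $y\mapsto Q(0,y)$ is not algebraic either. The one point deserving care is the period-producing step: one must check that $\widetilde P(x(\o),T)$ genuinely has $d$ distinct roots and that $r_x$ is holomorphic at the finitely many translated base points used, but both hold for generic $\o$ since the exceptional set of $X$ and the polar set of $r_x$ are discrete. Conceptually, that step is just the remark that a single-valued meromorphic function on ${\bf C}$ algebraic over the function field of the torus must again be elliptic, and the non-vanishing of $\mathscr O$ is exactly what prevents this in the non-positive covariance case.
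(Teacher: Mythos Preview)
Your proof is correct, and the non-algebraicity argument takes a genuinely different route from the paper's. The paper constructs an explicit decomposition: using the Weierstrass-zeta-based function $\Phi$ of Lemma~\ref{lemma_properties_wp}~\ref{zeta_elliptic_function} (which is $\o_1$-periodic with $\Phi(\o+\o_2)=\Phi(\o)-1$), one checks from \eqref{key_equation} (here $\ell=1$) that $r_x+\Phi\mathscr{O}$ is $(\o_1,\o_2)$-elliptic, hence algebraic in $x(\o)$; since $\mathscr{O}$ is algebraic and nonzero while $\Phi$ is known from \cite[page~71]{FIM} to be non-algebraic in $x(\o)$, the non-algebraicity of $r_x$ follows. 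Your contradiction argument is more self-contained: it avoids the external reference for the non-algebraicity of $\Phi$, trading it for the observation that an $r_x$ algebraic over ${\bf C}(x(\o))$ can have only finitely many lattice-translates, hence admits a period $N\o_2$, which via \eqref{key_equation} forces $\mathscr{O}\equiv 0$. One small remark on phrasing: the step ``at most $d$ distinct translates'' is most cleanly justified not by a generic-point/identity-theorem argument but by viewing the translates $r_x(\cdot+\gamma)$ directly as elements of the field of meromorphic functions on ${\bf C}$, all roots of the degree-$d$ polynomial $\widetilde P(x(\o),T)$ over that field---your conclusion is correct either way. The paper's approach yields an explicit structural description of $r_x$ (and re-derives holonomy in the same stroke), while yours is shorter and needs no input beyond Theorem~\ref{ratho} and the orbit-sum identity.
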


Proofs of both of these propositions involve the following lemma.
\begin{lem}
\label{lemma_properties_wp} Let ${\wp}$ be a Weierstrass elliptic
function with certain periods $\overline{\omega},\widehat{\omega}$.
\begin{enumerate}[label={\rm (P\arabic{*})},ref={\rm (P\arabic{*})}]
\item \label{differential_equation} We have
      \begin{equation*}
          \wp'(\omega)^{2}=4[\wp(\omega)-\wp(\overline{\omega}/2)]
                            [\wp(\omega)-\wp([\overline{\omega}+\widehat{\omega}]/2)]
                            [\wp(\omega)-\wp(\widehat{\omega}/2)],\qquad  \forall \omega\in{\bf C}.
     \end{equation*}
\item \label{principle_transformation} Let $p$ be some positive integer. The Weierstrass elliptic function with periods $\overline{\omega},\widehat{\omega}/p$ can be written in terms of ${\wp}$ as
     \begin{equation*}
          {\wp}(\omega)+\sum_{\ell=1}^{p-1}[{\wp}(\omega+\ell\widehat{\omega}/p)-{\wp}(\ell\widehat{\omega}/p)],\qquad  \forall \omega\in{\bf C}.
     \end{equation*}
\item \label{addition_theorem} We have the addition theorem:
     \begin{equation*}
          \wp({\omega}+\widetilde{\omega})=-\wp({\omega})
          -\wp(\widetilde{\omega})+\frac{1}{4}\left[\frac{\wp'({\omega})-
          \wp'(\widetilde{\omega})}{\wp({\omega})-
          \wp(\widetilde{\omega})}\right]^{2},\qquad  \forall \omega,\widetilde{\omega}\in{\bf C}.
     \end{equation*}
\item \label{elliptic_field} For any elliptic function $f$ with periods $\overline{\omega},\widehat{\omega}$, there exist two rational functions $R$ and $S$ such that
     \begin{equation*}
          f(\omega)=R(\wp(\omega))+\wp'(\omega)S(\wp(\omega)),\qquad  \forall \omega\in{\bf C}.
     \end{equation*}
\item \label{zeta_elliptic_function} There exists a function $\Phi$ which is $\overline{\omega}$-periodic and such that $\Phi(\omega+\widehat{\omega})=\Phi(\omega)-1$, $\forall \omega\in{\bf C}$.
\end{enumerate}
\end{lem}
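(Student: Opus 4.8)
The plan is to dispatch \ref{lemma_properties_wp} by recalling the relevant classical facts on the Weierstrass functions, for which \cite{WW} is the standard reference; I only indicate the arguments. Throughout, write $\Lambda=\overline{\omega}{\bf Z}+\widehat{\omega}{\bf Z}$ and $e_{1}=\wp(\overline{\omega}/2)$, $e_{2}=\wp([\overline{\omega}+\widehat{\omega}]/2)$, $e_{3}=\wp(\widehat{\omega}/2)$. For \ref{differential_equation}, I would observe that $\wp'$ is odd and $\Lambda$-elliptic, hence vanishes at the three half-periods, and that it has a single triple pole per fundamental parallelogram, at the lattice points; therefore $\wp'^{2}$ and $4(\wp-e_{1})(\wp-e_{2})(\wp-e_{3})$ are elliptic functions with the same zeros (double zeros at the three half-periods) and the same pole of order six at the origin, with identical principal part $4/\omega^{6}$ there, so their ratio is a holomorphic elliptic function, thus a constant, equal to $1$ by comparison of the principal parts.

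For \ref{principle_transformation} I would introduce $F(\omega)=\wp(\omega)+\sum_{\ell=1}^{p-1}[\wp(\omega+\ell\widehat{\omega}/p)-\wp(\ell\widehat{\omega}/p)]$. It is manifestly $\overline{\omega}$-periodic, and since $\omega\mapsto\omega+\widehat{\omega}/p$ permutes the $p$ arguments $\omega+\ell\widehat{\omega}/p$, $0\le\ell\le p-1$, modulo $\Lambda$ (because $\wp(\omega+\widehat{\omega})=\wp(\omega)$) while leaving the subtracted constant unchanged, $F$ is also $(\widehat{\omega}/p)$-periodic. As $F$ is even, with a double pole of principal part $1/(\omega-\lambda)^{2}$ at each $\lambda\in\Lambda'=\overline{\omega}{\bf Z}+(\widehat{\omega}/p){\bf Z}$ and no other poles, $F-\wp(\,\cdot\,;\overline{\omega},\widehat{\omega}/p)$ is a constant; the constant terms of the Laurent expansions of both functions at $0$ vanish (the constant term $\wp(\ell\widehat{\omega}/p)$ of $\wp(\omega+\ell\widehat{\omega}/p)$ cancels against $-\wp(\ell\widehat{\omega}/p)$), so that constant is $0$. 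For \ref{addition_theorem} I would use the usual geometric proof: choosing $A,B$ so that $\wp'-A\wp-B$ vanishes at $\omega$ and at $\widetilde{\omega}$, this function has a triple pole at the origin and hence three zeros summing to $0$ modulo $\Lambda$, whence the third is $-\omega-\widetilde{\omega}$; eliminating $\wp'$ between \ref{differential_equation} and $\wp'=A\wp+B$ shows that $\wp(\omega)$, $\wp(\widetilde{\omega})$ and $\wp(\omega+\widetilde{\omega})$ are the three roots of a cubic with leading coefficient $4$ and coefficient of $t^{2}$ equal to $-A^{2}$, so their sum is $A^{2}/4$, and substituting $A=[\wp'(\omega)-\wp'(\widetilde{\omega})]/[\wp(\omega)-\wp(\widetilde{\omega})]$ gives the stated identity.

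For \ref{elliptic_field} I would split $f(\omega)=\tfrac12[f(\omega)+f(-\omega)]+\wp'(\omega)\cdot\tfrac{f(\omega)-f(-\omega)}{2\wp'(\omega)}$; both bracketed functions are even and $\Lambda$-elliptic, and an even elliptic function is a rational function of $\wp$ (its zeros and poles occur in pairs $\pm\omega$, so one matches it against an explicit rational function of $\wp$, the ratio being a holomorphic elliptic function and hence constant). Finally, for \ref{zeta_elliptic_function}, let $\zeta$ denote the Weierstrass zeta function for $\Lambda$, with quasi-periods $\zeta(\omega+\overline{\omega})=\zeta(\omega)+\eta_{1}$ and $\zeta(\omega+\widehat{\omega})=\zeta(\omega)+\eta_{2}$, and set
\[
     \Phi(\omega)=\frac{\overline{\omega}\,\zeta(\omega)-\eta_{1}\,\omega}{\eta_{1}\widehat{\omega}-\eta_{2}\overline{\omega}}.
\]
Then $\Phi(\omega+\overline{\omega})=\Phi(\omega)$ and $\Phi(\omega+\widehat{\omega})=\Phi(\omega)-1$, the denominator being nonzero by Legendre's relation $\eta_{1}\widehat{\omega}-\eta_{2}\overline{\omega}=\pm 2\pi i$.

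I do not expect a genuine obstacle here: the whole lemma reduces to comparing poles, zeros and leading Laurent coefficients of elliptic functions. The only two places asking for a touch of care are the verification that the additive constant in \ref{principle_transformation} vanishes, and the appeal to Legendre's relation guaranteeing that $\Phi$ in \ref{zeta_elliptic_function} is well defined.
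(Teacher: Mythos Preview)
Your proposal is correct and aligns with the paper's treatment: the paper simply cites \cite{JS,WW} for (P1), (P3), (P4), \cite[page 456]{WW} for (P2), and \cite[Equation (4.3.7)]{FIM} for (P5), noting only that $\Phi$ is built from the Weierstrass zeta function. You go further by actually sketching the classical arguments (Liouville-type comparisons of elliptic functions via their poles and principal parts, the geometric proof of the addition law, and the explicit construction of $\Phi$ from $\zeta$ via Legendre's relation), which is more informative but not a different route.
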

\begin{proof}
Properties \ref{differential_equation}, \ref{addition_theorem} and
\ref{elliptic_field} are most classical, and can be found, e.g., in
\cite{JS,WW}. For \ref{principle_transformation} we refer to
\cite[page 456]{WW}, and for \ref{zeta_elliptic_function}, see
\cite[Equation (4.3.7)]{FIM}. Note that the function $\Phi$ in
\ref{zeta_elliptic_function} can be constructed via the zeta
function of Weierstrass.
\end{proof}

\begin{proof}[Proof of Proposition \ref{prop_zero_orbit-sum}]
If the orbit-sum $\mathscr{O}(\omega)$ is zero, Equation
\eqref{key_equation} implies that $r_x(\omega)$ is
$\ell\omega_2$-periodic. In particular, the property
\ref{elliptic_field} of Lemma \ref{lemma_properties_wp} entails that
there exist two rational functions $R$ and $S$ such that
     \begin{equation}
     \label{q_x_algebraic}
          r_x(\omega)=R(\wp(\omega;\omega_1,\ell\omega_2))+\wp'(\omega;\omega_1,\ell\omega_2)S(\wp(\omega;\omega_1,\ell\omega_2)).
     \end{equation}
Further, the property \ref{principle_transformation} together with
the addition formula \ref{addition_theorem} of Lemma
\ref{lemma_properties_wp} gives that
$\wp(\omega;\omega_1,\ell\omega_2)$ is an algebraic function of
$\wp(\omega)$---we recall that $\wp(\omega)$ denotes the Weierstrass
function $\wp(\omega;\omega_1,\omega_2)$. Due to Lemma
\ref{lemma_properties_wp} \ref{differential_equation},
$\wp'(\omega)$ is an algebraic function of $\wp(\omega)$ too, so
that $\wp'(\omega;\omega_1,\ell\omega_2)$ is also an algebraic
function of $\wp(\omega)$. Thanks to \eqref{q_x_algebraic}, we get
that $r_x(\omega)$ is algebraic in $\wp(\omega)$. Since
$\wp(\omega)$ is a rational function of $x(\omega)$, see
\eqref{expression_uniformization}, we finally obtain that
$r_x(\omega)$ is algebraic in $x(\omega)$.
  Then $q_x(\o)=r_x(\o)/K(x(\o), 0)$ is algebraic in $x(\o)$,
  and so is $q_y(\o)$ in $y(\o)$.
\end{proof}

\begin{proof}[Proof of Proposition \ref{prop_non-zero_orbit-sum}]
In this proof we have $\ell=1$, see Lemma \ref{lemma_omega_2_3}.
Thanks to Lemma \ref{lemma_properties_wp}
\ref{zeta_elliptic_function}, there exists a function $\Phi$ which
is $\omega_1$-periodic and such that
$\Phi(\omega+\omega_2)=\Phi(\omega)-1$. In particular, transforming
\eqref{key_equation} we can write
     \begin{equation*}
          r_x(\omega+\omega_2)+\Phi(\omega+\omega_2)\mathscr{O}(\omega+\omega_2)=r_x(\omega)+\Phi(\omega)\mathscr{O}(\omega).
     \end{equation*}
This entails that $r_x(\omega)+\Phi(\omega)\mathscr{O}(\omega)$ is
elliptic with periods $\omega_1,\omega_2$. In particular, for the
same reasons as in the proof of Proposition
\ref{prop_zero_orbit-sum}, this is an algebraic function of
$x(\omega)$. The function $\mathscr{O}(\omega)$ is obviously also
algebraic in $x(\omega)$. As for the function $\Phi(\omega)$, it is
proved in \cite[page 71]{FIM} that it is a non-algebraic function
of $x(\omega)$. Moreover, it is shown in \cite[Lemma 2.1]{FR} that
it is holonomic in $x(\omega)$. Hence $r_x(\o)$ is holonomic in
$x(\o)$ but not algebraic.  The same is true for
$q_x(\o)=r_x(\o)/K(x(\o),0)$ in $x(\o)$ and  for $q_y(\o)$ in
$y(\o)$.
\end{proof}


\section{Infinite group case}
\label{Section_infinite_group}
\setcounter{equation}{0}

  It has been shown in \cite[Part 6.2]{Ra} that for all $51$ models
   with infinite group, $\o_2/\o_3$ takes irrational values for
   infinitely many $z$. The next proposition states a more complete
   result.

\begin{prop}
\label{proir} For all $51$ walks with infinite group, the
 sets $\mathcal{H}=\{z \in ]0, 1/|\mathcal{S}|[: \o_2/\o_3 \hbox{ is
 rational}\}$ and $]0, 1/|\mathcal{S}|[\setminus \mathcal{H}=
  \{z \in ]0, 1/|\mathcal{S}|[: \o_2/\o_3 \hbox{ is
 irrational}\}$ are dense in
  $]0, 1/|\mathcal{S}|[$.
\end{prop}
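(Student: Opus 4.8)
The plan is to show that the map $z \mapsto \omega_2(z)/\omega_3(z)$ on $]0,1/|\mathcal{S}|[$ is real-analytic and non-constant; once this is established, the two assertions follow at once. Indeed, a non-constant real-analytic function on an interval takes every value in its range only finitely often on any compact subinterval, so it cannot be locally constant; hence both the set $\mathcal{H}$ where the ratio is rational and its complement are dense, because in any subinterval the function sweeps a non-degenerate range which meets both ${\bf Q}$ and ${\bf R}\setminus{\bf Q}$ in dense sets. So everything reduces to (a) analyticity in $z$ and (b) non-constancy.

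For step (a), I would argue that the branch points $x_1(z),\dots,x_4(z)$ depend analytically (indeed algebraically, locally) on $z$ on $]0,1/|\mathcal{S}|[$: they are the roots of the discriminant $d(x)=b(x)^2-4a(x)c(x)$ from \eqref{d_d_tilde}, whose coefficients are polynomials in $z$, and by the discussion in Section~\ref{Riemann_surface} these four roots stay real, distinct, and ordered as $|x_1|<x_2<1<x_3<|x_4|$ throughout the open interval (no collision, no passage through a branch-point singularity of the roots). Likewise $X(y_1(z);z)$ is analytic in $z$. Then the elliptic integrals
     \begin{equation*}
          \omega_1 = i\int_{x_1}^{x_2}\frac{\mathrm{d}x}{[-d(x)]^{1/2}},\qquad \omega_2 = \int_{x_2}^{x_3}\frac{\mathrm{d}x}{d(x)^{1/2}},\qquad \omega_3=\int_{X(y_1)}^{x_1}\frac{\mathrm{d}x}{d(x)^{1/2}}
     \end{equation*}
given in \eqref{expression_omega_1_2}--\eqref{expression_omega_3} are, after a standard change of variables sending the endpoints to fixed points, integrals of a function depending analytically on the parameter $z$ over a fixed contour, hence analytic in $z$; since $\omega_3$ never vanishes (it is half the distance between the cuts, see Section~\ref{uc}), the ratio $\omega_2/\omega_3$ is real-analytic on $]0,1/|\mathcal{S}|[$.

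For step (b), the non-constancy, the cleanest route is to invoke what is already known: by \cite[Part 6.2]{Ra}, cited just above the statement, $\omega_2/\omega_3$ takes irrational values for infinitely many $z$, and by Lemma~\ref{lemma_omega_2_3} type reasoning one can exhibit at least one $z$ (e.g.\ a value where the group restricted to the curve $\{K(x,y;z)=0\}$ is finite, which exists for these models by the analysis in \cite{FIM,Ra}) where the ratio is rational; a real-analytic function taking both a rational and an irrational value is certainly non-constant. Alternatively, and more self-containedly, one studies the behaviour of $\omega_2/\omega_3$ as $z\to 1/|\mathcal{S}|$: there the model degenerates (typically $x_1\to x_2$ or the cuts merge, and $\omega_1\to 0$ or $\omega_3\to 0$ in a controlled way), so the ratio tends to a limit—$0$, $\infty$, or an explicit finite value—that differs from its value at interior points, which again forces non-constancy. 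I expect the main obstacle to be the bookkeeping in step~(a): one must check carefully that over the whole open interval the four branch points remain real, simple, and in the prescribed order—equivalently that the discriminant of $d(\cdot)$ in its own variable does not vanish—so that the integrals never encounter a confluence of endpoints; the paper's remark that "the arguments given in \cite[Part 2.3]{FIM} for the case $z=1/|\mathcal{S}|$ indeed also apply for other values of $z$" is exactly what supplies this, so the degeneration is confined to the single endpoint $z=1/|\mathcal{S}|$.
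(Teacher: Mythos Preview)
Your overall plan---show that $z\mapsto\omega_2/\omega_3$ is real-analytic and non-constant, then conclude density of both $\mathcal{H}$ and its complement---is exactly the paper's strategy, and your step~(a) is fine (the paper also invokes real-analyticity, citing \cite[Part 6.2]{Ra}).

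The gap is in step~(b). Your first route is circular: you propose to exhibit a $z$ at which the ratio is rational by finding a $z$ where ``the group restricted to the curve $\{K(x,y;z)=0\}$ is finite'', but by Remark~\ref{Remark1} this is \emph{equivalent} to $\omega_2/\omega_3\in{\bf Q}$, i.e.\ to $z\in\mathcal{H}$, which is part of what you are trying to prove. Nothing in \cite{FIM,Ra} furnishes such a $z$ for the infinite-group models independently of this proposition. Your second route---look at $z\to 1/|\mathcal{S}|$---is only a sketch: you do not compute the limit, and the assertion that it ``differs from its value at interior points'' is unsupported.

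The paper supplies the missing ingredient by looking at the \emph{other} endpoint: Proposition~\ref{nncc} computes, for each of the $51$ models, an asymptotic
\[
\omega_2/\omega_3 \;=\; L \;+\; \widetilde L/\ln z \;+\; O\!\big((1/\ln z)^2\big), \qquad z\to 0,
\]
with $L\in{\bf Q}_{>0}$ and $\widetilde L\neq 0$. The nonzero $\widetilde L/\ln z$ term forces non-constancy near $0$ outright. Equivalently---and this is the version closest to your first route---the \emph{rational} limit $L$ at $z\to 0$, combined with the irrational values guaranteed by \cite[Part 6.2]{Ra}, yields non-constancy by the intermediate value theorem. Either way, the substantive work you are missing is the boundary asymptotic of the elliptic integrals (done via Abel's expansion of $K(k)$ near $k=1$, model by model), not a soft argument.
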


\begin{proof}
 {The function $\o_2/\omega_3$ is clearly real continuous function on $]0,
  1/|\mathcal{S}|[$.  In fact, it has been noticed in \cite[Part 6.2]{Ra} that the function
$\o_2/\o_3$ is expandable in power series in a neighborhood of any
point of the interval $]0, 1/|\mathcal{S}|[$.
   Thus it suffices to find just one segment within $]0,
  1/|\mathcal{S}|[$ where this function is not constant.}

  Proposition~\ref{nncc} below gives the asymptotic of $\o_2/\o_3$
  as  $z\to 0$: for any of $51$ models there exist some rational $L>0$
     and some $\widetilde L\ne 0$ such that as $z>0$ goes to $0$,
\begin{equation*}
     \o_2/\o_3=L+\widetilde L/\ln z+ O((1/\ln z)^2).
     \end{equation*}
  {This immediately implies that} this function
    is not constant on a small enough interval in a right neighborhood of
    $0$ and concludes the proof.

 Note however that there is another way to conclude the proof that does not need the full power of Proposition~\ref{nncc}: it is enough to show {(as done in the proof of Proposition~\ref{nncc})} that $\o_2/\o_3$ converges to a rational positive constant $L$ as $z \to 0$ for all $51$ models. {Indeed, then, since $\o_2/\o_3$ necessarily takes irrational values for some $z\in]0, 1/|\mathcal{S}|[$ (see \cite[Part 6.2]{Ra}), there exists an interval within $]0, 1/|\mathcal{S}|[$ where the ratio $\o_2/\o_3$ is not constant.}
\end{proof}

In  Subsections \ref{subsection71}, \ref{subsection72} and \ref{subsection73} we thoroughly analyze the branches of
$x\mapsto Q(x,0)$ and $y\mapsto Q(0,y)$ for $z$ such that
$\o_2/\o_3$ is irrational, and we prove in particular that their set of
poles is infinite and dense on the curves given on Figure \ref{CC},
see Theorem \ref{main_tt}.
   Then the following corollary is immediate.
\begin{cor}
\label{cor-rnc}
  Let  $\mathcal{H}=\{z \in ]0, 1/|\mathcal{S}|[: \o_2/\o_3 \hbox{ is
 rational}\}$ and $]0, 1/|\mathcal{S}|[\setminus \mathcal{H}=
  \{z \in ]0, 1/|\mathcal{S}|[: \o_2/\o_3 \hbox{ is
 irrational}\}$.
     \begin{enumerate}[label={\rm (\roman{*})},ref={\rm (\roman{*})}]
     \item \label{cor-rnc1} For all $z\in \mathcal H$, $x \mapsto Q(x,0)$ and $y \mapsto Q(0,y)$ are holonomic;
     \item\label{cor-rnc2} For all $z\in \mathcal ]0, 1/|\mathcal{S}|[\setminus \mathcal H$, $x \mapsto Q(x,0)$ and $y \mapsto Q(0,y)$ are non-holonomic.
     \end{enumerate}
     \end{cor}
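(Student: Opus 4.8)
The plan is to obtain Corollary~\ref{cor-rnc} as a quick consequence of Theorem~\ref{ratho} on the one hand and of the forthcoming Theorem~\ref{main_tt} on the other, glued together by the general principle that a holonomic one-variable function can have only finitely many poles among all its branches.

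Part~\ref{cor-rnc1} requires essentially no work: by the very definition of $\mathcal{H}$, for every $z\in\mathcal{H}$ the ratio $\o_2/\o_3$ is rational, so Theorem~\ref{ratho} applies verbatim and yields that $x\mapsto Q(x,0)$ and $y\mapsto Q(0,y)$ are holonomic.

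For part~\ref{cor-rnc2}, fix $z\in\,]0,1/|\mathcal{S}|[\,\setminus\mathcal{H}$, so that $\o_2/\o_3$ is irrational. First I would recall from Subsection~\ref{subsecbra} that, via the uniformization \eqref{expression_uniformization} and the covering map $\lambda$, the function $x\mapsto Q(x,0)$ is the multi-valued function whose branches are the restrictions of $r_x(\o)/K(x(\o),0)$ to the cells $\mathscr{M}_{k,0}$, $k\geq 1$, where $r_x$ is the globally meromorphic function on $\mathbf C$ furnished by Theorem~\ref{thm2}; similarly for $y\mapsto Q(0,y)$ with $r_y$ and $\mathscr{N}_{k,0}$. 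The second step, which is the content of Subsections~\ref{subsection71}--\ref{subsection73} and is summarized in Theorem~\ref{main_tt}, is to track the poles of $r_x$: starting from \eqref{cont}, \eqref{cont1}, \eqref{xieta} and \eqref{buzz} one follows a pole under the real shift $\o_3=2(\omega_{y_2}-\omega_{x_2})$ and checks that, since $\o_2/\o_3\notin\mathbf Q$, the iterates of that pole under the shifts by $n\o_3$ are never congruent modulo the lattice $\omega_1\mathbf Z+\omega_2\mathbf Z$, so they give infinitely many pairwise inequivalent poles, projecting onto an infinite set which is moreover dense on the curves pictured in Figure~\ref{CC}. Hence the set of all poles of all branches of $x\mapsto Q(x,0)$ (and likewise of $y\mapsto Q(0,y)$) is infinite. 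The last step is then the incompatibility argument already isolated in the Introduction: were $x\mapsto Q(x,0)$ holonomic, all of its branches would satisfy one and the same linear differential equation with polynomial coefficients, so every pole of every branch would be a zero of the leading polynomial coefficient, forcing the pole set to be finite; this contradicts the previous step. Therefore $x\mapsto Q(x,0)$, and by the same token $y\mapsto Q(0,y)$, is non-holonomic.

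The genuine difficulty — and hence the main obstacle — lies entirely in Theorem~\ref{main_tt}: it is not enough to exhibit candidate poles, one must prove they are truly infinite in number and do not cancel. The delicate points are separating the poles of $r_x$ from those of $1/K(x(\o),0)$ (item~(iv) of the list preceding Theorem~\ref{main_tt}), showing that the ``new'' poles created at each application of \eqref{cont} survive, and ruling out accidental coincidences modulo $\omega_1\mathbf Z+\omega_2\mathbf Z$ — which is precisely where the irrationality of $\o_2/\o_3$ is used, and where the fact that we are dealing with the $51$ \emph{non-singular} models with \emph{infinite} group has to be exploited.
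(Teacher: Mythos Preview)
Your proposal is correct and follows essentially the same approach as the paper: part~\ref{cor-rnc1} is Theorem~\ref{ratho} verbatim, and part~\ref{cor-rnc2} is Theorem~\ref{main_tt}~\ref{inet} combined with the incompatibility argument (infinitely many poles among the branches contradicts holonomy) spelled out in the Introduction. Your extra paragraph correctly identifies that the substantive content lies in Theorem~\ref{main_tt}, whose proof is deferred to Subsections~\ref{subsection71}--\ref{subsection73}; the corollary itself is a two-line consequence, and the paper treats it as such.
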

\begin{proof}
The statement \ref{cor-rnc1} follows from Theorem \ref{ratho}, and \ref{cor-rnc2} comes from Theorem \ref{main_tt} \ref{inet} below as explained in the Introduction.
\end{proof}

{
\begin{rem}
\label{Remark2}
   It follows from Remark \ref{Remark1} that
$\mathcal{H}$ can be characterized as the set of $z \in ]0,
1/|\mathcal{S}|[$ such that the group $\langle \xi, \eta \rangle$
restricted to the curve $\{(x,y)\in({\bf C}\cup\{\infty\})^2 : K(x,y;z)=0\}$
 is finite. Then methods developed in \cite[Chapter 4]{FIM}
    specifically for the finite group case should be efficient for
      further analysis of $x\mapsto Q(x,0)$ and $y \mapsto Q(0,y)$
      for any fixed $z \in \mathcal{H}$.
  \end{rem} }

 The analysis of the poles
being rather technical, we start first with an informal study.

%

\subsection{Poles of the set of branches of $x \mapsto Q(x,0)$ and
  $y \mapsto Q(y,0)$ for irrational $\o_2/\o_3$: an informal study}
  \label{subsection71}

Let us fix $z \in ]0, 1/|\mathcal{S}|[$ such that $\o_2/\o_3$
is irrational. We first informally explain why the set of poles of
all branches of $x\mapsto Q(x,0)$ and $y \mapsto Q(0,y)$ could be
dense on certain curves in this case. We shall denote by $\Re
\omega$ and $\Im \omega$ the real and imaginary parts of
$\omega\in{\bf C}$, respectively. Let $\Pi_x$ and $\Pi_y$ be the parallelograms
defined by
     \begin{equation}
     \label{def_Pi_x_y}
          \Pi_x = \mathscr{M}_{0,0} \cup \mathscr{M}_{0,1}=\omega_1[0,1[+\omega_2[0,1[,
          \qquad
          \Pi_y = \mathscr{N}_{0,0} \cup \mathscr{N}_{0,1}=\omega_3/2+\omega_1[0,1[+\omega_2]0,1],
     \end{equation}
with  notations \eqref{mkl} and \eqref{nkl}.
 Function $r_x(\o)$ (resp.\ $r_y(\o)$)
 on $\Pi_x$ (resp.\ $\Pi_y$) defines the first
  (main) branch of $x \mapsto  Q(x,0)$
   (resp.\ $y \mapsto  Q(0,y)$) twice via \eqref{branches} (resp.\ \eqref{branches1}).

Denote by $f_y(\o)=x(\o)[y(-\o+2\o_{x_1})-y( \o)]$ the function used in the meromorphic
 continuation procedure \eqref{cont1}.
  Assume that at some $\o_0 \in \Pi_y$, $r_y(\o_0)\ne \infty$ and $f_y(\o_0)=\infty$.
  Further, suppose that
\begin{equation}
\label{bbb}
\nexists \o \in \Pi_y:\quad \Im \o=\Im \o_0,\quad
 f_y(\o)=\infty.
 \end{equation}
   By \eqref{cont1}, for any $n\geq 1$ we have
   \begin{equation}
   \label{qdf1}
   r_y(\o_0+n\o_3)=r_y(\o_0)+ f_y(\o_0)+\sum_{k=1}^{n-1}f_y(\o_0+k
   \o_3).
  \end{equation}
     We have $r_y(\o_0)+ f_y(\o_0)=\infty$ by our assumptions.
   If $\o_2/\o_3$ is irrational, then for any $k\geq 1$
 there is no $p \in {\bf Z}$ such that $\o_0+k \o_3=\o_0+p\o_2$.
 Function $f_y$ being $\o_2$-periodic, it follows from this
 fact and assumption \eqref{bbb}
 that $f_y(\o_0+k\o_3)\ne \infty$ for any $k \geq 1$.  Hence
 by (\ref{qdf1}), $r_y(\o_0+n\o_3)=\infty$ for all $n \geq 1$.
  Due to irrationality of $\o_2/\o_3$,
  for any $n\geq 1$ there exists a unique $\o_n(\o_0) \in
    \Pi_y$ and
    $p \in {\bf Z}$ such that $\o_0+n \o_3=\o_n(\o_0)+p\o_2$,
  and the set $\{\o_n(\o_0)\}_{n\geq 1}$ is dense on the curve
  \begin{equation}
  \label{iy}
   \mathcal{I}_y(\o_0)=y(\{\o: \Im \o=\Im \o_0,\,  \o \in
   \Pi_y\})\subset {\bf C} \cup \{\infty\}.
   \end{equation}
  By definition \eqref{branches1}, the set of poles of all
   branches of $y \mapsto  K(0,y)Q(0,y)$ is dense on the curve
   $\mathcal{I}_y(\o_0)$. The number of zeros of $y \mapsto K(0,y)$ being
   at most two, the same conclusion holds true for $y \mapsto Q(0,y)$.

\medskip

Let us now identify the points $\o_0$ in $\Pi_y$ where $f_y(\o_0)$
is infinite. They are (at most) six such points $a_1,a_2,a_3,a_4,
b_1,b_2 \in \Pi_y$, which correspond to the following pairs
$(x(\o_0), y(\o_0))$:
     \begin{equation}
     \label{aaaabb}
          a_1= (x^\s, \infty),
          \ a_4=(x^\s, y^\s),
          \ a_2=(x^\ss, \infty),
          \ a_3=(x^\ss,y^\ss),
          \ b_1=(\infty, y^\d),
          \ b_2=(\infty, y^\dd).
     \end{equation}
Here by \eqref{def_XX} and \eqref{def_YY}
\begin{equation*}
\begin{array}{ccccccc}
 x^\s\hspace{-1.5mm}&\hspace{-1.5mm}=\hspace{-1.5mm}&\hspace{-1.5mm}
 \displaystyle \lim_{y \to \infty} \frac{
-\widetilde b(y)+[\widetilde b(y)^2-4\widetilde a(y)\widetilde
c(y)]^{1/2}}{2\widetilde a(y)},&\ & x^\ss
\hspace{-1.5mm}&\hspace{-1.5mm}
=\hspace{-1.5mm}&\hspace{-1.5mm}\displaystyle\lim_{y \to \infty}
\frac{ -\widetilde b(y)-[\widetilde b(y)^2-4\widetilde
a(y)\widetilde c(y)]^{1/2}}{2\widetilde a(y)},

\\

y^\s\hspace{-1.5mm}&\hspace{-1.5mm}=\hspace{-1.5mm}&\hspace{-1.5mm}\displaystyle\lim_{x\to
x^\s}\frac{ - b(x)+ [b(x)^2-4 a(x) c(x)]^{1/2}}{2 a(x)},&\ &
y^\ss\hspace{-1.5mm}&\hspace{-1.5mm}
=\hspace{-1.5mm}&\hspace{-1.5mm}\displaystyle\lim_{x\to x^\ss}\frac{
- b(x)+[ b(x)^2-4 a(x) c(x)]^{1/2}}{2 a(x)},

\\

y^\d\hspace{-1.5mm}&\hspace{-1.5mm}=\hspace{-1.5mm}&\hspace{-1.5mm}\displaystyle\lim_{x\to
\infty}\frac{ - b(x)+[b(x)^2-4 a(x) c(x)]^{1/2}}{2 a(x)},&\ &
y^\dd\hspace{-1.5mm}&\hspace{-1.5mm}=\hspace{-1.5mm}&\hspace{-1.5mm}\displaystyle\lim_{x\to
\infty}\frac{ - b(x)-[ b(x)^2-4 a(x) c(x)]^{1/2}}{2 a(x)}.
\end{array}
\end{equation*}
where $a,b,c, \widetilde a, \widetilde b, \widetilde c$ are
introduced in \eqref{kernel_pt}.

For most of $51$ models of walks, assumption \eqref{bbb} holds true
for none of these points, so that the previous reasoning does not
work: some poles of $f_y$ could be compensated in the sum
(\ref{qdf1}).
 Furthermore, it may happen for some of these points
  that not only $f_y(\o_0)=\infty$ but also
 $r_y(\o_0)=\infty$, and consequently $f_y(\o)+r_y(\o)$
     may have no pole at $\o=\o_0$.
 For these reasons we need to inspect {more closely} the location
 of these six points for each of the $51$ models and their contribution
 to the set of poles via (\ref{qdf1}).

\subsection{Functions $x \mapsto Q(x,0)$ and $y \mapsto Q(0,y)$
 for irrational $\o_2/\o_3$}
 \label{subsection72}

 In addition to the notation \eqref{iy}, define the curve
\begin{equation}
  \label{ix}
   \mathcal{I}_x(\o_0)=x(\{\o: \Im \o=\Im \o_0,\,\o \in \Pi_x\})
   \subset {\bf C} \cup \{\infty\}.
   \end{equation}
We now formulate the main theorem of this section.
\begin{thm}
\label{main_tt} For all $51$ non-singular walks with infinite group
\eqref{group} given on Figure \ref{Allcases} and any $z$ such that
$\o_2/\o_3$ is irrational, the following statements hold.
\begin{enumerate}[label={\rm (\roman{*})},ref={\rm (\roman{*})}]
\item \label{onsi} The only singularities on ${\bf C}$
of the first branch of $x \mapsto Q(x,0)$ (resp.\ $y \mapsto Q(0,y)$)
are the branch points $x_3$ and $x_4$ (resp.\ $y_3$ and $y_4$).


\item \label{ebfp}
Each branch of $x \mapsto Q(x,0)$ (resp.\ $y\mapsto Q(0,y)$)
is meromorphic on ${\bf C}$ with a finite number of poles.

\item \label{inet}
The set of poles on ${\bf C}$ of all branches of $x \mapsto Q(x,0)$
(resp.\ $y \mapsto Q(0,y)$) is infinite. With the notations
(\ref{ix}), (\ref{iy}) above
  and points $a_1,b_1$ defined in
\eqref{aaaabb}, it is dense on the following curves (see Figure
\ref{CC}):

\begin{enumerate}[label={\rm (iii.\alph{*})},ref={\rm (iii.\alph{*})}]
\item \label{inet1}
 For the walks of Subcase I.A in Figure \ref{Allcases},:
 $\mathcal{I}_x(a_1)$ and $\mathcal{I}_x(b_1)$ for $x \mapsto
Q(x,0)$; $\mathcal I_y(a_1)$ and $\mathcal I_y(b_1)$ for~$y \mapsto
Q(0,y)$.

\item \label{inet2}
For the walks of Subcases I.B and I.C in Figure \ref{Allcases}:
   $\mathcal{I}_x(a_1)$ and ${\bf R} \setminus ]x_1,x_4[$
for $x \mapsto Q(x,0)$;  $\mathcal{I}_y(a_1)$ and $[y_4, y_1]$
for $y \mapsto Q(0,y)$.

\item \label{inet3}
For the walks of Subcase II.A in Figure \ref{Allcases}:
$\mathcal{I}_x(b_1)$ and $[x_4,x_1]$ for $x \mapsto Q(x,0)$;
$\mathcal{I}_y(b_1)$ and ${\bf R} \setminus ]y_1,y_4[$ for $y
\mapsto Q(0,y)$.

\item \label{inet4}
For the walks of Subcases II.B, II.C, II.D
and Case III in Figure \ref{Allcases}:
  ${\bf R} \setminus ]x_1,x_4[$ for $x \mapsto Q(x,0)$;  ${\bf R} \setminus ]y_1,y_4[$ for $y
\mapsto Q(0,y)$.
\end{enumerate}

\item \label{ipoles} Poles of branches of
 $x \mapsto Q(x,0)$ and $y\mapsto Q(0,y)$ out of these curves
  may be only at zeros of $K(x,0)$ and $K(0,y)$, respectively.
\end{enumerate}
\end{thm}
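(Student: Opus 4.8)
The strategy is to work entirely on the universal covering, where by Theorem~\ref{thm2} the lifted functions $r_x(\omega)$ and $r_y(\omega)$ are meromorphic on all of ${\bf C}$ and satisfy the explicit continuation identities \eqref{cont}, \eqref{cont1}, \eqref{sqs2}, \eqref{xieta}, \eqref{buzz}. Each branch of $x\mapsto Q(x,0)$ is the restriction of $r_x(\omega)/K(x(\omega),0)$ to a strip $\mathscr{M}_{k,0}$ (see \eqref{branches}), and similarly for $y\mapsto Q(0,y)$ via $\mathscr{N}_{k,0}$; so the whole theorem is about the pole set of $r_x$ (resp.\ $r_y$) on ${\bf C}$, read off through the uniformization $x(\omega)$ (resp.\ $y(\omega)$). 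I will prove the $r_y$/$Q(0,y)$ statements; the $r_x$/$Q(x,0)$ ones are symmetric, exchanging the roles of $\xi,\eta$, of $x,y$, and of the strips $\Pi_x,\Pi_y$.

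\textbf{Step 1: poles of the first branch (point \ref{onsi}).} On the fundamental strip $\Delta$ the function $r_y$ is holomorphic wherever $K(0,y(\omega))Q(0,y(\omega))$ is — and $y\mapsto K(0,y)Q(0,y)$ is holomorphic in the unit disc, hence $r_y$ is holomorphic on $\Delta_y$; on $\Delta_x$ one uses the expression in Theorem~\ref{thmde}, whose only possible poles come from $y(\omega)=\infty$, and one checks from the coordinates near $\omega_{x_i}$ that no such point lies in the closure of the first branch's domain except at the branch points $y_3,y_4$, which are exactly the images $y(0)=Y(x_4),\ y(\omega_1/2)=Y(x_3)$ of the edges $L_{y_4}^{y_3}$. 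So the first branch of $y\mapsto Q(0,y)$ has singularities only at $y_3,y_4$. This is a direct bookkeeping argument using \eqref{expression_uniformization} and the list of special values in Section~\ref{uc}.

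\textbf{Step 2: each branch has finitely many poles (point \ref{ebfp}).} Iterating \eqref{cont1}, for $n\geq 1$
\begin{equation*}
r_y(\omega+n\omega_3)=r_y(\omega)+\sum_{k=0}^{n-1}f_y(\omega+k\omega_3),\qquad f_y(\omega)=x(\omega)\bigl[y(-\omega+2\omega_{x_2})-y(\omega)\bigr],
\end{equation*}
and symmetrically for $n\leq -1$ using \eqref{cont1} backwards. A fixed branch corresponds to a fixed strip $\mathscr{N}_{k,0}$, i.e.\ a bounded-width horizontal strip in which $\omega$ ranges; translating by $n\omega_3$ brings it back into $\Pi_y$ modulo $\omega_2$. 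The function $f_y$ is elliptic (it is $\omega_1$- and $\omega_2$-periodic by \eqref{peri} and \eqref{buzz}-type reasoning), hence has finitely many poles per period parallelogram, namely among $a_1,\dots,a_4,b_1,b_2$ of \eqref{aaaabb}; and $r_y$ on the first branch has poles only at the images of $y_3,y_4$ by Step~1. A given branch is obtained from the first one by a finite number $|n|$ of applications of \eqref{cont1}, so it acquires only finitely many new poles from the finitely many translates $f_y(\omega+k\omega_3)$, $0\le k<n$. Hence finiteness.

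\textbf{Step 3: the pole set of all branches is infinite and dense on the stated curves (points \ref{inet}, \ref{ipoles}).} This is the heart of the proof and the main obstacle. The informal argument of Subsection~\ref{subsection71} shows that if at one of the six points $\omega_0\in\{a_1,a_2,a_3,a_4,b_1,b_2\}$ one has $f_y(\omega_0)=\infty$ while $r_y(\omega_0)\neq\infty$ \emph{and} no other pole of $f_y$ shares the imaginary part $\Im\omega_0$ (assumption \eqref{bbb}), then $r_y(\omega_0+n\omega_3)=\infty$ for all $n\geq 1$, and by irrationality of $\omega_2/\omega_3$ the points $\omega_0+n\omega_3$ are dense modulo $\omega_2$ on the horizontal line $\Im\omega=\Im\omega_0$, giving a dense set of poles on the curve $\mathcal{I}_y(\omega_0)$. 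The work is: (a) for each of the $51$ models, locate $a_1,\dots,b_2$ in $\Pi_y$ — in particular decide which of $x^\star,x^{\bigstar},y^\circ,y^{\bullet}$ are finite/infinite/coincide, using the leading coefficients $a,c,\widetilde a,\widetilde c$ of \eqref{kernel_pt} (this is where the case split I.A / I.B / I.C / II.A--D / III comes from, matching the classification on Figure~\ref{Allcases}); (b) when \eqref{bbb} fails, show that the \emph{residue} of $f_y$ does not cancel in the finite sum $\sum_{k} f_y(\omega_0+k\omega_3)$ before the relevant translate re-enters a generic horizontal line — so a nonzero residue survives and produces infinitely many genuine poles — or, when $r_y(\omega_0)=\infty$ too, that the combination $r_y+f_y$ still has a pole (compare leading terms); (c) identify, for the degenerate subcases, that the relevant curve is $y(L_{y_4}^{y_1})=[y_4,y_1]$ or the whole image of a real horizontal line, ${\bf R}\setminus]y_1,y_4[$, rather than a generic $\mathcal{I}_y(\omega_0)$ — this happens exactly when the troublesome point sits on the real locus $\Phi$ of Section~2.3. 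Finally, \ref{ipoles}: any pole of a branch of $r_y/K(0,y(\cdot))$ off these curves would, by running \eqref{cont1} backward, force a pole of the first branch at a non-branch-point — impossible by Step~1 — unless it comes from a zero of $K(0,y)$. The count of zeros of $y\mapsto K(0,y)$ being at most two, this does not affect density.

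\textbf{Main obstacle.} Step~3(b) is the genuinely hard and model-dependent part: the residue-survival bookkeeping in the finite telescoping sum \eqref{qdf1}, and the verification that poles of $f_y$ and of $r_y$ do not conspire to cancel, must be carried out for each of the $51$ walks (organized by the subcases of Figure~\ref{Allcases}); the uniform features are Steps~1--2 and the irrationality/density mechanism, but pinning down the exact curves in \ref{inet1}--\ref{inet4} requires the explicit local analysis of the six points \eqref{aaaabb} on the universal covering.
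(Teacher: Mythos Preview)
Your overall architecture matches the paper's: work on the universal covering, iterate \eqref{cont1}, locate the six points \eqref{aaaabb}, and use the irrationality of $\omega_2/\omega_3$ to spread a single surviving pole over a dense orbit. Your Steps~2 and~3 are essentially right, and you correctly flag the model-by-model survival check as the main obstacle; the paper packages that check into a clean Lemma~\ref{dop} with three hypotheses (an order chain $\omega_0\ll\omega^1\ll\cdots\ll\omega^k$ among poles of $f_y$ on the same horizontal line, divergence of the finite sum $\sum f_y(\omega_0+n_j\omega_3)$, and absence of further ordered poles), rather than speaking of residues.

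There is, however, a real gap in your Step~1. You argue only on $\Delta=\Delta_x\cup\Delta_y$, but the first branch lives on all of $\Pi_y=\mathscr{N}_{0,0}\cup\mathscr{N}_{1,0}$, which strictly contains $\Delta$ (the curvilinear strip $\Delta$ does not reach the edges $L_{y_4}^{y_3}$ and $L_{y_4}^{y_3}+\omega_2$ of $\Pi_y$). To cover $\Pi_y\setminus\Delta$ one must already invoke \eqref{cont1}, i.e.\ write $r_y(\omega_0)=r_y(\omega_0-n\omega_3)+\sum_{k=1}^{n} f_y(\omega_0-k\omega_3)$ with $\omega_0-n\omega_3\in\Delta$, and then check that this finite sum acquires no pole at a point with $y(\omega_0)\neq\infty$. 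That is not bookkeeping: the orbit $\{\omega_0-k\omega_3\}$ may well hit one of $a_1,\ldots,b_2$, and one must show either that it does not, or that the resulting infinity cancels (e.g.\ the paper's identity $r_y(a_1-\omega_2)+f_y(a_1-\omega_2)=-r_x(a_4)+K(0,0)Q(0,0)+x^{\star}y^{\star}\neq\infty$ in Subcase~I.A). This is exactly Proposition~\ref{ph1}, and its proof is intertwined with, and of the same case-by-case nature as, Proposition~\ref{ph2} (your Step~3). So ``direct bookkeeping using \eqref{expression_uniformization}'' is not enough; the first-branch regularity and the density of poles for the higher branches are two halves of the \emph{same} subcase analysis I.A--III, and your plan should allocate Step~1 the same depth as Step~3.
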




Before giving the proof of Theorem \ref{main_tt}, we need to
introduce some additional tools. If the value of $\omega_2/\o_3$ is
irrational, for any $\o_0 \in {\bf C}$ and any $n \in {\bf Z}_+$,
there exists a unique $\o_n^y(\o_0) \in\Pi_y$  (resp.\ $\o_n^x(\o_0)
\in\Pi_x$) as well as a unique number $p_y \in {\bf Z}$ (resp.\ $p_x
\in {\bf Z}$) such that $\o_0+n\o_3=p_y\o_2+\o_n^y(\o_0)$ (resp.\
$\o_0+n\o_3=p_x\o_2+\o_n^x(\o_0)$). With these notations we can
state the following lemma.

\begin{lem}
\label{vspom}
Let $z$ be such that $\omega_2/\o_3$ is irrational.
\begin{enumerate}[label={\rm (\alph{*})},ref={\rm (\alph{*})}]
\item \label{AAA} For all $n \neq  m$,
we have $\o_n^x(\o_0) \neq \o_m^x(\o_0)$ and $\o_n^y(\o_0) \neq \o_m^y(\o_0)$.
\item \label{BBB} The set $\{\o_n^x(\o_0)\}_{n  \in {\bf Z}_+}$ (resp.\ $\{\o_n^y(\o_0)\}_{n  \in {\bf Z}_+}$) is dense on the segment $\{\o \in \Pi_x : \Im\o=\Im \o_0\}$ (resp.\ $\{\o \in \Pi_y : \Im\o=\Im \o_0\}$).
\end{enumerate}
\end{lem}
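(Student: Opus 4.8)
The plan is to reduce both assertions to the classical fact that, for an irrational real $\theta$, the sequence $(n\theta \bmod 1)_{n\in{\bf Z}_+}$ is dense in $[0,1[$. The point is that $\omega_3$, like $\omega_2$, is a real number (as noted in the proof of Theorem~\ref{thm2}), whereas $\omega_1\in i{\bf R}$; hence adding $n\omega_3$ to a point of ${\bf C}$ keeps its $\omega_1$-component (equivalently, its imaginary part) unchanged and only shifts the $\omega_2$-component by the real amount $n\omega_3/\omega_2$. Set $\theta=\omega_3/\omega_2$, which is irrational exactly when $\omega_2/\omega_3$ is, and write $\omega_0=\sigma_0\omega_1+\tau_0\omega_2$ with $\sigma_0,\tau_0\in{\bf R}$, where $\sigma_0\in[0,1[$ for the points $\omega_0$ to which the lemma is applied in Subsections~\ref{subsection71}--\ref{subsection73} (they lie in the fundamental parallelogram, which is what makes the reduction into $\Pi_x$ by multiples of $\omega_2$ alone legitimate). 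Then the requirement $\omega_0+n\omega_3=p_x\omega_2+\omega_n^x(\omega_0)$ with $\omega_n^x(\omega_0)\in\Pi_x=\omega_1[0,1[+\omega_2[0,1[$ forces $p_x=\lfloor\tau_0+n\theta\rfloor$ and
\begin{equation*}
\omega_n^x(\omega_0)=\sigma_0\omega_1+\{\tau_0+n\theta\}\omega_2,\qquad \{t\}:=t-\lfloor t\rfloor ,
\end{equation*}
which simultaneously yields existence and uniqueness of the pair $(p_x,\omega_n^x(\omega_0))$; the analogous formula holds for $\omega_n^y(\omega_0)$, with $\Pi_x$ replaced by $\Pi_y$ and $\sigma_0\omega_1$ replaced by $\omega_3/2+\sigma_0'\omega_1$.

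Part~\ref{AAA} is then immediate: if $\omega_n^x(\omega_0)=\omega_m^x(\omega_0)$ with $n\neq m$, subtracting the two defining relations gives $(n-m)\omega_3=(p_x^{(n)}-p_x^{(m)})\omega_2$, i.e.\ $\theta=(p_x^{(n)}-p_x^{(m)})/(n-m)\in{\bf Q}$, a contradiction; equivalently, the displayed formula would give $(n-m)\theta\in{\bf Z}$. The same argument applies verbatim to $\omega_n^y(\omega_0)$.

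For part~\ref{BBB}, observe that $\{\omega\in\Pi_x:\Im\omega=\Im\omega_0\}$ is precisely the segment $\{\sigma_0\omega_1+t\omega_2:t\in[0,1[\}$, parametrized homeomorphically by $t\mapsto\sigma_0\omega_1+t\omega_2$, and that under this parametrization $\omega_n^x(\omega_0)$ corresponds to $\{\tau_0+n\theta\}$. Hence density of $\{\omega_n^x(\omega_0)\}_{n\in{\bf Z}_+}$ on that segment is equivalent to density of $(\{\tau_0+n\theta\})_{n\in{\bf Z}_+}$ in $[0,1[$, which follows from the density of $(n\theta\bmod 1)_{n\in{\bf Z}_+}$ (Weyl's equidistribution theorem, or an elementary pigeonhole argument) by translating modulo $1$ by $\tau_0$. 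The reasoning for $\{\omega_n^y(\omega_0)\}_{n\in{\bf Z}_+}$ on $\{\omega\in\Pi_y:\Im\omega=\Im\omega_0\}$ is identical. There is no genuine obstacle here; the only point needing a little care is the bookkeeping of the reduction modulo $\omega_2$, which goes through smoothly precisely because $\omega_3$ is real, so the imaginary part of $\omega_0+n\omega_3$ stays fixed inside the strip $\Im\omega_1\cdot[0,1[$ while only the "horizontal" coordinate undergoes the irrational rotation by $\theta$.
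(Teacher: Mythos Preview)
Your proof is correct and follows exactly the approach the paper alludes to: the paper's own proof is the single sentence ``Both \ref{AAA} and \ref{BBB} are direct consequences of the irrationality of $\omega_2/\omega_3$,'' and you have simply unpacked this by writing out the reduction to the irrational rotation $t\mapsto t+\theta\bmod 1$ with $\theta=\omega_3/\omega_2$. Your care in noting that $\omega_3\in{\bf R}$ (so only the $\omega_2$-coordinate moves) and that $\sigma_0\in[0,1[$ in all applications is appropriate, and the argument is sound.
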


\begin{proof}
Both \ref{AAA} and \ref{BBB} are direct consequences of the irrationality of $\omega_2/\o_3$.
\end{proof}

In the next definition, we introduce a partial order in $\Pi_y$.

\begin{defn}
For any $\o,\o' \in \Pi_y$, we write $\o \ll \o'$ if for some $n\in
{\bf Z}_+$ and some $p \in {\bf Z}$, $\o+n\o_3=\o'+p\o_2$.
\end{defn}
If $\o \ll \o'$ (and if $\o_2/\o_3$ is irrational), both $n$ and $p$
are unique and sometimes we shall write $\o \ll_n \o'$. In
particular, for any $\o \in \Pi_y$, we have $\o \ll \o$, since
$\o\ll_0\o$.

\begin{defn}
\label{def_equiv}
If either $\o \ll \o'$ or $\o \ll \o'$,
we say that $\o$ and $\o'$ are ordered, and we write $\o \sim \o'$.
\end{defn}

Let us denote by $f_x$ and $f_y$ the (meromorphic) functions used in the
meromorphic~continuation procedures \eqref{cont} and \eqref{cont1}, namely, by using \eqref{hatx}:
     \begin{equation*}
          f_x(\o)=y(\omega)[x(\widehat \eta \o)-x(\omega)],
          \qquad f_y(\o)=x(\omega)[y(\widehat \xi \o)-y(\omega)].
     \end{equation*}
 The following lemma will be the key tool for the proof of
 Theorem~\ref{main_tt}.

 \begin{figure*}[t]
\begin{center}
{\rm Theorem \ref{main_tt} \ref{inet1}, i.e., Subcase I.A }
\end{center}
\begin{center}
\begin{picture}(00.00,732.00)
\hspace{-105mm}
\includegraphics{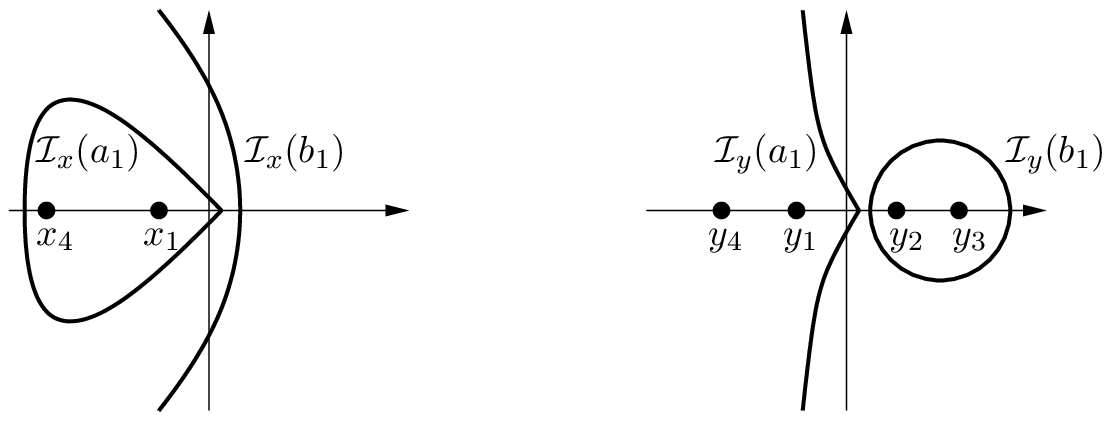}
\end{picture}
\vspace{-223.5mm}


\end{center}
\end{figure*}

\begin{figure*}[t]
\begin{center}
{\rm Theorem \ref{main_tt} \ref{inet2}, i.e., Subcases I.B and I.C}
\end{center}
\begin{center}
\begin{picture}(00.00,732.00)
\hspace{-105mm}
\includegraphics{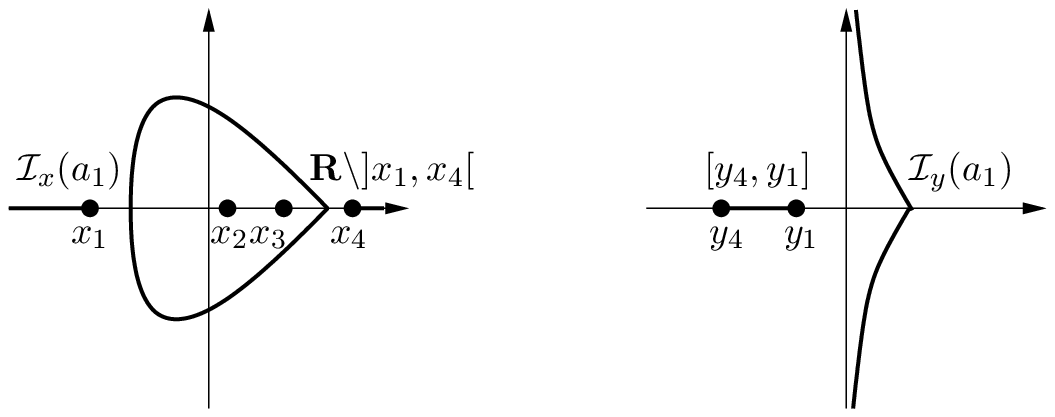}
\end{picture}
\vspace{-223.5mm}


\end{center}
\end{figure*}

\begin{figure*}[t]
\begin{center}
{\rm Theorem \ref{main_tt} \ref{inet3}, i.e., Subcase II.A}
\end{center}
\begin{center}
\begin{picture}(00.00,732.00)
\hspace{-105mm}
\includegraphics{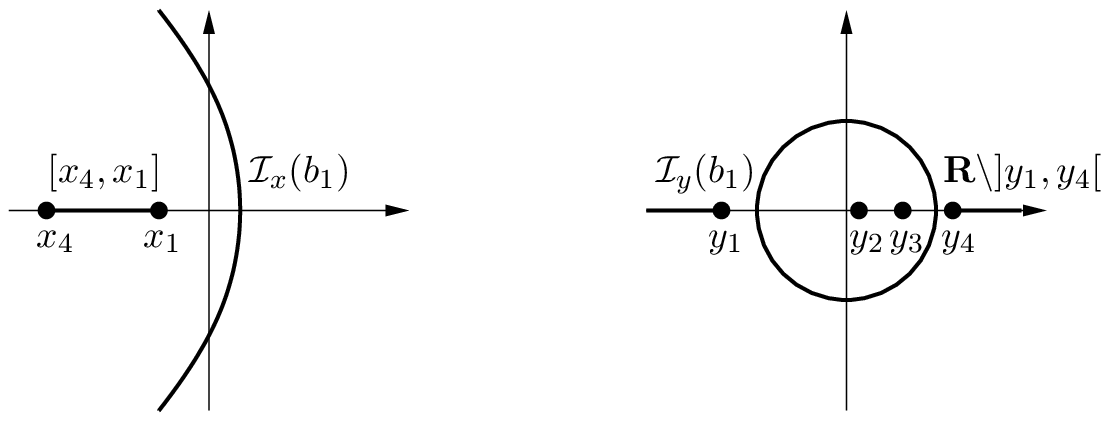}
\end{picture}
\vspace{-223.5mm}


\end{center}
\end{figure*}

\begin{figure}[t]
\begin{center}
{\rm Theorem \ref{main_tt} \ref{inet4}, i.e., Subcases II.B, II.C,
II.D and Case III}
\end{center}
\begin{center}
\begin{picture}(00.00,732.00)
\hspace{-105mm}
\includegraphics{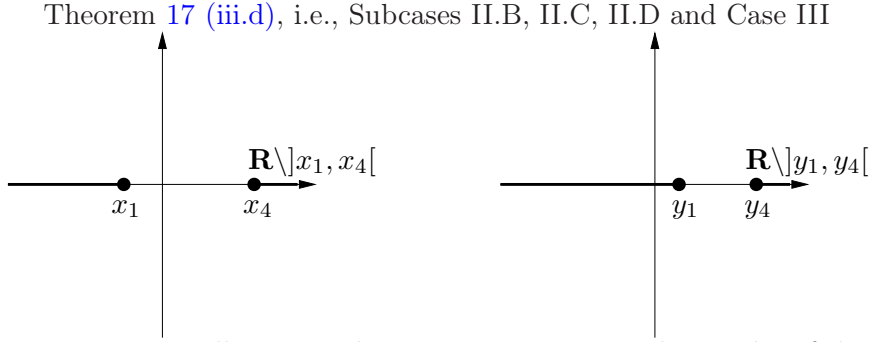}
\end{picture}
\vspace{-222.5mm}


\end{center}
\caption{For walks pictured on Figure \ref{Allcases},
curves where
poles of the set of branches of $x\mapsto Q(x,0)$ and $y\mapsto Q(0,y)$
 are dense}
\label{CC}
\end{figure}

\begin{lem}
\label{dop} Let $z$ be such that $\omega_2/\o_3$ is irrational; let $\o_0 \in \Pi_y$ be such that $r_y(\o_0)\neq\infty$,
and let
     \begin{equation*}
          \mathcal{A}(\o_0)=\{\o \in \Pi_y: \Im \o=\Im \o_0,\, f_y(\o)=\infty\}.
     \end{equation*}
Assume that $\o_0 \in \mathcal{ A}(\o_0)$  and  that for some $\o^1,
\ldots, \o^k \in \mathcal{A}(\o_0)$:
\begin{enumerate}[label={\rm (\Alph{*})},ref={\rm (\Alph{*})}]
\item \label{(i)}  $\o_0 \ll_{n_1} \o^1 \ll_{n_2} \cdots \ll_{n_k} \o^k$;
\item \label{(ii)} $\lim_{\o \to \o_0} \{ f_y(\o)+f_y(\o+n_1 \o_3)+ f_y(\o+ n_2 \o_3)
+\cdots +f_y(\o+ n_k \o_k)\} =\infty$;
\item \label{(iii)} there is no other $\o \in \mathcal{A}(\o_0)$ such that $\o_0 \ll \o$.
\end{enumerate}
   Then
the set of poles of all branches of  $x \mapsto Q(x,0)$ (resp.\ $y\mapsto Q(0,y)$) is dense on the curve
  $\mathcal{I}_x(\o_0)$ (resp.\ $\mathcal{I}_y(\o_0)$)
  defined in \eqref{ix} (resp.\ \eqref{iy}).
\end{lem}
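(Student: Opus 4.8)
The strategy is to iterate the continuation formula \eqref{cont1} along the arithmetic progression $\o_0+n\o_3$, $n\geq 1$, and to show that infinitely many of these points are genuine poles of $r_y$, and then to transport this information back to the complex plane via the description \eqref{branches1} of the branches. Concretely, by \eqref{cont1} we have, for every $n\geq 1$,
\begin{equation*}
     r_y(\o_0+n\o_3)=r_y(\o_0)+\sum_{j=0}^{n-1}f_y(\o_0+j\o_3).
\end{equation*}
First I would argue that for all $n$ larger than $n_k$ the partial sum on the right has exactly one "packet" of poles, located near $\o_0$, coming from the points $\o_0,\o^1,\dots,\o^k$: indeed, by hypothesis \ref{(iii)} and the $\o_2$-periodicity of $f_y$, the only elements $\o$ of $\mathcal A(\o_0)$ with $\o_0\ll\o$ are $\o^1,\dots,\o^k$, so among the shifts $\o_0+j\o_3$, $0\leq j\leq n-1$, the only ones whose image in $\Pi_y$ lies on the horizontal segment $\{\Im\o=\Im\o_0\}$ and at which $f_y$ is infinite are $j=0,n_1,\dots,n_k$ (using also Lemma~\ref{vspom}~\ref{AAA} so that no two of these coincide). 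Hypothesis \ref{(ii)} says precisely that when these finitely many polar terms are added together the singularity does \emph{not} cancel; hence $r_y(\o_0+n\o_3)=\infty$ for every $n>n_k$, while $r_y(\o_0)\neq\infty$ was assumed.

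Next I would translate these poles into poles of branches of $Q(0,y)$. For each $n$, write $\o_0+n\o_3=p_y\o_2+\o_n^y(\o_0)$ with $\o_n^y(\o_0)\in\Pi_y$ as in the paragraph preceding Lemma~\ref{vspom}. Since $r_y$ is $\o_2$-periodic (Theorem~\ref{thm2}, \eqref{buzz} together with $\o_2=\ell\o_2$... more precisely $r_y(\o+\o_2)$ is a branch-shift), the point $\o_n^y(\o_0)$ is a pole of the branch of $r_y(\o)/K(0,y(\o))$ read off on the appropriate cell $\mathscr N_{k,\ell}$ — possibly after discarding at most the two values of $y$ that are zeros of $y\mapsto K(0,y)$, which only removes finitely many of the points and so does not affect density. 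By Lemma~\ref{vspom}~\ref{BBB} the set $\{\o_n^y(\o_0):n>n_k\}$ is dense on the horizontal segment $\{\o\in\Pi_y:\Im\o=\Im\o_0\}$, and $y$ maps this segment onto the curve $\mathcal I_y(\o_0)$ of \eqref{iy}; since $y$ is continuous, the images of a dense subset form a dense subset of $\mathcal I_y(\o_0)$. Therefore the set of poles of all branches of $y\mapsto Q(0,y)$ is dense on $\mathcal I_y(\o_0)$. The statement for $x\mapsto Q(x,0)$ follows by the symmetric argument, using \eqref{cont}, $f_x$, the cells $\mathscr M_{k,\ell}$ of \eqref{mkl}, and the relation \eqref{sqs2} which links $r_x$ and $r_y$ so that a pole of $r_y$ at $\o_0+n\o_3$ forces a pole of $r_x$ at the same point (unless it is killed by the subtracted term $x(\o)y(\o)$, which happens at only finitely many points).

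**Main obstacle.** The delicate point is the bookkeeping in step one: one must be certain that, as $n$ ranges over the integers $>n_k$, the \emph{only} shifts $\o_0+j\o_3$ ($0\le j\le n-1$) landing on a pole of $f_y$ within the relevant horizontal segment are the $k+1$ prescribed ones, and that no \emph{new} coincidence $\o_0+j\o_3\equiv\o^i\pmod{\o_2\mathbf Z}$ appears for large $j$. This is exactly where the irrationality of $\o_2/\o_3$ is essential (via Lemma~\ref{vspom}), but one also needs that $f_y$ has no poles on $\{\Im\o=\Im\o_0\}$ other than at the points of $\mathcal A(\o_0)$, which is clear from the explicit form $f_y(\o)=x(\o)[y(\widehat\xi\o)-y(\o)]$ and the list \eqref{aaaabb} of the at most six points where $x$ or $y$ blows up. A secondary subtlety is making hypothesis \ref{(ii)} do its job: it guarantees non-cancellation of the \emph{leading} polar part at $\o_0$, and since $f_y(\o+j\o_3)$ for $j\notin\{0,n_1,\dots,n_k\}$ is finite near $\o_0$, adding those regular terms cannot reintroduce cancellation; I would spell this out by a Laurent-expansion argument at $\o=\o_0$. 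Everything else is routine once these two points are secured.
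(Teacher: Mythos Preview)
Your proposal is correct and follows essentially the same route as the paper: iterate \eqref{cont1} to get $r_y(\o_0+n\o_3)=r_y(\o_0)+\sum_{j=0}^{n-1}f_y(\o_0+j\o_3)$, use assumptions \ref{(i)}, \ref{(iii)} together with Lemma~\ref{vspom} and the $\o_2$-periodicity of $f_y$ to isolate the finitely many polar terms, invoke \ref{(ii)} for non-cancellation, then pass to $r_x$ via \eqref{sqs2} and to the branches via \eqref{branches} and \eqref{branches1}, discarding the finitely many points where $x(\o)y(\o)=\infty$ or $K(x(\o),0)K(0,y(\o))=0$. One small slip to clean up: $r_y$ is \emph{not} $\o_2$-periodic (only $\o_1$-periodic, see \eqref{buzz}); what matters is that $f_y$ is $\o_2$-periodic and that each $\o_0+n\o_3$ lies in some cell $\mathscr N_{k,\ell}$, which by \eqref{branches1} identifies a branch of $Q(0,y)$---you caught this yourself, but the sentence as written is confusing and should be rephrased.
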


\begin{proof}
By Equation \eqref{cont1} of Theorem \ref{thm2}, we have, for any $n\in{\bf Z}_+$ and any $\o \in \Pi_y$,
     \begin{equation}
     \label{n_it}
          r_y(\o+n\o_3)=r_y(\o)+f_y(\o)+f_y(\o+ \o_3)+ f_y(\o+2\o_3)+\cdots +f_y(\o_0+(n-1)\o_3).
     \end{equation}
Let $\o_0$ be as in the statement of Lemma \ref{dop}.
 Due to assumption \ref{(iii)}, Lemma \ref{vspom} \ref{AAA}
   and the $\o_2$-periodicity of $f_y$,
  the set $\{\o_0 + n \o_3\}_{n>n_1+\cdots +n_k}$ does
  not contain any point $\o$ where $f_y(\o)=\infty$.
   Further, by the assumptions \ref{(i)} and \ref{(iii)},
    Lemma \ref{vspom} \ref{AAA} and also by the $\o_2$-periodicity of
    $f_y$, the set
    $\{\o_0 + n \o_3\}_{0 \leq n\leq n_1+\cdots +n_k}$
   contains exactly  $k+1$ poles of $f_y$ that are
     $\o_0, \o_0+n_1\o_3, \ldots, \o_0+n_k\o_3$.
    Then, by \eqref{n_it}, assumption \ref{(ii)} and the fact that
     $r_y(\o_0)\ne \infty$, we reach the conclusion
     that for any $n>n_1+\cdots+n_k$, the point $\o_0+n \o_3$
     is a pole of $r_y(\o)$.

Due to Equation \eqref{sqs2}, any $\o$ pole of $r_y$ such that $x(\o)y(\o) \neq \infty$
 is also a pole of $r_x$.
Define now $\mathcal{B}$, the set of (at most twelve) points
  in $\Pi_y$ where either $x(\o)=\infty$, $y(\o)=\infty$,
   $K(x(\o),0)=0$ or $K(0, y(\o))=0$.
  Introduce also $M=\max\{m\geq  0: \o_0 \ll_m \o \hbox{ for some }\o \in \mathcal{B}\}$---with the usual convention $M=-\infty$ if $\o_0 \ll \o$ for none $\o\in \mathcal{B}$.
   If $n >\max(M, n_1+\cdots +n_k)$, the points $\o_0+n \o_3$ are poles of $r_x$ as well, and both $K(x(\o_0+n\o_3),0)$ and $K(0,y(\o_0+n \o_3))$ are non-zero.
   By Lemma \ref{vspom} \ref{BBB}
   and definitions~\eqref{branches} and \eqref{branches1}, Lemma~\ref{dop}
    follows.
\end{proof}

\begin{figure}[t]
\begin{center}
\begin{picture}(00.00,725.00)
\hspace{-105mm}
\includegraphics{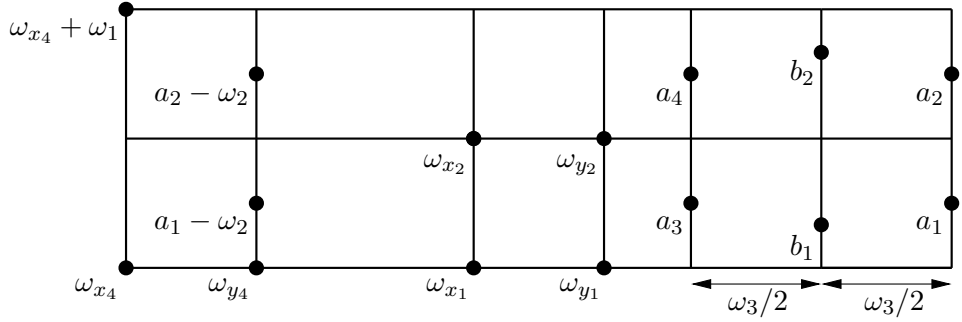}
\end{picture}
\end{center}
\vspace{-213mm} \caption{Location of $a_1,a_2,a_3,a_4, b_1,b_2$ if
$y_4<0$ and $x_4<0$, i.e.,~Subcase~I.A} \label{Locloc}
\end{figure}

 We are now ready to give the proof of Theorem \ref{main_tt}.

\begin{proof}[Proof of Theorem \ref{main_tt}]
Functions $f_y(\o)$ and $y(\o)$ being $\o_2$-periodic,
 it follows that both of them have no pole at
 any $\o$ with $0\leq \Im \o<\o_1$ and $\Im \o \notin \{\Im a_1, \Im a_2, \Im a_3, \Im a_4, \Im b_1, \Im b_2\}$. Then, by \eqref{cont1},
     \begin{equation}
     \label{xx}
          \forall \o \in\bigcup_{k=0}^{\infty} \mathscr{N}_{k,0} \ \hbox{with } \Im \o \ne\{\Im a_1, \Im a_2, \Im a_3, \Im a_4, \Im b_1, \Im b_2\},\quad r_y(\o) \ne \infty.
     \end{equation}
Function $x(\o)$ being $\o_2$-periodic, it has no pole at any $\o$ such that $0\leq \Im \o<\o_1$ and $\Im \o \notin \{\Im b_1, \Im b_2\}$. Then Equation \eqref{sqs2} and the fact that $ \cup_{k=1}^{\infty}\mathscr{M}_{k,0} \subset\cup_{k=0}^{\infty} \mathscr{N}_{k,0}$ imply that
\begin{equation}
\label{yy} \forall \o \in
\bigcup_{k=1}^{\infty} \mathscr{M}_{k,0}\ \hbox{with }\Im \o \ne
\{\Im a_1, \Im
 a_2, \Im a_3, \Im a_4, \Im b_1, \Im b_2\},\quad r_x(\o) \ne \infty.
\end{equation}
      In order to prove Theorem \ref{main_tt} \ref{onsi},
 we shall prove the following proposition.
\begin{prop}
\label{ph1}  For all  $51$ models,
   for any $\o \in \mathscr{M}_{1,0}$ (resp.\ $\o \in
\mathscr{N}_{1,0}$)  with $x(\o)\neq \infty$ (resp.\ $y(\o)\neq
\infty$) and
  $\Im \o \in \{\Im a_1, \Im
 a_2, \Im a_3, \Im a_4, \Im b_1, \Im b_2\}$,
 we have $r_x(\o)\neq \infty$ (resp.\ $r_y(\o) \neq \infty$).
\end{prop}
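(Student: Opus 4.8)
The plan is to analyze, for each of the six special imaginary levels $\Im a_1, \Im a_2, \Im a_3, \Im a_4, \Im b_1, \Im b_2$, the behaviour of the meromorphic continuation formula \eqref{cont1} (resp.\ \eqref{cont}) restricted to the horizontal strip $\mathscr{N}_{1,0}$ (resp.\ $\mathscr{M}_{1,0}$), and to show that the potential pole of $r_y$ (resp.\ $r_x$) coming from $f_y$ (resp.\ $f_x$) at a point of $\Pi_y$ with that imaginary part is in fact cancelled, either because it lies one full period $\omega_2$ away from the corresponding pole in $\Pi_y$ and the continuation has not yet ``reached'' it, or because the two poles of $f_y$ on the same horizontal line are brought in simultaneously and their residues cancel. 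Concretely, I would start from the observation already recorded in \eqref{xx}: on $\cup_{k\geq 0}\mathscr{N}_{k,0}$ the only candidate poles of $r_y$ sit on the six levels $\Im a_i,\Im b_j$. Since $\mathscr{N}_{1,0}=\omega_3/2+\omega_1[0,1[+\omega_2]1/2,1]$ is the strip $\Pi_y$ shifted by half of $\omega_2$, and since $f_y$ and $y$ are $\omega_2$-periodic, the point of $\mathscr{N}_{1,0}$ on a given level is $\omega_n^y$-related to the point $a_i$ or $b_j$ of $\Pi_y$ with $n$ small; the continuation \eqref{n_it} applied from $\Pi_y$ up to $\mathscr{N}_{1,0}$ involves only finitely many terms $f_y(\o+m\omega_3)$, and the claim is that these terms do not produce a net pole.

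The key steps, in order: (1) Recall from Theorem~\ref{thmde} that on $\Delta_y$ (equivalently on $\Pi_y$ via $\omega_1$-periodicity) $r_y=K(0,y)Q(0,y)$ is holomorphic wherever $y$ is finite and bounded away from $[y_1,y_2]$; in particular $r_y(\o_0)\neq\infty$ at each of $a_1,\ldots,b_2$ except possibly where $y(\o_0)=\infty$, and those exceptional cases $b_1,b_2$ are excluded in the statement by the hypothesis $y(\o)\neq\infty$. (2) Use \eqref{cont1} in the form \eqref{n_it} to write $r_y$ at the relevant point of $\mathscr{N}_{1,0}$ as $r_y$ at the associated point of $\Pi_y$ plus a finite sum of values of $f_y$ along the $\omega_3$-orbit; since $\omega_2/\omega_3$ is irrational, Lemma~\ref{vspom}\ref{AAA} guarantees these orbit points are distinct, so at most the genuine poles $a_1,a_2,a_3,a_4$ of $f_y$ inside $\Pi_y$ can contribute, and only if they lie on the same horizontal line and in the correct $\ll$-order. (3) For each model (organized by the Subcases I.A, I.B, I.C, II.A--D, III of Figure~\ref{Allcases}, according to the signs of $x_4,y_4$ and the coincidences among $x^\s,x^\ss,\infty$ and $y^\s,y^\ss,\infty$), check the finitely many configurations: either the special point in $\mathscr{N}_{1,0}$ is \emph{not} $\ll$-above any pole of $f_y$ in $\Pi_y$ (so by \eqref{n_it} there is no pole), or it is $\ll_1$-above exactly one such pole $a_i$ but the residue of $f_y$ there is cancelled by the simple zero of $K(0,y)$ or by a matching residue of $r_y$ via \eqref{sqs2}; the residue computation uses the explicit expressions \eqref{def_XX}--\eqref{def_YY} and Vieta's relations \eqref{bue}. (4) The argument for $r_x$ on $\mathscr{M}_{1,0}$ is symmetric, using \eqref{cont}, \eqref{sqs2} and $\cup_{k\geq 1}\mathscr{M}_{k,0}\subset\cup_{k\geq 0}\mathscr{N}_{k,0}$ together with \eqref{yy}.

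The main obstacle I expect is step~(3): it is genuinely a finite but lengthy case analysis, since the relative vertical positions of $a_1,a_2,a_3,a_4,b_1,b_2$ in $\Pi_y$ (hence which of them share a horizontal line, and in which $\ll$-order) depend on the model, and one must also track whether $x^\s=x^\ss$, whether $y^\s$ or $y^\ss$ equals $\infty$ or $y_i$, and whether $K(0,y)$ vanishes at $y^\d$ or $y^\dd$. The cleanest way to organize this is to observe that $f_y(\o)=x(\o)[y(\widehat\xi\o)-y(\o)]$ has a pole at $\o_0$ only when $x(\o_0)=\infty$ or $y(\o_0)\in\{y^\s,y^\ss,\infty\}$ with a genuine blow-up, and that at such $\o_0$ one either has simultaneously $r_y(\o_0)=\infty$ with opposite principal part (so \eqref{sqs2} forces $r_y+r_x$ regular), or a compensating $f_y$ term precisely $\omega_3$ below. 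Rather than verifying all $51$ walks by hand, I would group them exactly as in Figure~\ref{CC}'s four cases and treat one representative configuration per group in detail, indicating that the remaining members are handled identically; the substantive input is always the pair of facts that $r_y$ is holomorphic on $\Delta_y$ away from $|y|\leq 1$ and that an isolated pole of $f_y$ on a horizontal line, not yet reached by the $\omega_3$-shift from $\Pi_y$, cannot create a pole of $r_y$ in $\mathscr{N}_{1,0}$.
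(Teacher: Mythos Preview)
Your proposal has the right overall shape---a case analysis organized by the signs of $x_4,y_4$---but there are two genuine gaps that would prevent the argument from going through as written.

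First, step~(1) contains a circularity. You claim that on $\Delta_y$ ``(equivalently on $\Pi_y$ via $\omega_1$-periodicity)'' the function $r_y$ is holomorphic wherever $y$ is finite. But $\Delta_y$ is the curvilinear strip $\{|y(\omega)|<1\}$, which is \emph{not} $\Pi_y$; the two are not related by $\omega_1$-periodicity. The points $a_3,a_4$ (where $y=y^\ss,y^\s$) typically have $|y|>1$ and lie \emph{outside} $\Delta_y$, so Theorem~\ref{thmde} says nothing about $r_y$ there. Showing $r_y(a_3),r_y(a_4)\neq\infty$ is the first nontrivial step of the proof, not an input. (Also, you have the roles reversed: it is $y(a_1)=y(a_2)=\infty$ and $x(b_1)=x(b_2)=\infty$, not the other way around.) The paper handles this by tracing each such point backwards along $\{\omega_0-k\omega_3\}$ until it first lands in $\Delta$, and checking that no pole of $f_y$ is hit along the way; this requires the precise location of $a_1,\ldots,b_2$ in each subcase.

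Second, the cancellation mechanism in step~(3) is not the right one. At the genuine poles of $f_y$ on a given level (say at $a_1-\omega_2$ or $a_2$), the quantity $r_y(\omega_0)+f_y(\omega_0)$ is \emph{not} shown to be finite by a residue match with a zero of $K(0,y)$; rather, one rewrites it via \eqref{sqs2} as $-r_x(\omega_0)+K(0,0)Q(0,0)+x(\omega_0)y(\widehat\xi\omega_0)$, then uses the invariance $r_x(\widehat\xi\omega)=r_x(\omega)$ from \eqref{xieta} to identify $r_x(a_1-\omega_2)$ with $r_x(a_4)$, which was already shown finite. This algebraic rewriting (Step~2 in the paper's treatment of Subcase~I.A) is the essential device, and it has no counterpart in your outline. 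Without it, the possibility that the orbit $\mathscr{O}^\Delta(\omega_0)$ passes through $a_1-\omega_2$ or $a_2$ cannot be dismissed.
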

  The proof of this proposition is postponed to the next subsection.
 By this proposition, (\ref{xx}) and (\ref{yy}),
     the only singularities of the first branches of $K(x,0)Q(x,0)$
(resp.\ $K(0,y)Q(0,y)$)  may be only among the branch
      points $x_1, x_2, x_3, x_4$
 (resp.\ $y_1, y_2, y_3, y_4$).
 Let us recall that
       the function $x \mapsto Q(x,0)$ is initially defined as a series
   $\sum_{i, n \geq 0} q(i,0; n)x^{i}z^{n}$. The {elementary estimate} $\sum_{i \geq 0} q(i,0; n)\leq |\mathcal{S}|^n$
   implies that for any $z \in ]0, 1/|\mathcal{S}|[ $ and $x \in {\bf C}$ with $|x|\leq
1$ this series is absolutely is convergent. Since $|x_1|<1$,
$|x_2|<1$, and since also $K(x,0)$ is a polynomial with
      (at most two) roots
that are smaller or equal to $1$ by absolute value,
 the only singularities of the first branch of $x \mapsto Q(x,0)$ are
  the branch points $x_3$ and $x_4$, that are out of the unit disc.
     By the same arguments the analogous statement holds true for $y \mapsto Q(0,y)$.
 This finishes the proof of Theorem \ref{main_tt} \ref{onsi}.

 Since for any $p\in{\bf Z}_+$,
  there  exist only finitely many  $\o \in \cup_{k=1}^p\mathscr{M}_{k,0}$ (resp.\ $\o \in \cup_{\ell=1}^p\mathscr{N}_{0,\ell}$)
   where $f_x(\o)=\infty$
    (resp.\ $f_y(\o)=\infty$),
       Theorem \ref{main_tt} \ref{ebfp}
    immediately follows from the meromorphic continuation procedure
    of $r_x$ and $r_y$ done in Section \ref{meroun},
    namely Equations \eqref{cont} and \eqref{cont1} as well as the definitions
\eqref{branches} and \eqref{branches1}.

  The
following proposition proves Theorem \ref{main_tt} \ref{inet}.
\begin{prop}
\label{ph2}
  The poles of $x \mapsto  Q(x,0)$ (resp.\ $y \mapsto  Q(0,y)$)
  are dense on the six curves
$\mathcal{I}_x(\o_0)$ (resp.\ $\mathcal{I}_y(\o_0)$) with
$\o_0\in\{a_1,a_2,a_3,a_4,b_1,b_2\}$.
 For any of $51$ models, the set of these curves coincide with the one claimed in
 Theorem~\ref{main_tt}~\ref{inet}.
 \end{prop}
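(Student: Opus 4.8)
The plan is to apply Lemma~\ref{dop} to each of the six points $\omega_0\in\{a_1,a_2,a_3,a_4,b_1,b_2\}$ of \eqref{aaaabb} and to collect, model by model, the resulting curves $\mathcal{I}_x(\omega_0)$ and $\mathcal{I}_y(\omega_0)$. The starting point is that the meromorphic function $f_y(\omega)=x(\omega)[y(\widehat\xi\omega)-y(\omega)]$ can have a pole in $\Pi_y$ only where $x(\omega)=\infty$, where $y(\omega)=\infty$, or where $y(\widehat\xi\omega)=\infty$; since $x(\widehat\xi\omega)=x(\omega)$ and using the definition of $x^\star,x^{\bigstar},y^\circ,y^\bullet$, these are exactly the points $b_1,b_2$, then $a_1,a_2$, then $a_4=\widehat\xi a_1$ and $a_3=\widehat\xi a_2$; the same holds for $f_x$. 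Consequently, for each $\omega_0$ in the above list, the set $\mathcal{A}(\omega_0)$ of Lemma~\ref{dop} is simply the collection of those among the other five points that lie on the horizontal line $\{\Im\omega=\Im\omega_0\}$.

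First I would fix, for each of the $51$ models, the combinatorial geometry of these six points on the universal covering, i.e.\ draw the analogue of Figure~\ref{Locloc}. From \eqref{def_algebraic_functions}--\eqref{def_YY} and the conventions of Section~\ref{Riemann_surface} one reads off whether $x_4$ and $y_4$ are negative, positive, or infinite --- this is precisely the partition into Subcases I.A, I.B, I.C, II.A--II.D and Case~III --- and then whether each of $x^\star,x^{\bigstar},\infty$ lies on the real cycle $[x_4,x_1]$, on $[x_2,x_3]$, or on a complex arc, and similarly for the $y$-coordinates. Because the cycles $\lambda^{-1}\Phi_0,\lambda^{-1}\Phi_1$ and the lines $L_{x_1}^{x_2},L_{x_4}^{x_3},L_{y_1}^{y_2},L_{y_4}^{y_3}$ are all horizontal, this pins down which of the six points share an imaginary part, and whether that common imaginary part equals $0$ or $\Im\omega_1$ --- so that the corresponding curve $\mathcal{I}_x(\omega_0)$ degenerates to a real segment $[x_4,x_1]={\bf R}\setminus\,]x_1,x_4[$ (resp.\ $[x_2,x_3]$) --- or is a genuine closed analytic curve. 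Using $\widehat\eta\widehat\xi\,\omega=\omega+\omega_3$ from \eqref{O3} one then computes, whenever two of the six points lie on the same horizontal line, the integer $n$ with $\omega\ll_n\omega'$, which produces the chains needed in hypothesis (A) of Lemma~\ref{dop} and settles hypothesis (C).

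The analytic core of the proof is the verification of hypothesis (B): that the finite sum $f_y(\omega)+f_y(\omega+n_1\omega_3)+\cdots+f_y(\omega+n_k\omega_3)$ genuinely tends to $\infty$ as $\omega\to\omega_0$, i.e.\ that its (at most two) principal parts do not cancel. For this I would compute the local behaviour of $f_y$ near each of $a_1,\dots,b_2$ from the uniformization \eqref{expression_uniformization}: near $b_1,b_2$ the pole of $f_y$ comes from the pole of $x(\omega)$, near $a_1,a_2$ from the pole of $y(\omega)$, and near $a_3,a_4$ from the pole of $y(\widehat\xi\omega)=c(x(\omega))/[a(x(\omega))y(\omega)]$. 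In each chain one checks that the residues do not sum to zero, distinguishing --- as anticipated in the informal discussion of Subsection~\ref{subsection71} and condition \eqref{bbb} --- the generic case, where a single point of $\mathcal{A}(\omega_0)$ already carries the pole, from the few degenerate configurations in which two principal parts interact. The degenerate cases, namely those where $r_y(\omega_0)=\infty$, where cancellation occurs, or where $K(x(\omega_0),0)=0$, must be treated by hand: one then either transfers the pole between $r_x$ and $r_y$ through \eqref{sqs2}, or checks that the point contributes no curve --- in agreement with the fact that such a curve is absent from the list in Theorem~\ref{main_tt}~\ref{inet}. I expect this residue bookkeeping, carried out uniformly over the eight configurations, to be the main obstacle, since it is the step where the individual step set of each of the $51$ walks actually enters the argument.

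Finally, having applied Lemma~\ref{dop} at every point where its hypotheses hold, I would assemble the result: the poles of $x\mapsto Q(x,0)$ are dense on the union of the surviving curves $\mathcal{I}_x(\omega_0)$, and it remains to identify this union with the one stated in Theorem~\ref{main_tt}~\ref{inet}. Since $\mathcal{I}_x(\omega_0)$ depends only on $\Im\omega_0$, points lying on a common horizontal line give the same curve; any point on $\lambda^{-1}\Phi_1$ gives the real set ${\bf R}\setminus\,]x_1,x_4[$; and a short case check shows that in Subcase I.A exactly the curves attached to $a_1$ and $b_1$ persist, in Subcases I.B, I.C, II.A exactly two curves persist, one of them being the real segment ${\bf R}\setminus\,]x_1,x_4[$ produced by the point(s) on $\lambda^{-1}\Phi_1$, while in Subcases II.B--II.D and Case~III every surviving point lies on $\lambda^{-1}\Phi_1$, so only the real set appears. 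Running the same computation with $f_x$, $\Pi_x$ and \eqref{cont} in place of $f_y$, $\Pi_y$ and \eqref{cont1} yields the corresponding statements for $y\mapsto Q(0,y)$, which completes the proof of Proposition~\ref{ph2}, and hence of Theorem~\ref{main_tt}~\ref{inet}.
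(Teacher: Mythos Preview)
Your plan is essentially the paper's own approach: locate the six poles of $f_y$ in $\Pi_y$ according to the signs of $x_4$ and $y_4$ (the eight subcases of Figure~\ref{Allcases}), determine the $\ll$-chains among them, and verify hypotheses \ref{(i)}--\ref{(iii)} of Lemma~\ref{dop} at suitably chosen base points---the paper systematically uses $a_3,a_4,b_1,b_2$ rather than $a_1,a_2$, since $r_y$ is shown to be finite there as a by-product of the proof of Proposition~\ref{ph1}, exactly the ``degenerate case'' handling you anticipate. One small redundancy in your outline: Lemma~\ref{dop} already yields both the $\mathcal{I}_x$ and $\mathcal{I}_y$ conclusions simultaneously via \eqref{sqs2} in its proof, so the final pass you propose with $f_x$ in place of $f_y$ is not needed.
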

The proof of this proposition is postponed to the next subsection as
well, it will be based on Lemma \ref{dop} with $\o_0$ appropriately
chosen among $a_1,a_2,a_3,a_4,b_1,b_2$.

The last statement  \ref{ipoles} of the theorem follows immediately
from
  (\ref{xx}) and (\ref{yy}), Proposition \ref{ph2}
   and definitions \eqref{branches} and
\eqref{branches1}. \end{proof}

\subsection{Proof of Propositions \ref{ph1} and \protect\ref{ph2}}
\label{subsection73}

 To start with the proofs of Propositions \ref{ph1} and \ref{ph2},
we need to study closer the location of points (\ref{aaaabb})
$a_1,a_2,a_3,a_4,b_1,b_2$ on $ \Pi_y$. It depends heavily on the
signs of $x_4$ and $y_4$,
 see Figures \ref{Locloc}, \ref{LoclocICIIBC}, \ref{LoclocIIA},
 \ref{LoclocIID} and \ref{LoclocIII}.
  Let us recall that  $0<x_3<\infty$ and $0<y_3<\infty$, see Section~\ref{Riemann_surface}.

      If $y_4<0$ (resp.\ $x_4<0$),
      the point $y=\infty$ (resp.\ $x=\infty$)
      obviously belongs to the real cycle $
      ]y_3,\infty[\cup\{\infty\}\cup]\infty,y_4[$
      (resp.\ $]x_3,\infty[\cup\{\infty\}\cup ]\infty,x_4[$)
      of the complex sphere ${\bf S}$.
      By construction of the Riemann surface ${\bf T}$ and of its universal covering,
      the points $a_1, a_2$ (resp.\ $b_1, b_2$) then lie on the open interval
      $\{\o : \o \in L_{y_4}^{y_3}+\o_2,\, 0<\Im \o<\o_1\}$
      (resp.\ $\{\o : \o \in L_{x_4}^{x_3}+\o_2,\, 0<\Im \o <\o_1 \}$).
      These points are symmetric w.r.t.\ the center of the interval, namely $\o_1/2+\o_2+\o_3/2$
      (resp.\ $\o_1/2+\o_2$).
      The points $\o$ corresponding to $a_3$ and $a_4$ are on
      the open interval $\{\o : \o \in L_{y_4}^{y_3}+\o_2-\o_3,\, 0<\Im \o<\o_1\}$
       and are symmetric w.r.t.\ the center $\o_1/2+\o_2-\o_3/2$ as well.
        Furthermore, $a_4 +\o_3=a_2$ and $a_3+\o_3 =a_1$,
         so that $a_4 \ll_1 a_2$ and $a_3 \ll_1 a_1$, see  Figure~\ref{Locloc}.
         Finally, we have $\Im a_4=\Im a_2 \neq \Im a_1= \Im_3$,
         hence for any $a \in \{a_2,a_4\}$ and any $a' \in \{a_1,a_3\}$,
         $a \not\sim a'$ (in the sense of Definition \ref{def_equiv}).

      If $y_4>0$ or $y_4=\infty$
          (resp.\ $x_4>0$ or $x_4=\infty$), the point $y=\infty$
          (resp.\ $x=\infty$) is on
          $]y_4, \infty]\cup\{\infty\}\cup ]\infty,y_1[$ (resp.\ $]x_4, \infty]\cup\{\infty\}\cup
          ]\infty,x_1[$). Accordingly, the points $a_1,a_2$ (resp.\ $b_1,b_2$) and also $a_3,a_4$
           are on the segment $]\o_3/2, \omega_2+\omega_3/2]$.
           Their location on this segment
           will be specified latter.

Therefore, Propositions \ref{ph1} and \ref{ph2} must
be proved separately for eight subclasses of $51$ models according to the signs of $x_4$ and $y_4$:
 these are those of the walks pictured on Figure~\ref{Allcases}, Subcases I.A, I.B, I.C, II.A, II.B, II.C, II.D and Case III.

The following remark gives a geometric interpretation of this
classification.
\begin{rem}
\label{Remark3} Let ${\bf 1}_{(i,j)}$ be $1$ if
$(i,j)\in\mathcal{S}$,
  otherwise $0$. Then $x_4>0$ (resp.\ $<0$, $=\infty$)
  if and only if ${\bf 1}_{(1,0)}^2-4{\bf 1}_{(1,1)}{\bf
  1}_{(1,-1)}>0$ (resp.\ $<0$, $=0$), see Equation \eqref{d_d_tilde}.
  A symmetric statement holds for $y_4$.
\end{rem}
   As an example, Remark \ref{Remark3} implies that $x_4<0$ if and only
    if $(1,1)\in \mathcal S$ and $(1,-1)\in \mathcal S$.


\renewcommand\thesubsubsection{Subcase \Roman{subsection}.\Alph{subsubsection}}

\medskip

\noindent\underline{\it Case {\rm I}: $y_4<0$, Subcase {\rm I.A}:
$x_4<0$.}
This assumption yields $x^\s\neq x^\ss$; $x^\s,x^\ss \neq \infty$;
$y\neq y^\dd$; $y^\d,y^\dd\neq \infty$; $y^\s,y^\ss\neq \infty$. The
location of the six points $a_1,a_2,a_3,a_4,b_1,b_2$ is already
described above and is pictured on Figure \ref{Locloc}.

\medskip

We first show that for all $\o \in \{\o: \Im \o =\Im a_3,\, \o_{y_1}\leq \Re \o< \o_{y_4}+\o_2\}$, we have
 $r_y(\o)\neq \infty$. The proof consists in three steps.

\medskip

{\noindent}\textit{Step 1.}  Let us first prove that
 $r_y(a_3)\neq \infty$ and $r_y(a_4)\neq \infty$.
  If $|y^\ss|<1$, then $a_3 \in \Delta_y$
 (see Section \ref{Lifting_uc} for the definition of $\Delta_y$)
 and it is immediate from Theorem~\ref{thmde}
 that $r_y(a_3)\neq \infty$.
  If $|y^\ss|\geq 1$, then by Lemma~\ref{lemde}
  there exists $n \in {\bf Z}_+$ such that $a_3-n\o_3 \in \Delta$,
  and by Equation \eqref{cont1} of Theorem \ref{thm2},
     \begin{equation}
     \label{before_reformulation}
          r_y(a_3)=r_y(a_3-n \o_3)+\sum_{k=n}^{1}f_y(a_3-k\o_3).
     \end{equation}
Introducing, for any $\o_0$, the set
     \begin{equation}
     \label{ooo}
          \mathscr{O}^{\Delta}(\o_0)=\{\o_0-\o_3, \o_0-2\o_3,\ldots,\o_0-n_{\o_0}\o_3\},
          \quad n_{\o_0}=\inf\{\ell\geq 0: \o_0-\ell\o_3\in \Delta\},
     \end{equation}
we can rewrite \eqref{before_reformulation} as
     \begin{equation}
     \label{samwino}
          r_y(a_3)=r_y(a_3-n_{a_3} \o_3)+\sum_{\omega \in\mathscr{O}^{\Delta}(a_3)}f_y(\o).
     \end{equation}
In \eqref{samwino}, the quantity $r_y(a_3-n_{a_3} \o_3)$ is defined
thanks to Theorem \ref{thmde}. It may be infinite, but only if $y(a_3-n_{a_3} \o_3)=\infty$. In this case we must have
$a_3-n_{a_3} \o_3=a_1-\omega_2$. But since $a_3+\o_3=a_1$, we then have
$(n_{a_3}+1)\o_3=\omega_2$ which is impossible, due to irrationality
of $\o_2/\o_3$. Hence $r_y(a_3-n_{a_3} \o_3)\ne \infty$.
 Further, we immediately have (see indeed Figure
\ref{Locloc}) that $a_2,a_4,a_2-\o_2,a_4-\o_2 \notin
\mathscr{O}^{\Delta}(a_3)$. Moreover, since either $\Im b_1 \neq \Im
a_3$, or $\Im b_1=\Im a_3$ but then $a_3+\o_3/2=b_1$ (see again
Figure \ref{Locloc}), we also have that $b_1, b_2, b_1-\o_2,
b_2-\o_2 \notin \mathscr{O}^{\Delta}(a_3)$. Finally $a_1=a_3+\o_3
\notin \mathscr{O}^{\Delta}(a_3)$ and $a_1-\o_2=a_3+\o_3-\o_2
\notin \mathscr{O}^{\Delta}(a_3)$, since $\o_2/\o_3$ is
irrational. Thus $f_y(a_3-k\o_3)\neq \infty$ for any
$k\in\{1,\ldots, n_{a_3}\}$. Accordingly, $r_y(a_3)\neq \infty$ and
by the same arguments, $r_y(a_4)\neq \infty$.

\medskip

{\noindent}\textit{Step 2.} We now show that
$r_y(a_1-\o_2)+f_y(a_1-\o_2)\neq \infty$
 and $r_y(a_2-\o_2)+f_y(a_2-\o_2)\neq \infty$.
  By Equation \eqref{sqs2},
$r_y(a_1-\o_2)=-r_x(a_1-\o_2)+K(0,0)Q(0,0)+x(a_1-\o_2)y(a_1-\o_2)$
  and $f_y(a_1-\o_2)=x(a_1-\o_2)[y(a_4)- y(a_1-\o_2)]$; hence
\begin{equation}
 \label{sjk}
 r_y(a_1-\o_2)+f_y(a_1-\o_2)=-r_x(a_1-\o_2)+K(0,0)Q(0,0)+
    x(a_1-\o_2)y(a_4).
    \end{equation}
It follows from Equation \eqref{sqs2}
  and from the first
step that $r_x(a_4)\neq \infty$, since $x(a_4)y(a_4)=x^\s y^\s\neq
\infty$.
 Then, by \eqref{hatx} and \eqref{xieta}
 we get that $r_x(a_1-\o_2)=r_x(\widehat \xi a_4)=r_x(a_4)\neq \infty $.
  Furthermore, $x(a_1-\o_2)y(a_4)=x^\s y^\s\ne \infty$.
  Finally, thanks to \eqref{sjk},
       $r_y(a_1-\o_2)+f_y(a_1-\o_2)\neq \infty$ and
 by the same arguments, $r_y(a_2-\o_2)+f_y(a_2-\o_2)\neq \infty$.

\medskip

{\noindent}\textit{Step 3.} Let us now take any $\o_0$ in $\{\o: \Im
\o =\Im a_3,\, \o_{y_1}\leq \Re \o< \o_{y_4}+\o_2\}$. If $\o_0 \in
\Delta$, then $\o_0 \in \Delta_y$.
Indeed, it is proved in Section \ref{Lifting_uc} that
the domain $\Delta_y$ (resp.\ $\Delta_x$),
which is bounded by $\widehat \Gamma_y^0$ and $\widehat\Gamma_y^1$ (resp.\ $\widehat\Gamma_x^0$ and $\widehat\Gamma_x^1$), is centered around $L_{y_1}^{y_2}$ (resp.\ $L_{x_1}^{x_2}$). Furthermore, $\widehat\Gamma_y^0 \in \Delta_x$ and $\widehat\Gamma_y^1 \notin \Delta_x$ (resp.\ $\widehat\Gamma_x^0 \in \Delta_y$ and $\widehat\Gamma_x^1 \notin \Delta_y$). It follows that for any $\o_0 \in\Delta\setminus\Delta_y$, $\Re \o_0<  \o_{y_1}$. Then, by Theorem \ref{thmde}, $r_y(\o_0)\neq \infty$. If $\o_0 \notin \Delta$, with \eqref{ooo} and \eqref{samwino} we have
     \begin{equation*}
          r_y(\o_0)=r_y(\o_0-n_{\o_0} \o_3)+\sum_{\omega \in\mathscr{O}^{\Delta}(\o_0)}f_y(\o).
     \end{equation*}
For the same reasons as in the first step,
we have that $a_2,a_4,a_2-\o_2, a_4-\o_2 \notin \mathscr{O}^{\Delta}(\o_0)$.
 If $\Re \o_0< \Re a_3$, for obvious reasons
 $\mathscr{O}^{\Delta}(\o_0)$
 cannot contain $a_3$.
 If $\Re a_3 \leq \Re \o_0 < \o_{y_4}+\o_2$,
 it can neither contain $a_3$, since $\o_{y_4}+\o_2- \Re a_3=\o_3$,
 and hence $\Re \o_0- \o_3 < \Re a_3$.
  If $\Im b_1 \neq \Im a_3$,
  or $\Im b_1= \Im a_3$ and $\Re \o_0< \Re b_1$,
   it cannot contain $b_1$. If $\Im b_1=\Im a_3$
   and $\Re b_1 \leq \Re \o_0 < \o_{y_4}+\o_2$,
    then $\Re  \o_0 - \Re b_1 \leq \o_{y_4}+\o_2-\Re b_1=\o_3/2<\o_3$,
    and $b_1\notin \mathscr{O}^{\Delta}(\o_0)$.

If $a_1-\o_2 \notin \mathscr{O}^{\Delta}(\o_0)$,
 then we have $r_y(\o_0-n_{\o_0}\o_3)\neq \infty$
 and $f_y(\o_0-k\o_3)\neq \infty$ for all $k\in\{1,\ldots, n_{\o_0}\}$,
  so that $r_y(\o_0)\neq\infty$ by \eqref{samwino}.

If $a_1-\o_2 \in \mathscr{O}^{\Delta}(\o_0)$,
 then for some $j\in\{1,\ldots n_{\o_0}\}$, we have $\o_0-j\o_3=a_1-\o_2$.
    Then
     $r_y(\o_0-n_{\o_0} \o_3)+\sum_{k=n}^{j+1} f_y(\o_0-k \o_3)=r_y(a_1-\o_2)$
      and thus by \eqref{samwino},
     \begin{equation*}
     r_y(\o_0)=r_y(a_1-\o_2)+f_y(a_1-\o_2)+ \sum_{k=j-1}^{1}
     f_y(\o_0-k\o_3).
     \end{equation*}
     The first term here is finite by the second step and
      $f_y(\o_0-k\o_3) \ne \infty$ for $k\in\{1,\ldots,j-1\}$ by all properties
     said above, so that $r_y(\o_0)\ne \infty$.

\medskip

So far we have proved that for all
 $\o \in \{\o: \Im \o =\Im a_3,\, \o_{y_1}\leq \Re \o< \o_{y_4}+\o_2\}$,
 $r_y(\o)\neq \infty$. In the same way, we obtain that
 $r_y(\o)\neq \infty$ for $\o \in \{\o: \Im \o =\Im a_4,\,  \o_{y_1}\leq \Re \o<  \o_{y_4}+\o_2\}$.

\medskip

Since by \eqref{hatx},
     \begin{align*}
          &\widehat \eta \{\o: \Im \o =\Im a_3,\, \o_{y_1}\leq \Re \o<  \o_{y_4}+\o_2\}= \{\o: \Im \o =\Im a_4,\,\o_{y_4}< \Re \o \leq  \o_{y_1}\},\\
          &\widehat \eta \{\o: \Im \o =\Im a_4,\, \o_{y_1}\leq \Re \o<  \o_{y_4}+\o_2\}= \{\o: \Im \o =\Im a_3,\,\o_{y_4}< \Re \o \leq  \o_{y_1}\},
     \end{align*}
Equation \eqref{xieta} implies that $r_y(\o)\neq \infty$ on the
segments $\{\o\in \Pi_y : \Im  \o=a_3,a_4\}$, except for their ends
$a_1,a_2$.
    The segments $\{\o : \Im \o=a_3,a_4,\, \o_{x_1}\leq \Re \o \leq \o_{x_4}+\o_2\}$
 do not contain any point where $y(\o)=\infty$.
 It follows from Equation \eqref{sqs2}
 that $r_x(\o)\neq \infty$
     on these segments except for
 points where $x(\o)=\infty$ if they exist.
      This last fact happens if and only if $\Im b_1=\Im a_3$ and only
  at the ends $b_1,b_2$ of the segments.

If $\Im b_1 \neq \Im a_3$,
 we can show exactly in the same way that
 $r_y(\o)\neq\infty$ on the two segments
 $\{\o\in \Pi_y : \Im   \o=b_1,b_2\}$ and that
$r_x(\o)\neq\infty$ on the segments $\{\o : \Im\o=b_1,b_2,\,\o_{x_1}\leq \Re \o \leq \o_{x_4}+\o_2\}$, except for their ends
$b_1,b_2$.
  This concludes the
  proof of Proposition~\ref{ph1}.

        We proceed with the proof of Proposition \ref{ph2}.
   Let us verify the assumptions of Lemma \ref{dop} for
   $\o_0=a_3,a_4,b_1,b_2$.
We have proved that $r_y(a_3),r_y(a_4),r_y(b_1), r_y(b_2)\ne
\infty$, $a_3 \ll_1 a_1$, $a_4 \ll_1 a_2$ and that the pairs $\{a_1,a_3\}$
and $\{a_2, a_4\}$ are not ordered.  Let us now show that for any
$k\in\{3,4\}$ and $\ell\in\{1,2\}$, it is impossible to have $a_k\sim b_\ell$.
If $\Im b_\ell \neq \Im a_3, \Im a_4$, this is obvious.
If $\Im b_\ell =\Im a_3$, then it is enough to note that
$b_\ell-a_3=\o_3/2$ and $a_1-b_1=\o_3/2$ (see Figure \ref{Locloc}).
From the irrationality of $\o_2/\o_3$, it follows that
$b_\ell\not\sim a_1,a_3$ and in the same way $b_\ell \not\sim
a_2,a_4$.
  Then there is no other $\o \in \Pi_y$ except for $a_1$ (resp.\ $a_2$)
   such that $a_3 \ll \o$ (resp.\ $a_4 \ll \o$)
 and $f_y(\o)=\infty$. There is no $\o \in \Pi_y$
   such that $b_\ell \ll \o$
 and $f_y(\o)=\infty$, $\ell=1,2$.
 Hence, Lemma \ref{dop} could be applied to any of four points
  $\o_0=a_3,a_4,b_1,b_2$
  if the assumption \ref{(ii)} of this lemma is satisfied for these points.
   It is then immediate that $\lim_{\o \to b_\ell}
      f_y(\o)=\lim_{\o \to b_\ell} x(\o)[y(\widehat \xi \o)-y(\o)]=\infty$,
      $\ell\in\{1,2\}$,
          since $x(\o) \to \infty$  and the other term converges to
       $\pm[y^\d-y^\dd]\ne 0$.
   Let us verify that $\lim_{\o \to a_3}
   \{f_y(\o)+f_y(\o+\o_3)\}=\infty$. We have
\begin{align*}
\lim_{\o \to a_3} \{f_y(\o)+f_y(\o+\o_3)\}=&
\lim_{\o \to a_3} \{x(\o)[y(\widehat \xi\o)-y(\o)]+x(\widehat \eta \widehat
\xi\o)[y(\widehat \xi \widehat \eta \widehat \xi\o)-y(\widehat \eta \widehat
\xi\o)]\}\\=&\lim_{\o \to a_3}\{x(\widehat \eta \widehat \xi\o)y(\widehat \xi \widehat \eta
\widehat \xi\o)-x(\o)y(\o)\}\\+&\lim_{\o \to a_3}\{x(\o)y(\widehat
\xi\o)-x(\widehat \eta \widehat \xi\o)y(\widehat \eta \widehat \xi\o)\}.
\end{align*}
    The first term above converges to $x^\s y^\s- x^\ss y^\ss$.
 By \eqref{hat} the second term equals the limit of the product
 $y(\widehat \xi \omega) [x(\widehat \xi \o)-x(\widehat \eta \widehat \xi \o)]$.
    If $\o \to a_3$, then $\widehat \xi \o \to a_2-\o_2$
  so that the first term in the product converges to
$y(a_2-\o_2)=y(a_2)=\infty$.
  The second
  term of this product converges to $x(a_2-\o_2)-x(a_1)=x^\ss-x^\s$
    which is different from $0$ as $x^\ss\ne x^\s$.
  Then assumption \ref{(ii)} is satisfied for $\o_0=a_3$
   and in the same way for $\o_0=a_4$.
    Lemma~\ref{dop} applies to any of the four points
   $\o_0=a_3,a_4,b_1,b_2$. But  by \eqref{hat},
  $\mathcal{I}_x(a_3)=\mathcal{I}_x(a_4)= \mathcal{I}_x(a_1)=\mathcal{I}_x(a_2)$,
$\mathcal{I}_y(a_3)=\mathcal{I}_y(a_4)=\mathcal{I}_y(a_1)=\mathcal{I}_y(a_2)$,
$\mathcal{I}_x(b_1)=\mathcal{I}_x(b_2)$,
$\mathcal{I}_y(b_1)=\mathcal{I}_y(b_2)$
      so that poles of $x \mapsto  Q(x,0)$ are dense on the curves
$\mathcal{I}_x(a_1)$ and $\mathcal{I}_x(b_1)$ and those of $y
\mapsto Q(0,y)$ are dense on the curves $\mathcal{I}_y(a_1)$ and
$\mathcal{I}_y(b_1)$. Proposition \ref{ph2} is proved.

\medskip

\noindent\underline{\it Case {\rm I}: $y_4<0$, Subcase {\rm I.B}:
$x_4=\infty$.} This assumption implies that $x^\s\neq x^\ss$;
$x^\s,x^\ss \neq \infty$;
 $y^\d=y^\dd\neq \infty$; $y^\s,y^\ss\ne\infty$.

 The points $a_1,a_2,a_3,a_4$ are located as in the previous case,
 see Figure \ref{Locloc}.
  Consequently we have the following facts:
   $r_y(\o)\neq\infty$ on the segments $\{\o \in \Pi_y :
\Im\o=a_3,a_4\}$, except for their ends $\o=a_1,a_2$;
$r_x(\o)\neq\infty$ on the segments $\{\o: \Im\o=a_3,a_4,\,\o_{x_1}\leq \Re \o \leq \o_{x_4}+\o_2\}$.
 Lemma \ref{dop}
applies to $\o_0=a_3,a_4$ as in the previous case, as $x^\s \ne
x^\ss$.  Then the set of poles of all branches of $x\mapsto Q(x,0)$
(resp.\ $y\mapsto Q(0,y)$) is dense on $\mathcal{I}_x(a_1)$ (resp.
$\mathcal{I}_y(a_1)$) where  $\mathcal{I}_x(a_1)=\mathcal{I}_x(a_2)=
\mathcal{I}_x(a_3)=\mathcal{I}_x(a_4)$
(resp.~$\mathcal{I}_y(a_1)=\mathcal{I}_y(a_2)=\mathcal{I}_y(a_3)=\mathcal{I}_y(a_4)$).

Since $x_4=\infty$, we have that $b_1=b_2=\o_{x_4}+\o_2$.
 Take any $\o_0$ with $\Im \o_0=0$ such that $\o_{y_1}\leq  \Re \o_0 \leq
  \o_{y_4}+\o_2$. Then $y(\o_0)\ne \infty$.
    Let us show that $r_y(\o_0)\neq \infty$.
  If $\o_0 \in \Delta$, then by the same reasons as in Subcase I.A
  $\o_0 \in \Delta_y$ and $r_y(\o_0)\ne \infty$.
       If $\o_0 \notin \Delta$, consider the set $\mathscr{O}^{\Delta}(\o_0)$
    defined as in \eqref{ooo} and \eqref{samwino}.
    Clearly $b_1-\o_2=\o_{x_4} \notin
    \mathscr{O}^{\Delta}(\o_0)$. Since $b_1+\o_3/2=\o_{y_4}+\o_2$, we have
       $\o_0-\o_3\leq \o_{y_4}+\o_2 -\o_3 <b_1$  and  then
      $b_1 \notin
    \mathscr{O}^{\Delta}(\o_0)$. Hence  $
    \mathscr{O}^{\Delta}(\o_0)$ does not contain any point where
      $y(\o)$ or $f_y(\o)$ is infinite. Thus  by \eqref{samwino},
       $r_y(\o_0)\ne \infty$.

We have $\widehat \eta\{ \o : \Im \o=0,\, \o_{y_1}\leq  \Re \o_0 \leq
  \o_{y_4}+\o_2\}=\{\o : \Im \o=\o_1,\, \o_{y_4}\leq  \Re \o_0
\leq
  \o_{y_1}\}$. Then
by \eqref{xieta} and \eqref{buzz}, we get that
  $r_y(\o)\neq\infty$ for all $\o \in \Pi_y$ with $\Im \o=0$.
The segment $\{\o : \Im \o=0,\, \o_{x_1}\leq \o \leq \o_{x_4}+\o_2\}$
  does not contain any point with $y(\o)=\infty$.
   By \eqref{sqs2}
   this gives $r_x(\o)\neq\infty$ for all $\o$ on this segment
    except for the points where $x(\o)= \infty$ (that is only at $\o=\o_{x_4}+\o_2=b_1$),
   and this concludes the proof of Proposition \ref{ph1}.

    We have proved in particular that $r_y(\o_0)\ne \infty$ for $\o_0=b_1$.
       Furthermore, there is no $\o \in \Pi_y$ such that $b_1\ll \o$
   and $f_y(\o)=\infty$. Finally
 \begin{equation}
 \label{xqx}
 \lim_{\o \to b_1}f_y(\o)= \lim_{\o \to b_1} x(\o)[y(\widehat \xi
 \o)-y(\o)]=\lim_{\o \to b_1} x(\o)
     \frac{[b(x(\o))^2-4a(x(\o))c(x(\o))]^{1/2}}{a(x(\o))},
 \end{equation}
where $x(\o)\to \infty$ as $x \to b_1$. For all models in Subcase
I.B $\deg a(x)=2$, $\deg b(x)=1$ and
     $\deg c(x)=1$, so that \eqref{xqx} is of the order
     $O(|x(\o)|^{1/2})$. Thus
        $\lim_{\o \to b_1}f_y(\o)=\infty$.
   By Lemma~\ref{dop} with $\o_0=b_1$,
   the poles of $x\mapsto  Q(x,0)$ and those of $y\mapsto  Q(0,y)$ are
   dense on $\mathcal{I}_x(b_1)=\mathcal{I}_x(b_2)$ and $\mathcal{I}_y(b_1)=\mathcal{I}_y(b_2)$,
   respectively. They are the intervals of the real line claimed in
  Theorem \ref{main_tt} \ref{inet}. Proposition~\ref{ph2} is proved.

\medskip

\noindent\underline{\it Case {\rm I}: $y_4<0$, Subcase {\rm I.C}:
$x_4>0$.} The statements and results about $a_1,a_2,a_3,a_4$ are the
same as in Subcases I.A and I.B, see Figure \ref{Locloc} for
 their location.


We now locate $b_1,b_2$. By definition (see Section
\ref{Riemann_surface}), the values $y_1, y_2, y_3,y_4$ are the roots
of
     \begin{equation*}
          \widetilde d(y)=(\widetilde b(y)-2[\widetilde a(y)\widetilde c(y)]^{1/2})
          (\widetilde b(y)+2[\widetilde a(y)\widetilde c(y)]^{1/2})=0.
     \end{equation*}
Hence, for two of these roots
 $\widetilde b(y)=-2[\widetilde a(y)\widetilde c(y)]^{1/2}$ and then
 $X(y)\geq 0$ (see \eqref{def_XX}),
  and for the two others $\widetilde b(y)=2[\widetilde a(y)\widetilde c(y)]^{1/2}$
  and then $X(y)\leq 0$.
  But $X(y_2)$ and $X(y_3)$ are on the segment $[x_2, x_3]\subset ]0,\infty[$.
  Thus $X(y_1)\leq 0$ and $X(y_4)\leq 0$.
Since $x(b_1)=x(b_1-\o_2)=\infty$, $x_4=x(\o_{x_4})>0$ and
$X(y_4)=x(\o_{y_4})<0$, it follows that $b_1-\o_2 \in ]\o_{x_4},
\o_{y_4}[$,
in such a way that $b_1 \in ]\o_{x_4}+\o_2, \o_{y_4}+\o_2[$.
 Also, $b_2=\widehat \xi (b_1-\o_2)-\o_1=
    2(\o_{x_4}+\o_2)-b_1$ is symmetric to $b_1$
    w.r.t.\ $\o_{x_4}+\o_2$.
     Since $x(\o_{y_1})=X(y_1)\leq 0$ and
       $x(\o_{x_4}+\o_2)=x_4>0$,
       it follows that $\o_{y_1}<b_2<\o_{x_4}+\o_2$, see Figure~\ref{LoclocICIIBC}.

\begin{figure}[t]
\begin{center}
\begin{picture}(00.00,645.00)
\hspace{-112.5mm}
\includegraphics{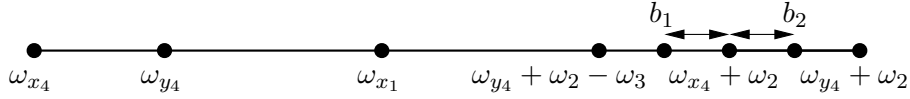}
\end{picture}
\end{center}
\vspace{-213mm} \caption{Location of $b_1,b_2$ if $x_4>0$, Subcases
I.C and II.C} \label{LoclocICIIBC}
\end{figure}

Now we show that for any $\o_0$
 with $\Im \o_0=0$ and
 $\o_{y_1} \leq \Re \o_0 \leq \o_{y_4}+\o_2$, $r_y(\o_0)\neq \infty$.
 Note that $y(\o_0)\ne \infty$.
 If $\o_0 \in \Delta$,
 by the same arguments as in Subcase I.A, $\o_0 \in
 \Delta_y$ and $r_y(\o_0)\ne \infty$.
 If $\o_0 \notin \Delta$,
 then consider $\mathscr{O}^\Delta(\o_0)$ with the notation \eqref{ooo}.

  Note that $b_1-\o_2 \notin\Delta$.
      For this, it is enough to prove that
  $b_1-\o_2\notin \Delta_x$ and that $b_1-\o_2\notin \Delta_y$.
  First, $b_1-\o_2 \notin \Delta_x$,
  since $x(b_1)=x(b_1-\o_2)=\infty$.
  Furthermore, $\Delta_y$ is centered w.r.t.\ $L_{y_1}^{y_2}$,
  and $\o_{y_4} \notin \Delta_y$ (since $|y_4|>1$).
  Hence the point $b_1-\o_2 < \o_{y_4}$ cannot be in $\Delta_y$.

   Since $\Delta \cap \{\o\in {\bf C} : \Im \o=0\}$ is
    an open interval containing $\o_{y_1}$, and since $b_1-\o_2
<\o_{y_1}\leq \o_0$, it follows that
$b_1-\o_2<\o_0-n_{\omega_0}\o_3$ (see \eqref{ooo}), so that
$b_1-\o_2 \notin \mathscr{O}^\Delta(\o_0)$. Obviously
$b_2-\o_2\notin \mathscr{O}^\Delta(\o_0)$. Furthermore, since
$\o_{y_4}+\o_2-b_2=\o_3/2+ \o_{x_4}+\o_2-b_2<\o_3/2+\o_3/2=\o_3$,
 it follows that $\o_0-\o_3<b_2<b_1$
 for any such $\o_0$.
 Hence $b_1,b_2 \notin\mathscr{O}^\Delta(\o_0)$.
 Thus for any $\o \in \mathscr{O}^\Delta(\o_0)$,
   $y(\o)\ne \infty$ and $f_y(\o) \ne \infty$.
  By \eqref{samwino}, $r_y(\o_0)\neq \infty$.
This implies, exactly as in Subcase I.B---by \eqref{xieta} and
\eqref{buzz}---, that
  $r_y(\o)\neq\infty$ for all $\o \in \Pi_y$ such that $\Im \o=0$.
   By \eqref{sqs2}
   this gives $r_x(\o)\neq\infty$ for all $\o$ with $\Im
   \o=0$ and $\o_{x_1} \leq \Im \o \leq \o_{x_4}+\o_2$, except for
   points $\o$ where $x(\o)=\infty$ (that happens for
   $\o=b_2$ only),
    and this concludes
   the proof of Proposition \ref{ph1}.

   In particular, we proved that $r_y(b_1)\neq \infty$
and also $r_y(b_2)\neq\infty$.  Since $y^\d \ne y^\dd$, we have
$\lim_{\o \to b_1}f_y(\o)=\infty$ and also $\lim_{\o \to
b_2}f_y(\o)=\infty$ by the same arguments
 as in Subcase I.A. If $b_1$ and $b_2$ are not ordered,
Lemma \ref{dop} applies to both of these points. If $b_1 \ll b_2$
(resp.\ $b_2 \ll b_1$), then there is no  $\o \in \Pi_y$ such that
  $\o \neq b_2$ (resp.\ $\o \neq b_1$),
$f_y(\o)=\infty$ and $b_2 \ll \o$ (resp.\ $b_1 \ll \o$). Hence
Lemma \ref{dop} applies to $\o_0=b_2$ (resp.\ $\o_0=b_1$).
 Thus the set of poles
 of all branches of $x\mapsto Q(x,0)$ (resp.\ $y\mapsto Q(0,y)$) is
dense on the curves $\mathcal{I}_x(b_2)$ and $\mathcal{I}_y(b_2)$
  (resp.\ $\mathcal{I}_x(b_1)$ and $\mathcal{I}_y(b_1)$). We conclude the proof of Proposition \ref{ph2}
 with the observation that $\mathcal{I}_x(b_2)=\mathcal{I}_x(b_1)$,
 while $\mathcal{I}_y(b_2)=\mathcal{I}_y(b_1)$ are intervals of the real line
  as claimed in Theorem \ref{main_tt} \ref{inet}.

\medskip

\noindent\underline{\it Case {\rm II}: $y_4>0$, location of
$a_1,a_2,a_3,a_4$.} We first exclude Subcase II.D where $x_4=\infty$
and $Y(x_4)=\infty$, and we locate the points $a_1,a_2,a_3,a_4$. In
this case we have $x^\s\neq x^\ss$. Note that $x_1, x_2, x_3,x_4$
are the roots of the equation
     \begin{equation*}
          d(x)=(b(x)-[a(x)c(x)]^{1/2})(b(x)+[a(x)c(x)]^{1/2})=0.
     \end{equation*}
Hence for two of these roots
$b(x)=-2[a(x)c(x)]^{1/2}$ and then $Y(x)\geq 0$
(see \eqref{def_YY}), and for two others $b(x)=2[a(x)c(x)]^{1/2}$
 and then $Y(x)\leq 0$.
 But $Y(x_2)$ and $Y(x_3)$ are on the segment
 $[y_2,y_3]\subset ]0,\infty[$.
 Hence $Y(x_1)\leq 0$ and $Y(x_4)\leq 0$.
 If in addition $x_4=\infty$,
  then $Y(x_4)$ equals $0$ or $\infty$;
      note also that if $Y(x_4)=\infty$
   then necessarily $x_4=\infty$.
    But the case when $Y(x_4)=\infty$ and $x_4=\infty$
    is excluded from our consideration at this moment.
    It follows that $\infty \in [y_4,Y(x_4)[$,
     and in fact $\infty \in ]y_4,Y(x_4)[$,
      since the case $y_4=\infty$ is excluded from Case II.

\begin{figure}[t]
\begin{center}
\begin{picture}(00.00,655.00)
\hspace{-112mm}
\includegraphics{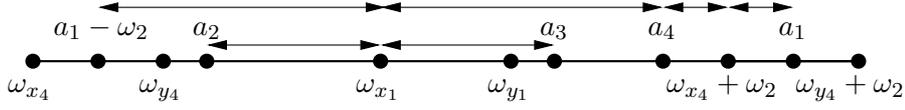}
\end{picture}
\end{center}
\vspace{-213mm} \caption{Location of $a_1,a_2,a_3,a_4$ if $y_4>0$,
$(x_4, Y(x_4))\ne (\infty, \infty)$, Subcases II.A, II.B and
II.C}\label{LoclocIIA}
\end{figure}

It follows from the above considerations that $a_1 \in
]\o_{x_4}+\o_2, \o_{y_4}+\o_2[$, see Figure \ref{LoclocIIA}. In
particular, we have $a_1-\o_2 \in ]\o_{x_4}, \o_{y_4}[$ and
$a_2=\widehat \eta a_1-\o_1=-(a_1-\o_2) + 2 \o_{y_4}$.
 This means that $a_1-\o_2$ and $a_2$ are symmetric w.r.t.\ $\o_{y_4}$.
  Furthermore $a_2 \in  ]\o_{y_4}, \o_{y_1}[$,
   but since $y_4>0$ and $Y(x_1)\leq 0$, we have
    $a_2 \in ]\o_{y_4},\o_{x_1}[$.
    We must put $a_3=\widehat \xi a_2-\o_1=-a_2+2\o_{x_1}$,
    in such a way that the points $a_2$
    and $a_3$ are symmetric w.r.t.\ $\o_{x_1}$.
     Finally, $a_4=\widehat \xi (a_1-\o_2)-\o_1
     =-(a_1-\o_2)+ 2\o_{x_1}=a_3+a_2-(a_1-\o_2)$.
     Note that $\o_{x_4}+\o_2-a_4=a_1- \o_2-\o_{x_4}>0$.
     Furthermore, $a_1-a_3=\o_3$ and $a_2+\o_2-a_4=\o_3$,
     so that $a_3 \ll a_1$ and $a_4 \ll a_2$.

\medskip

\noindent\underline{\it Case {\rm II}: $y_4>0$, Subcase {\rm II.A}:
$x_4<0$.} In this case we have $y^\d\neq y^\dd$; $y^\d,y^\dd\neq
\infty$; $x^\s, x^\ss\neq \infty$; $y^\s, y^\ss \neq \infty$. The
points $b_1,b_2$ are located as in Subcase I.A, see
Figure~\ref{Locloc}.
  Further, we can show as in Subcase I.A
that $r_y(\o)\neq\infty$ on the segments  $\{\o:\Im\o=b_1,b_2,\,
\o_{y_1}\leq \Re \o \leq \o_{y_4}+\o_2 \}$, and we deduce that
$r_x(\o)\neq\infty$ on $\{\o:\Im\o=b_1,b_2,\,\o_{x_1}\leq \Re \o
\leq \o_{x_4}+\o_2 \}$ except for their ends $b_1,b_2$.
Consequently, the set of poles of all branches of $x\mapsto Q(x,0)$
(resp.\ $y\mapsto Q(0,y)$) is dense on the curve
$\mathcal{I}_x(b_1)=\mathcal{I}_x(b_2)$ (resp.\
$\mathcal{I}_y(b_1)=\mathcal{I}_y(b_2)$), as claimed in Proposition
\ref{ph2}.

Consider now $r_x(\o)$ and $r_y(\o)$ for $\o$
with $\Im \o=0$. See Figure \ref{LoclocIIA}
for the location of the points $a_1,a_2,a_3,a_4$.

We first prove that for
\begin{equation}
\label{oaa}r_y(\o_0)\ne \infty,\quad
\forall \o_0 \in \{\o : \Im \o=0,\, \o_{y_1}\leq \o_0 \leq
\o_{y_4}+\o_2\}\setminus \{a_1\}.
\end{equation}
The proof consists in three steps.

\smallskip

{\noindent}\textit{Step 1.} We prove that $r_y(a_3)\neq\infty$ and
$r_y(a_4)\neq\infty$.

If $a_3 \in \Delta$, then necessarily $r_y(a_3)\neq\infty$, since
$x^\ss,y^\ss\neq \infty$. Otherwise $|x^\ss|, |y^\ss|\geq 1$, and
then $a_2=\widehat \xi a_3-\o_1 \notin \Delta$. Since
$a_2<\o_{x_1}<a_3$, $\o_{x_1} \in \Delta$ and $a_2,a_3 \notin
\Delta$, it follows that any $\o \in \mathscr{O}^\Delta(a_3)$ must
be in $]a_2,a_3[$, hence $f_y(\o)\neq \infty$, $x(\o)\neq \infty$
and $y(\o)\neq\infty$. Thus by
\eqref{samwino}, $r_y(a_3)\neq\infty$.

We now show that $r_y(a_4)\neq\infty$. Suppose first that $a_1-\o_2
\in \Delta$. Then, since $a_1-\o_2\notin \Pi_y$,
 we have  $a_1-\o_2 \in \Delta_x$, so that $r_x(a_1-\o_2)\neq \infty$
by Theorem~\ref{thmde}.  Then by Equations \eqref{xieta} and \eqref{buzz},
$r_x(a_4)=r_x(\widehat\xi(a_1-\o_2)-\o_1)=r_x(a_1)\neq\infty$. Since
$x^\s,y^\s\neq \infty$, we also have $r_y(a_4)\neq\infty$ by
\eqref{sqs2}. Assume now that $a_1-\o_2 \notin \Delta$. Since
$a_1-\o_2<\o_{x_1}<a_4$, then
$a_1-\o_2\notin\mathscr{O}^\Delta(a_4)$. Furthermore $a_2 \notin
\mathscr{O}^\Delta(a_4)$ as $a_4+\o_3=a_2+\o_2$ and $\o_3/\o_2$ is
irrational. Finally $a_4-a_3<a_1-a_3=\o_3$, so that $a_3 \notin
\mathscr{O}^\Delta(a_4)$. It follows that for any $\o \in
\mathscr{O}^\Delta(a_4)$, we have
$f_y(\o)\neq \infty$ and $y(\o)\neq\infty$. Hence
$r_y(a_4)\neq\infty$.

\smallskip

{\noindent}\textit{Step 2.} We prove that
$r_y(a_2)+f_y(a_2)\neq\infty$ and that
$r_y(a_1-\o_2)+f_y(a_1-\o_2)\neq\infty$. By Equation \eqref{sqs2}
\begin{align}
\nonumber
 r_y(a_2)+f_y(a_2)&=-r_x(a_2)+K(0,0)Q(0,0)+x(a_2) y(a_2)+
x(a_2)[y(\widehat \xi a_2)-y(a_2)]\\
&= -r_x(a_2)+K(0,0)Q(0,0)+
x(a_2)y(\widehat \xi a_2).\label{mphh}
\end{align}
  Since $r_y(a_3)\neq\infty$ and since $x^\ss,y^\ss \neq \infty$, it
follows from Equation \eqref{sqs2} that $r_x(a_3)\neq\infty$. Then
by \eqref{xieta} and \eqref{buzz}, $r_x(a_2)=r_x(\widehat \xi
a_3-\o_1)=r_x(a_3)\neq\infty$.
  Since $x(a_2)=x^\ss\ne \infty$ and $y(\widehat \xi a_2)=y^\ss \ne
 \infty$, \eqref{mphh} is finite.
   By completely analogous arguments we obtain that
   $r_y(a_1-\o_2)+f_y(a_1-\o_2)\neq\infty$.

\smallskip

{\noindent}\textit{Step 3.} Let us now show \eqref{oaa}.
 If $\o_0
\in \Delta$, then  by the same arguments as in Subcase I.A
 $\o_0 \in \Delta_y$ and then $r_y(\o_0)\ne \infty$ by Theorem
 \ref{thmde}.
Otherwise, consider $\mathscr{O}^\Delta(\o_0)$ defined in
\eqref{ooo}. Note that
\begin{equation}
\label{zozo}
\o \notin \mathscr{O}^\Delta(\o_0),
\quad \forall \o \in ]\o_{x_4}+\o_2-\o_3/2, \o_{y_4}+\o_2],
\end{equation}
   since in this case
$\o_0-\o_3<\o$. In particular, $a_4 \notin
\mathscr{O}^\Delta(\o_0)$. Furthermore, $a_3 \in
\mathscr{O}^\Delta(\o_0)$ implies that $\o_0=a_3+\ell\o_3$ for some
$\ell\geq 1$. But $a_3+\o_3=a_1\neq \o_0$ and
$a_3+\ell\o_3>\o_{y_4}+\o_2$ for any $\ell\geq 2$. Hence $a_3
\notin \mathscr{O}^\Delta(\o_0)$. Since $a_2-(a_1-\o_2)<\o_3$, it
is impossible that both $a_2$ and $a_1-\o_2$ belong to
$\mathscr{O}^\Delta(\o_0)$.
    If none of them belongs to $\mathscr{O}^\Delta(\o_0)$, then
  $y(\o)\neq \infty$ and $f_y(\o) \ne \infty$ for any $\mathscr{O}^\Delta(\o_0)$, and
  then $r_y(\o_0) \ne \infty$.
 Suppose, e.g., that $a_2\in
\mathscr{O}^\Delta(\o_0)$. Then for some $\ell\geq 1$,
$\o_0=a_2+\ell\o_3$, and by \eqref{samwino},
   $r_y(\o_0)=r_y(a_2)+
f_y(a_2)+\sum_{k=\ell-1}^{1}
  f_y(\o_0-k\omega_3)$.
     But $r_y(a_2)+f_y(a_2)\neq\infty$ by the second step, and
obviously $\sum_{k=\ell-1}^{1} f(\o_0-k\o_3)\neq\infty$ by all facts
said above, so that $r_y(\o_0)\neq\infty$. The reasoning is the same
if $a_1-\o_2 \in \mathscr{O}^\Delta(\o_0)$.
  This concludes the proof of \eqref{oaa}.

  \medskip

Applying \eqref{xieta} and \eqref{buzz} exactly as in Subcase I.B,
we now reach the conclusion that $r_y(\o_0)\neq \infty$ for all
$\o_0 \neq a_2=\widehat \eta a_1$ with $\Im \o_0=0$ and
$\o_{y_4}\leq \Re\o_0 \leq \o_{y_1}$ as well. Next, exactly as in
Subcase I.B, thanks to \eqref{sqs2}, we derive that
$r_x(\o)\neq\infty$ for all $\o_0$ such that $\Im \o=0$ and
$\o_{x_1}\leq \Re\o\leq \o_{x_4}+\o_2$ except possibly for points
where $x(\o)=\infty$. But these points are absent on this segment in
this case. This concludes the proof of Proposition \ref{ph1}.

For the same reason as in Subcase I.A, the fact that $x^\s \ne
x^\ss$ gives
 $\lim_{\o \to a_4}\{f_y(\o)+f_y(\o+\o_3)\}=\infty$ and
 $\lim_{\o \to a_3}\{f_y(\o)+f_y(\o+\o_3)\}=\infty$.
Further, $a_3 \ll_1 a_1$ and $a_4 \ll_1 a_2$. If in addition $a_3
\ll a_4$, then due to the fact that $a_3+\o_3=a_1$ we have $a_3
\ll_1 a_1 \ll a_4 \ll_1 a_2$. There is no point $\o \in \Pi_y
\setminus \{a_2\}$  such that $a_4 \ll \o$ and $f_y(\o)=\infty$.
 Lemma \ref{dop} applies to $\o_0=a_4$. If $a_4 \ll a_3$, then also $a_4
\ll_1 a_2 \ll a_3 \ll_1 a_1$ and this lemma applies to $\o_0=a_3$.
If $a_3 \not\sim a_4$, Lemma \ref{dop} can be applied to both $a_3$
and $a_4$. Since $\mathcal{I}_x(a_3)=\mathcal{I}_x(a_4)=
\mathcal{I}_x(a_1)=\mathcal{I}_x(a_2)= [x_1,x_4]$ and
$\mathcal{I}_y(a_3)=\mathcal{I}_y(a_4)=
\mathcal{I}_y(a_1)=\mathcal{I}_y(a_2)= {\bf R}\setminus ]y_4,y_1[$,
the set of poles of $x\mapsto Q(x,0)$ (resp.\ $y\mapsto Q(0,y)$) is
dense on the announced intervals and Proposition \ref{ph2} is
proved.

\medskip

\noindent\underline{\it Case {\rm II}: $y_4>0$, Subcase {\rm II.B}:
$x_4>0$ and exactly
 one of $y^\d,y^\dd$ is $\infty$.}
Assume, e.g., that $y^\d=\infty$ and $y^\dd\neq \infty$. Then
$x^\s=\infty$; $y^\s=y^\dd\neq \infty$; $x^\ss \neq x^\s=\infty$;
$y^\ss \neq \infty$. It follows that $b_1=a_1$ and $b_2=\widehat \xi
b_1-\o_1-\o_2=a_4$,
 while $a_1,a_2,a_3,a_4$ are pictured as previously, in Subcase
 II.A, see Figure~\ref{LoclocIIA}.

  We first derive \eqref{oaa}.
By the same reasoning as in Subcase II.A,
 we reach the conclusion that $r_y(a_3)\neq\infty$.
 Let us note that in this case $a_1-\o_2 \notin \Delta$,
 as $x(a_1-\o_2),y(a_1-\o_2)=\infty$. Next, we derive as in Subcase
 II.A that $r_y(a_4)\neq\infty$ and that $r_y(a_2)+f_y(a_2)\ne\infty$,
 since $x^\ss y^\ss\neq \infty$.
 Finally, again by the same arguments as in Subcase II.A,
 we conclude that for any $\o_0\neq a_1$ with $\Im \o_0=0$
 and $\o_{y_1}\leq \Re\o_0 \leq \o_{y_4}+\o_2$,
 the orbit $\mathscr{O}^{\Delta}(\o_0)$ does not contain $a_3$ and $a_4$.
 The orbit can neither contain $a_1-\o_2$, since $a_1-\o_2 \notin \Delta$
 and $a_1-\o_2<\o_{y_1}<\o_0$, where $\o_{y_1} \in \Delta$.
 Since $r_y(a_2)+f_y(a_2)\ne\infty$, then as in Subcase II.A
 we have $r_y(\o_0)\neq\infty$.

     It follows from \eqref{xieta} and \eqref{buzz} that
     $r_y(\o_0)\neq \infty$ for any $\o_0$
      such that $\Im \o_0=0$
     and $\o_{y_4}\leq \o_0 \leq \o_{y_4}+\o_2$, except for
     $a_1$ and $\widehat \eta a_1-\o_1=a_2$. By \eqref{sqs2},
     $r_x(\o_0)\neq \infty$ for any $\o_0$ such that
     $\Im \o_0=0$ and $\o_{x_1}\leq \o_0 \leq \o_{x_4}+\o_2$,
     except for points $\o_0$ where $x(\o_0)=\infty$
      (this is $\o_0=a_4$ in this case). This finishes the proof of
      Proposition \ref{ph1} in this case.

     Since $x^\s \ne x^\ss$, the same reasoning as in Subcase I.A
  gives $\lim_{\o \to a_4}\{f_y(\o)+f_y(\o+\o_3)\}=\infty$ and
 $\lim_{\o \to a_3}\{f_y(\o)+f_y(\o+\o_3)\}=\infty$.
 The rest of the proof of Proposition \ref{ph2}
    via the use of Lemma \ref{dop} with $\o_0=a_3$
 if $a_4 \ll a_3$ or with $\o_0=a_4$ if $a_3 \ll a_ 4$, or with
 indifferent choice of $a_3$ or $a_4$ if $a_3\not\sim a_4$,
  is the same as
in Subcase II.A.

\medskip

\noindent\underline{\it Case {\rm II}: $y_4>0$, Subcase {\rm II.C}:
$x_4>0$ and $y^\d,y^\dd\neq \infty$, or $x_4=\infty$ and $Y(x_4)\neq
\infty$.} In this case, we have $y^\d,y^\dd\neq \infty$; $x^\s,
x^\ss \neq \infty$; $x^\s \neq x^\ss$, $y^\s,y^\ss\neq \infty$.

The points $a_1,a_2,a_3,a_4$ are pictured as in Subcases II.A and
II.B, see Figure \ref{LoclocIIA},  while $b_1,b_2$ are pictured as
in Subcase I.B (where $b_1=b_2=\omega_{x_4+\omega_2}$) or I.C, see
Figure \ref{LoclocICIIBC}. They are such that $b_1\ne a_1$ and $b_2
\ne a_4$.
 In particular,  $b_1,b_2 \in ]\o_{x_4}+\o_2-\o_3,\o_{x_4}+\o_2[$
 and are symmetric w.r.t.\ $\o_{x_4}+\o_2$; $b_1=b_2$ is in the
 middle of this interval if and only if $x_4=\infty$.
          Hence for any $\o_0$ with $\Im \o_0=0$ and
 $\o_{y_1}\leq \Re \o_0 \leq \o_{y_4}+\o_2$,
 we have $\o_0-\o_3<b_2<b_1$, so
    that $b_1,b_2 \notin \mathscr{O}^{\Delta}(\o_0)$.
Furthermore, by the same arguments as in Subcase I.C, $b_1-\o_2
\notin \mathscr{O}^{\Delta}(\o_0)$.  Hence \eqref{oaa} proved in
Subcase II.A stays valid in this case and by \eqref{xieta} and
\eqref{buzz}, $r_y(\o)\neq \infty$ for all $\o$ with $\Im \o=0$ and
$\o_{y_4}\leq \Re \o \leq \o_{y_4}+\o_2$, except for $\o=a_1,a_2$.
By the identity \eqref{sqs2}, $r_x(\o) \ne \infty$
   for all $\o$ with $\Im \o=0$
and $\o_{x_1}\leq \Re \o \leq \o_{x_4}+\o_2$, except for
 points $\o$ where $x(\o)=\infty$, namely $\o=b_2$.
   This concludes the proof of Proposition \ref{ph1} and
   proves in particular that $r_y(\o_0)\ne \infty$ for any
   $\o_0 \in \{a_3,a_4,b_1,b_2\}$.

  Using $x^\s \ne x^\ss$, we verify as in  Subcase I.A
that $\lim_{\o \to a_4}\{f_y(\o)+f_y(\o+\o_3)\}=\infty$ and
 $\lim_{\o \to a_3}\{f_y(\o)+f_y(\o+\o_3)\}=\infty$.
  If $x_4>0$, since $y^\d \ne y^\dd$, we verify as in Subcase I.A
that $\lim_{\o \to b_1}f_y(\o)=\infty$ and
 $\lim_{\o \to b_2}f_y(\o)=\infty$.
 If $x_4=\infty$, then $b_1=b_2$ and we verify as in Subcase I.B
  that $\lim_{\o \to b_1}f_y(\o)=\infty$.
    If $a_3, a_4, b_1, b_2$ are ordered (e.g., $a_3 \ll b_1 \ll a_4
\ll b_2$, then immediately $a_3 \ll_1 a_1 \ll b_1 \ll a_4 \ll_1 a_2
\ll b_2$), there is a {\it maximal} point {\it in the sense of
this order}.
   If the maximal element is $b_\ell$ for some $\ell\in\{1,2\}$, then
  there is no $\o \in \Pi_y$ with $f_y(\o)=\infty$ such that
  $b_\ell \ll \o$.
  If the maximal element is $a_3$ (resp.\ $a_4$), then
there is no $\o \in \Pi_y$ except for $a_1$ (resp.\ $a_2$) with
$f_y(\o)=\infty$ such that
  $a_3 \ll \o$ (resp.\ $a_4 \ll \o$).
  Lemma \ref{dop} applies with $\o_0$ equal this maximal element since
  all assumptions \ref{(i)}, \ref{(ii)} and \ref{(iii)} are satisfied.
    If $a_3,a_4,b_1,b_2$ are not all ordered, then it is enough to apply Lemma
\ref{dop} to the maximal element of any ordered subset.
   Finally $\mathcal{I}_x(\o_0)={\bf R}\setminus ]x_1,x_4[$
and $\mathcal{I}_y(\o_0)={\bf R}\setminus ]y_4,y_1[$ for {any}
$\o_0\in \{a_3,a_4, b_1,b_2\}$, hence the set of poles of $x\mapsto
Q(x,0)$ (resp.\ $y\mapsto Q(0,y)$) is dense on the announced intervals.
  Proposition \ref{ph2} is proved.

\medskip

\noindent\underline{\it Case {\rm II}: $y_4>0$, Subcase {\rm II.D}:
$x_4=\infty$ and $Y(x_4)=\infty$.} In this case $x^\s=\infty$;
$y^\s=\infty$; $x^\ss, y^\ss \neq \infty$.

We have $a_1=\o_{x_4}+\o_2$, and
 $a_2=\widehat \eta a_1-\o_1=\o_{x_4}+\o_3$ is
  symmetric to $a_1-\o_2$ w.r.t.\ $\o_{y_4}$.
  Further, since $y_4>0$ and
  $Y(x_1)\leq 0$,  $a_2\in ]\o_{y_4}, \o_{x_1}[$.
  Then $a_3=\widehat \xi a_2-\o_1=\o_{x_4}+\o_2-\o_3$
  is symmetric to $a_2$ w.r.t.\ $\o_{x_1}$.
   Finally, $a_4=\o_{x_4}+\o_2=a_1$,
    $b_1=b_2=a_1$, $a_3+\o_3=a_1$ and $a_1+\o_3=a_2+\o_2$,
    so that $a_3 \ll_1 a_1 \ll_1 a_2$, see Figure \ref{LoclocIID}.

We prove \eqref{oaa}. For this purpose, we first show that
$r_y(a_3)\neq \infty$. If $a_3 \in \Delta$, $x^\ss, y^\ss \ne
\infty$, then by Theorem \ref{thmde}, $r_y(a_3)\ne \infty$
  If $a_3 \notin \Delta$, consider $\mathscr{O}^{\Delta}(\o_0)$.
   Since $a_3+2\o_3=a_2+\o_2$,  $a_2\notin \mathscr{O}^{\Delta}(\o_0)$
     by the irrationality of $\o_2/\o_3$. Obviously $a_1-\o_2
  =\o_{x_4}\notin \Delta$ and then it is not in $\mathscr{O}^{\Delta}(\o_0)$.
  Hence $r_y(a_3)\neq \infty$.
  Next we prove  that $r_y(a_2)+f_y(a_2)$ exactly as in Subcase II.A
    by using that $x^\ss y^\ss \ne \infty$.

   For any $\o_0\ne a_1$ with $\Im \o_0=0$ and $\o_{y_1}\leq \o_0 \leq
   \o_{y_4}+\o_2$, the orbit $\mathscr{O}^{\Delta}(\o_0)$ cannot
   contain $a_3$, since $a_3+\o_3=a_1$ and $a_3+2\o_3>\o_0$. It can
   neither contain $a_1$, since $\o_0-\o_3 <a_1$, nor obviously
   $a_1-\o_2=\o_{x_4}$. If it does not contain $a_2$, then by
   \eqref{samwino}, $r_y(\o_0)\ne \infty$. If it does, then
    exactly as in Subcase II.A, using $r_y(a_2)+f_y(a_2)\ne
    \infty$, we prove that $r_y(\o_0)\ne \infty$ as well.
    This finishes the proof of \eqref{oaa}.

\begin{figure}[t]
\begin{center}
\begin{picture}(00.00,645.00)
\hspace{-102mm}
\includegraphics{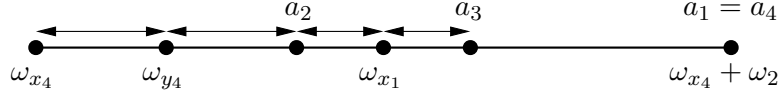}
\end{picture}
\end{center}
\vspace{-213mm} \caption{Location of $a_1,a_2,a_3,a_4$ if $y_4>0$,
$x_4=\infty$, $Y(x_4)=\infty$, i.e., Subcase II.D} \label{LoclocIID}
\end{figure}

   By Equations \eqref{xieta}, \eqref{buzz} and \eqref{sqs2},
    we derive as in Subcase II.A
   that $r_x(\o_0)\neq \infty$
   for all $\o_0 \in \{\o: \Im \o=0,\, \o_{x_1} \leq \o_0 \leq \o_{x_4}+\o_2\}$
    except for $\o_0$ where $x(\o_0)=\infty$, that is for $a_1$.
    This finishes the proof of
    Proposition~\ref{ph1}
    in this
     case.

       To prove Proposition~\ref{ph2}, we would like to apply Lemma~\ref{dop}
       with $\o_0=a_3$. We have shown that $r_y(a_3) \ne
       \infty$, $a_3 \ll_1 a_1=a_4=b_1=b_2 \ll a_2$, so
        that there is no $\o \in \Pi_y \setminus \{a_1,a_2\}$  such that
        $a_3 \ll \o$ and $f_y(\o)=\infty$.
         It remains to verify  assumption \ref{(ii)} of Lemma \ref{dop} for
          $\o_0=a_3$, that is that
     $f_y(\o)+f_y(\o+\o_3)+f_y(\o+2\o_3)$ converges to infinity if
     $\o \to a_3$. The last quantity is the sum of
     \begin{equation*}
     x(\o)[y(\widehat \xi \o)-y(\o)]+ x(\widehat \eta \widehat \xi \o)[y (\widehat
     \xi \widehat \eta \widehat \xi \o) -y(\widehat \eta \widehat \xi \o)]
      + x( \widehat \eta \widehat \xi \widehat \eta \widehat \xi \o)[y (\widehat
     \xi \widehat \eta \widehat \xi \widehat \eta \widehat \xi \o) -y(\widehat \eta \widehat \xi
     \widehat \eta \widehat \xi \o)],
     \end{equation*}
     which equals
     \begin{equation}
     \label{domin}
     x( \widehat \eta \widehat \xi \widehat \eta \widehat \xi \o)y (\widehat
     \xi \widehat \eta \widehat \xi \widehat \eta \widehat \xi \o)-x(\o)y(\o)
       +x(\widehat \eta \widehat \xi \o)[y (\widehat
     \xi \widehat \eta \widehat \xi \o) -y(\widehat \eta \widehat \xi
     \o)]+ x(\o) y(\widehat \xi \o) - x(\widehat \eta \widehat \xi
          \widehat \eta \widehat \xi \o) y(\widehat \xi \widehat \eta \widehat \xi
          \o)
       \end{equation}
        where we used \eqref{hat}.
   If $\o \to a_3$, then the first term in this sum converges to
   $x^\s  y^\s-x^\s y^\s=0$.
    Next, $\widehat \eta \widehat \xi \o \to a_1$, so that
    $x(\widehat \eta \widehat \xi \o) \to \infty$.
     We can also compute
    the values of $y(\widehat \xi \o)$ and
    $y(\widehat \xi \widehat \eta \widehat \xi \o)$
      as $( -b(x )\pm [b(x)^2 -  4 a(x  ) c( x)]^{1/2})/ (2 a( x))$  with $x=x(\widehat \eta \widehat \xi \o)$.
         Since for all of the $9$ models composing Subcase II.D,
        $ \deg a=\deg b=1$ and
        $\deg c=2$, then
        $y(\widehat \xi \o)$ and
        $y(\widehat \xi \widehat \eta \widehat \xi \o)$ are of
        order $O(|x(\widehat \eta \widehat \xi \o)|^{1/2})$, and their
        difference
         $|y(\widehat \xi \o)-y(\widehat \xi \widehat \eta \widehat \xi
        \o)|$ is not smaller than
        $O(|x(\widehat \eta \widehat \xi \o)|^{1/2})$
        as $\o \to a_3$.
         Finally, $x(\o), x(\widehat \eta \widehat \xi
          \widehat \eta \widehat \xi \o) \to x^\s\ne \infty$ as $\o \to
          a_3$.
          Then as $\o \to a_3$ in the sum \eqref{domin} the second term
           is of the order not smaller than
              $O(|x(\widehat \eta \widehat \xi \o)|^{3/2})$
           while the first vanishes and the third has the order
             $O(|x(\widehat \eta \widehat \xi \o)|^{1/2})$.
           This proves the assumption
          \ref{(ii)} of Lemma \ref{dop} for $\o_0=a_3$.
           By this lemma the poles of $x \mapsto  Q(x,0)$ and $y\mapsto
            Q(0,y)$ are  dense on the intervals of the real line, as
            announced in the proposition.

\medskip

\noindent\underline{\it Case {\rm III}: $y_4=\infty$.} It remains
here exactly one case to study, see Figure \ref{Allcases}. It is
such that $y_4=\infty$, $x_4=\infty$ and $X(y_4)\neq \infty$. Then
$x^\s=x^\ss \neq \infty$; $y^\d=y^\dd\neq \infty$;
$y^\s=y^\ss\ne\infty$.

The points $b_1=b_2=\o_{x_4}+\o_2$ are located as in Subcase I.B,
$a_1=a_2=\o_{y_4}+\o_2$ and $a_3=a_4=\o_{y_4}+\o_2-\o_3$. In
particular, $a_3+\o_3/2=b_1$, $b_1+\o_3/2=a_1$, see
Figure~\ref{LoclocIII}.

\begin{figure}[t]
\begin{center}
\begin{picture}(00.00,645.00)
\hspace{-110mm}
\includegraphics{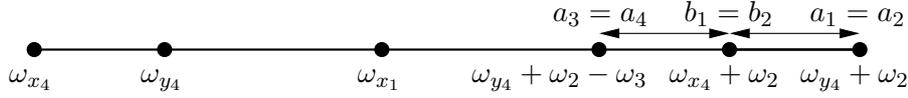}
\end{picture}
\end{center}
\vspace{-213mm} \caption{Location of $a_1,a_2,a_3,a_4,b_1,b_2$ if
$y_4=\infty$, Case III}
 \label{LoclocIII}
\end{figure}

  We start by showing that $r_y(a_3)\neq\infty$. If $a_3 \in \Delta$, this
is true thanks to \eqref{sqs2} and since $x^\s,y^\s\neq \infty$. If
$a_3 \notin \Delta$, consider the orbit
$\mathscr{O}^{\Delta}(a_3)$.
     It cannot contain $a_1-\o_2$ since $a_3+\o_3=a_1$, neither
 $b_1$, nor $b_1-\o_2=\o_{x_4}$.
 It follows that $r_y(a_3)\neq\infty$.

Since $x^\s,y^\s\neq \infty$, it follows from Equations
\eqref{sqs2},
 \eqref{xieta} and \eqref{buzz} that
 $r_x(a_1-\o_2)=r_x(\widehat \xi (a_1-\o_2))=
 r_x(\widehat \xi (a_1-\o_2)-\o_1)=r_x(a_3) \neq \infty$.
 Then, by \eqref{sqs2},
     $r_y(a_1-\o_2)+f_y(a_1-\o_2)=-r_x(a_1-\o_2)+K(0,0)Q(0,0)
 +x(a_1-\o_2)y(\widehat \xi (a_1-\o_2))\neq\infty$.

 Take any $\o_0$ with $\Im \o_0=0$ and $\o_{y_1} \leq \o_0 < \o_{y_4}+\o_2$.
  If $\o_0 \in \Delta$, then by the same arguments as in Subcase I.A,
  $\o_0 \in \Delta_y$, so that $r_y(\o_0)\neq\infty$.
  Otherwise, we notice that
   $\o_0 -\o_3 <a_3$, so that no point---and in particular $b_1$---of $[\o_{y_4}+\o_2-\o_3, \o_{y_4}+\o_2[$
    belongs to $\mathscr{O}^\Delta(\o_0)$.
    Clearly $b_1-\o_2=\o_{x_4}\notin \mathscr{O}^\Delta(\o_0)$.
     Since either $a_1-\o_2 \notin \mathscr{O}^\Delta(\o_0)$ or
  $a_1-\o_2  \in \mathscr{O}^\Delta(\o_0)$
     but $r_y(a_1-\o_2)+f_y(a_1-\o_2)\neq \infty$,
     and by the same reasoning as in Subcase I.A,
      we derive that $r_y(\o_0)\neq \infty$. The rest of the proof
        of Proposition \ref{ph1} in this case goes along the
        same lines as in Case II.

   Now note that $b_1$ is not ordered with $a_1$ and $a_3$. Indeed,
   since $b_1+\o_3/2=a_1$ and $b_1-\o_3/2=a_3$, this would
   contradict the irrationality of $\o_2/\o_3$.
    Then there is no $\o \in \Pi_y$ such that $b_1 \ll \o$ and
     $f_y(\o)=\infty$. We also have $f_y(\o)=x(\o)[y(\widehat \xi
      \o)-y(\o)]=x(\o) [b^2(x(\o))-4 a(x(\o))c(x(\o))]^{1/2}/
      a(x(\o))$.  If $\o \to b_1$, then $x(\o) \to \infty$ and
      since $\deg a=2$ and $\deg b=\deg c=1$, we have
      $f_y(\o)\to \infty$. Lemma \ref{dop} applies with $\o_0=b_1$
      and proves Proposition~\ref{ph2}.

\subsection{\bf Asymptotic of $\o_2(\o)/\o_3(\o)$ as $z \to 0$}

It remains to prove the following result announced at the beginning
of Section~7.
\begin{prop}
\label{nncc} For any of $51$ non-singular walks having an infinite
group, there exist a rational constant $L>0$ and a constant
$\widetilde L\ne 0$ such that
\begin{equation}
\label{btcn} \o_2/\o_3=L+\widetilde L /\ln (z)+O(1/\ln(z))^2.
\end{equation}
\end{prop}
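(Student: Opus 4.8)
The plan is to estimate the explicit integral representations \eqref{expression_omega_1_2} and \eqref{expression_omega_3} of $\omega_2$ and $\omega_3$ directly, as $z\to 0^{+}$. The first step is to pin down the asymptotics of the branch points. Writing $a(x)=z\alpha(x)$, $b(x)=z\beta(x)-x$, $c(x)=z\gamma(x)$ with $\alpha,\beta,\gamma$ polynomials whose coefficients are among the ${\bf 1}_{(i,j)}$ (so $\beta(0)={\bf 1}_{(-1,0)}$, $\alpha(0)={\bf 1}_{(-1,1)}$, $\gamma(0)={\bf 1}_{(-1,-1)}$), one has $d(x)=b(x)^{2}-4a(x)c(x)=x^{2}-2zx\,\beta(x)+z^{2}\big(\beta(x)^{2}-4\alpha(x)\gamma(x)\big)$, so $d(0)=z^{2}({\bf 1}_{(-1,0)}^{2}-4{\bf 1}_{(-1,1)}{\bf 1}_{(-1,-1)})$ and the leading coefficient of $d$ is $\Lambda:=z^{2}({\bf 1}_{(1,0)}^{2}-4{\bf 1}_{(1,1)}{\bf 1}_{(1,-1)})$, cf.\ Remark~\ref{Remark3}. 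Inspecting the Newton polygons of $d(x)=0$ at $x=0$ and at $x=\infty$ (and, symmetrically, of $\widetilde d(y)=0$) one finds that as $z\to 0^{+}$ the two small branch points of $d$ behave like $c\,z^{p}$ and the one or two large ones like $c\,z^{-q}$, for suitable \emph{positive rationals} $p,q$ (in the generic degree-$4$ case $p=q=1$), the constants $c$ being algebraic numbers built from the ${\bf 1}_{(i,j)}$; moreover $x_{1}x_{2}x_{3}x_{4}=d(0)/\Lambda$ is independent of $z$. The point $X(y_{1})=-\widetilde b(y_{1})/(2\widetilde a(y_{1}))$ is treated the same way via $\widetilde d$: $y_{1}\sim c\,z^{p'}$, and depending on which of $(1,-1),(1,0)$ lie in $\mathcal S$, the value $X(y_{1})$ tends either to a finite nonzero limit or to $\infty$ at a rational power rate $z^{-q'}$.

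Second, one feeds these into the integrals. For $\omega_{2}=\int_{x_{2}}^{x_{3}}\,dx/d(x)^{1/2}$, in the generic degree-$4$ case reduce to the complete elliptic integral of the first kind $\mathcal K$: for the ordering $x_{1}<x_{2}<x_{3}<x_{4}$ (the other sign patterns being analogous), $\omega_{2}=\dfrac{2\,\mathcal K(k)}{\sqrt{\Lambda(x_{3}-x_{1})(x_{4}-x_{2})}}$ with $1-k^{2}=\dfrac{(x_{2}-x_{1})(x_{4}-x_{3})}{(x_{3}-x_{1})(x_{4}-x_{2})}$; by the branch-point asymptotics $\Lambda(x_{3}-x_{1})(x_{4}-x_{2})=1+O(z)$ (using that $x_{1}x_{2}x_{3}x_{4}$ is $z$-free) and $1-k^{2}=Cz^{2}(1+O(z))$ with $C>0$ an explicit algebraic number, so, since $\mathcal K(k)=\ln\!\big(4/\sqrt{1-k^{2}}\big)+o(1)$ as $k\to 1$, one gets $\omega_{2}=-2\ln z+\ln(16/C)+o(1)$. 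In the non-generic cases one splits $[x_{2},x_{3}]$ at a fixed point and rescales near $x_{2}$ and near $x_{3}$ separately; each rescaled piece contributes a logarithm plus a convergent limit integral, giving $\omega_{2}=-\mu_{2}\ln z+\nu_{2}+o(1)$ for an explicit positive rational $\mu_{2}$ and constant $\nu_{2}$. For the \emph{incomplete} integral $\omega_{3}=\int_{X(y_{1})}^{x_{1}}\,dx/d(x)^{1/2}$ the same scheme applies: the dominant contributions come from the neighbourhood of $x=0$, where $d$ is small and a rescaling reduces it to a limiting polynomial, and (when $X(y_{1})\to\infty$) from the neighbourhood of $x=\infty$; one obtains $\omega_{3}=-\mu_{3}\ln z+\nu_{3}+o(1)$, $\mu_{3}$ a positive rational ($\mu_{3}=1$ when $X(y_{1})$ has a finite nonzero limit) and $\nu_{3}$ explicit. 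The $\mu$'s are small positive rationals and the $\nu$'s are explicit linear combinations of logarithms of the algebraic constants above. (Consistency check: $\omega_{1}$ tends to a finite nonzero limit, so the modulus $\omega_{2}/\omega_{1}\to i\infty$ --- the curve $\{K(x,y;z)=0\}$ degenerates as $z\to 0$.)

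Dividing the two expansions,
\[
\frac{\omega_{2}}{\omega_{3}}
=\frac{-\mu_{2}\ln z+\nu_{2}+o(1)}{-\mu_{3}\ln z+\nu_{3}+o(1)}
=\frac{\mu_{2}}{\mu_{3}}+\frac{\mu_{2}\nu_{3}-\mu_{3}\nu_{2}}{\mu_{3}^{2}}\cdot\frac{1}{\ln z}+O\!\Big(\frac{1}{(\ln z)^{2}}\Big),
\]
which is exactly \eqref{btcn} with the rational $L=\mu_{2}/\mu_{3}>0$ and $\widetilde L=(\mu_{2}\nu_{3}-\mu_{3}\nu_{2})/\mu_{3}^{2}$. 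It remains only to check $\widetilde L\neq 0$, i.e.\ $\nu_{2}/\mu_{2}\neq\nu_{3}/\mu_{3}$, and this I expect to be the real obstacle: it does \emph{not} follow from the (already known) non-constancy of $z\mapsto\omega_{2}/\omega_{3}$, so one must make $\nu_{2},\nu_{3}$ fully explicit in each of the eight subclasses of Figure~\ref{Allcases} --- the ones distinguished by the signs of $x_{4},y_{4}$ as in Remark~\ref{Remark3} --- and verify, walk by walk over the $51$ models, that the resulting number is nonzero: either by direct exact evaluation, or by noting that $\mu_{3}^{2}\widetilde L$ is a ${\bf Q}$-linear combination of logarithms of distinct positive algebraic numbers (plus at most a $\ln 2$ coming from the $\ln 16$ and the square roots), whose vanishing would force a multiplicative relation among those numbers that one checks never holds. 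Essentially all of the work of the proof sits in this last, case-by-case, bookkeeping; the leading-order computation yielding $L$ is comparatively routine.
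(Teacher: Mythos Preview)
Your approach is essentially the same as the paper's: both reduce $\omega_2,\omega_3$ to elliptic integrals of the first kind, exploit the asymptotic $k\to 1$ (and $w\to 1$) as $z\to 0$ via the known logarithmic expansion of $K(k)$ near $k=1$, and divide. The paper is slightly more systematic in that it writes $\omega_2=M\Omega_2$, $\omega_3=M\Omega_3$ with a common factor $M$, so that the ratio is directly $\Omega_2/\Omega_3=K(k)/F(w,k)$; it then expands $F(w,k)=K(k)-\widetilde F(w,k)$ uniformly, rather than splitting the $\omega_3$-integral ad hoc as you propose. Both the paper and you concede that $\widetilde L\neq 0$ must ultimately be checked model by model; the paper carries this out explicitly for only one walk and asserts the rest. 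Two small points: your case analysis for $X(y_1)$ misses the possibility $X(y_1)=0$ (which actually occurs, e.g.\ in the model the paper treats); and your $o(1)$ remainders in $\omega_2,\omega_3$ formally only yield $o(1/\ln z)$ in the ratio, not the claimed $O(1/(\ln z)^2)$ --- you need (and in fact have) power-of-$z$ decay of the remainders.
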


\begin{proof}
In order to prove \eqref{btcn}, we shall use expressions of the
periods $\o_2$ and $\o_3$ different from that given in
\eqref{expression_omega_1_2} and \eqref{expression_omega_3}. To that
purpose,
 define the complete and incomplete elliptic integrals of the first kind by, respectively,
\begin{align}
     K(k)=\int_0^1&\frac{\text{d}t}{[1-t^2]^{1/2}[1-k^2t^2]^{1/2}},\label{cei}\\
     F(w,k)=\int_0^w&\frac{\text{d}t}{[1-t^2]^{1/2}[1-k^2t^2]^{1/2}}\label{iei}.
\end{align}
Then the new expressions of $\o_2$ and $\o_3$ are
\begin{equation}
\label{after_change_1}
\o_2 = M\Omega_2,\qquad
\o_3 = M\Omega_3,
\end{equation}
where
\begin{align}
     \Omega_2&= K\left(\sqrt{\frac{(x_4-x_1)(x_3-x_2)}{(x_4-x_2)(x_3-x_1)}}\right),\label{after_change_2}\\
     \Omega_3&= F\left(\sqrt{\frac{(x_4-x_2)(x_1-X(y_1))}{(x_4-x_1)(x_2-X(y_1))}},\sqrt{\frac{(x_4-x_1)(x_3-x_2)}{(x_4-x_2)(x_3-x_1)}}\right)\label{after_change_3},
\end{align}
and where (below, ${\bf 1}_{(i,j)}=1$ if $(i,j)\in\mathcal{S}$, otherwise $0$)
\begin{equation}
\label{after_change_4}
M=\left\{\begin{array}{cc}\displaystyle
\frac{2}{z}\frac{1}{\sqrt{({\bf 1}_{(1,0)}-4{\bf 1}_{(1,1)}{\bf 1}_{(1,-1)})(x_3x_4-x_2x_3-x_1x_4+x_1x_2)}} & \text{if}\ x_4\neq\infty,\\ \displaystyle
\frac{2}{\sqrt{(2z{\bf 1}_{(1,0)}+4z^2[{\bf 1}_{(1,1)}{\bf 1}_{(0,-1)}+{\bf 1}_{(1,-1)}{\bf 1}_{(0,1)}])(x_3-x_1)}} & \text{if}\ x_4=\infty.
\end{array}\right.
\end{equation}
The expressions of $\o_2$ and $\o_3$ written in \eqref{after_change_1},
\eqref{after_change_2}, \eqref{after_change_3} and \eqref{after_change_4} are obtained
from \eqref{expression_omega_1_2} and \eqref{expression_omega_3} by making simple changes of variables.

We are now in position to analyze the behavior of $\o_2/\o_3$
(or equivalently, thanks to \eqref{after_change_1}, that of $\Omega_2/\Omega_3$)
in the neighborhood of $z=0$.
First, with \eqref{d_d_tilde}
and \cite[Proposition 6.1.8]{STAN},
 we obtain that as $z\to 0$, $x_1,x_2\to 0$ and $x_3,x_4\to \infty$. For this reason,
\begin{equation}
\label{t11}
k=\sqrt{\frac{(x_4-x_1)(x_3-x_2)}{(x_4-x_2)(x_3-x_1)}}\to 1.
\end{equation}
The behavior of $X(y_1)$ as $z\to 0$
is not so simple as that of the branch points
$x_\ell$ (indeed, as $z\to 0$, $X(y_1)$
 can converge to $0$, to $\infty$ or to some non-zero constant),
  but we can show that for all $51$ models,
\begin{equation}
\label{t12}
w=\sqrt{\frac{(x_4-x_2)(x_1-X(y_1))}{(x_4-x_1)(x_2-X(y_1))}}\to 1.
\end{equation}
Due to \eqref{t11} and \eqref{t12},
in order to determine the behavior of $\Omega_2/\Omega_3$ near $z=0$
it suffices to know
\begin{enumerate}[label={\rm (\roman{*})},ref={\rm (\roman{*})}]
\item \label{exp1} the expansion of $K(k)$ as $k\to 1$;
\item \label{exp2} the expansion of $F(w,k)$ as $k\to 1$ and $w\to 1$.
\end{enumerate}

Point \ref{exp1} is classical, and is known as Abel's
identity (it can be found, e.g., in \cite{SG2}):
 there exist two functions $A$ and $B$, holomorphic at $z=0$,
 such that $K(k)=A(k)+\ln(1-k)B(k)$. Both $A$
 and $B$ can be computed in an explicit way, see \cite{SG2},
  and from all this we can deduce an expansion of $K(k)$ as $k\to 1$
  up to any level of precision. For our purpose, it will be enough to use the following:
\begin{align*}
A(k)&=(3/2)\ln(2)+((k-1)/4)(1-3\ln(2))+O(k-1)^2,\\
B(k)&=-1/2+(k-1)/4+O(k-1)^2.
\end{align*}

As for Point \ref{exp2}, we proceed as follows. We have $F(w,k)=K(k)-\widetilde F(w,k)$, with
\begin{equation*}
\widetilde F(w,k)=\int_w^1\frac{\text{d}t}{[1-t^2]^{1/2}[1-k^2t^2]^{1/2}}.
\end{equation*}
Then, introduce the expansion
$
{1}/({[1+t]^{1/2}[1+kt]^{1/2}})= \sum_{\ell=0}^\infty \mu_\ell(k)(1-t)^\ell
$, so that
\begin{equation}
\label{intsom}
\widetilde F(w,k)=\sum_{\ell=0}^\infty \mu_\ell(k)\int_w^1\frac{\text{d}t}{[1-t]^{1/2-\ell}[1-kt]^{1/2}}.
\end{equation}
In Equation \eqref{intsom}, all $\mu_\ell(k)$ as well as all integrals can be computed.
 As an example (that we shall use), we have
\begin{multline*}
\mu_0(k)\int_w^1\frac{\text{d}t}{[1-t]^{1/2}[1-kt]^{1/2}}=\frac{1}{[2k(1+k)]^{1/2}}\times \\
\times [\ln\{(1-k)/k^{1/2}\}-\ln\{-[1-(1+k)w+kw^2]^{1/2}+[(k+1)/2-kw]/k^{1/2}\}].
\end{multline*}
Moreover, it should be noticed that as $k\to 1$ and $w\to 1$, the
speed of convergence to zero of the integrals in \eqref{intsom}
increases with $\ell$. This way, we can write an expansion of
$\widetilde F(w,k)$---and thus of $F(w,k)$---up to any level of
precision.

Unfortunately, the end of the proof cannot be done simultaneously for all $51$ models,
 but should be done model by model.
 For the sake of shortness, we choose to present
 the details only for one model, namely for the model
 with $\mathcal{S} = \{(-1,0),(-1,1),(0,1),(1,-1)\}$
  (which belongs to Subcase II.D of Figure \ref{Allcases}).
  For this model, we easily obtain from \eqref{d_d_tilde} that
\begin{align*}
x_1 &= z-2z^2+3z^3+O(z^4),\\
x_2 &= z+2z^2+5z^3+O(z^4), \\
x_3 & = 1/(4z^2)-1-2z-8z^3 +O(z^4), \\
x_4 & = \infty,\\
X(y_1) &=0.
\end{align*}
Then, with \eqref{t11} and \eqref{t12}, we reach the conclusion that
\begin{align}
k&=1-8z^4-4z^5+O(z^6),\label{t11t}\\
w&=1-2z+z^2-(7/4)z^3-(65/8)z^4+(613/64)z^5+O(z^6)\label{t12t}.
\end{align}
Then, using Points \ref{exp1} and \ref{exp2} above, we obtain
 \begin{align*}
\Omega_2&=-2\ln(z)-(1/4)z+(1/16)z^2+O(z^3\ln(z)),\\
\Omega_3&=-(1/2)\ln(z)-(1/2)\ln(2)+(1/4)z+(57/16)z^2+O(z^3\ln(z)),
\end{align*}
so that
\begin{equation}
\label{LastExp}
\Omega_2/\Omega_3 = 4-4\ln(2)/\ln(z)+O(1/(\ln(z))^2).
\end{equation}
The latter proves \eqref{btcn}, and thus Proposition \ref{nncc},
with $L=4$ and $\widetilde L=-4\ln(2)$.

Making expansions of higher order of $k$ and $w$ in \eqref{t11t} and \eqref{t12t}, we could obtain more terms in the expansion \eqref{LastExp} of $\Omega_2/\Omega_3$. A contrario, we could also be interested in obtaining the first term only (the constant term $L$) in \eqref{LastExp}. (Indeed, we saw in the proof of Proposition \ref{proir} that it was sufficient for our purpose, i.e., for proving that in the infinite group case, the ratio $\omega_2/\omega_3$ is not constant in $z$.) To that aim, instead of \eqref{t11t} and \eqref{t12t}, we just need  two-terms expansions of $k$ and $w$,  say $k=1+\alpha z^p+o(z^p)$ and $w=1+\beta z^q+o(z^q)$, with $\alpha,\beta\neq 0$. Then with \ref{exp1} and \ref{exp2} we deduce that $\Omega_2 = -(p/2) \ln(z) +o(\ln(z))$ and $\Omega_3 = -(q/2)\ln(z)+o(\ln(z))$, in such a way that $L = p/q$, which obviously is (non-zero and) rational. 

\end{proof}

\begin{figure}[t]
\begin{center}
(Case I: $y_4<0$, Subcase I.A: $x_4<0$)
\begin{picture}(420.00,40.00)
\includegraphics{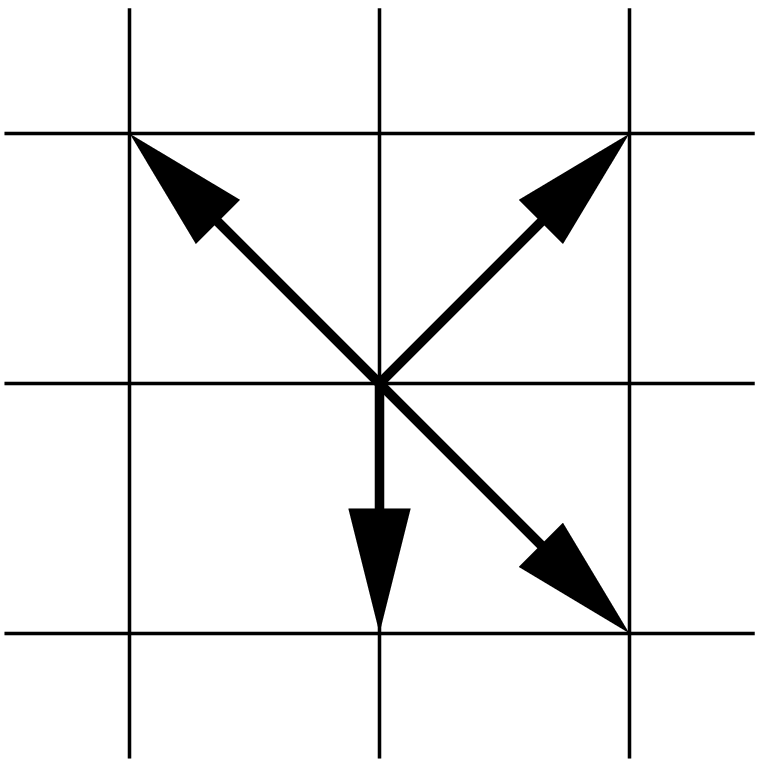}
\hspace{12.5mm}
\includegraphics{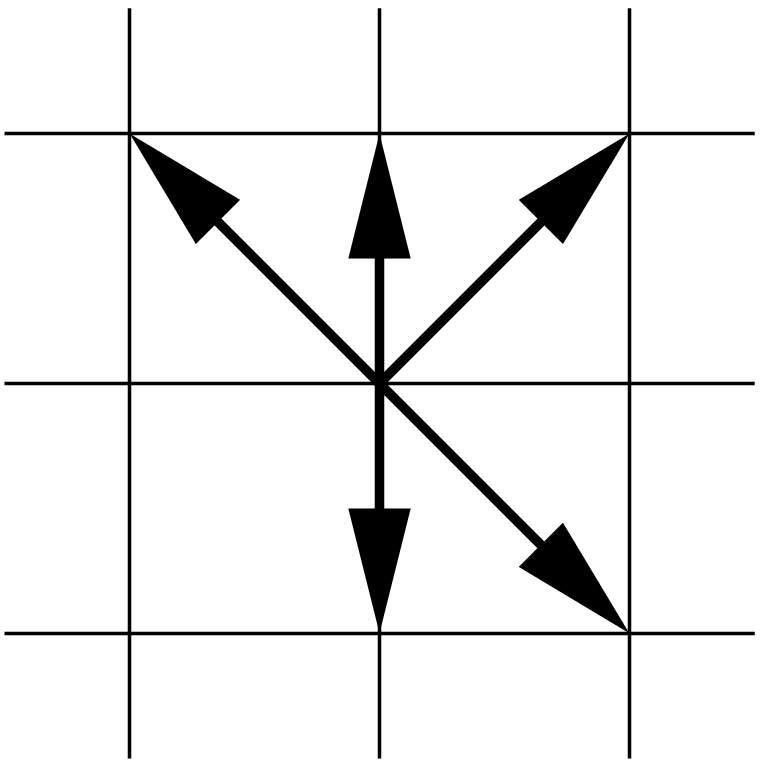}
\hspace{12.5mm}
\includegraphics{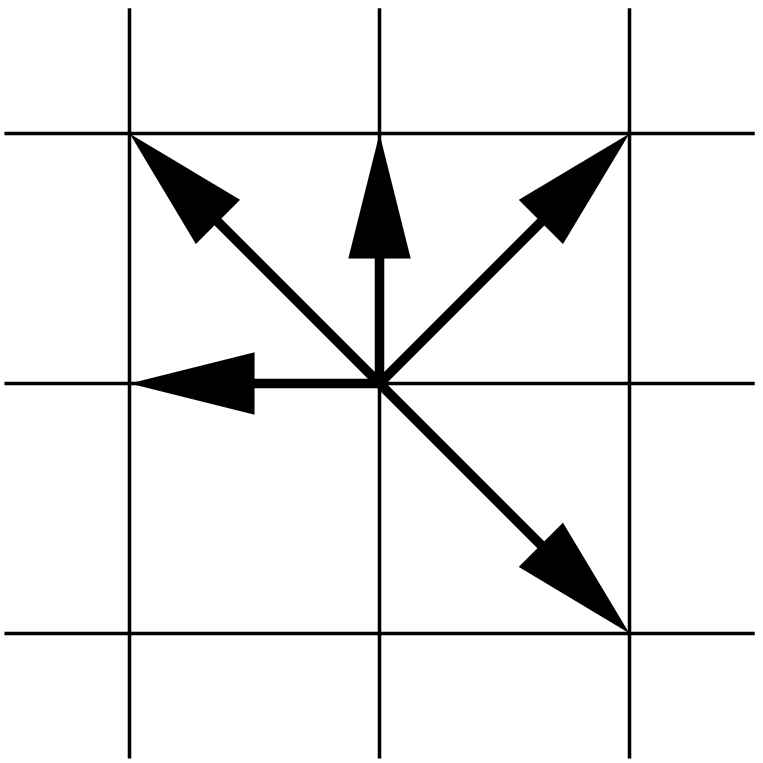}
\hspace{12.5mm}
\includegraphics{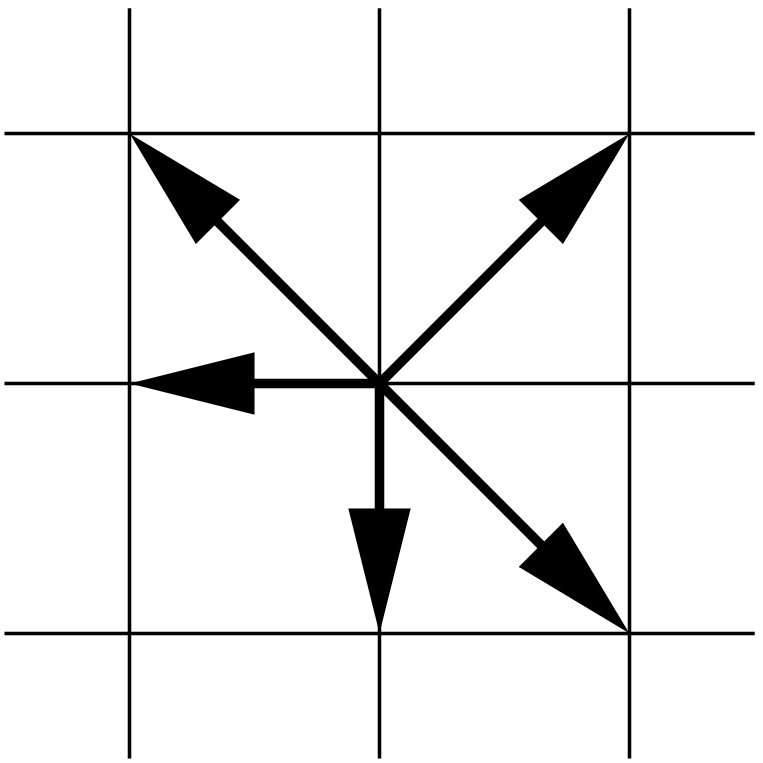}
\hspace{12.5mm}
\includegraphics{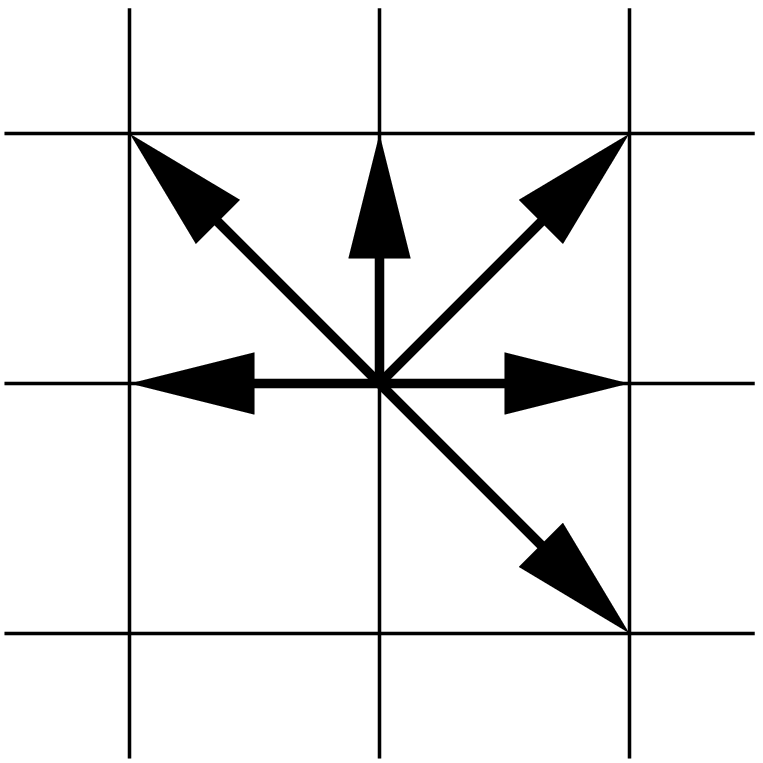}
\hspace{12.5mm}
\includegraphics{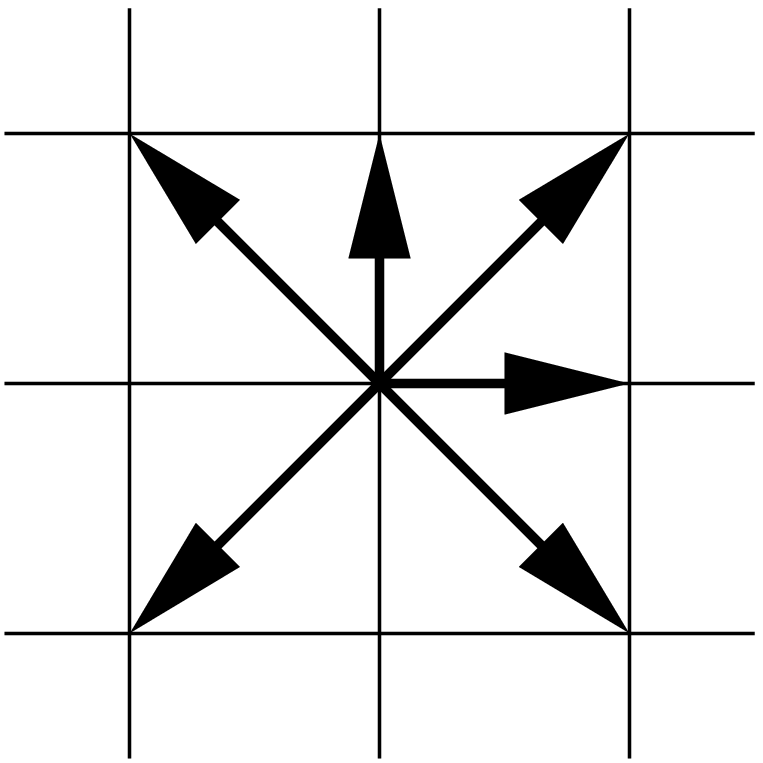}
\hspace{12.5mm}
\includegraphics{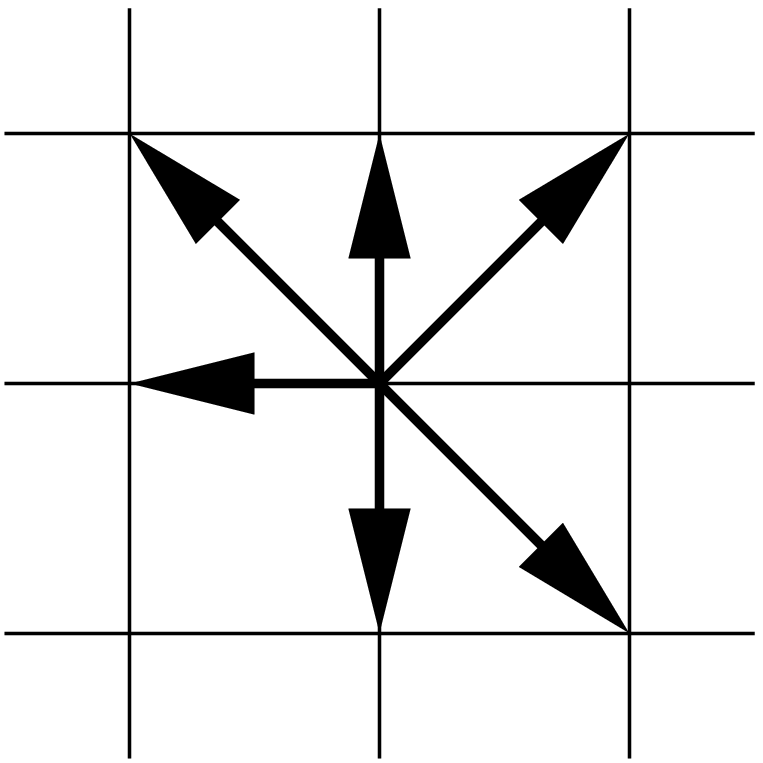}
\hspace{12.5mm}
\includegraphics{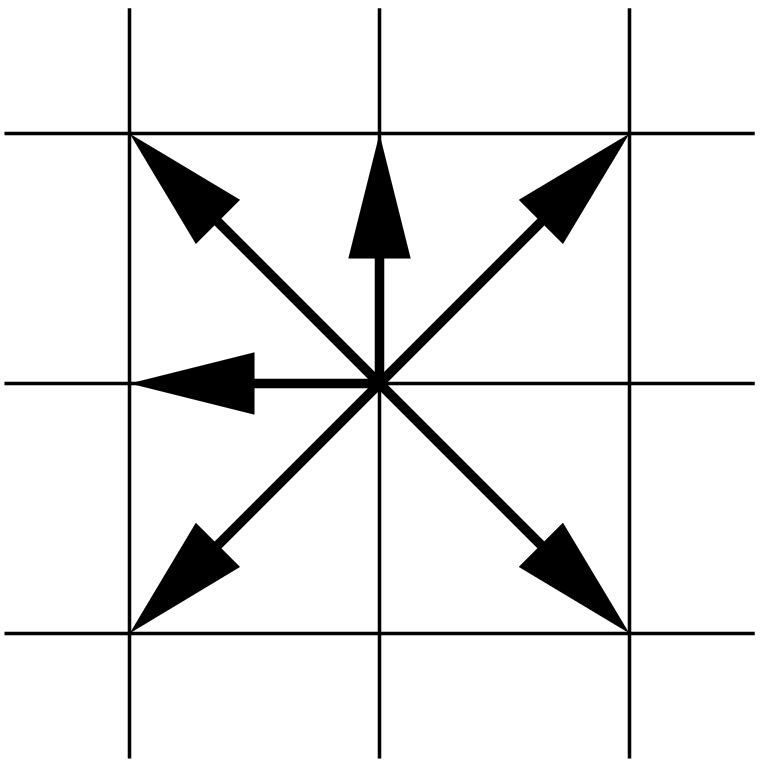}
\hspace{12.5mm}
\includegraphics{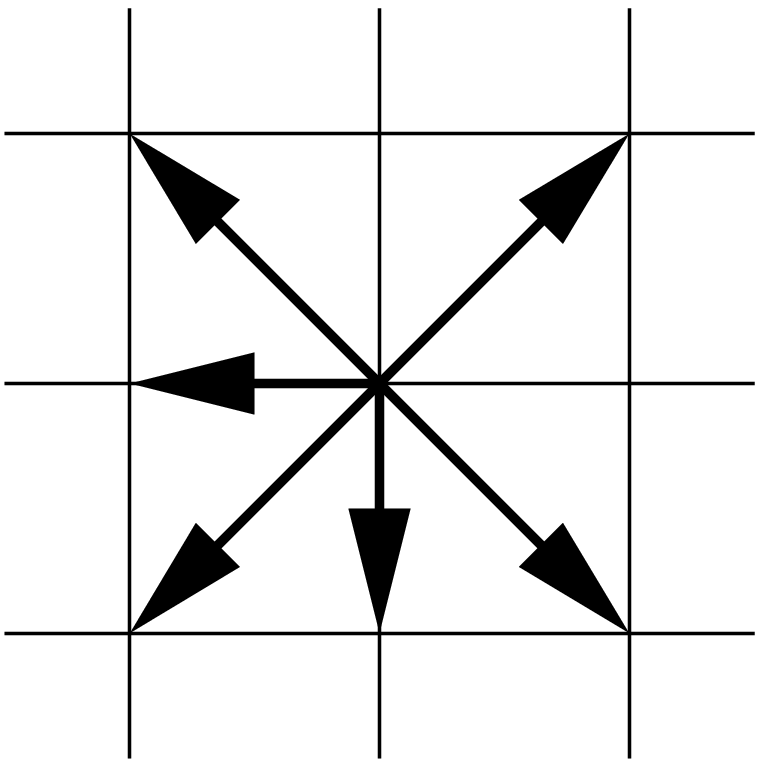}
\hspace{12.5mm}
\includegraphics{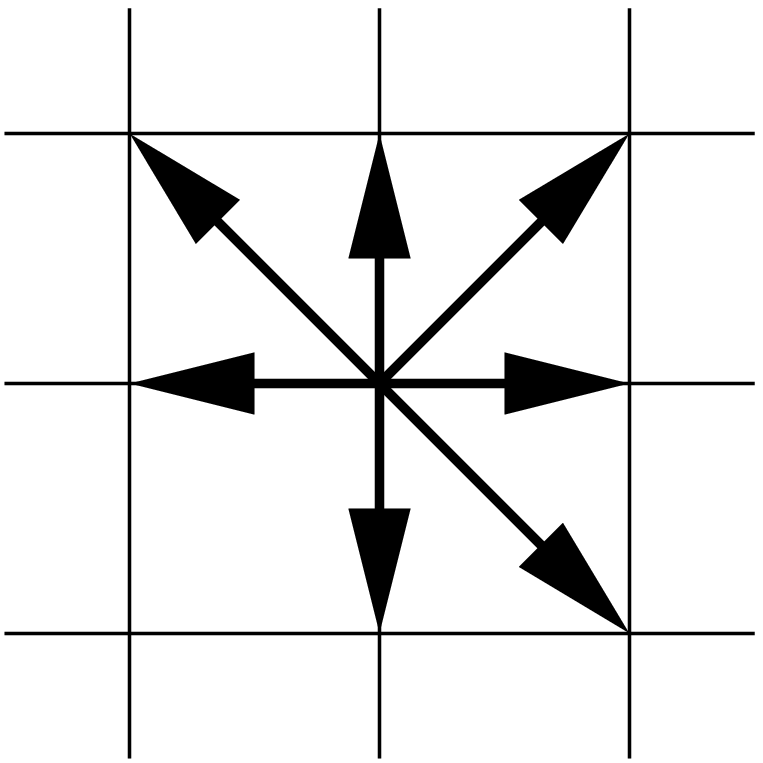}
\end{picture}

 (Case I: $y_4<0$, Subcase I.B: $x_4=\infty$)
\begin{picture}(420.00,40.00)
\hspace{10mm}
\includegraphics{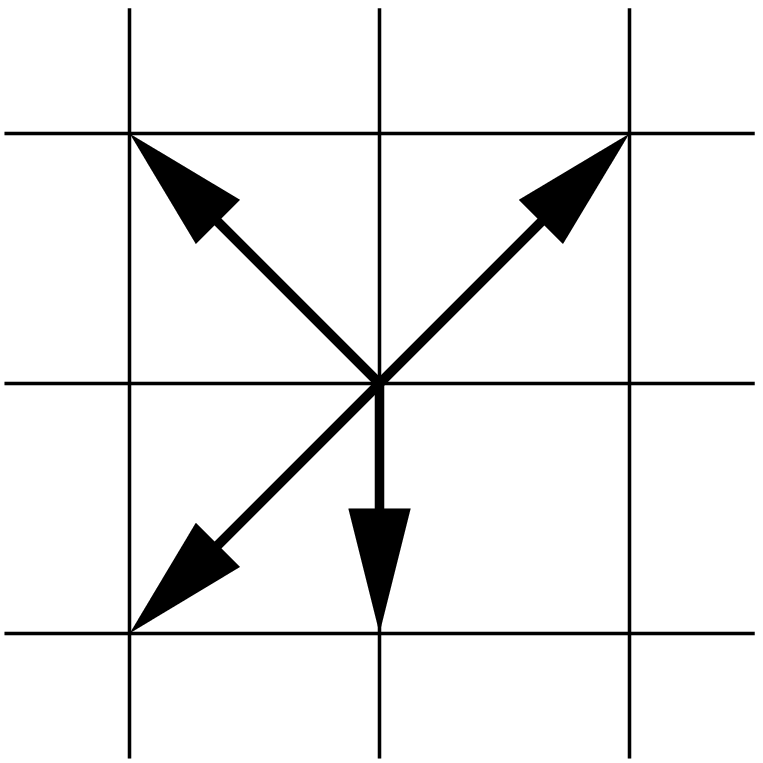}
\hspace{20mm}
\includegraphics{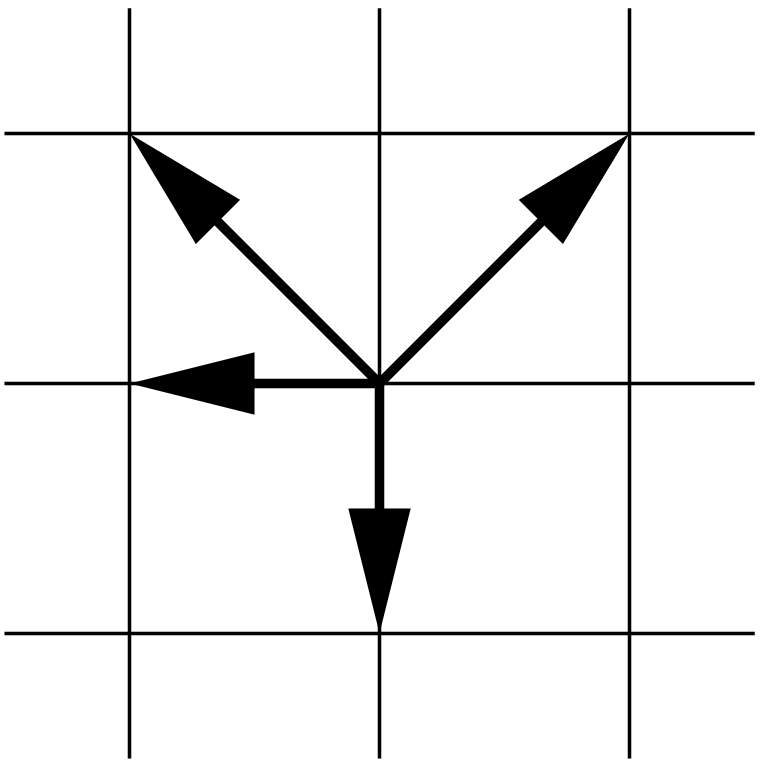}
\hspace{20mm}
\includegraphics{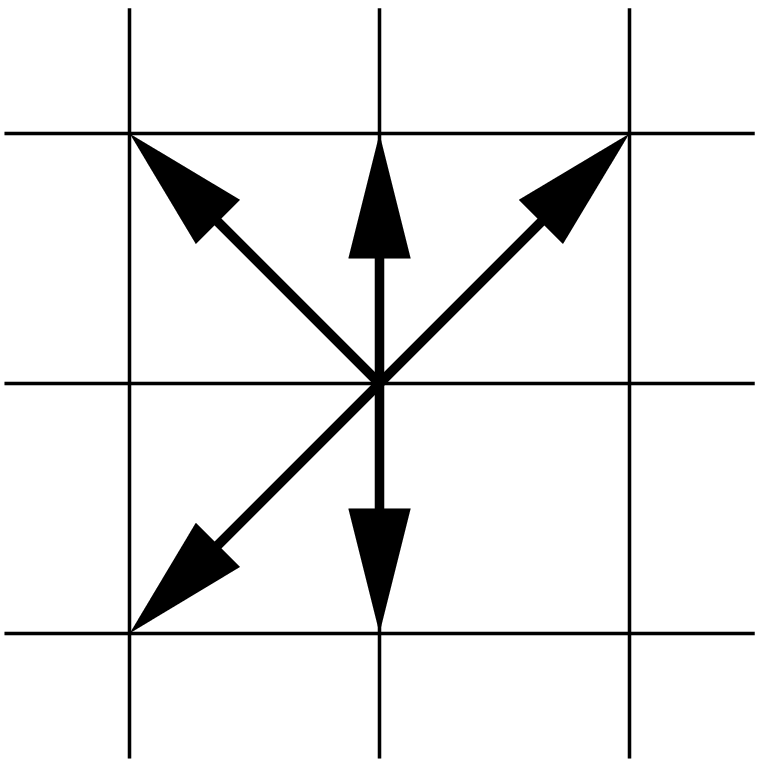}
\hspace{20mm}
\includegraphics{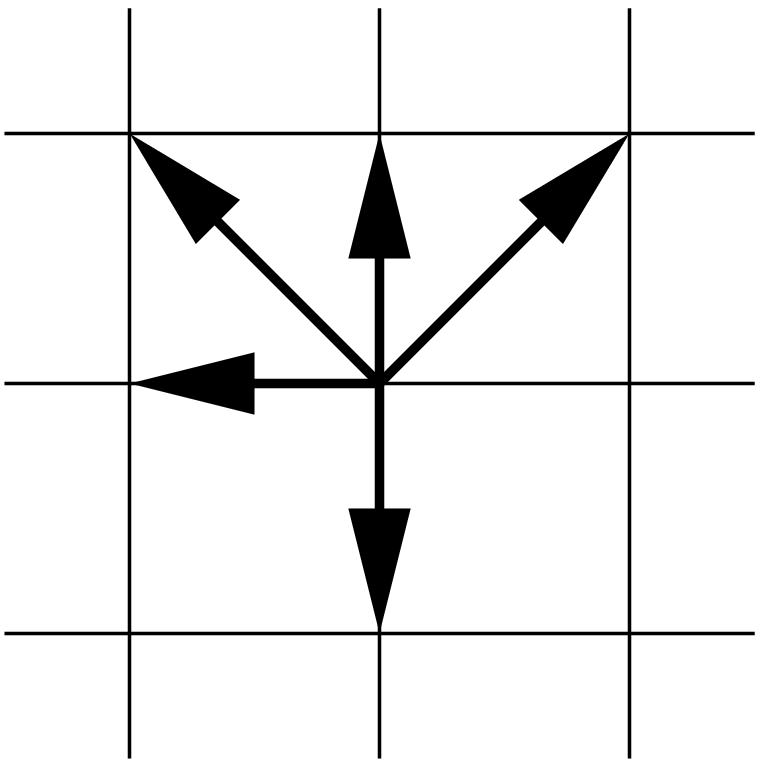}
\hspace{20mm}
\includegraphics{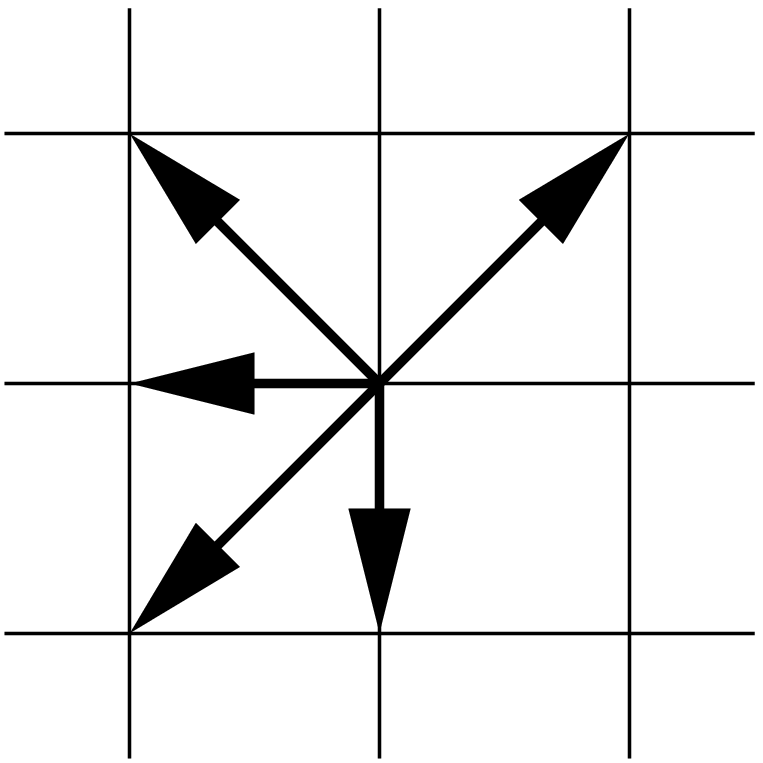}
\hspace{20mm}
\includegraphics{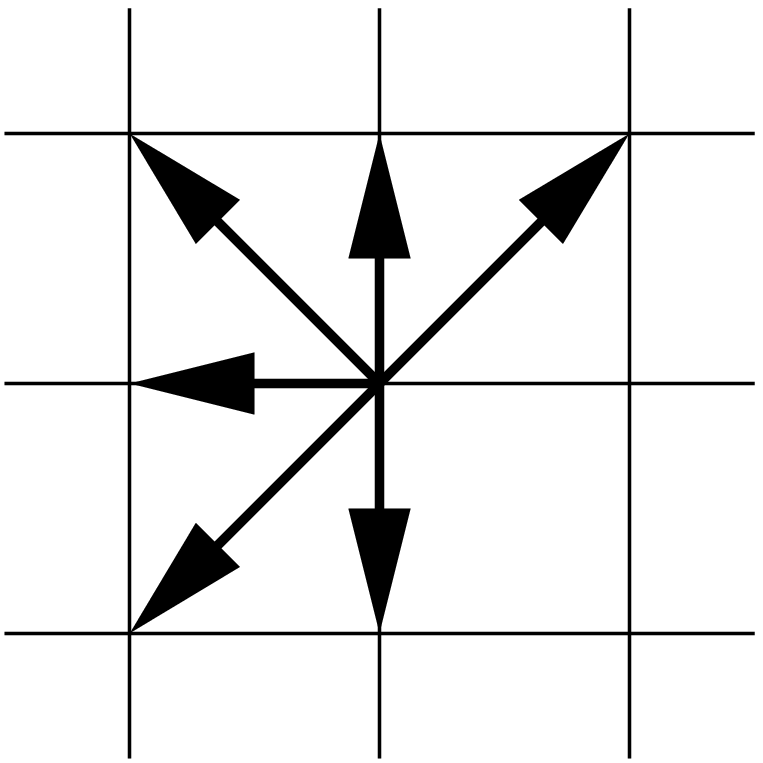}
\end{picture}

(Case I: $y_4<0$, Subcase I.C: $x_4>0$)
\begin{picture}(420.00,40.00)
\hspace{43mm}
\includegraphics{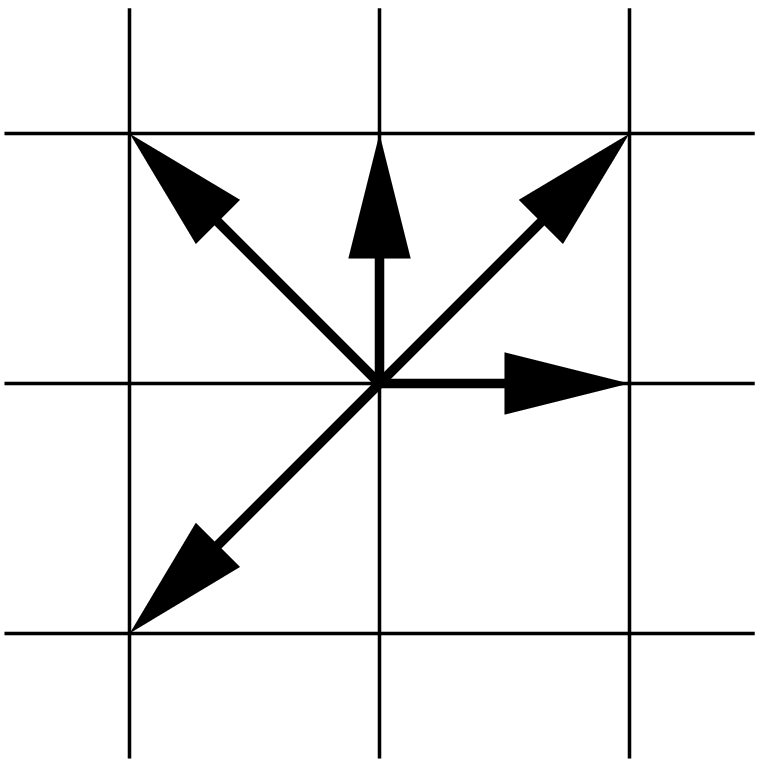}
\hspace{20mm}
\includegraphics{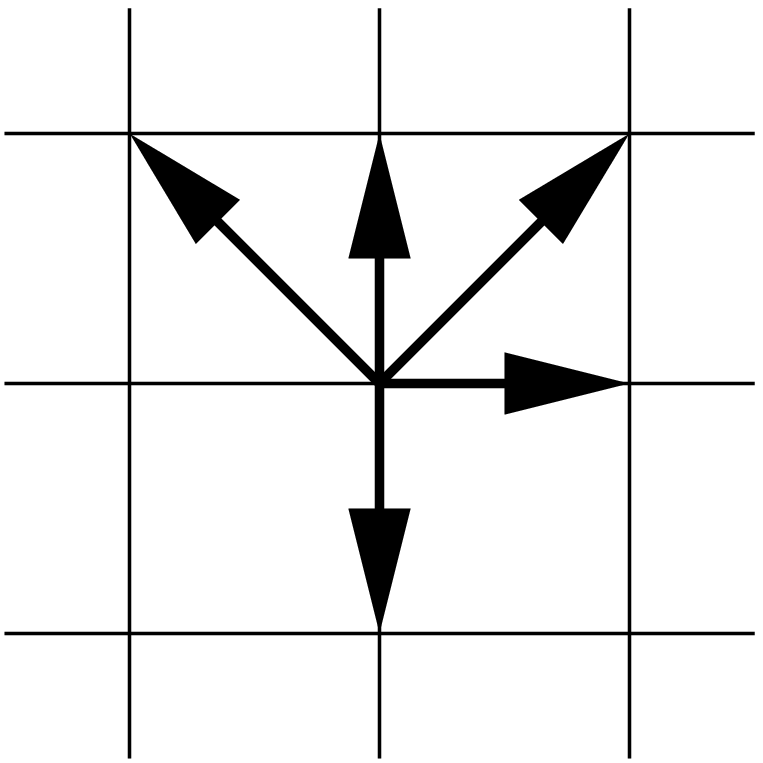}
\hspace{20mm}
\includegraphics{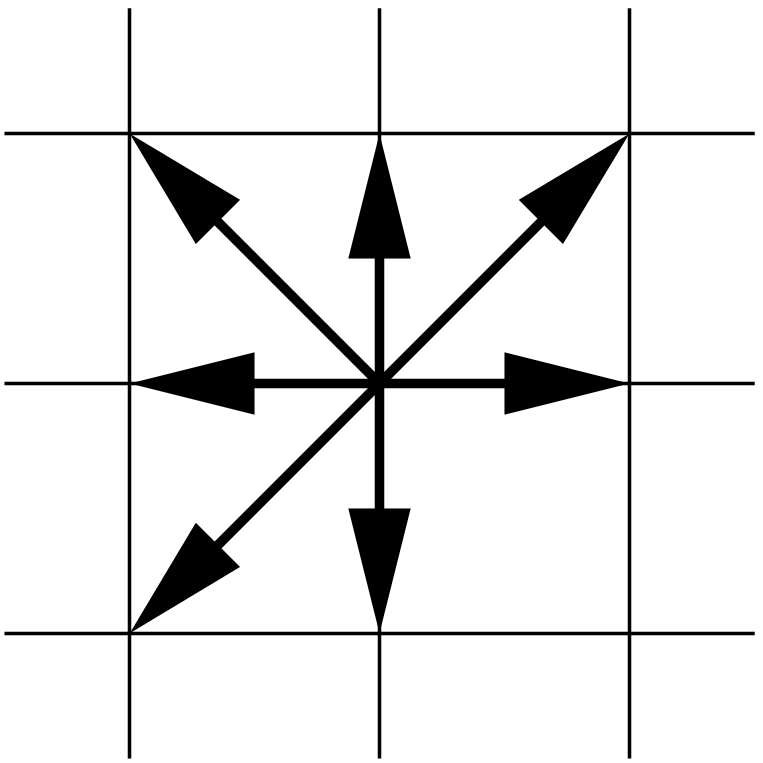}
\end{picture}

(Case II: $y_4>0$, Subcase II.A: $x_4<0$)
\begin{picture}(420.00,40.00)
\hspace{0mm}
\includegraphics{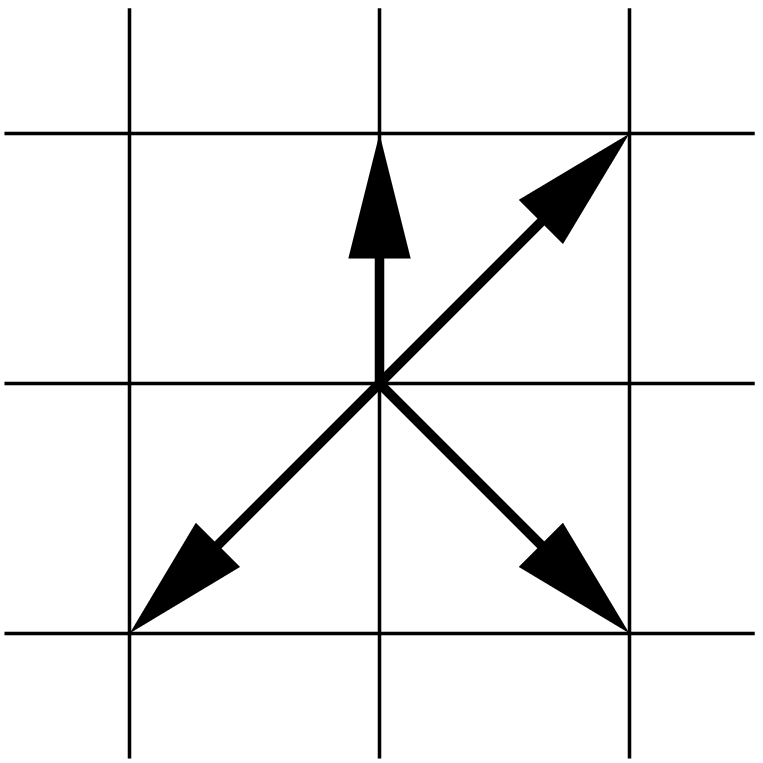}
\hspace{19.5mm}
\includegraphics{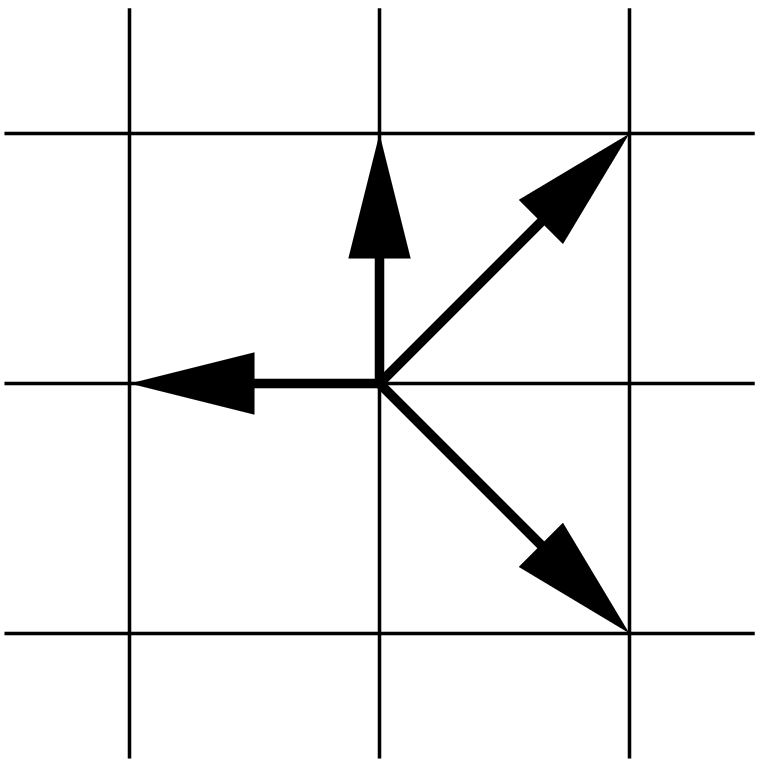}
\hspace{19.5mm}
\includegraphics{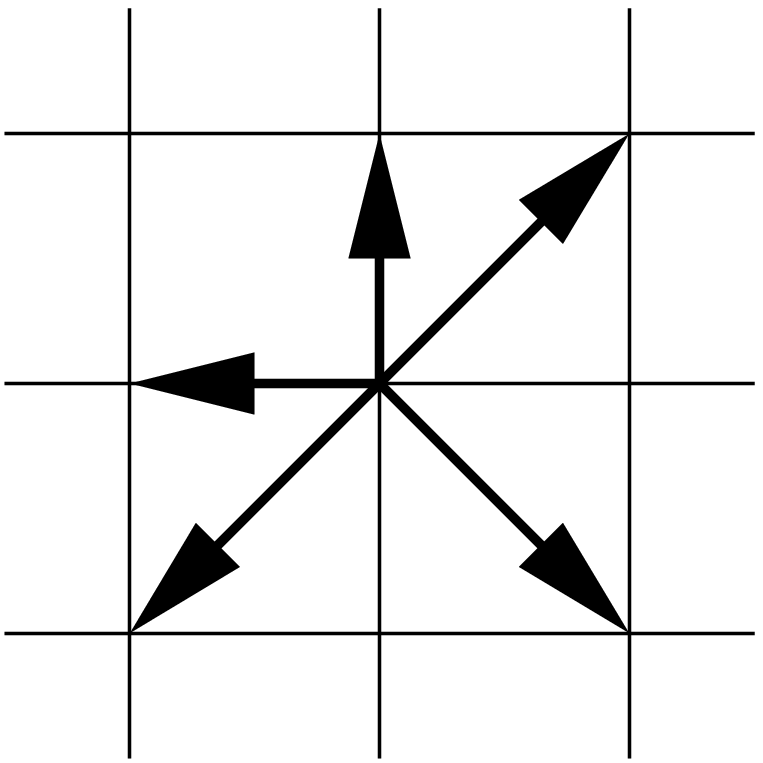}
\hspace{19.5mm}
\includegraphics{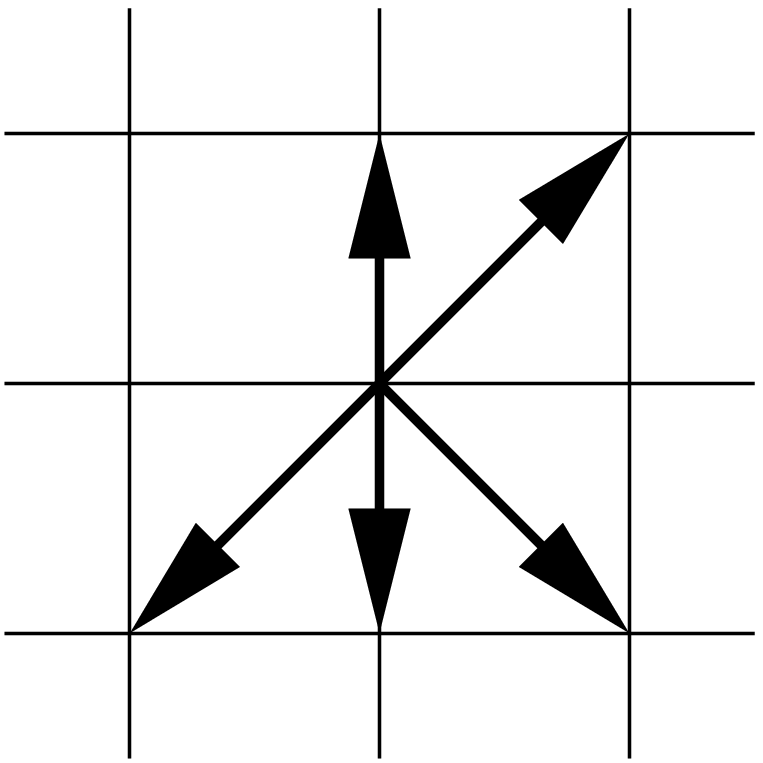}
\hspace{19.5mm}
\includegraphics{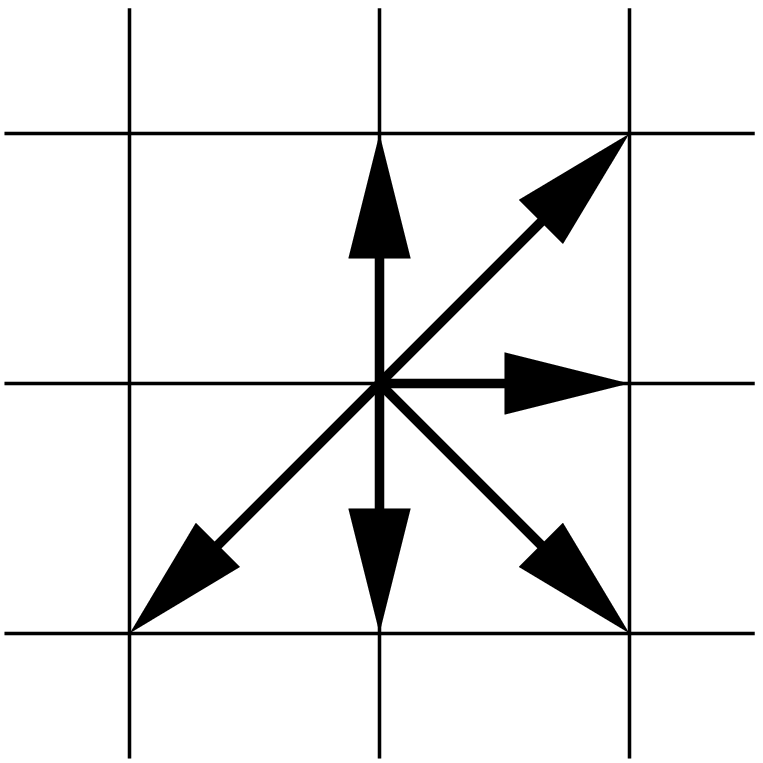}
\hspace{19.5mm}
\includegraphics{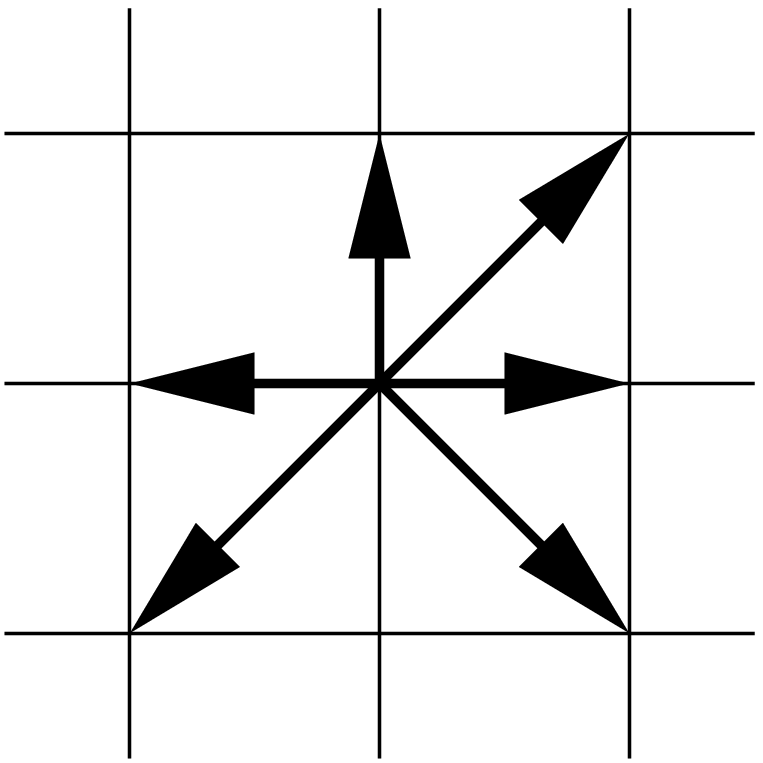}
\hspace{19.5mm}
\includegraphics{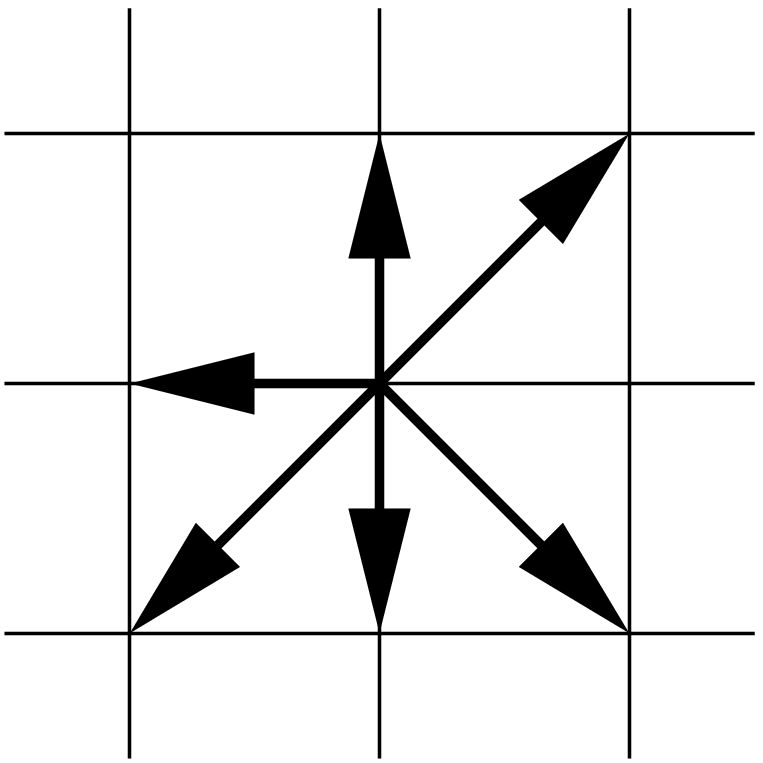}
\end{picture}

(Case II: $y_4>0$, Subcase II.B: $x_4>0$, one of $y^\d,y^\dd$ is
$\infty$)
\begin{picture}(420.00,40.00)
\includegraphics{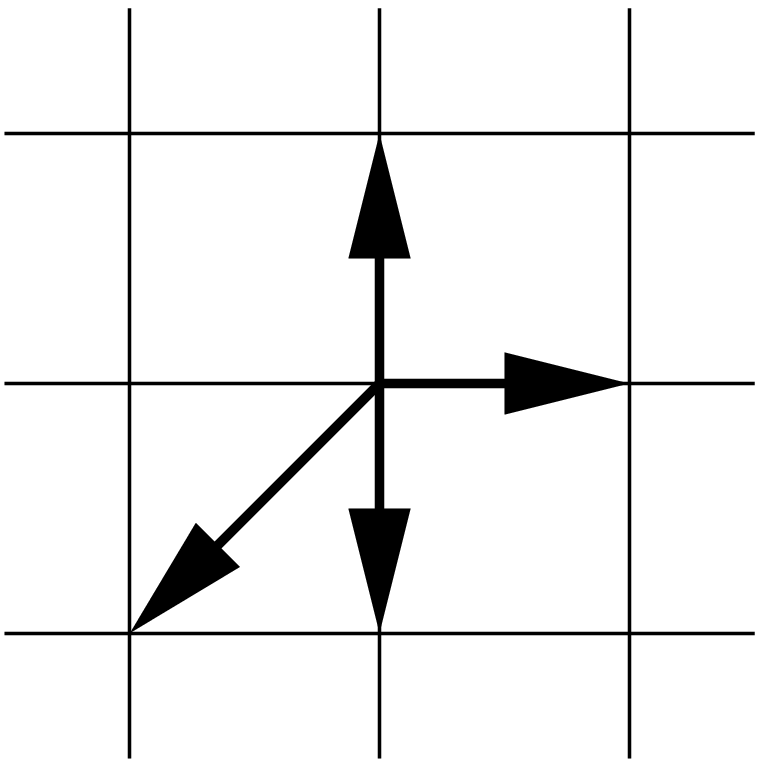}
\hspace{12.5mm}
\includegraphics{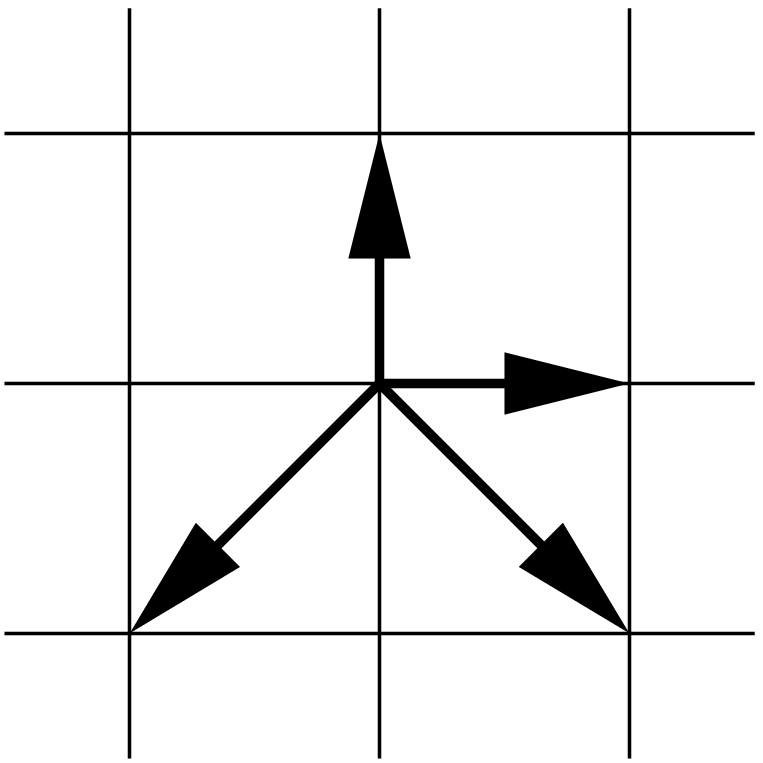}
\hspace{12.5mm}
\includegraphics{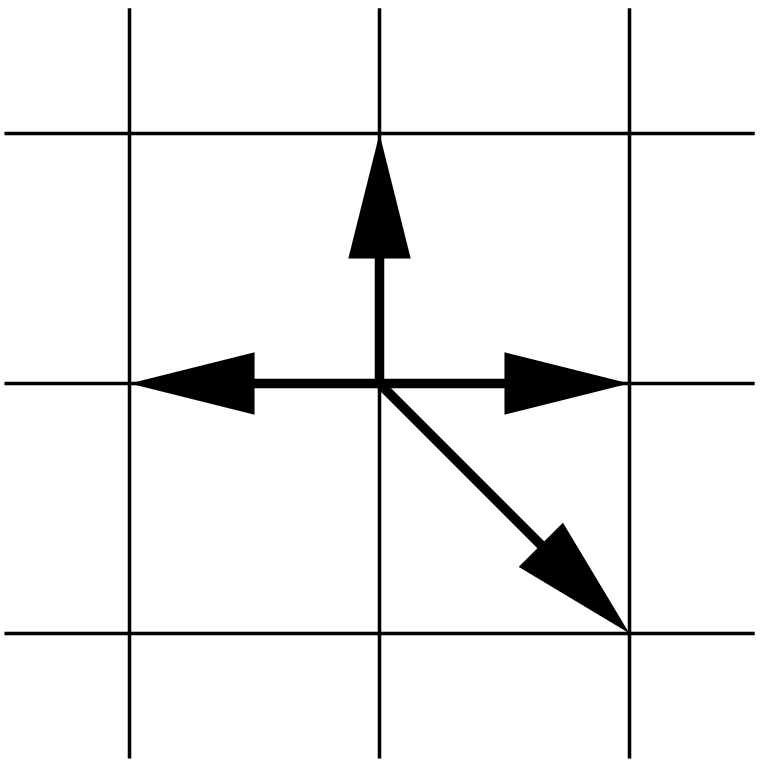}
\hspace{12.5mm}
\includegraphics{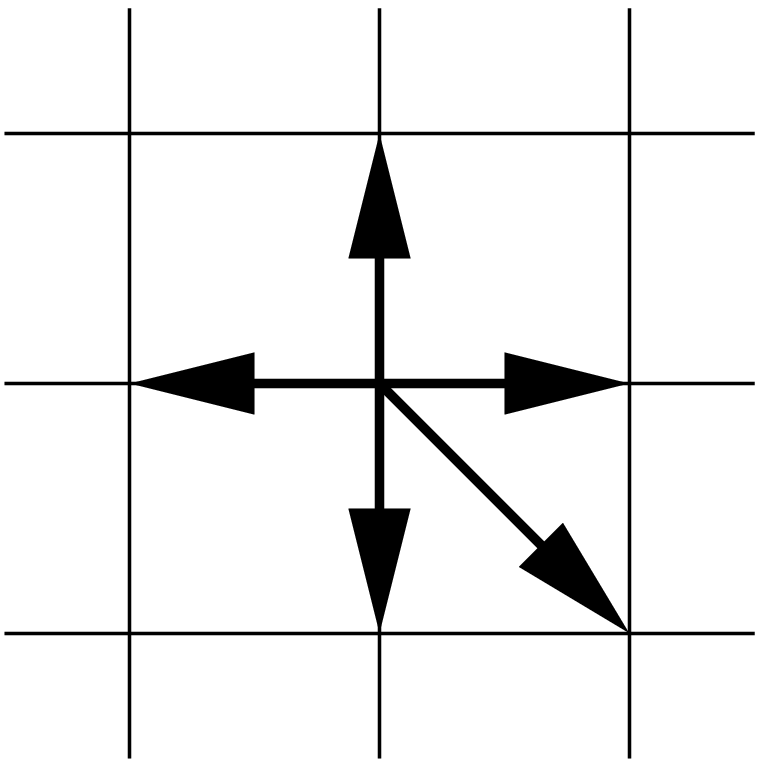}
\hspace{12.5mm}
\includegraphics{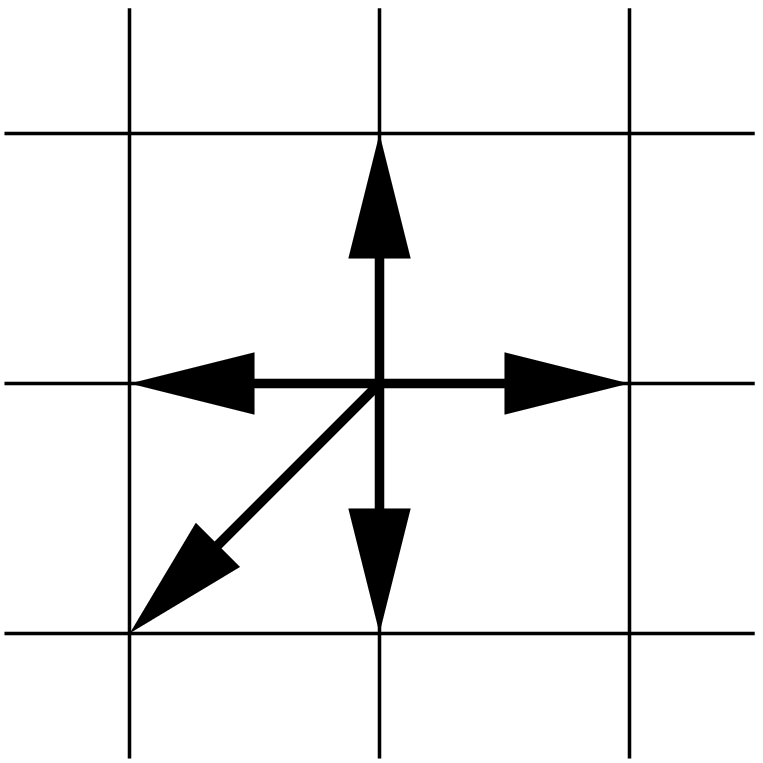}
\hspace{12.5mm}
\includegraphics{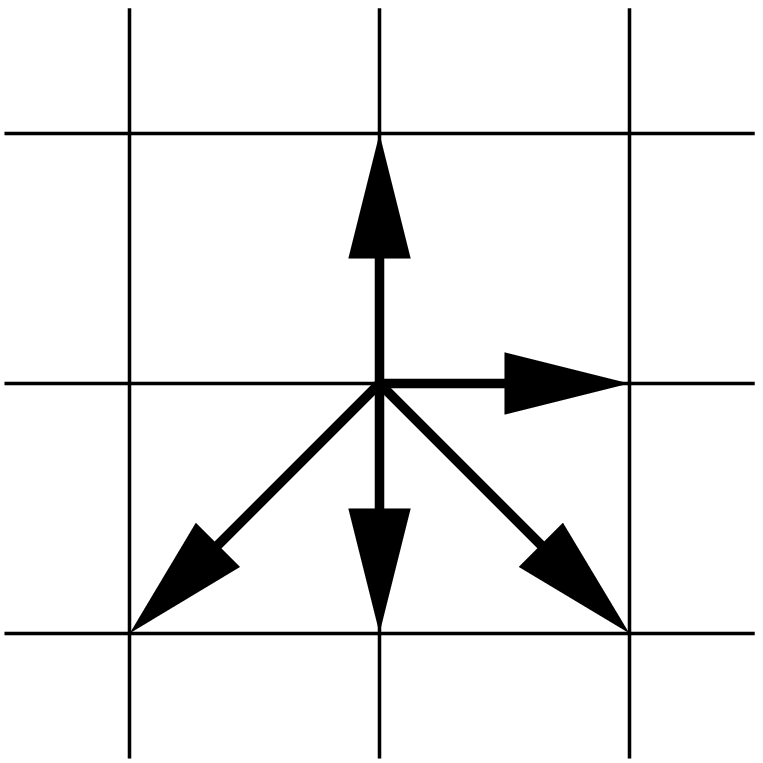}
\hspace{12.5mm}
\includegraphics{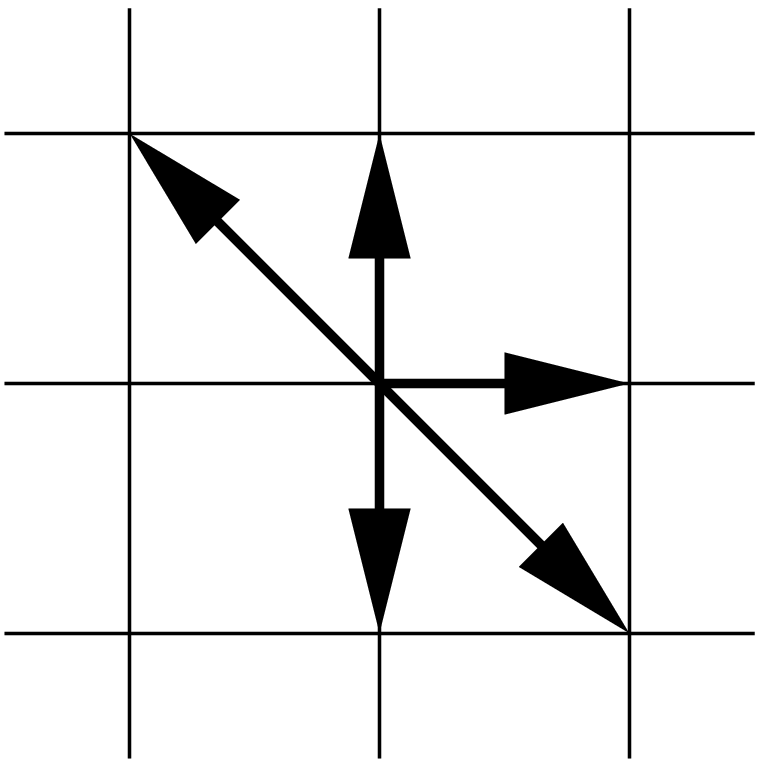}
\hspace{12.5mm}
\includegraphics{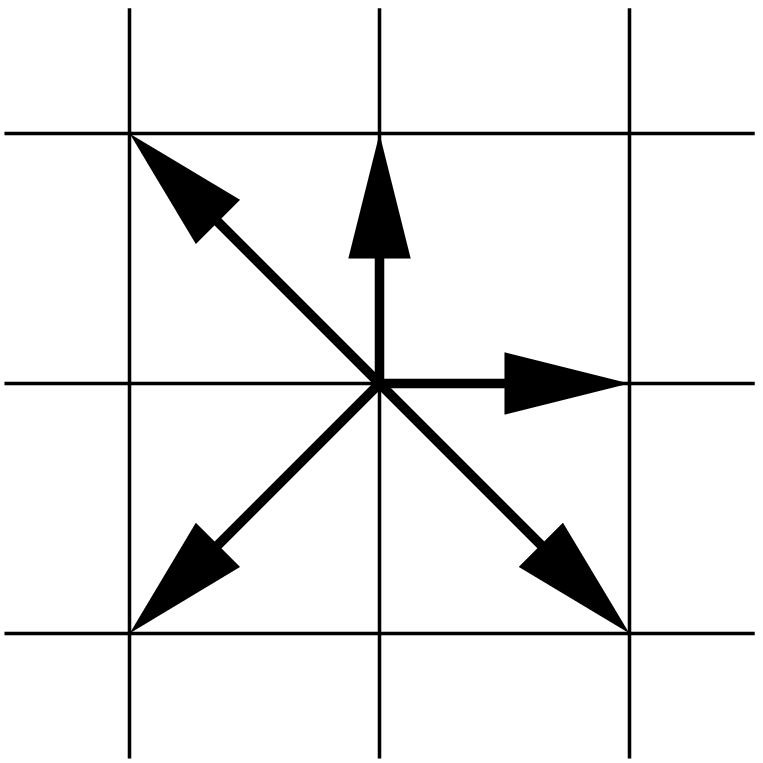}
\hspace{12.5mm}
\includegraphics{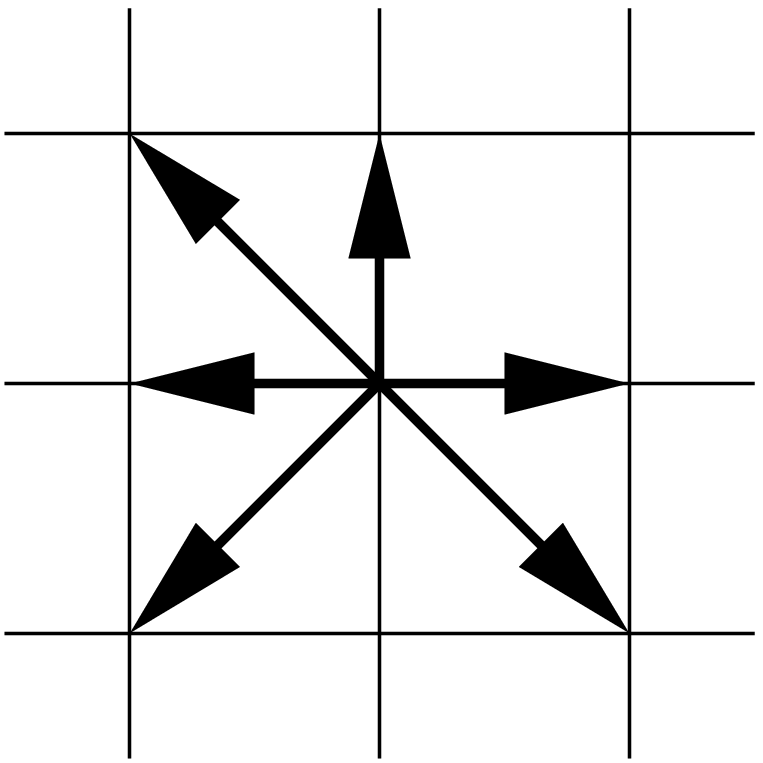}
\hspace{12.5mm}
\includegraphics{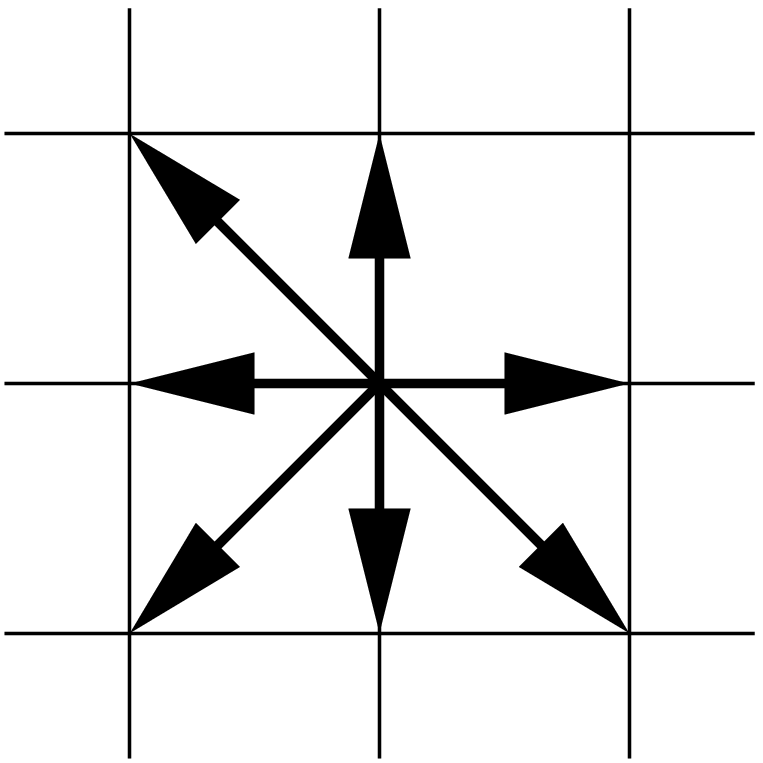}
\end{picture}

(Case II: $y_4>0$, Subcase II.C: $x_4=\infty$ and $Y(x_4)\neq
\infty$ as well as $x_4>0$, $y^\d,y^\dd\neq \infty$)
\begin{picture}(420.00,40.00)
\hspace{20.5mm}
\includegraphics{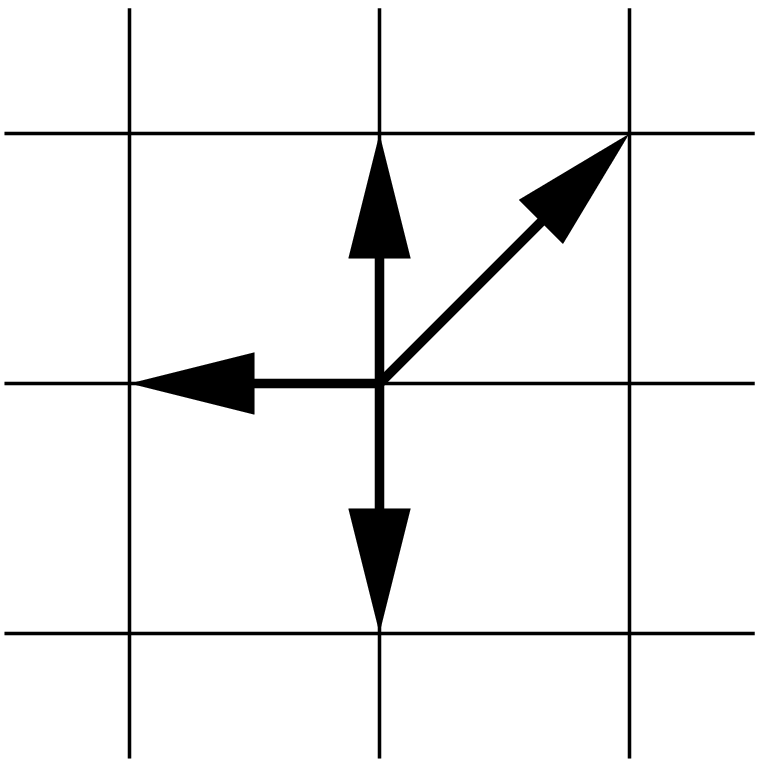}
\hspace{20mm}
\includegraphics{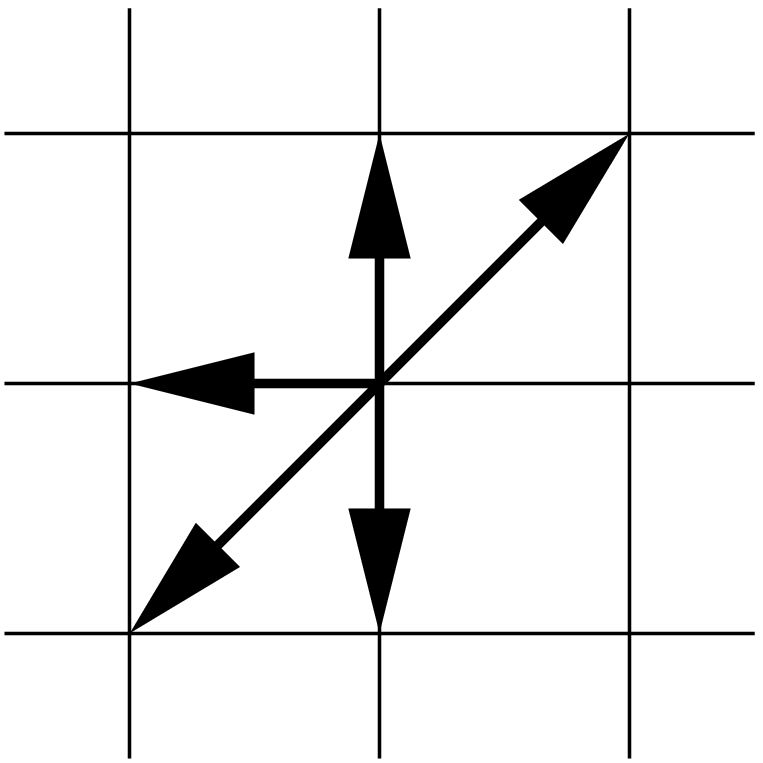}
\hspace{20mm}
\includegraphics{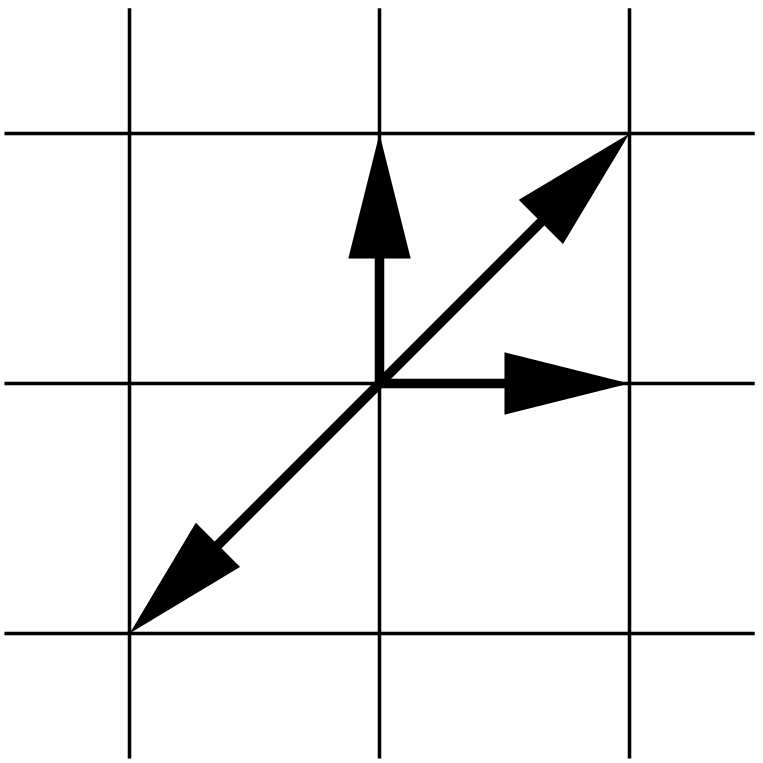}
\hspace{20mm}
\includegraphics{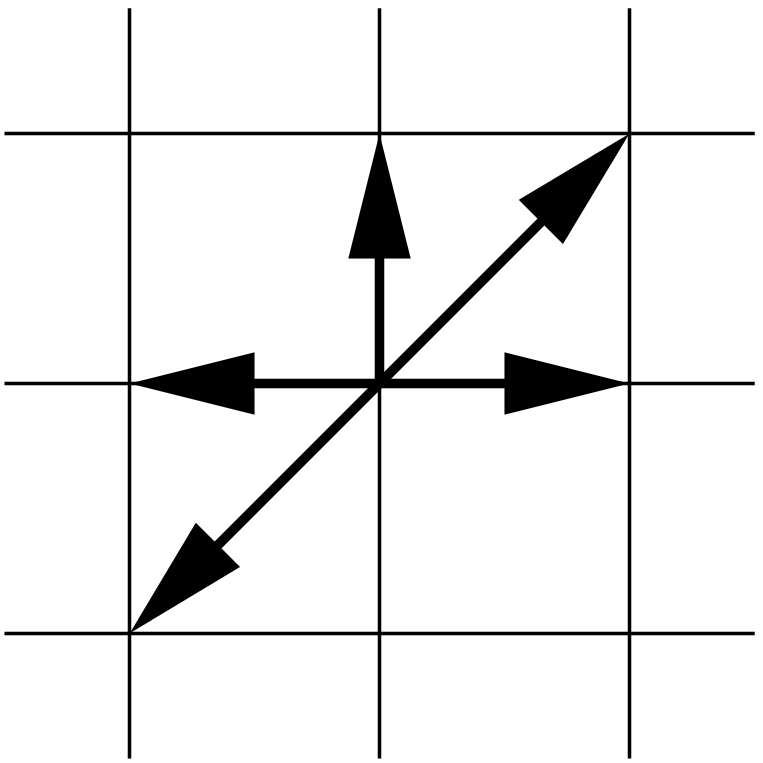}
\hspace{20mm}
\includegraphics{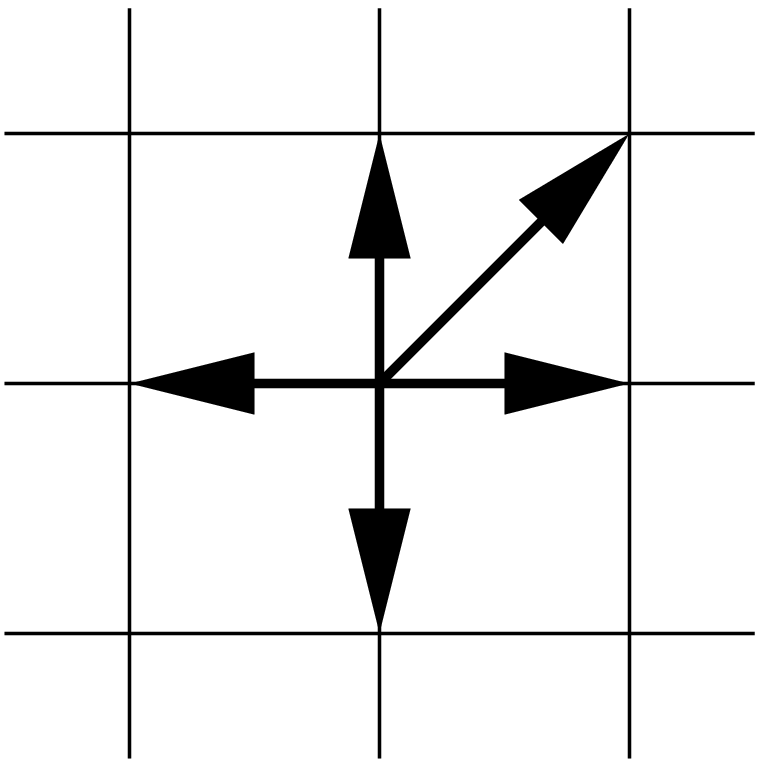}
\end{picture}

(Case II: $y_4>0$, Subcase II.D: $x_4=\infty$ and $Y(x_4)=\infty$)
\begin{picture}(420.00,40.00)
\includegraphics{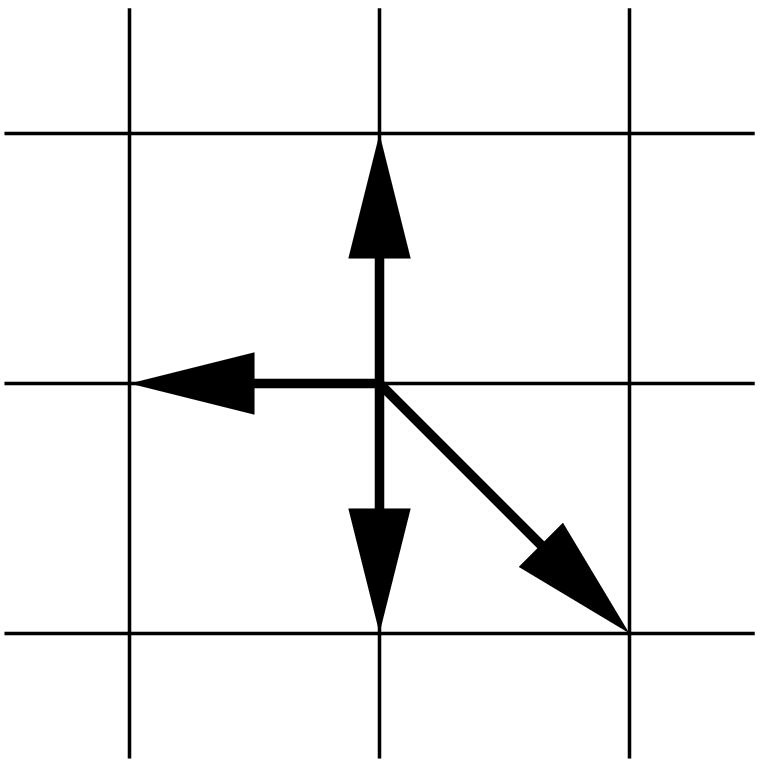}
\hspace{14.3mm}
\includegraphics{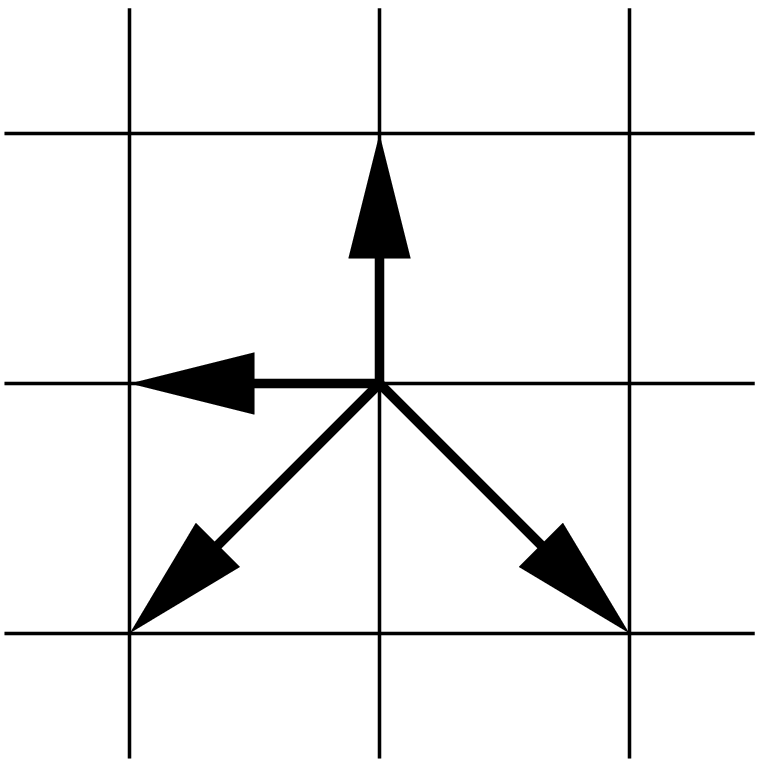}
\hspace{14.3mm}
\includegraphics{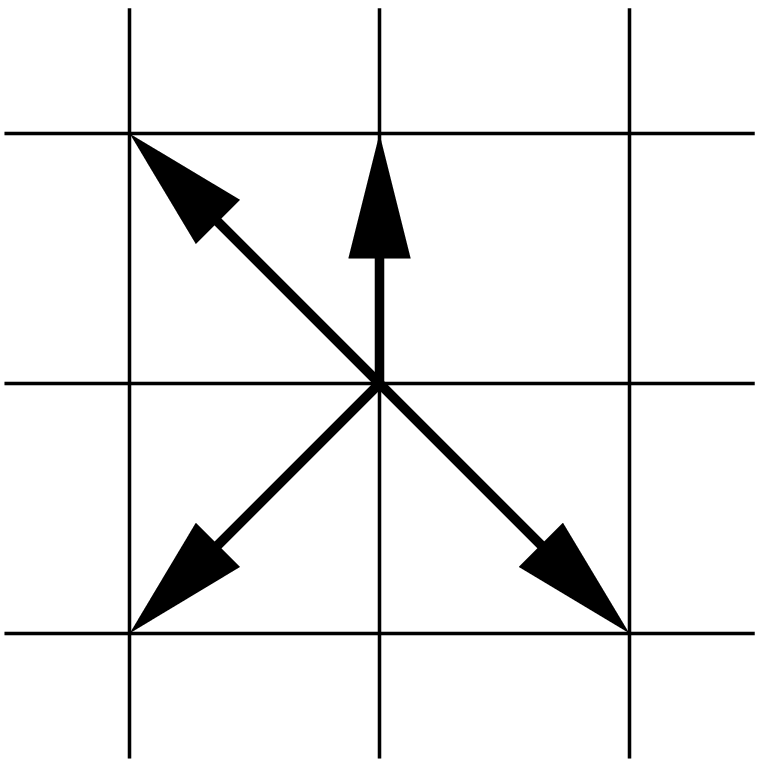}
\hspace{14.3mm}
\includegraphics{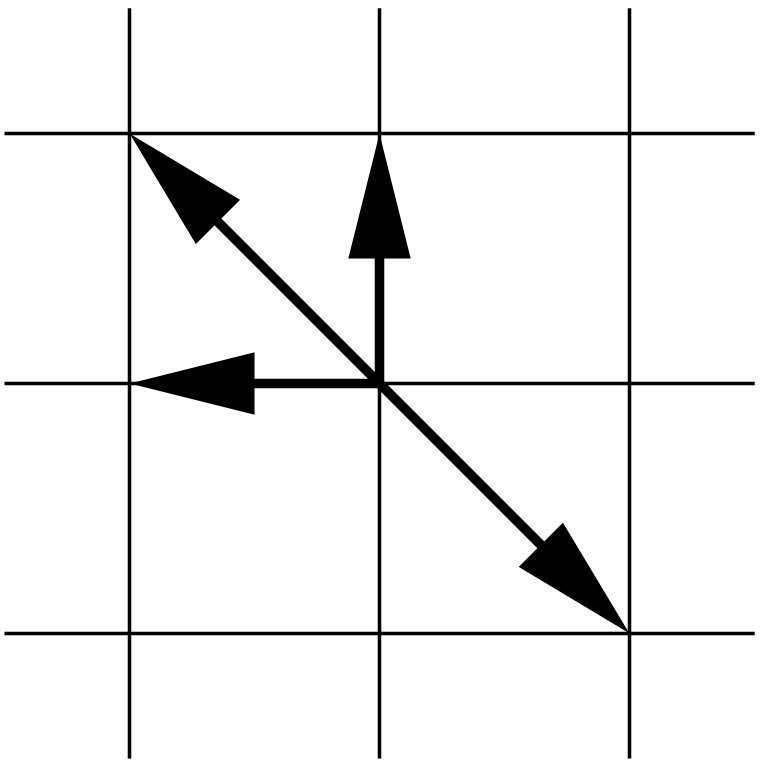}
\hspace{14.3mm}
\includegraphics{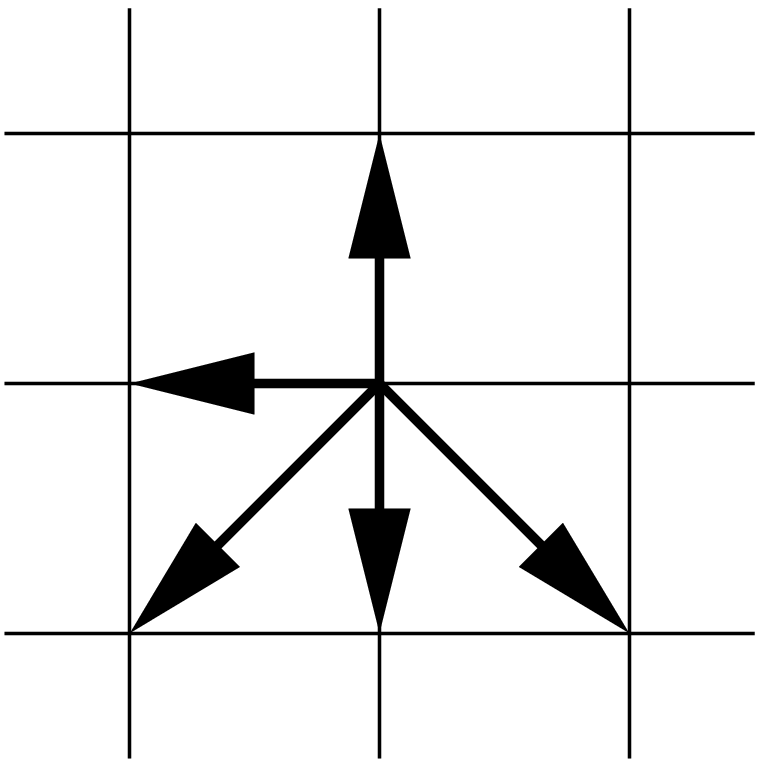}
\hspace{14.3mm}
\includegraphics{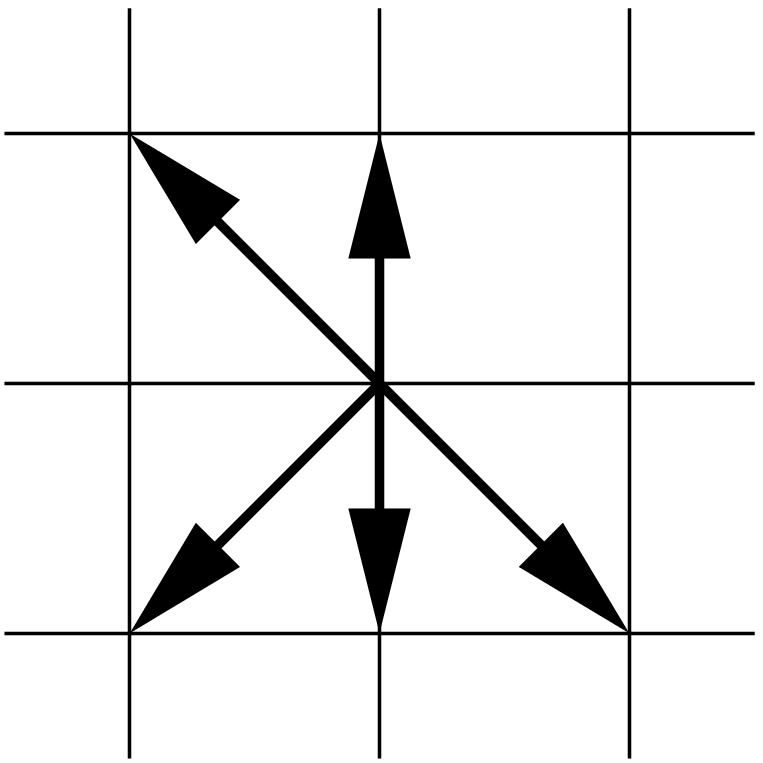}
\hspace{14.3mm}
\includegraphics{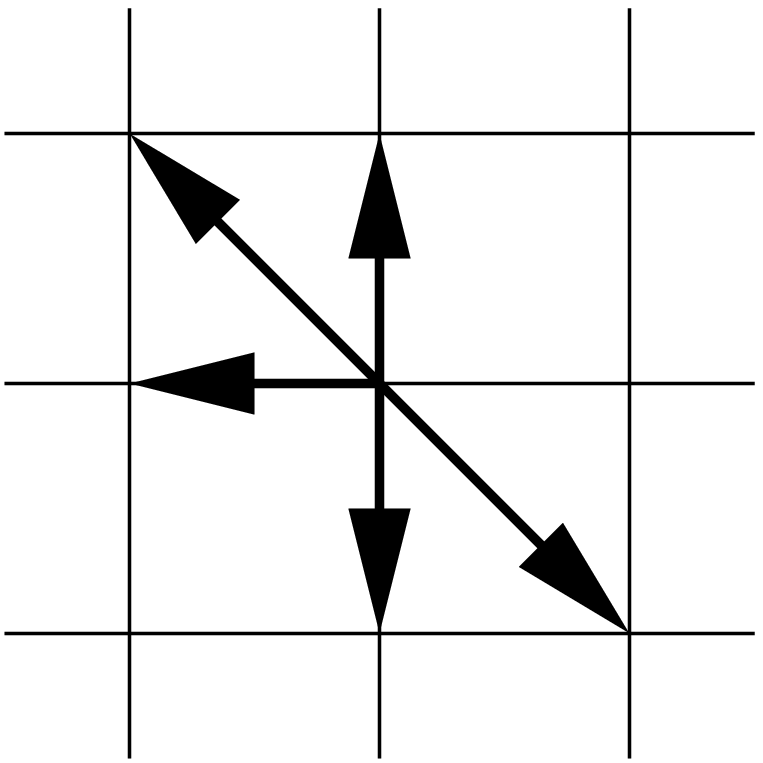}
\hspace{14mm}
\includegraphics{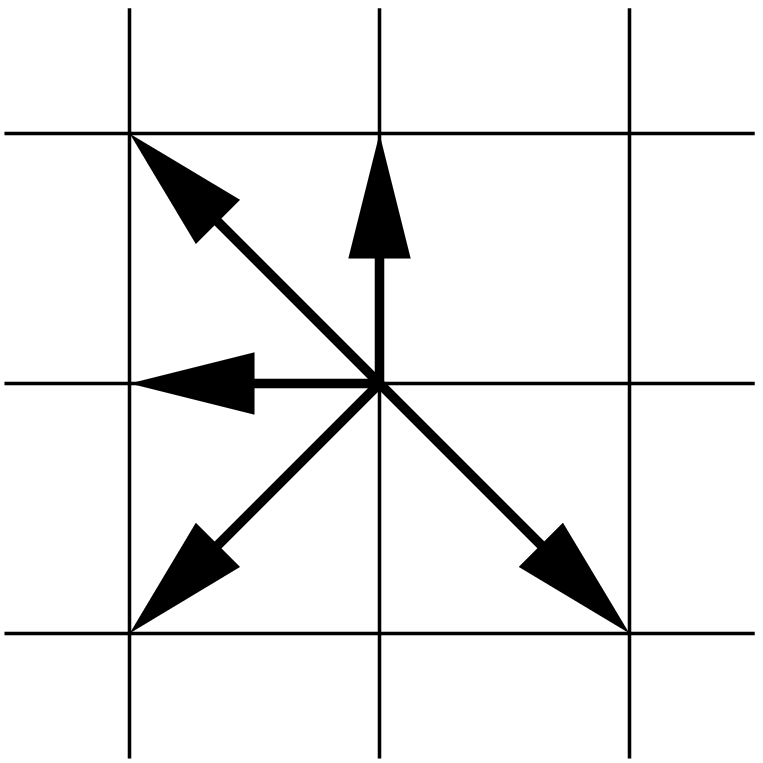}
\hspace{14.3mm}
\includegraphics{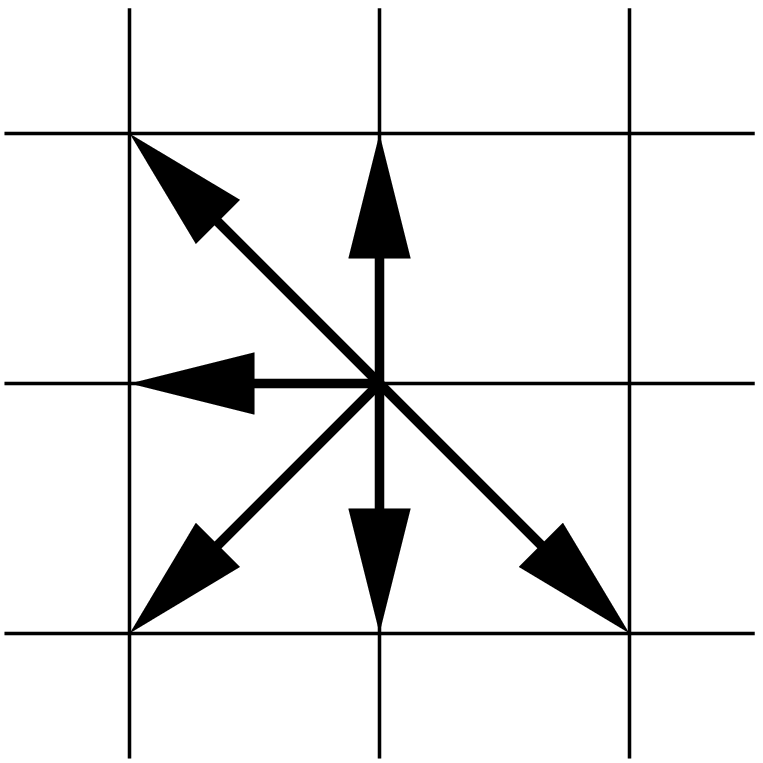}
\end{picture}

(Case III: $y_4=\infty$)
\begin{picture}(420.00,40.00)
\hspace{66mm}
\includegraphics{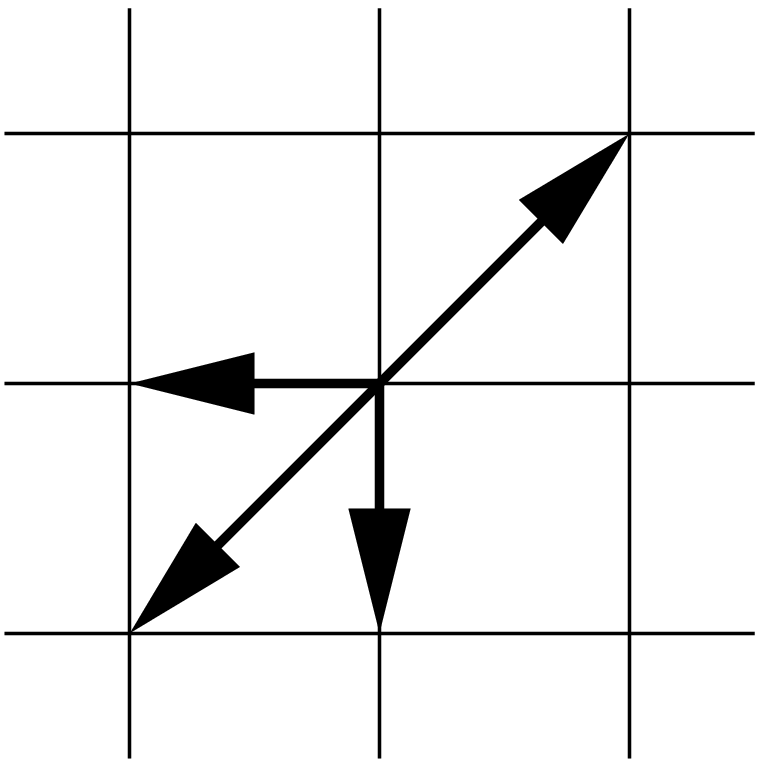}
\end{picture}

\end{center}
\caption{Different cases considered in
the proof of Theorem \ref{main_tt}---they correspond to the $51$ non-singular walks with infinite group,
see \cite{BMM}}
\label{Allcases}
\end{figure}

\section*{Acknowledgments}
K.\ Raschel's work was partially supported by CRC $701$, Spectral
Structures and Topological Methods in Mathematics at the University
of Bielefeld. We are grateful to E.\ Lesigne: his mathematical
knowledge and ideas---that he generously shared with us---have been very helpful in the
 elaboration  of this paper.
    We also warmly thank R.~Krikorian and J.-P.~Thouvenot
 for encouraging discussions.
 Finally, we thank two anonymous referees for their careful reading and their remarks.

\nocite{BCK}

\newpage

\end{document}